\documentclass[reqno,11pt]{amsart}

\usepackage{xcolor}
\usepackage{graphicx}
\usepackage[hidelinks]{hyperref}

\usepackage{amscd}

\usepackage{amsmath}
\usepackage{amsthm}
\usepackage{amssymb}
\usepackage{latexsym,array}
\usepackage{amsfonts}
\usepackage{shadow}
\usepackage{amsbsy}
\usepackage{amssymb}
\usepackage{dsfont}
\usepackage{mathtools}
\usepackage{enumitem}
\usepackage{doi}

\usepackage{tikz-cd}
\usepackage{color}
\usepackage{graphicx}
\usepackage[hidelinks]{hyperref}
\usepackage{cleveref}
\usepackage{fullpage}
\usepackage{comment}

\usepackage{dsfont}

\usepackage{cleveref}

\numberwithin{equation}{section}

\newtheorem{theorem}{Theorem}[section]
\newtheorem{lemma}[theorem]{Lemma}

\newtheorem{proposition}[theorem]{Proposition}

\theoremstyle{definition}
\newtheorem{remark}[theorem]{{\bf Remark}}
\newtheorem{definition}[theorem]{Definition}

\newcommand{\cc}{\mathbb{C}}

 \newcommand{\hn}{h_n}

\newcommand{\rr}{\mathbb{R}}

\newcommand{\unx}{\underline{x}}

\usepackage{amscd}
\usepackage{tikz-cd}
\usepackage{tikz}

\crefname{enumi}{}{}
\crefname{enumii}{}{}

\title[]{A unified approach to the Dirac fine structures on the $S$-spectrum and a connection with Jacobi polynomials}
\author[F. Colombo]{Fabrizio Colombo}
\address{(FC)
	Politecnico di Milano\\Dipartimento di Matematica\\Via E. Bonardi, 9\\20133
	Milano, Italy}
\email{fabrizio.colombo@polimi.it}

\author[A. De Martino]{Antonino De Martino}
\address{(ADM)
	Politecnico di Milano\\Dipartimento di Matematica\\Via E. Bonardi, 9\\20133
	Milano, Italy
} \email{antonino.demartino@polimi.it}

\author[S. Pinton]{Stefano Pinton}
\address{(SP)
	Politecnico di Milano\\Dipartimento di Matematica\\Via E. Bonardi, 9\\20133
	Milano, Italy
} \email{stefano.pinton@polimi.it}

\date{}

\begin{document}
	\maketitle
	\begin{abstract}
This paper contributes to the recently introduced theory of fine structures on the $S$-spectrum.
We study, in a unified way, the functional calculi for axially Poly-Analytic-Harmonic functions on the $S$-spectrum.
Axially Poly-Analytic-Harmonic functions of type $(\beta, m)$, for $\beta, m \in \mathbb{N}_0$ belong to the kernel of the
Dirac-Laplace operators $D^\beta\Delta^m_{n+1}$ of type $(\beta, m)$
and contain as particular cases Poly-Analytic and Poly-Harmonic functions of axial type.
By applying these operators to the Cauchy kernels $S^{-1}_L(s,x)$ of (left) slice hyperholomorphic functions,
we obtain an integral representation for axially Poly-Analytic-Harmonic functions.
We point out that the kernels $D^\beta\Delta^m_{n+1}S^{-1}_L(s,x)$ have a remarkable connection with Jacobi polynomials.
By replacing the paravector operator $T$ with commuting components in the kernels $D^\beta\Delta^m_{n+1} S^{-1}_L(s,x)$,
 we obtain the associated resolvent operators.
 With these resolvent operators, denoted by $S^{-1}_{L, D^\beta\Delta^m}(s,T)$, we define the associated functional calculi based on the $S$-spectrum and study their properties.
	\end{abstract}

	\medskip
	\noindent AMS Classification: 47A10, 47A60.
	
	\noindent Keywords: Jacobi polynomials, Analytic-Harmonic functions, Poly-Analytic and Poly-Harmonic functions, $S$-spectrum, functional calculi.

\tableofcontents

\section{Introduction}

The spectral theory on the $S$-spectrum originated to provide quaternionic quantum mechanics, see \cite{adler,BF} with a precise mathematical foundation. It soon turned out to be the natural spectral theory for linear operators in vector analysis, such as the gradient operator and its variations, see \cite{BARACCOColomPelPint2022, Gradient}. More recently, it has found applications in differential geometry for the spectral theory of the Dirac operator on manifolds beyond the self-adjoint case, see \cite{DIRACHYPSPHE,DiracHarm} . For comprehensive discussions,
the reader is referred to the books  \cite{CGK, FJBOOK,ColomboSabadiniStruppa2011}.
This theory has proven to be significantly more general than initially anticipated. It naturally extends to full Clifford operators and reveals connections with the spectral theory based on the monogenic spectrum, which was developed by A. McIntosh and collaborators, see \cite{JBOOK,JM}.

\medskip
 A new branch of this spectral theory, called \textit{fine structures on the $S$-spectrum}, has recently emerged. This development involves function spaces linked to the Fueter-Sce mapping theorem, also called Fueter-Sce extension theorem, see \cite{Fueter, TaoQian1, Sce} and \cite{ColSabStrupSce,DDG, DDG1}, which in the Clifford algebra setting $\mathbb{R}_n$ with $n$ imaginary units $e_\mu$ for $\mu=1,\ldots,n$, bridges left slice hyperholomorphic functions
 $\mathcal{SH}_L(U)$, on a suitable open set $U\subseteq \mathbb{R}^{n+1}$, and (left) axially  monogenic functions $\mathcal{AM}_L(U)$, using the operator $\Delta^{\frac{n-1}{2}}_{n+1}$, where
 $\Delta_{n+1}$ is the Laplacian in $n+1$ dimensions.
Recalling that the Dirac operator $D$ and its conjugate $\overline{D}$ are defined by
	\begin{equation}\label{DIRACeBARDNN}
		D= \frac{\partial}{\partial x_0}+ \sum_{i=1}^{n} e_i \frac{\partial}{\partial x_i}, \ \ {\rm and} \ \
		\overline{D}= \frac{\partial}{\partial x_0}- \sum_{i=1}^{n} e_i \frac{\partial}{\partial x_i},
	\end{equation}
as a consequence of the  Fueter-Sce extension theorem we have
$$
D\Delta^{\frac{n-1}{2}}_{n+1}f(x)=0,\ \ \ {\rm for \ all }\ f\in \mathcal{SH}_L(U).
$$

This means that the map $x\mapsto \Delta^{\frac{n-1}{2}}_{n+1}f(x)$ belongs to the space $\mathcal{AM}_L(U)$.
Similar relation holds for right slice hyperholomorphic functions $\mathcal{SH}_R(U)$ and
 right axially monogenic functions $\mathcal{AM}_R(U)$.
For odd $n$, the operator $\Delta^{\frac{n-1}{2}}_{n+1}$ acts as a pointwise differential operator,
	while for even $n$, fractional powers of the Laplace operator are involved. In this
paper we focus on integer powers, for fractional powers completely different techniques are needed, see \cite{CDD}.
The operator $\Delta^{\frac{n-1}{2}}_{n+1}$ can be factorized in terms of the
Dirac operator $ D$ and its conjugate
 $\overline{D}$ since
   $$
   D\overline{D}=\overline{D}D=\Delta_{n+1}.
   $$
   So it is possible to repeatedly apply to a slice hyperholomorphic function $f(x)$
   the Dirac operator and its conjugate, until we reach the maximum power of the Laplacian, i.e., the Sce exponent $\frac{n-1}{2}$.
 This implies the possibility to build different sets of functions which lie between the set of slice hyperholomorphic functions and the set of axially monogenic functions.

\medskip
{\em The plan of the paper and description of the main results.}

\medskip

{\em  Section 1 contains the introduction. Section \ref{Prel}
contains the preliminary results on Clifford algebras and function theories.
Section \ref{DIRAC-FINE_STRU} contains the description of the Dirac fine structures on the $S$-spectrum.}

\medskip
We will call {\em Dirac fine structures of the spectral theory on the $S$-spectrum} the collection
 of function spaces and
 the associated functional calculi
induced by all possible factorizations of the operator $\Delta^{\frac{n-1}{2}}_{n+1}$, in terms of $D$ and of its conjugate $\overline{D}$.
There are several possible factorizations such as
$$\Delta^{\frac{n-1}{2}}_{n+1}
= \underbrace{(D \overline{D}) \cdots (D  \overline{D})}_{(n-1)/2-\text{times}}
\ \ \ \ \ \ {\rm or} \ \ \ \ \ \ \
	\Delta_{n+1}^{\frac{n-1}{2}}
	= \underbrace{D \cdots D}_{(n-1)/2- \text{times}} \cdot
	\underbrace{\overline{D} \cdots \overline{D}}_{(n-1)/2-\text{times}}
$$
or recalling that $D\overline{D}=\Delta_{n+1}$, we can also get
\begin{equation}
	\label{fact}
	\Delta_{n+1}^{\frac{n-1}{2}}
	= D^{\frac{n-1}{2}-\gamma} \cdot \Delta_{n+1}^\gamma \cdot\overline{D}^{\frac{n-1}{2}-\gamma},\  \ \ {\rm for} \ \gamma\in \mathbb{N}_0\ \ {\rm such\ that}\ \ 0\leq \gamma\leq \frac{n-1}{2}.
\end{equation}

From the above considerations we observe that the building blocks of any Dirac-factorization of $\Delta_{n+1}^{\frac{n-1}{2}}$  in terms of $D$ and $\overline{D}$ are the operators
\begin{equation}\label{oper-D-Delta}
D^\beta \Delta_{n+1}^m \ \ \ {\rm and}\ \ \ \ \ \overline{D}^\beta \Delta_{n+1}^m
\end{equation}
for suitable $\beta$ and $m\in \mathbb{N}_0$.

\medskip
The operators in (\ref{oper-D-Delta}) lead to the definition of the spaces of
(left) axially Analytic-Harmonic functions $\mathcal{AAH}_{\beta,m}^L(U)$ of type $(\beta,m)$ for  $\beta$, $m\in \mathbb{N}_0$ (see Definition \ref{axpolC}).
Roughly speaking $\mathcal{AAH}_{\beta,m}^L$ consists of those functions $f: U\subseteq\mathbb{R}^{n+1} \to \mathbb{R}_n$ of class $ \mathcal{C}^{2m+\beta}(U)$
such that	
$$
D^\beta \Delta_{n+1}^m f(x)=0,\ \  \ \ {\rm for\ all}\ \ \  x\in U.
$$
Similarly for functions such that $\overline{D}^\beta \Delta_{n+1}^m f(x)=0$ for all $x\in U$ we define the spaces of
(left) axially Anti-Analytic-Harmonic functions
$\overline{\mathcal{AAH}}_{\beta,m}^L(U)$ of type $(\beta, m)$ (see in Definition \ref{antiax}).
Similar definitions hold for the right case $\mathcal{AAH}_{\beta,m}^R(U)$.

\medskip
The function spaces $ \mathcal{AAH}_{\beta,m}^L(U)$ of order $(\beta, m)$ contain as particular cases the set of (left) axially Poly-Harmonic functions $\mathcal{APH}^L_m(U)$ (see Definition \ref{polyharm})
	  and the (left) axially Poly-Polyanalytic functions $\mathcal{APA}_\beta^L(U)$ (see Definition
	\ref{axpoly}). In Remark \ref{rem_func_speces} we give complete description of all the function spaces contained in $ \mathcal{AAH}_{\beta,m}^L(U)$ as particular cases.

\medskip
{\em
Section \ref{Integral-representation}	 contains the crucial integral representation of the functions of the Dirac fine structures.}

\medskip
To introduce integral representation of the function spaces of the fine Dirac fine structures on the $S$-spectrum
it is crucial to compute explicitly the
action of the differential operators
$ D^\beta \Delta_{n+1}^m$  and $\overline{D}^\beta \Delta_{n+1}^m $ give in (\ref{oper-D-Delta})
when applied to the left (resp. right) slice hyperholomorphic Cauchy kernel and considering the
 $S_L^{-1}(s,x)$ written in form II, for  $s,x\in \mathbb{R}^{n+1}$ with $x\not\in [s]$, i.e.,
\[
			S_L^{-1}(s,x):=(s-\bar x)\mathcal{Q}_{c,s}^{-1}(x),\ \  {\rm where}\ \ \
			\mathcal{Q}_{c,s}^{-1}(x):=(s^2-2{\rm Re}(x) s+|x|^2)^{-1}.
\]	
To obtain the explicit expressions of the maps
$$
(s,x)\mapsto D^k\Delta^m_{n+1} S_L^{-1}(s,x)
\ \ \
{\rm and}
\ \ \
(s,x)\mapsto  \overline{D}^k\Delta^m_{n+1} S_L^{-1}(s,x)
$$
is non trivial, but they can be written in closed form. In this paper we will mainly concentrate on the
left  slice hyperholomorphic Cauchy kernel
 $S_L^{-1}(s,x)$, but similar results hold for the right  slice hyperholomorphic Cauchy kernel
 $S_R^{-1}(s,x)$, see \cite{DPMilano}.

\medskip
Assume that $n$ is an odd number and denote by $h_n=\frac{n-1}{2}$ the Sce exponent.
For $s, x \in \mathbb{R}^{n+1}$ with $x \notin [s]$ and $\beta$, $m\in \mathbb{N}_0$
the computation of $D^\beta\Delta^m_{n+1} S_L^{-1}(s,x)$
and $\overline{D}^\beta\Delta^m_{n+1} S_L^{-1}(s,x)$ is based on the explicit expression of $\Delta^m_{n+1} S_L^{-1}(s,x)$ and then on
the actions of ${D}^\beta$ and $\overline{D}^\beta$, respectively, on $\Delta^m_{n+1} S_L^{-1}(s,x)$.

The first remarkable  fact
is the following result, proved in \cite{CSS}, that claims that for  $n$ odd, $h_n=\frac{n-1}{2}$, and $m\in \mathbb{N}$  we have
		\begin{equation}\label{DELTAEMME}
			\Delta^m_{n+1}(S^{-1}_L(s,x))=4^m  m! (-h_n)_{m}
(s-\overline x)\mathcal{Q}_{c,s}^{-m-1}(x),
		\end{equation}
where $\Delta_{n+1}$ is considered with respect to the variable $x$,
and $(-h_n)_{m}$ is the Pochhammer symbol as in Definition \ref{Pochhsym}.
Using (\ref{DELTAEMME}) we obtain
\begin{equation}\label{DBETADELTA}
D^\beta\big(\Delta^m_{n+1}(S^{-1}_L(s,x))\big)
=4^m  m! (-h_n)_{m} D^\beta\big((s-\overline x)\mathcal{Q}_{c,s}^{-m-1}(x)\big)
\end{equation}
and  of $\overline{D}^\beta$ on $\Delta^m_{n+1} S_L^{-1}(s,x)$, i.e.
\begin{equation}\label{ANTIDBETADELTA}
\overline{D}^\beta\big(\Delta^m_{n+1}(S^{-1}_L(s,x))\big)
=4^m  \ell! (-h_n)_{m} \overline{D}^\beta\big((s-\overline x)\mathcal{Q}_{c,s}^{-m-1}(x)\big).
\end{equation}
Secondly,  the action of the operators $D$,  $\overline{D}$ in the second hand side of the relations (\ref{DBETADELTA}) and (\ref{ANTIDBETADELTA}) can be computed iteratively by the crucial equalities (I) and (II), proved in \cite{CDP25,DPMilano},  given by:
\begin{itemize}
 \item[(I)]
 For  $n$ odd number, $h_n=\frac{n-1}{2}$ and $m\in \mathbb{N}$ we have
	\begin{align}
	D\left( (s-\bar x) \mathcal{Q}_{c,s}^{-m}(x) \right) & =-2(h_n-m+1) \mathcal{Q}_{c,s}^{-m}(x), \label{d_cauchy_ker}
\\
	D\left( Q_{c,s}^{-m}(x) \right) & =4m (s-x_0)\mathcal{Q}_{c,s}^{-m-1}(x)(x)-2m(s-\bar x) \mathcal{Q}^{-m-1}_{c,s}(x).
 \label{d2_cauchy_ker}
\end{align}
\item[(II)]
For  $n$ odd number, $h_n=\frac{n-1}{2}$
		  and $m \in \mathbb{N}$ we have
\begin{align}
			\overline D\left( (s-\bar x) \mathcal{Q}_{c,s}^{-m}(x) \right) & =2(h_n-\ell) \mathcal{Q}_{c,s}^{-m}(x)(x) +4\ell(s-\bar x )(s-x_0) \mathcal{Q}_{c,s}^{-m-1}(x),
\\
			\overline D\left( \mathcal{Q}_{c,s}^{-m}(x) \right) & =2\ell (s-\bar x)\mathcal{Q}_{c,s}^{-m-1}(x).
		\end{align}
\end{itemize}
In order to obtain $D^k\big((s-\overline x)\mathcal{Q}_{c,s}^{-m-1}(x)$ we reason by induction
using step (I). Similarly for $\overline{D}^k\big((s-\overline x)\mathcal{Q}_{c,s}^{-m-1}(x)$ we consider point (II).

\medskip
The explicit expression of $D^\beta\Delta^m_{n+1} S_L^{-1}(s,x)$
and $\overline{D}^\beta\Delta^m_{n+1} S_L^{-1}(s,x)$ obtained by (\ref{DBETADELTA}) and (\ref{ANTIDBETADELTA})
 are linear combinations of the kernels:
\begin{equation}
	K_{\nu,\ell}^{1,L}(s,x):=(s-\bar{x}) (s-x_0)^{\nu} \mathcal{Q}_{c,s}^{-\ell}(x), \qquad \ell \leq h_n, \quad \nu \in \mathbb{N},
\end{equation}
and
\begin{equation}
	K_{\nu,\ell}^{2}(s,x):= (s-x_0)^{\nu} \mathcal{Q}_{c,s}^{-\ell}(x), \qquad \ell \leq h_n, \quad \nu \in \mathbb{N},
\end{equation}
for $n \in \mathbb{N}$ being odd and $h_n=\frac{n-1}{2}$ that are the building blocks for all the kernels of the integral representations of the functions of the Dirac fine structures.

Consequently, all the resolvent operators for the related functional calculi are also based on these functions, where instead of $x$, we replace a paravector operator.
Theorem \ref{p111}, proved in \cite{DPMilano}, clarifies this fact.
We recall that $\beta$, $m\in \mathbb{N}_0$ are such that $m + \beta\leq h_n$.
Assuming that $s$, $x \in \mathbb{R}^{n+1}$ are such that $s \notin[x]$ and
 if $\beta=2k_1+1$, where $k_1\in\mathbb N_0$, then we have
	\begin{equation}
			D^\beta \Delta^m_{n+1} S^{-1}_{L}(s,x)= c_{\beta, m, h_n}\left(  \sum_{j=0}^{k_1-1} a^1_{j,k_1,m} K^{1,L}_{2j+1, m+j+2+k_1}(s,x)  -\sum_{j=0}^{k_1} b^1_{j,k_1,m}	K_{2j,m+1+k_1+j}^{2}(s,x)  \right),
	\end{equation}
	where the coefficients are $c_{\beta, m, h_n}$ are defined in (\ref{cbmhn}) and $a^1_{j,k_1,m} $ and $b^1_{j,k_1,m}$ are explicitly computed in (\ref{coeff}).
Similar expression holds for  $\beta=2k_2$, where $k_2\in\mathbb N$. Moreover, in Theorem \ref{barp} it is stated the analogous results for
$\overline{D}^\beta \Delta^m_{n+1} S^{-1}_{L}(s,x)$.

\medskip
Thanks to the explicit expression of the kernels
$D^\beta \Delta^m_{n+1} S^{-1}_{L}(s,x)$  and $\overline{D}^\beta \Delta^m_{n+1} S^{-1}_{L}(s,x)$,
the Cauchy formula  for slice hyperholomorphic functions recalled in (\ref{cauchynuovo})
and the Fueter-Sce mapping theorem 	%$D\Delta^{\frac{n-1}{2}}_{n+1}f(x)=0$
 we can give an integral representation of
  (left) axially Analytic-Harmonic functions $\mathcal{AAH}_{\beta,m}^L(U)$ of type $(\beta,m)$ for  $\beta$, $m\in \mathbb{N}_0$, see Definition  \ref{axpolC} and
of (left) axially  Anti-Analytic-Harmonic functions $\overline{\mathcal{AAH}}_{\beta,m}^L(U)$ of type $(\beta,m)$ for  $\beta$, $m\in \mathbb{N}_0$, see Definition  \ref{antiax}.
In fact, applying $D^\beta \Delta^m$ to a left slice hyperholomorphic function $f(x)$ we get
\begin{equation}\label{INTEGRALREP}
		D^\beta \Delta^m_{n+1}f(x)=\frac{1}{2 \pi}\int_{\partial (U\cap \mathbb{C}_I)} D^\beta \Delta^m_{n+1}S_L^{-1}(s,x)\, ds_I\,  f(s).
	\end{equation}
Such integral representations, and the analogue for $\overline{D}^\beta \Delta^m_{n+1}f(x)$, allow to define the functional calculi of all possible
functions coming from the factorization of the $ \Delta^{\frac{n-1}{2}}_{n+1}$ in terms of the Dirac operator $D$ and its conjugate $\overline{D}$.

\medskip
{\em In Section \ref{Jacobi-series-expan} we prove the series expansions of the kernels $\mathcal{Q}^{-\ell}_{c,s}(x)$ and the connection with Jacobi polynomials.}

A very important fact is the connection of the series expansion of the kernel
$\mathcal{Q}_{c,s}^{-\ell}(x)$ with Jacobi polynomials proved in Theorem \ref{exppser}.
In fact,
denoting by $h_n:= \frac{n-1}{2}$ the Sce exponent and let $n$ be an odd number, we show that
 for $s$, $x \in \mathbb{R}^{n+1}$ such that $|x|<|s|$ the function $\mathcal{Q}^{-\ell}_{c,s}(x)$, with $1 \leq \ell \leq h_n$, admits the series expansion:
	\begin{equation}
		\label{series11}
		\mathcal{Q}_{c,s}^{-\ell}(x)= \sum_{k=2 \ell-1}^{\infty} \mathcal{H}_\ell^k(x)s^{-k-1}
	\end{equation}
where the coefficients $\mathcal{H}_{\ell}^k(x)$ are defined as
$$
	\mathcal{H}_{\ell}^k(x):=\sum_{j=0}^{k-2\ell+1} C^k_j(\ell) x^{k-2\ell+1-j} \bar{x}^{j},\quad C^k_j(\ell):=\binom{\ell+j-1}{\ell-1} \binom{k-\ell-j}{\ell-1}
$$
and these polynomials are connected with the Jacobi polynomials  $P_n^{(\alpha, \beta)}$ by the relations
$$
	\mathcal{H}_\ell^{2m+2\ell-1}(x)= \frac{(-1)^m \sqrt{\pi} \Gamma(m+\ell)|x|^{2m}}{\Gamma(\ell)\Gamma\left(m+\frac{1}{2}\right)} P_m^{\left(-\frac{1}{2}, \frac{2 \ell-1}{2}\right)} \left(1-\frac{2 x_0^2}{|x|^2}\right)
$$
and
$$
	\mathcal{H}_{\ell}^{2m+2\ell}(x)=\frac{(-1)^m\sqrt{\pi} \Gamma \left(m+\ell+1\right)x_0|x|^{2m}}{\Gamma(\ell) \Gamma \left(m+\frac{3}{2}\right)}P_m^{\left(\frac{1}{2}, \frac{2 \ell-1}{2}\right)}\left(1-\frac{2x_0^2}{|x|^2}\right).
$$

{\em In Section \ref{FUNC-CAL_DIRAC}	we use the integral representation of the functions of the fine structure on the $S$-spectrum to define
 the associated functional calculi.}

\medskip
In order to explain the functional calculi based on the integral representations formulas (\ref{INTEGRALREP}) of the functions of the Dirac
fine structures we consider  bounded paravector operators
$T= \sum_{\mu=0}^{n}e_{\mu} T_{\mu}$, where $T_\mu \in \mathcal{B}(V)$ for $\mu=0,1,...,n$. The conjugate of the operator $T$ is given by $\overline{T}=T_0-\sum_{\mu=1}^{n} e_{\mu}T_{\mu}$. The subset of paravector
operators whose components commute among themselves will be denoted by
$\mathcal{BC}^{0,1}(V_n)$. If let $T \in \mathcal{BC}^{0,1}(V_n)$, where $V_n=V \otimes \mathbb{R}_n$,
the operator $T \bar{T}$ is well defined and is given by
$$
T\overline{T}=\overline{T}T= \sum_{\mu=0}^{n} T_\mu^2\ \ \ {\rm and}\ \ \ T+\bar{T}=2T_0.
$$
For operators in $\mathcal{BC}^{0,1}(V_n)$ the $S$-resolvent set of $T$ is defined as
$$ \rho_S(T)= \{s \in \mathbb{R}^{n+1} \, : \, \mathcal{Q}_{c,s}^{-1}(T):=(s^2-2sT_0+T\bar{T})^{-1} \in \mathcal{BC}(V_n)\},$$
and the $S$-spectrum of $T$ is
$$ \sigma_S(T)=\mathbb{R}^{n+1}\setminus \rho_S(T).$$

In Proposition \ref{Qint} we show that for $n$ be an odd number and $h_n:=\frac{n-1}{2}$, with $1 \leq \ell \leq h_n$. We assume that $T \in \mathcal{BC}^{0,1}(V_n)$ and $s \in \mathbb{R}^{n+1}$ being such that $\|T\| < |s|$. Then we have
\begin{equation}
\mathcal{Q}_{c,s}^{-\ell}(T)= \sum_{k=2\ell-1}^{\infty} \mathcal{H}_{\ell}^k(T) s^{-k-1}.
\end{equation}
For $s \in \rho_S(T)$ and $T \in \mathcal{BC}^{0,1}(V_n)$ we define the $K_{\nu,\ell}^{1,L}(s,T)$ and $K_{\nu,\ell}^{2}(s,T)$ resolvent operators as
 \begin{equation*}\nonumber
 	K_{\nu,\ell}^{1,L}(s,T)=(s\mathcal{I}-\bar{T}) (s\mathcal{I}-T_0)^{\nu} \mathcal{Q}_{c,s}^{-\ell}(T),
\ \ \
 	K_{\nu,\ell}^{2}(s,T)= (s \mathcal{I}-T_0)^{\nu} \mathcal{Q}_{c,s}^{-\ell}(T).
 \end{equation*}

From the integral representations of the functions of the Dirac fine structures  (see Theorems \ref{Dinte} and \ref{Dbarinte}, respectively), we introduce the notions of the corresponding resolvent operators.
These can be expressed as linear combinations of the $K_{\nu,\ell}^{1,L}(s,T)$ and $K_{\nu,\ell}^{2}(s,T)$ resolvent operators and
setting for simplicity
$$
D^{\beta} \Delta^m_{n+1}S^{-1}_{L}(s,T):=S^{-1}_{L,D^{\beta} \Delta^m}(s,T),
$$
for example, if $\beta=2k_1+1$ where $k_1\in\mathbb N$, the resolvent $S^{-1}_{L,D^{\beta} \Delta^m}(s,T)$ is given by
	\begin{equation*}
	S^{-1}_{L,D^{\beta} \Delta^m}(s,T)= c_{\beta, m, h_n}\left(  \sum_{j=0}^{k_1-1} a^1_{j,k_1,m} K^{1,L}_{2j+1, m+j+2+k_1}(s,T)  -\sum_{j=0}^{k_1} b^1_{j,k_1,m}	K_{2j,m+1+k_1+j}^{2}(s,T)  \right),
	\end{equation*}
Observe that  all the functional calculi associated with the function spaces
 of the Dirac fine structures discussed above  are all based on the $S$-spectrum, i.e., all these resolvent operators  are explicit functions of the map
$$
s\mapsto (s^2\mathcal{I}-s(T+\overline{T})+T\overline{T})^{-1}, \ \ \ \text{for}\ \ \  s\in\rho_S(T).
$$

So if $m$ and $\beta \in \mathbb{N}$ are such that $\beta+m\leq h_n$ and
 $T \in \mathcal{BC}^{0,1}(V_n)$ be such that its components  $T_i$, $i=0,...,n-1$ have real spectra
  when we assume that $U$ be a bounded slice Cauchy domain and set $ds_I=ds(-I)$ for $I\in \mathbb{S}$ then
   for any $f \in \mathcal{SH}^L_{\sigma_S(T)}(U)$ we set
  $$
  f_{\beta, m}(x):=D^\beta \Delta_{n+1}^{m}f(x).
  $$
   We we can now define functional calculus for $f_{\beta, m}$ and for $T\in \mathcal{BC}^{0,1}(V_n)$ as
	\begin{equation}
	f_{\beta, m}(T):=\frac{1}{2\pi} \int_{\partial(U \cap \mathbb{C}_I)}  S^{-1}_{L,D^\beta\Delta^m} (s,T)ds_I f(s).
\end{equation}
Finally Section \ref{THEAPPENDIX} contains an appendix with a technical result.

\section{Preliminary results on Clifford algebras and function theories}\label{Prel}
In this paper we will work within the framework of the real Clifford algebra $\rr_n$, constructed over $n$ imaginary units denoted as $e_1,\dots,e_n$,
satisfying the relations $e_ie_j+e_je_i=-2\delta_{ij}$. An element in the Clifford algebra $\mathbb{R}_n$ is represented as $\sum_A e_Ax_A$, where $r=0,...,n$, $A=\{ i_1\ldots i_r\}$, $i_\ell\in \{1,2,\ldots, n\}$, $i_1<\ldots <i_r$ forms a multi-index, $e_A=e_{i_1} e_{i_2}\ldots e_{i_r}$, and $e_\emptyset =1$. Notably, when $n=1$, $\rr_1$ corresponds to the commutative algebra of complex numbers $\mathbb{C}$, and for $n=2$, it yields the division algebra of real quaternions $\mathbb{H}$. However, for $n>2$, the Clifford algebras $\rr_n$ contain zero divisors.

Within the Clifford algebra $\rr_n$, certain elements can be identified with vectors in Euclidean space $\rr^n$: a vector $(x_1,x_2,\ldots,x_n)\in\rr^n$ maps to a "1-vector" in the Clifford algebra via $\unx=x_1e_1+\ldots+x_ne_n$. Additionally, an element $(x_0,x_1,\ldots,x_n)\in \rr^{n+1}$ is identified as a paravector, represented as $x=x_0+\unx=x_0+ \sum_{j=1}^nx_je_j$, the conjugate of $x$ is define as $\overline{x}:=x_0-\unx$.
The norm of $x\in\rr^{n+1}$ is given by $|x|^2=x_0^2+x_1^2+\ldots +x_n^2$, and the real part $x_0$ of $x$ is denoted also as ${\rm Re} (x)$.
We introduce the notation $\mathbb{S}$ for the sphere of unit 1-vectors in $\mathbb{R}^n$, defined as
$$
\mathbb{S}=\{ \unx=e_1x_1+\ldots +e_nx_n\ :\ x_1^2+\ldots +x_n^2=1\}.
$$
Note that $\mathbb{S}$ is an $(n-1)$-dimensional sphere in $\rr^{n+1}$.
The vector space $\mathbb{R}+I\mathbb{R}$, passing through $1$ and $I\in \mathbb{S}$, is denoted by $\mathbb C_I$, that is defined as
$$ \mathbb{C}_I=\{u+Iv \, | \,  u,v \in \mathbb{R}\}.$$
Notably, $\mathbb C_I$ is a 2-dimensional real subspace of $\rr^{n+1}$ isomorphic to the complex plane, with the isomorphism being an algebraic isomorphism.

%\noindent Given a paravector $x=x_0+\unx\in\rr^{n+1}$ let us set
%$$
%I_x=\left\{\begin{array}{l}
	%\displaystyle\frac{\unx}{|\unx|}\quad{\rm if}\ \unx\not=0,\\
%	{\rm any\ element\ of\ } \mathbb{S}{\rm\ otherwise,}\\
%\end{array}
%\right.
%$$
%so, by definition we have $x\in \mathbb C_{I_x}$.

\begin{definition}\label{sphere}
	Given an element $x\in\rr^{n+1}$, we define
	$$
	[x]=\{y\in\rr^{n+1}\ :\ y={\rm Re}(x)+I |\underline{x}|,\, I\in \mathbb{S}\}.
	$$
\end{definition}

\begin{remark}{\rm The set $[x]$ is a $(n-1)$-dimensional sphere in $\rr^{n+1}$. When $x\in\rr$, then $[x]$ contains $x$ only. In this case, the
		$(n-1)$-dimensional sphere has radius equal to zero. }\end{remark}

In this part of the section, we review the main notions of slice hyperholomorphic functions; see~\cite{CGK,ColomboSabadiniStruppa2011, CSS2016}. These types of functions are defined on the following kinds of sets.

\begin{definition}
	Let  $U \subseteq \mathbb{R}^{n+1}$.
	\begin{itemize}
		\item We say that $U$ is {\em axially symmetric} if $[x]\subset U$  for any $x \in U$.
		\item We say that $U$ is a {\em slice domain} if $U\cap\rr\neq\emptyset$ and if $U\cap\cc_{I}$ is a domain in $\cc_{I}$ for any $I\in\mathbb{S}$.
	\end{itemize}
\end{definition}
\begin{definition}\label{AXIAL}
	Let $U\subseteq \mathbb R^{n+1}$ be an axially symmetric domain. We set
	\begin{equation}\label{twente}
		\mathcal U:=\{ (u,v)\in\mathbb R^2: \, u+Iv\in U \quad\forall J\in\mathbb S\}.
	\end{equation}
	We say that a function $f:U\to\mathbb R_n$ is a left (resp. right) axial function (or slice function) if there exist two functions $A,\, B:\mathcal U\to\mathbb R_n$ that satisfy the  even-odd conditions
	\begin{equation}
		\label{EO}
		A(x_0,|\underline{x}|)=A(x_0,-|\underline{x}|), \qquad B(x_0,|\underline{x}|)=- B(x_0,-|\underline{x}|), \qquad \forall  (x_0,|\underline{x}|) \in \mathcal{U}.
	\end{equation}
	Moreover, the function $f$ has the following decomposition:
	\begin{equation}
		\label{axial1}
		f(x)=A(x_0,|\underline{x}|)+\underline{\omega}B(x_0,|\underline{x}|),
		\quad \left(\hbox{resp.} f(x)=A(x_0,|\underline{x}|)+B(x_0,|\underline{x}|)\underline{\omega}\right)\quad \underline{\omega}= \frac{\underline{x}}{|\underline{x}|}.
	\end{equation}
\end{definition}

\begin{remark}
In the literature the terminology axial function or slice function depend on the context they are used.
\end{remark}

\begin{definition}[Slice hyperholomorphic functions (or slice monogenic functions)]\label{sh}
	Let $U\subseteq \mathbb{R}^{n+1}$ be an axially symmetric open set and let $\mathcal U$ defined as in \eqref{twente}. A function $f:U\to \mathbb{R}^{n+1}$ is called a left (resp. right)
	slice hyperholomorphic (or slice monogenic) function, if it is slice (see Definition \ref{AXIAL}), so it can be written as
	\begin{equation}
		\label{form}
		f(x) = \alpha(u,v) + I\beta(u,v)\qquad \text{(resp. $f(x) = \alpha(u,v) + \beta(u,v)I$) \, for } x = u + I v\in U,
	\end{equation}
	and the functions $\alpha, \beta: \mathcal{U}\to \mathbb{R}_{n}$ satisfy the even-odd conditions (\ref{EO}) and the Cauchy-Riemann-equations
	\begin{equation}
		\label{CR}
\frac{\partial}{\partial u} \alpha(u,v)- \frac{\partial}{\partial v} \beta(u,v)=0, \quad \frac{\partial}{\partial v} \alpha(u,v)+ \frac{\partial}{\partial u} \beta(u,v)=0.
	\end{equation}
\end{definition}
\begin{definition}
	Let $U$ be an axially symmetric open set $\mathbb{R}^{n+1}$.
	\begin{itemize}
		\item
		We denote the sets of left
		slice hyperholomorphic functions (or left slice monogenic functions) on $U$ by $\mathcal{SH}_L(U)$
		while  right slice hyperholomorphic functions (or right slice monogenic functions) on $U$ will be denoted by $\mathcal{SH}_R(U)$. When no confusion arises, we refer to this class of functions as $\mathcal{SH}(U)$.
		
		\item
		A slice hyperholomorphic function \eqref{form} such that $\alpha$ and $\beta$ are real-valued functions is called intrinsic slice hyperholomorphic (or intrinsic slice monogenic functions) and will be denoted by $\mathcal{N}(U)$.
	\end{itemize}
	\begin{remark}
		The set of intrinsic left slice monogenic functions coincides with the set of right intrinsic slice monogenic functions, so when we mention the set $\mathcal{N}(U)$ we do not distinguish the left case from the right one.
	\end{remark}
\end{definition}
In the theory of slice monogenic functions, a crucial aspect is the representation formula,
which shows that any slice monogenic function, as in Definition \ref{sh},
on an axially symmetric set can be entirely characterized by its values on two complex planes $\mathbb{C}_I$, for $I\in \mathbb{S}$, because of the  book structure of $\mathbb{R}^{n+1}$, i.e.,
$\mathbb{R}^{n+1}=\bigcup_{I\in \mathbb{S}} \mathbb{C}_I$.

Now, we recall that also for slice hyperholomorphic functions holds a counterpart of the Cauchy integral theorem, see \cite{CGK, ColomboSabadiniStruppa2011}.
\begin{theorem}
	\label{Cif}
	Let $U \subset \mathbb{R}^{n+1}$ be an open set and $I \in \mathbb{S}$. We assume that $f$ is a left slice hyperholomorphic function and $g$ is a right slice hyperholomorphic function in $U$. Furthermore, let $D_I \subset U \cap \mathbb{C}_I$
	be an open and bounded subset of $ \mathbb{C}_I$ with $ \overline{D}_I \subset U \cap \mathbb{C}_I$
	such that $\partial D_I$ is a finite union of piecewise continuously differentiable Jordan curves. Then, for $ds_I=ds(-I) $, we have
	$$ \int_{\partial D_I}g(s)ds_I f(s)=0.$$
\end{theorem}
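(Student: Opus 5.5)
The plan is to reduce the statement to the classical Cauchy theorem in the complex plane $\mathbb{C}_I$, using the splitting lemma for slice hyperholomorphic functions.

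\textbf{Splitting.} Fix $I\in\mathbb{S}$ and choose $I_2,\dots,I_n$ so that $I,I_2,\dots,I_n$ is an orthonormal basis of imaginary units generating $\mathbb{R}_n$. By the Cauchy--Riemann system (\ref{CR}), the restriction $f_I=f|_{U\cap\mathbb{C}_I}$ is holomorphic in the sense $\tfrac12(\partial_u+I\partial_v)f_I=0$. Writing $f_I=\sum_A F_A I_A$, with $A$ ranging over multi-indices in $\{2,\dots,n\}$ and each $F_A:U\cap\mathbb{C}_I\to\mathbb{C}_I$, every $F_A$ is then an ordinary holomorphic function. This uses only that $I$ commutes with $\mathbb{C}_I$-valued functions and that the $I_A$ are constant and form a basis of $\mathbb{R}_n$ as a left $\mathbb{C}_I$-module.

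\textbf{Right case and reduction.} For the right slice hyperholomorphic $g$ we use the mirror-image splitting $g_I=\sum_B I_B G_B$, with $G_B$ holomorphic $\mathbb{C}_I$-valued. This follows from $(\partial_u+\partial_v I)g_I=0$, writing the basis elements on the left. Then
$$
\int_{\partial D_I} g(s)\,ds_I\,f(s)=\sum_{A,B} I_B\left(\int_{\partial D_I} G_B(s)\,ds_I\,F_A(s)\right) I_A .
$$
Here $ds_I=ds\,(-I)$ with $ds\in\mathbb{C}_I$, so the inner integrand $G_B\,ds\,(-I)\,F_A$ is a product in the commutative field $\mathbb{C}_I$. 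The inner integral therefore equals $-I\int_{\partial D_I}G_B(s)F_A(s)\,ds$.

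\textbf{Conclusion.} Each product $G_BF_A$ is holomorphic on an open set containing $\overline{D}_I$. The boundary $\partial D_I$ is a finite union of piecewise $C^1$ Jordan curves, so the classical Cauchy theorem (equivalently Green's theorem, since $\overline{D}_I\subset U\cap\mathbb{C}_I$) shows that each inner integral vanishes. Hence the whole sum is zero.

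\textbf{Main obstacle.} The key point is making the splitting precise, and in particular handling the noncommutativity correctly. For the left function $f$, the derivative $\partial_u+I\partial_v$ acts from the left, so the basis $I_A$ must multiply from the right. For the right function $g$ the roles are reversed. Only with this arrangement does the integrand reduce to a commutative scalar product $G_B\,ds\,F_A$ sitting between constant Clifford factors $I_B$ and $I_A$ that can be pulled outside the integral. I would verify this directly from (\ref{form}): $f(u+Iv)=\alpha+I\beta$, and expanding $\alpha,\beta$ in the basis $\{I_A, I I_A\}$ gives the $\mathbb{C}_I$-components $F_A$ explicitly.
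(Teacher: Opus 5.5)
Your proof is correct, and it is essentially the standard argument: the paper gives no proof of its own and cites the literature. There the theorem is proved exactly as you do it. One applies the splitting lemma ($f_I=\sum_A F_A I_A$, $g_I=\sum_B I_B G_B$ with holomorphic $\mathbb{C}_I$-valued components), pulls the constant units $I_B$ and $I_A$ outside the integral, and applies the classical Cauchy theorem to $G_BF_A$. Your point about placing the units on the right for $f$ and on the left for $g$ is what makes this work: it leaves an integrand $G_B\,ds\,(-I)\,F_A$ that lies entirely in the commutative field $\mathbb{C}_I$.
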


We now revisit the slice hyperholomorphic Cauchy formulas, which serve as the foundation for developing  the hyperholomorphic spectral theories on the $S$-spectrum. The following result will be required, see \cite{CGK, ColomboSabadiniStruppa2011}.
\begin{proposition}[Cauchy kernel series]
	\label{cauchyseries}
	Let $s$, $x \in \mathbb{R}^{n+1}$ such that $|x|<|s|$ then
	$$ \sum_{n=0}^{\infty}x^n s^{-1-n}=-(x^2 -2x {\rm Re} (s)+|s|^2)^{-1}(x-\overline s)$$
	and
	$$ \sum_{n=0}^{\infty}s^{-1-n}x^n=-(x-\overline s)(x^2 -2x {\rm Re} (s)+|s|^2)^{-1}.$$
\end{proposition}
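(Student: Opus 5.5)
The plan is to prove the first identity by a telescoping argument. The key input is the real quadratic relation satisfied by $s$:
$$
s^2-2{\rm Re}(s)\,s+|s|^2=0 .
$$
The second identity then follows in the same way with all multiplications done on the other side.

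First, I would show that the series converges absolutely for $|x|<|s|$. For paravectors $a,b$ the product $a\bar a=|a|^2$ is real, so
$$
|ab|^2=ab\,\overline{ab}=a\,b\bar b\,\bar a=|a|^2|b|^2 .
$$
Now $x^n$ lies in $\mathbb{C}_{I_x}$ and $s^{-1-n}$ lies in $\mathbb{C}_{I_s}$, so each of them is a paravector. Hence $|x^ns^{-1-n}|=|x|^n|s|^{-1-n}$, and the series is dominated by a convergent geometric series. Call its sum $\Sigma$.

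Next, I would multiply $\Sigma$ on the left by $\mathcal{Q}:=x^2-2x{\rm Re}(s)+|s|^2$. The scalars ${\rm Re}(s)$ and $|s|^2$ are real and commute with everything, and $\mathcal{Q}$ multiplies powers of $x$ from the left. Collecting the coefficient of $x^k$ after shifting indices gives, for $k\ge 2$,
$$
x^k\big(s^{-k+1}-2{\rm Re}(s)s^{-k}+|s|^2s^{-k-1}\big)=x^k s^{-k-1}\big(s^2-2{\rm Re}(s)s+|s|^2\big)=0 .
$$
Only the boundary terms remain:
\begin{itemize}
\item for $k=0$ the term is $|s|^2s^{-1}=\bar s$;
\item for $k=1$ the term is $x\big(-2{\rm Re}(s)s^{-1}+|s|^2s^{-2}\big)=x\,s^{-2}\big(|s|^2-2{\rm Re}(s)s\big)=x\,s^{-2}(-s^2)=-x$.
\end{itemize}
Therefore $\mathcal{Q}\,\Sigma=\bar s-x=-(x-\bar s)$.

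Finally, I would check that $\mathcal{Q}$ is invertible. It lies in $\mathbb{C}_{I_x}$, so it suffices that it is nonzero. As a quadratic in $x$ with real coefficients, its zeros in $\mathbb{C}_{I_x}$ are the points of $[s]\cap\mathbb{C}_{I_x}$, which all have modulus $|s|$. These are excluded because $|x|<|s|$. Multiplying on the left by $\mathcal{Q}^{-1}$ gives the first formula.

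For the second formula I would multiply $\sum s^{-1-n}x^n$ on the right by the same $\mathcal{Q}$ and repeat the computation verbatim. Equivalently, one can apply the reversion anti-automorphism, which fixes paravectors and reverses products, to the first identity. The only delicate point is keeping the side of multiplication consistent, so that the relation in $s$ is applied on the side where the powers of $s$ sit. I expect no real obstacle beyond this bookkeeping.
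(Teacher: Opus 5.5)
Your proof is correct. The paper does not prove this proposition; it recalls it from \cite{CGK, ColomboSabadiniStruppa2011}, and the argument there is essentially yours: multiply the series on the left by $x^2-2x\,{\rm Re}(s)+|s|^2$, use $s^2-2{\rm Re}(s)s+|s|^2=0$ (equivalently $2{\rm Re}(s)=s+\bar s$ and $|s|^2=\bar s s$) to telescope down to $\bar s-x$, and handle the right-sided series symmetrically.
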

An important fact for our considerations is that the sum of the Cauchy kernel series can be written in two equivalent ways by the following well known result.
\begin{proposition}\label{secondAA}
	Suppose that $x$ and $s\in\rr^{n+1}$ are such that $x\not\in [s]$. We have that
	\begin{equation}\label{second}
		-(x^2 -2x {\rm Re} (s)+|s|^2)^{-1}(x-\overline s)=(s-\bar x)(s^2-2{\rm
			Re}(x)s+|x|^2)^{-1},
	\end{equation}
	and
	\begin{equation}\label{third}
		(s^2-2{\rm Re}(x)s+|x|^2)^{-1}(s-\bar x)=-(x-\bar s)(x^2-2{\rm Re}(s)x+|s|^2)^{-1} .
	\end{equation}
\end{proposition}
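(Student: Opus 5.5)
The statement to prove is Proposition \ref{secondAA}: for $x\notin[s]$, the two ways of writing the sum of the Cauchy kernel series coincide, i.e. identities \eqref{second} and \eqref{third} hold. The plan is to reduce each identity to a polynomial identity by clearing the scalar-valued (in one variable) quadratic factors, and to check that identity by direct expansion.

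\textbf{Step 1: structure of the two quadratic factors.} Write $Q_x(s):=s^2-2{\rm Re}(x)s+|x|^2$ and $Q_s(x):=x^2-2{\rm Re}(s)x+|s|^2$.
\begin{itemize}
\item The element $Q_x(s)$ is a paravector lying in the plane $\mathbb{C}_{I_s}$ of $s$, because it is a real polynomial in $s$. Hence it commutes with $s$.
\item Likewise $Q_s(x)$ lies in $\mathbb{C}_{I_x}$ and commutes with $x$ and $\bar x$.
\item The hypothesis $x\notin[s]$ (equivalently $s\notin[x]$) ensures both are nonzero paravectors. Indeed, $Q_x(s)=0$ exactly when $s$ is a root of $t^2-2x_0t+|x|^2$ in $\mathbb{C}_{I_s}$, i.e. $s\in[x]$. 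So both are invertible, with inverse $\bar q/|q|^2$.
\end{itemize}

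\textbf{Step 2: reduction of \eqref{second}.} Multiply \eqref{second} on the left by $Q_s(x)$ and on the right by $Q_x(s)$. The claim becomes
\[
-(x-\bar s)\,Q_x(s)=Q_s(x)\,(s-\bar x).
\]
Both sides are polynomials in $s,\bar s,x,\bar x$ with real coefficients, and we expand them. On the left, use that $\bar s$ commutes with $s$. On the right, use $\bar x=2x_0-x$.

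The key simplifications come from $x\bar x=|x|^2$, $s\bar s=|s|^2$, $s+\bar s=2{\rm Re}(s)$ and $x+\bar x=2x_0$. Rewrite the left side as $-xs^2+2x_0xs-|x|^2x+\bar s s^2-2x_0|s|^2+|x|^2\bar s$. Then:
\begin{itemize}
\item $\bar s s^2=|s|^2 s$;
\item $x^2s-xs\cdot$ terms are matched by writing $x^2=2x_0x-|x|^2$ and $s^2=2{\rm Re}(s)s-|s|^2$.
\end{itemize}
Both sides then reduce to the same expression, linear in $x$ and $s$ with the single mixed monomial $xs$. Each side becomes a combination of $1$, $x$, $s$, $\bar x$, $\bar s$, $xs$ with scalar coefficients, and these combinations are compared term by term.

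\textbf{Step 3: identity \eqref{third}.} This follows by the same computation, with the multiplication order reversed (multiply on the left by $Q_x(s)$ and on the right by $Q_s(x)$). Alternatively, apply Clifford conjugation/reversion to \eqref{second}, which reverses products and fixes real scalars, after interchanging the roles of $x$ and $s$.

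\textbf{Main obstacle.} The only delicate point is noncommutativity in Step 2: $x$ and $s$ do not commute. The expansion must therefore always keep $x$-factors to the left of $s$-factors, and reduce quadratic terms only within a single variable, using $x^2=2x_0x-|x|^2$ and $s^2=2{\rm Re}(s)s-|s|^2$. Done this way, every term becomes of the form $\alpha+\beta x+\gamma s+\delta xs$ with real coefficients, so the comparison is straightforward.
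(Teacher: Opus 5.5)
The paper gives no proof of this proposition; it quotes it as a well-known fact from the books on slice hyperholomorphic functions. Your argument is correct and is the standard verification found in those books.

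Since $x\notin[s]$, both quadratic factors are nonzero paravectors, hence invertible. So \eqref{second} is equivalent to
\[
-(x-\bar s)\bigl(s^2-2x_0s+|x|^2\bigr)=\bigl(x^2-2{\rm Re}(s)x+|s|^2\bigr)(s-\bar x).
\]
Now use $s^2=2{\rm Re}(s)s-|s|^2$, $x^2=2x_0x-|x|^2$, $\bar s=2{\rm Re}(s)-s$ and $\bar x=2x_0-x$. After these substitutions both sides become literally
\[
2\bigl(x_0-{\rm Re}(s)\bigr)xs+\bigl(|s|^2-|x|^2\bigr)(x+s)+2\bigl({\rm Re}(s)|x|^2-x_0|s|^2\bigr).
\]
You should write this common expression out explicitly rather than just assert that both sides reduce to it. Also eliminate $\bar x$ and $\bar s$ before comparing: $1,x,s,\bar x,\bar s,xs$ are not independent ($\bar x=2x_0-x$), so ``comparing term by term'' only makes sense after that reduction.

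There is one small inaccuracy in your aside on \eqref{third}. Interchanging $x$ and $s$ in \eqref{second} already yields \eqref{third}, because the hypothesis $x\notin[s]$ is symmetric. Applying reversion alone also yields \eqref{third}. Doing both, as your sentence suggests, brings you back to \eqref{second}. Your main route for \eqref{third}, the mirrored expansion, is fine.
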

 Proposition \ref{cauchyseries} justifies the following definitions.
\begin{definition}[Slice hyperholomorphic Cauchy kernels]
	\label{Ckernel}
	Let $x$, $s\in \rr^{n+1}$ such that $x\not\in [s]$. We say that  $S_L^{-1}(s,x)$ (resp.  $S_R^{-1}(s,x)$) is written in the form I if
	$$
	S_L^{-1}(s,x):=-(x^2 -2x {\rm Re} (s)+|s|^2)^{-1}(x-\overline s), \quad \left(	\hbox{resp.} \, \,	S_R^{-1}(s,x):=-(x-\bar s)(x^2-2{\rm Re}(s)x+|s|^2)^{-1} \right).
	$$
	We say that $S_L^{-1}(s,x)$ (resp.  $S_R^{-1}(s,x)$) is written in the form II if
	$$
	S_L^{-1}(s,x):=(s-\bar x)(s^2-2{\rm Re}(x) s+|x|^2)^{-1}, \quad \left(\hbox{resp.} \, \,S_R^{-1}(s,x):=(s^2-2{\rm Re}(x)s+|x|^2)^{-1}(s-\bar x)\right).
	$$
\end{definition}
\begin{remark}
	In the following the polynomial
	\begin{equation}\label{polynomQCSX}
		\mathcal{Q}_{c,s}(x):=s^2-2{\rm Re}(x) s+|x|^2, \ \ \ s,x\in \mathbb{R}^{n+1}
	\end{equation}
	and its inverse, defined for $s,x\in \mathbb{R}^{n+1}$ with $x\not\in [s]$,
	\begin{equation}\label{QCSX}
		\mathcal{Q}_{c,s}^{-1}(x):=(s^2-2{\rm Re}(x) s+|x|^2)^{-1}
	\end{equation}
	play a crucial role in several results. The term $\mathcal{Q}_{c,s}^{-1}(x)$, that appears in the Cauchy kernel in form II  of slice monogenic functions, is often called commutative pseudo-Cauchy kernel to distinguish it from the pseudo-Cauchy kernel that is given by
	$\mathcal{Q}_{s}^{-1}(x):=(x^2-2{\rm Re}(s)x+|s|^2)^{-1}$, defined for $s,x\in \mathbb{R}^{n+1}$ with
	$x\not\in [s]$.
\end{remark}
\begin{remark}
	Thanks to Proposition \ref{secondAA}, the Cauchy kernels can be expressed in two equivalent ways from the perspective of the Clifford Algebra.
	However, this equivalence does not hold when the paravector $x$
	is replaced by a paravector operator.
	Specifically, the Cauchy kernels in form I are well-suited for paravector
	operators $T=\sum_{\ell=0}^n T_\ell e_\ell$, where the operators $T_\ell$, for $\ell=0,\cdots,n$,
	do not commute among themselves.
	In contrast, the Cauchy kernels in  form II requires
	the operators $T_\ell$, for $\ell=0,\cdots,n$, to commute among themselves.
	This commuting restriction on the operators has the advantage of allowing the resolvent operators of the fine structure to be computed explicitly in a closed form.
\end{remark}

The Cauchy formula for slice hyperholomorphic functions is a crucial tool
for defining the $S$-functional calculus.
\begin{definition}[Slice Cauchy domain]
	An axially symmetric open set $U\subset \mathbb{R}^{n+1}$ is called a slice Cauchy domain,
	if $U\cap\cc_I$ is a Cauchy domain in $\cc_I$ for any $I\in\mathbb{S}$.
	More precisely, $U$ is a slice Cauchy domain if for any $I\in\mathbb{S}$
	the boundary ${\partial( U\cap\cc_I)}$ of $U\cap\cc_I$ is the union a finite number of non-intersecting piecewise continuously differentiable Jordan curves in $\cc_{I}$.
\end{definition}
\begin{theorem}[The Cauchy formulas for slice hyperholomorphic functions, see \cite{CGK, ColomboSabadiniStruppa2011}]
	\label{Cauchy}
	Let $U\subset\mathbb{R}^{n+1}$ be a bounded slice Cauchy domain, let $I\in\mathbb{S}$ and set  $ds_I=ds (-I)$.
	If $f$ is a left (resp. right) slice hyperholomorphic function on a set that contains $\overline{U}$ then
	\begin{equation}\label{cauchynuovo}
		f(x)=\frac{1}{2 \pi}\int_{\partial (U\cap \mathbb{C}_I)} S_L^{-1}(s,x)\, ds_I\,  f(s),\quad \left(f(x)=\frac{1}{2 \pi}\int_{\partial (U\cap \mathbb{C}_I)}  f(s)\, ds_I\, S_R^{-1}(s,x)\right),
	\end{equation}
	for any $x \in U$. Moreover, the integrals in \eqref{cauchynuovo}  depend neither on $U$ nor on the imaginary unit $I\in\mathbb{S}$.
\end{theorem}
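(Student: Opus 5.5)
The plan is to reduce to the classical one-variable Cauchy formula on a single slice $\mathbb{C}_I$, and then propagate to all of $U$ by the representation formula for slice functions. I treat the left case; the right case is symmetric, with the multiplication order reversed.

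\emph{Step 1: the slice $U\cap\mathbb{C}_I$.} Fix $I\in\mathbb{S}$ and take $x\in U\cap\mathbb{C}_I$. Since $s,x\in\mathbb{C}_I$ commute, form II of the kernel collapses: $s^2-2{\rm Re}(x)s+|x|^2=(s-x)(s-\bar x)$, so $S_L^{-1}(s,x)=(s-x)^{-1}$.
\begin{itemize}
\item The restriction $f_I(u+Iv)=\alpha(u,v)+I\beta(u,v)$ satisfies $(\partial_u+I\partial_v)f_I=0$ by the Cauchy--Riemann system \eqref{CR}.
\item I would use the splitting lemma: complete $I=I_1$ to an orthonormal family $I_1,\dots,I_n$ and take the products $I_A$ that avoid $I_1$, which form a basis of $\mathbb{R}_n$ as a right $\mathbb{C}_I$-module. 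Writing $f_I=\sum_A F_A I_A$, each $F_A$ is a $\mathbb{C}_I$-valued holomorphic function.
\item The classical Cauchy formula then gives $F_A(x)=\frac{1}{2\pi I}\int_{\partial(U\cap\mathbb{C}_I)}(s-x)^{-1}\,ds\,F_A(s)$.
\item Since $\frac{1}{2\pi}ds\,(-I)=\frac{1}{2\pi I}ds$ and everything on the left of $I_A$ lies in $\mathbb{C}_I$, summing over $A$ yields \eqref{cauchynuovo} for $x\in U\cap\mathbb{C}_I$.
\end{itemize}

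\emph{Step 2: general $x\in U$.} Write $x=x_0+J|\underline{x}|$ and set $x_I=x_0+I|\underline{x}|$. Every left slice function satisfies the representation formula
$$f(x)=\tfrac12(1-JI)f(x_I)+\tfrac12(1+JI)f(\overline{x_I}).$$
This is an identity of the form $A+JB$, with $A,B$ determined by the values at $x_I$ and $\overline{x_I}$. I would check that $x\mapsto S_L^{-1}(s,x)$, for fixed $s$, is itself a left slice function in $x$; indeed in form II it is $(s-x_0)\mathcal{Q}_{c,s}^{-1}(x)+\underline{\omega}\,|\underline{x}|\,\mathcal{Q}_{c,s}^{-1}(x)$, with $\mathcal{Q}_{c,s}^{-1}(x)$ depending only on $(x_0,|\underline{x}|)$ and satisfying the even-odd conditions \eqref{EO}. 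Hence the kernel obeys the same representation formula, with the left coefficients $\tfrac12(1\mp JI)$. Multiplying the Step~1 formulas at $x_I$ and $\overline{x_I}$ on the left by these coefficients and adding, the integrand recombines into $S_L^{-1}(s,x)\,ds_I\,f(s)$, which proves the formula at $x$.

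\emph{Step 3: independence.} Independence from $U$ follows from Theorem~\ref{Cif}. The function $s\mapsto S_L^{-1}(s,x)$ is right slice hyperholomorphic for $s\notin[x]$. Given two admissible domains, their boundaries bound a region in $\mathbb{C}_I$ avoiding $[x]\cap\mathbb{C}_I$, so the difference of the two integrals vanishes. Independence from $I$ is then immediate, because the left-hand side $f(x)$ does not involve $I$.

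I expect the main obstacle to be Step~2. There one must verify carefully that the kernel, viewed as a function of $x$, satisfies the representation formula with coefficients acting on the correct (left) side. This is needed so that the right-hand factor $ds_I\,f(s)$, which does not commute with $J$, is untouched when the two slice formulas are combined. The splitting-lemma bookkeeping in Step~1 is routine by comparison.
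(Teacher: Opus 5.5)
Your proposal is correct and is essentially the standard proof in the references the paper cites (the paper itself gives no proof and simply refers to \cite{CGK, ColomboSabadiniStruppa2011}). The steps are the splitting lemma plus the classical Cauchy formula on one slice, followed by the representation formula applied both to $f$ and to the left slice function $x\mapsto S_L^{-1}(s,x)$. One small simplification: independence from $U$, like independence from $I$, is immediate because the integral equals $f(x)$. So the Cauchy-theorem argument in Step~3 is unnecessary, and as you phrase it, it would also need an intermediate domain whenever neither closure contains the other.
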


Alongside the class of slice hyperholomorphic functions, another prominent class of functions in hypercomplex analysis is that of axially monogenic functions.

\begin{definition}\label{AXIALM}
	Let $U \subseteq \mathbb{R}^{n+1}$ be an axially symmetric domain.
	A function $f:U \to \mathbb{R}^{n+1}$ of class $ \mathcal{C}^1$ is said to be left (resp. right)  axially monogenic if it is monogenic:
	\begin{equation}\label{LEFTMONFUNC}
		\! \! \! \! \!	Df(x)= \left(\frac{\partial}{\partial x_0}+ \sum_{i=1}^{n} e_i \frac{\partial}{\partial x_i} \right)f(x)=0, \quad \left(\hbox{resp.} \, \,f(x)D=\frac{\partial}{\partial x_0}f(x)+ \sum_{i=1}^{n} \frac{\partial}{\partial x_i}f(x) e_i  =0\right),
	\end{equation}
	i.e. it is in the kernel of the Dirac operator $D$, and if in addition it has a left (resp. right) axial form, see Definition \ref{AXIAL}.
	The class of left (resp. right) axially monogenic is denoted by $ \mathcal{AM}_L(U)$ (resp. $\mathcal{AM}_R(U)$). When no confusion arises, we refer to this class of functions as $ \mathcal{AM}$.
	
\end{definition}
These functions have been extensively studied in the following books \cite{red, green}. Well-known examples of axially monogenic functions are the Clifford-Appell polynomials:

\begin{equation}
	\label{ca}
	\mathcal{Q}_n^k(x)= \frac{(2 h_n)!}{k!} \sum_{k=0}^{k-2h_n} \binom{k-j-h_n}{h_n} \binom{h_n+j-1}{h_n-1}x^{k-2h_n-j} \bar{x}^j, \qquad k \geq 2h_n,
\end{equation}
where $h_n=\frac{n-1}{2}$ with $n$ being an odd number. These polynomials were introduced in \cite{CFM,CMF} and have been extensively studied in \cite{DDG, DDG1}. They have found applications in various areas, such as Schur analysis (see \cite{ACDDS}) and in connection with Fock and Hardy spaces (see \cite{AKS3}).

\begin{remark}
A Cauchy formula has also been established for the class of axially monogenic functions. This formula forms the basis of the monogenic functional calculus developed by McIntosh and collaborators; see~\cite{JBOOK, JM}.
\end{remark}

\begin{remark}
	In the literature related to axially monogenic functions for the elements of the sphere $\mathbb{S}$, the symbol $\underline{\omega}$ is used instead of $I$, which is used in the definition of slice hyperholomorphic functions. We maintain this dual notation throughout the paper to clearly distinguish between slice monogenic functions and axially monogenic functions.
\end{remark}

The connection between slice hyperholomorphic functions and axially monogenic functions is a crucial tool in hypercomplex analysis and is known as the Fueter–Sce mapping theorem. This result was originally proved in~\cite{Fueter} for quaternions and was later generalized to the Clifford setting; see~\cite{Sce} and~\cite{ColSabStrupSce} for an English translation.

\begin{theorem}[Fueter-Sce mapping theorem (also called Fueter-Sce extension theorem)]
	\label{FS1}
	Let $n$ be an odd number and set $h_n:=(n-1)/2$. We assume that $f_0(z)=\alpha(u,v)+i \beta(u,v)$ is a holomorphic function of one variable defined in a domain $\Pi$
	in the upper-half complex plane. Lets us consider
	$$ U_{\Pi}:= \{x=x_0+ \underline{x} \ \  : \ \ (x_0, | \underline{x}|) \in \Pi\},$$
	the open set induced by $\Pi$ in $ \mathbb{R}^{n+1}$. The operator $T_{FS1}$, called slice operator, and defined by
	$$
	f(x)=T_{FS1}(f_0):= \alpha(x_0, | \underline{x}|)+ \frac{\underline{x}}{| \underline{x}|} \beta(x_0, | \underline{x}|), \ \ \ x\in  U_{\Pi}
	$$
	maps the set of holomorphic functions in the set of slice hyperholomorphic functions. Furthermore,
	the operator
	$$
	T_{FS2}:=\Delta^{h_n}_{n+1}
	$$
	maps the slice hyperholomorphic functions $f(x)=T_{FS1}(f_0)$ into the set of axially monogenic function, i.e.,
	the function
	$$
	\breve{f}(x):=\Delta^{h_n}_{n+1} \left(\alpha(x_0, | \underline{x}|)
	+ \frac{\underline{x}}{| \underline{x}|} \beta(x_0, | \underline{x}|) \right), \ \ \ x\in  U_{\Pi}
	$$
	is of axial type and in the kernel of the Dirac operator.
\end{theorem}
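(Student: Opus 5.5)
The plan is to prove the two assertions separately: the first by a direct check of the definitions, the second by reducing it to the Cauchy kernel through the slice Cauchy formula (Theorem \ref{Cauchy}), so that the work is carried by the closed formulas (\ref{DELTAEMME}) and (\ref{d_cauchy_ker}) stated in the introduction.

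\emph{Step 1 (the slice operator $T_{FS1}$).} Since $\Pi$ lies in the upper half-plane, $|\underline{x}|>0$ on $U_\Pi$, so $\underline{\omega}=\underline{x}/|\underline{x}|$ is well defined and smooth. Take $x=u+Iv\in U_\Pi$ with $v>0$. Then $I=\underline{\omega}$, and $f(x)=\alpha(u,v)+I\beta(u,v)$ holds with $\alpha,\beta$ real valued. The Cauchy--Riemann system (\ref{CR}) is exactly the holomorphy of $f_0$. The even-odd conditions (\ref{EO}) are a matter of convention: the point $u-I v$ equals $u+(-I)v$ with $-I\in\mathbb{S}$, and setting $\alpha(u,-v)=\alpha(u,v)$, $\beta(u,-v)=-\beta(u,v)$ gives the same value of $f$. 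Hence $f$ is (intrinsic) left slice hyperholomorphic on $U_\Pi$, by Definition \ref{sh}.

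\emph{Step 2 (localization and Cauchy formula).} Monogenicity and axial form can be checked near each point, so fix $x^*\in U_\Pi$. Choose a small closed disc $\overline{B}\subset\Pi$ around $(x^*_0,|\underline{x}^*|)$ and let $W$ be the axially symmetric set it generates. For every $I\in\mathbb{S}$, the slice $W\cap\mathbb{C}_I$ is the union of two disjoint discs, symmetric with respect to the real axis. Therefore $W$ is a bounded slice Cauchy domain whose closure lies in $U_\Pi$, and Theorem \ref{Cauchy} gives
$$
f(x)=\frac{1}{2\pi}\int_{\partial(W\cap\mathbb{C}_I)}S^{-1}_L(s,x)\,ds_I\,f(s),\qquad x\in W .
$$
For $x$ in a compact subset of $W$, the kernel in form II is smooth in $x$ uniformly in $s$ on the boundary. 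So we may differentiate under the integral sign and obtain
$$
\Delta^{h_n}_{n+1}f(x)=\frac{1}{2\pi}\int_{\partial(W\cap\mathbb{C}_I)}4^{h_n}h_n!\,(-h_n)_{h_n}\,(s-\bar x)\mathcal{Q}_{c,s}^{-h_n-1}(x)\,ds_I\,f(s),
$$
where the integrand was computed by (\ref{DELTAEMME}) with $m=h_n$.

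\emph{Step 3 (monogenicity and axial form).} Apply $D$ inside the integral and use (\ref{d_cauchy_ker}) with $m=h_n+1$. The coefficient becomes $-2(h_n-(h_n+1)+1)=0$, so $D\,\Delta^{h_n}_{n+1}f=0$ on $W$, and hence on all of $U_\Pi$. For the axial form, note that $\mathcal{Q}_{c,s}(x)=s^2-2x_0s+x_0^2+|\underline{x}|^2$ depends on $x$ only through $(x_0,|\underline{x}|)$. Writing $s-\bar x=(s-x_0)+\underline{\omega}|\underline{x}|$, the kernel takes the form $A(x_0,|\underline{x}|;s)+\underline{\omega}B(x_0,|\underline{x}|;s)$, with $A$ even and $B$ odd in $|\underline{x}|$. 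Multiplication on the right by $ds_I f(s)$ and integration in $s$ preserve this structure, because $\underline{\omega}$ stays on the left. Hence $\breve f$ is of left axial type.

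The main obstacle I expect is Step 2. One must make sure that Theorem \ref{Cauchy} really applies to the localized domain $W$, which does not meet the real axis. One must also keep the non-commutativity straight: $s$ and $\bar x$ do not commute, but $\mathcal{Q}_{c,s}(x)$ is a polynomial in $s$ with real coefficients, so the form II kernel and the formulas (\ref{DELTAEMME}), (\ref{d_cauchy_ker}) are used in the same order of factors throughout. If the Cauchy formula on $W$ caused difficulty, the fallback is a direct computation in the variables $(x_0,r)$, $r=|\underline{x}|$. It uses $\Delta_{n+1}(A+\underline{\omega}B)=\big(A_{00}+A_{rr}+\tfrac{n-1}{r}A_r\big)+\underline{\omega}\big(B_{00}+B_{rr}+\tfrac{n-1}{r}B_r-\tfrac{n-1}{r^2}B\big)$ together with $D(A+\underline{\omega}B)=\big(A_0-B_r-\tfrac{n-1}{r}B\big)+\underline{\omega}(B_0+A_r)$. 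One then proves by induction that $\Delta^k f$ is a constant multiple of $\big(\tfrac{1}{r}\partial_r\big)^k\alpha+\underline{\omega}\,\big(\partial_r\tfrac{1}{r}\big)^k\beta$, and checks that the Dirac operator annihilates this expression when $k=h_n$.
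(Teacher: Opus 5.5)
Your proof is correct. Note, however, that the paper gives no proof of this statement. It is recalled from Fueter, Sce and the English translation of Sce's work, where the argument is the direct computation in the axial variables $(x_0,r)$, $r=|\underline{x}|$, that you list as your fallback. In that computation one uses the Cauchy--Riemann equations for $(\alpha,\beta)$ to show that $\Delta^k_{n+1}(\alpha+\underline{\omega}\beta)$ is a constant multiple of $(\frac1r\partial_r)^k\alpha+\underline{\omega}(\partial_r\frac1r)^k\beta$. One then checks that for $k=h_n$ this pair solves the Vekua-type system $A_{x_0}-B_r=\frac{n-1}{r}B$, $B_{x_0}+A_r=0$, which is exactly $D(A+\underline{\omega}B)=0$.

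Your main route is instead the integral form of the theorem. You localize to a bounded slice Cauchy domain, differentiate the slice Cauchy formula under the integral sign, and reduce everything to the monogenicity in $x$ of the single kernel $(s-\bar x)\mathcal{Q}_{c,s}^{-h_n-1}(x)$. What this buys:
\begin{itemize}
\item it works verbatim for every left slice hyperholomorphic $f$, including $\mathbb{R}_n$-valued $\alpha,\beta$;
\item it gives the axial form directly from the shape of the kernel;
\item it is exactly the mechanism the paper later uses for the kernels $D^\beta\Delta^m_{n+1}S^{-1}_L(s,x)$ in Theorems \ref{Dinte} and \ref{Dbarinte}.
\end{itemize}
The classical route instead yields an explicit pointwise formula for $\breve f$ in terms of $\alpha,\beta$, and needs neither the Cauchy formula nor localization.

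Three small remarks.

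First, your worry in Step 2 is unfounded. Theorem \ref{Cauchy} is stated for arbitrary bounded slice Cauchy domains, and with Definition \ref{sh} the representation formula is built into slice hyperholomorphy. So $W$ need not meet the real axis.

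Second, you use \eqref{DELTAEMME} and \eqref{d_cauchy_ker} only as citations. To be sure nothing rests on the Fueter--Sce theorem itself, check them directly; this is immediate. From $\partial_{x_0}\mathcal{Q}^{-m}_{c,s}(x)=2m(s-x_0)\mathcal{Q}^{-m-1}_{c,s}(x)$ and $\partial_{x_i}\mathcal{Q}^{-m}_{c,s}(x)=-2mx_i\mathcal{Q}^{-m-1}_{c,s}(x)$ you get
\[
D\big((s-\bar x)\mathcal{Q}^{-m}_{c,s}(x)\big)=-(n+1)\mathcal{Q}^{-m}_{c,s}(x)+2m\big[(s-\bar x)(s-x_0)-\underline{x}(s-\bar x)\big]\mathcal{Q}^{-m-1}_{c,s}(x)=(2m-n-1)\mathcal{Q}^{-m}_{c,s}(x),
\]
because the bracket equals $(s-x_0)^2+|\underline{x}|^2=\mathcal{Q}_{c,s}(x)$. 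Combined with $\overline{D}\mathcal{Q}^{-m}_{c,s}(x)=2m(s-\bar x)\mathcal{Q}^{-m-1}_{c,s}(x)$ and $\Delta_{n+1}=\overline{D}D$, this gives \eqref{DELTAEMME} by induction. Taking $m=h_n+1$ gives the vanishing you need.

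Third, when gluing the local axial decompositions obtained on the sets $W$, note that $A(x_0,r)$ and $\underline{\omega}B(x_0,r)$ are the even and odd parts of $\breve f$ under $\underline{\omega}\mapsto-\underline{\omega}$ on each sphere. They are therefore uniquely determined and agree on overlaps.
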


\begin{remark}
In the Fueter–Sce mapping theorem, the assumption that the holomorphic function is defined on the upper half of the complex plane can be removed, provided the even–odd conditions~\eqref{EO} are satisfied.
\end{remark}

\section{The Dirac fine structures}\label{DIRAC-FINE_STRU}

 Poly-Analytic and Poly-Harmonic functions are  well known classes of functions, that have been extensively studied in \cite{Aro} and \cite{Balk}, respectively. In particular, Poly-Analytic functions have applications in time-frequency analysis \cite{A1, FISH}, elasticity theory \cite{Russ}, operator theory \cite{ACDS}, in the study of duality theorems \cite{CDKSS}, in
 \cite{Begehr2} the authors study boundary value problems for the inhomogeneous Poly-Analytic equation.

We now recall  Poly-Analytic and Poly-Harmonic functions in the Clifford algebra setting.
When considering Clifford valued functions of axial type, see (\ref{axial1}),
these functions are known as axially Poly-Analytic and axially  Poly-Harmonic, respectively.
\begin{remark}\label{REMAXIAL}
Clearly, if we do not assume that the functions are of  axial type as in  (\ref{axial1})
then Poly-Analytic and  Poly-Harmonic functions Clifford values are more general.
In this paper we will always consider axial functions as in (\ref{axial1}) because we work in the framework of the Fueter-Sce mapping theorem and slice hyperholomorphic functions which are of axial type.
\end{remark}
\begin{definition}[Axially Poly-Harmonic functions $\mathcal{APH}_m(U)$]
	\label{polyharm}
	 We assume that $U$ is an open set in $\mathbb{R}^{n+1}$ and let $m \geq 2$ be an integer. A function $f: U \subset \mathbb{R}^{n+1} \to \mathbb{R}_n$ of class $ \mathcal{C}^{2m}$ is left (resp. right)  axially Poly-Harmonic  (of order $m$) if it is of axial type and if
	$$ \Delta_{n+1}^m f(x)=0, \qquad \forall x \in U.$$
	This class of functions is denoted by $ \mathcal{APH}_m^L(U)$ (resp. $ \mathcal{APH}_m^R(U)$). If no confusion arises, we denote this class of functions by $\mathcal{APH}_m(U)$ or by $\mathcal{APH}_m$ without mentioning the open set.	
\end{definition}
\begin{definition}[Axially Poly-Analytic functions $\mathcal{APA}_\beta(U)$]
	\label{axpoly}
	Let $U \subset \mathbb{R}^{n+1}$ be an open set and let $\beta \geq 2$  be an integer. Let $f:U \to \mathbb{R}_n$ be a function of class $\mathcal{C}^\beta(U)$ and of axial type. We say that $f$ is left (resp. right) axially Poly-Analytic (of order $\beta$) on $U$ if
	$$D^\beta f(x)=0, \quad \forall x \in U, \quad \left( \hbox{resp.} \, \, f(x)D^\beta=0, \quad \forall x \in U\right).$$
	We denote this class of functions by $ \mathcal{APA}_\beta^L(U)$ (resp. $ \mathcal{APA}_\beta^R(U)$). If no confusion arises, we denote this class of functions by $\mathcal{APA}_\beta(U)$ or by $\mathcal{APA}_\beta$ without mentioning the open set.	
\end{definition}
The above functions have been first studied in \cite{B1976} and in the literature are also called poly-monogenic functions.
Now we introduce different sets of functions that include axially Poly-Harmonic
 and axially Poly-Analytic functions.
This larger class of functions, that include $ \mathcal{APH}_m(U) $ and $\mathcal{APA}_\beta(U)$, are called:
\begin{itemize}
\item Axially Analytic-Harmonic functions $\mathcal{AAH}_{\beta,m}(U)$ of type $(\beta,m)$
 and belong to the kernel of the Dirac-Laplace operator $D^\beta\Delta^m_{n+1}$ of type $(\beta,m)$, for $\beta$, $m\in \mathbb{N}_0$.
\item Axially Anti-Analytic-Harmonic  $\overline{\mathcal{AAH}_{\beta,m}}(U)$ of type $(\beta,m)$  and belong to in the kernel of the conjugate Dirac-Laplace operator $\overline{D}^\beta\Delta^m_{n+1}$ of type $(\beta,m)$, for $\beta$, $m\in \mathbb{N}_0$.
\end{itemize}
All these classes of functions are framed within a unified theory, as their definitions naturally arise from the Fueter-Sce mapping theorem. This provides a coherent and structured way of introducing all these classes of functions and their precise definitions are as follows.
\begin{definition}[Axially Analytic-Harmonic functions $\mathcal{AAH}_{\beta,m}(U)$ of type $(\beta,m)$]
	\label{axpolC}
	Let $U$ be an open set in $\mathbb{R}^{n+1}$ and let $\beta$, $m\geq 0$  be integers. A function $f: \mathbb{R}^{n+1} \to \mathbb{R}_n$ of class $ \mathcal{C}^{2m+\beta}(U)$ is said to be left (resp. right) axially Analytic-Harmonic of type $(\beta, m)$ if it is of axial type and if we have
	$$ D^\beta \Delta_{n+1}^m f(x)=0, \qquad \forall x \in U, \quad \left(f(x)\Delta_{n+1}^m D^\beta=0, \quad \forall x \in U \right).$$
	We denote this class of functions by $ \mathcal{AAH}_{\beta,m}^L(U)$ (resp. $ \mathcal{AAH}_{\beta,m}^R(U)$). Whenever it is clear from the context, we use the notation $ \mathcal{AAH}_{\beta,m}(U)$  or by $ \mathcal{AAH}_{\beta,m}$  to represent this class of functions.	
\end{definition}
\begin{definition}[Axially Anti-Analytic-Harmonic  $\overline{\mathcal{AAH}_{\beta,m}}(U)$ of type $(\beta,m)$]
	\label{antiax}
Let $U$ be an open set in $\mathbb{R}^{n+1}$ and let $\beta$, $m\geq 0$  be integers. A function $f: \mathbb{R}^{n+1} \to \mathbb{R}_n$ of class $ \mathcal{C}^{2m+\beta}(U)$ is said to be left (resp. right) axially Anti-Analytic-Harmonic of type $(\beta, m)$ if it is of axial type and if we have
	$$ \overline{D}^\beta \Delta_{n+1}^m f(x)=0, \qquad \forall x \in U, \quad \left(f(x)\Delta_{n+1}^m \overline{D}^\beta=0, \quad \forall x \in U \right).$$
	We denote this class of functions by $ \overline{\mathcal{AAH}}_{\beta,m}^L(U)$ (resp. $ \overline{\mathcal{AAH}}_{\beta,m}^R(U)$). Whenever it is clear from the context, we use the notation $\overline{\mathcal{AAH}}_{\beta,m}(U)$ or by $\overline{\mathcal{AAH}}_{\beta,m}$ to represent this class of functions.
	\end{definition}
\begin{remark}
Clearly, also in the general case if we do not assume that the functions are of  axial type we can define the more general class of Analytic-Harmonic functions of type $(\beta,m)$ but in this paper we stay within the axial type functions for the reason explained in Remark \ref{REMAXIAL}.
\end{remark}
\begin{remark}\label{rem_func_speces}
Axially Analytic-Harmonic functions $\mathcal{AAH}_{\beta,m}(U)$ of type $(\beta,m)$ include most of the well-known classes of functions with values in a Clifford algebra. In those specific cases, the spaces have a particular symbol used to denote them, which was employed in the literature before this unified approach was introduced.
 In fact, for every $x\in U$, we have:
 \begin{enumerate}
 \item For $\beta=1$ and $m=0$ we get the $Df(x)=0$, so $\mathcal{AAH}_{1,0}(U)$ are
 the axially monogenic functions $\mathcal{AM}(U)$;
 \item  For $\beta=0$ and $m=1$ we get $\Delta_{n+1} f(x)=0$ that gives axially Harmonic functions $\mathcal{AAH}_{0,1}(U)$ (often denoted by $\mathcal{AH}(U)$);
 \item
 For $\beta=0$ and $m\geq 2$ we get $\Delta_{n+1}^m f(x)=0$ that gives axially Poly-Harmonic functions Clifford valued $\mathcal{AHC}_{0,m}(U)$ (often denoted by $\mathcal{APH}_m(U)$);
 \item
 For $\beta\geq 2$ and $m=0$ we get $D^\beta f(x)=0$ that gives axially Poly-Analytic functions Clifford valued $\mathcal{AAH}_{\beta,0}(U)$ (often denoted by $\mathcal{APA}_\beta(U)$);
 \item
 For $\beta\geq 2$ and $m\geq 2$ we get $D^\beta \Delta_{n+1}^mf(x)=0$ that gives axially poly analytic-harmonic functions Clifford valued;
 \item
 The notion of  holomorphic Cliffordian functions that exists in the literature and are those
suitably regular Clifford valued functions that are in the kernel
 of the operator $D \Delta_{n+1}^\frac{n-1}{2}$.
\end{enumerate}
Similar notation holds for axially Anti-Analytic-Harmonic  $\overline{\mathcal{AAH}_{\beta,m}}(U)$ of type $(\beta,m)$.
\end{remark}
The function spaces described above appear in various works in the literature; see, for example, \cite{L, LL, LR}. In \cite{Fivedim}, the authors established a connection between these spaces in dimension five. However, the relationship in general dimensions remained unexplored. In this paper, we establish a connection between all the function spaces introduced above within the context
of the Dirac fine structures on the $S$-spectrum.

\medskip
Now suppose that $n$ is an odd number and $h_n := \frac{n-1}{2}$ is the Sce exponent.
In order to give the crucial definition of Dirac fine structures we recall that
all possible factorizations of the  differential operator $ \Delta_{n+1}^{h_n}$,
in terms of the Dirac operator $D$, its conjugate $\overline{D}$ and $\Delta_{n+1}$ are given by the operators
	\begin{equation}
		\label{T1}
		 D^\beta \Delta_{n+1}^m, \quad \beta, m \in \mathbb{N}, \quad \beta+m \leq h_n
	\end{equation}
	 and
	\begin{equation}
		\label{T2}
		 \overline{D}^\beta \Delta_{n+1}^m, \quad \beta, m \in \mathbb{N}, \quad \beta+m \leq h_n,
	\end{equation}
so that by the Fueter-Sce mapping theorem we have
$$
D\Delta_{n+1}^{h_n}f(x)=0 \ \ \ {\rm for \ all }\ f\in \mathcal{SH}_L(U)\ \ \ \  ({\rm resp.} \ \
f(x)D\Delta_{n+1}^{h_n}=0\ \ \ {\rm for \ all }\ f\in \mathcal{SH}_R(U)).
$$
 \begin{definition}[Dirac fine structures on the $S$-spectrum]
	Let $n$ be an odd number and denote by $h_n := \frac{n-1}{2}$ the Sce exponent.
The Dirac fine structures on the $S$-spectrum consist of:
\begin{itemize}
\item All axial functions in the kernel of the operators (\ref{T1}) and (\ref{T2}).
%that factorize $\Delta_{n+1}^{h_n}$ associated with the
Fueter-Sce mapping theorem.

\item Their associated functional calculi defined via their integral representations
based on the Cauchy formula for slice hyperholomorphic functions.
\end{itemize}
\end{definition}
\begin{definition}
Let  $h_n := \frac{n-1}{2}$ be the Sce exponent and let  $\beta, m \in \mathbb{N}$ with
$\beta+m \leq h_n$.
Fine structures arising from the factorization of $ \Delta_{n+1}^{h_n}$ with the operators
 defined in (\ref{T1}) are call $D$-fine structure.
 Similarly the factorizations obtained via the operators defined in (\ref{T2})
are called the $\overline{D}$-fine structure.
\end{definition}
In the following result we study the action of the operators (\ref{T1}) and (\ref{T2}) when applied to a function of axial type, see Definition \ref{AXIAL}.

\begin{remark}
Because of the Fueter-Sce mapping theorem in the definition of the Dirac fine structures when we factorize the operator $\Delta_{n+1}^{h_n}$, the functions spaces of the fine structures  always involves the space of axially monogenic functions $\mathcal{AM}$, as illustrated in diagrams \ref{PPPP} or \ref{GGGGG}.

However, if we consider factorizations of the operator $D\Delta_{n+1}^{h_n}$, the Dirac fine structure does not necessarily terminate with the space $\mathcal{AM}$. Instead, it may end with axially Analytic-Harmonic functions $\mathcal{AAH}_{\beta,m}$ of type $(\beta,m)$ or $\overline{\mathcal{AAH}}_{\beta,m}$.

The fine structures that do not conclude with axially monogenic functions are significant because they involve the spaces of Poly-Analytic functions as the final function space such as in (\eqref{MMM13} and \eqref{MMM12}). These structures do not include the $F$-functional calculus (a version of the monogenic functional calculus of McIntosh and collaborators), but they establish for example a crucial connection with the monogenic Poly-Analytic functional calculus based on the monogenic spectrum, in the spirit of McIntosh's calculus.

\end{remark}

\begin{proposition}
	\label{axial}
	Let $U \subseteq \mathbb{R}^{n+1}$ be an axially symmetric slice domain and $h_n:=\frac{n-1}{2}$, with $n$ being odd. We assume that $f$ is a left (resp. right) function of axial type on $U$. Then, for any $m \in \mathbb{N}$ and $\beta$ such that $\beta+m\leq h_n$, we have that $\overline{D}^\beta \Delta_{n+1}^mf(x)$ (resp. $f(x)\overline{D}^\beta \Delta_{n+1}^m$) and $D^\beta \Delta_{n+1}^mf(x)$ (resp. $f(x)D^\beta \Delta_{n+1}^m$) are of left (resp. right) axial type on $U$.
\end{proposition}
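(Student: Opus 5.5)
The plan is to reduce everything to two elementary facts: $\Delta_{n+1}$ maps axial functions to axial functions, and so do $D$ and $\overline{D}$. The statement then follows by iterating, since $D^\beta\Delta^m_{n+1}$ and $\overline{D}^\beta\Delta^m_{n+1}$ are compositions of these operators. We work on the open set where $\underline{x}\neq 0$. Near the real axis we argue by the even-odd conditions \eqref{EO}: these guarantee that $A$ and $\underline{\omega}B$ extend smoothly, so the identities below persist by continuity.

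\textbf{Action of $D$ on an axial function.} Write $f(x)=A(x_0,r)+\underline{\omega}B(x_0,r)$ with $r=|\underline{x}|$ and $\underline{\omega}=\underline{x}/r$. We use the standard identities
$$\underline{\partial}\,\varphi(r)=\underline{\omega}\,\partial_r\varphi, \qquad \underline{\partial}\big(\underline{\omega}\,\psi(r)\big)=-\partial_r\psi-\frac{n-1}{r}\psi,$$
where $\underline{\partial}=\sum_{i=1}^n e_i\partial_{x_i}$. These follow from $\underline{\partial}\,\underline{x}=-n$ and $\underline{\partial}\, r=\underline{\omega}$. Since $A$ and $B$ are Clifford valued, multiplied from the left by $e_i$, these identities apply directly in the left case. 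We obtain
$$Df=\Big(\partial_{x_0}A-\partial_rB-\frac{n-1}{r}B\Big)+\underline{\omega}\big(\partial_{x_0}B+\partial_rA\big).$$
Set $\tilde A=\partial_{x_0}A-\partial_rB-\frac{n-1}{r}B$ and $\tilde B=\partial_{x_0}B+\partial_rA$. If $A$ is even and $B$ is odd in $r$, then $\partial_rB$ and $B/r$ are even while $\partial_rA$ is odd. Hence $\tilde A$ is even and $\tilde B$ is odd, so $Df$ is of axial type. The same computation with a sign change in front of $\underline{\partial}$ handles $\overline{D}$.

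\textbf{Action of $\Delta_{n+1}$.} Here we use the factorization $\Delta_{n+1}=D\overline{D}$, which reduces this case to the previous step. Alternatively, one can compute directly:
$$\Delta_{n+1}\varphi(x_0,r)=\partial_{x_0}^2\varphi+\partial_r^2\varphi+\frac{n-1}{r}\partial_r\varphi,$$
$$\Delta_{n+1}\big(\underline{\omega}\psi\big)=\underline{\omega}\Big(\partial_{x_0}^2\psi+\partial_r^2\psi+\frac{n-1}{r}\partial_r\psi-\frac{n-1}{r^2}\psi\Big).$$
Both expressions visibly preserve the even-odd parity.

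\textbf{Right case and the main obstacle.} In the right case $f=A+B\underline{\omega}$, and the operators act from the right. The computation is identical, using $\varphi(r)\underline{\partial}=\partial_r\varphi\,\underline{\omega}$ and the analogous formula for $(\psi\underline{\omega})\underline{\partial}$.

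The main subtlety is regularity at points of $U\cap\mathbb{R}$. The coefficient $\tilde A$ contains $B/r$, and the ansatz is only meaningful where $r>0$. One must check that the resulting pair is again smooth and satisfies \eqref{EO}. This holds because $B$ is odd and smooth, so $B/r$ extends smoothly as an even function of $r$. Alternatively, since $D^\beta\Delta^m_{n+1}f$ is continuous on $U$, it suffices to establish the axial form off the real axis and extend.

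Finally, iterate: apply $\Delta_{n+1}$ $m$ times, then $D$ or $\overline{D}$ $\beta$ times, each step preserving axial type. This requires $f\in\mathcal{C}^{2m+\beta}$, which is implicit in the statement. The hypothesis $\beta+m\le h_n$ plays no role in the argument.
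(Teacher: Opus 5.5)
Your argument is correct, and its core is the same as the paper's. The paper also proves axiality of $\overline{D}^\beta g$ by induction on $\beta$, using exactly your formula for $\overline{D}$ acting on $\gamma+\underline{\omega}\delta$ in the coordinates $(x_0,|\underline{x}|)$, and it says $D$ is handled ``similarly''.

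Where you differ is the Laplacian step. The paper does not show that a single $\Delta_{n+1}$ preserves axial type. Instead it quotes from the literature the closed formula $\Delta^m_{n+1}f=2^m(h_n-m+1)_m\big[(\frac{1}{r}\partial_r)^mA+\underline{\omega}(\partial_r\frac{1}{r})^mB\big]$ (written in your notation). That formula presupposes that $A+iB$ is holomorphic in $(x_0,r)$. Already for $m=1$ it omits the terms $(\partial_{x_0}^2+\partial_r^2)A$ and $(\partial_{x_0}^2+\partial_r^2)B$; for instance it fails for $f=x_0^2$, where it gives $0$ instead of $2$. So, strictly, the paper's proof covers only slice hyperholomorphic $f$, which is the only case the paper uses later.

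Your route computes $\Delta_{n+1}$ on an arbitrary axial function (or writes $\Delta_{n+1}=D\overline{D}$) and then iterates. It therefore proves the proposition in the generality in which it is stated, for any axial $f\in\mathcal{C}^{2m+\beta}(U)$. It also checks two points the paper leaves implicit: the even--odd conditions, and the behaviour of the $\frac{n-1}{r}B$ term at the real axis.

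The paper's route additionally gives the explicit form of $\Delta^m_{n+1}f$ for slice hyperholomorphic $f$, but that is not needed for the axiality claim. As you note, the constraint $\beta+m\le h_n$ plays no role in this statement.
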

\begin{proof}
	We focus only on the case where the function $f$ is left axial, as the right axial case follows from similar arguments. By hypothesis we can write $f(x)=\alpha(u,v)+\underline{\omega} \beta(u,v)$, for $x=u+\underline{\omega}v$. By \cite[thm. 11.33]{GHS} and \cite{CDP25} we know that
	$$ \Delta^m_{n+1}f(x)=2^m(h_n-m+1)_m \left[\left(\frac{1}{v} \frac{\partial}{\partial v}\right)^m\alpha(u,v)+\underline{\omega} \left(\frac{\partial}{\partial v} \frac{1}{v}\right)^m \beta(u,v)\right],$$
	where $(h_n-m+1)_m$ is the Pochhammer symbol. This means that $g(x):=\Delta^m_{n+1}f(x)$ is of axial type, i.e. $g(x)=\gamma(u,v)+\underline{\omega}\delta(u,v)$, for $x=u+\underline{\omega}v$. So, we have just to prove that $\overline{D}^\beta g$ is of axial type. We prove this by induction on $\beta$. If $\beta=1$, by \cite{D}, for $x=u+I\underline{\omega}v$ we can write $\overline{D}f$ as
	\begin{equation}
		\label{dbar}
		\overline{D} f(x)=  \left( \frac{\partial}{\partial u}\gamma(u,v)+ \frac{\partial}{\partial v} \delta(u,v)+ \frac{2h_n}{v} \delta(u,v)\right)+ \underline{\omega} \left(\frac{\partial}{\partial u}\delta(u,v)-\frac{\partial}{\partial v} \gamma(u,v)\right).
	\end{equation}
	This means that $\overline{D}g$ is of axial type. If we suppose that the statement is true for $\beta$ we want to prove for $\beta+1$. By the inductive hypothesis we know that $\overline{D}^\beta g$ is of axial, thus by \eqref{dbar} we get that also $\overline{D}^{\beta+1}g$ is of axial type. This proves that $\overline{D}^\beta \Delta_{n+1}^mf$ is of axial type. Similarly can be proved that $D^\beta \Delta_{n+1}^mf$ is of axial type as well.
\end{proof}

The application of the differential operators $D^\beta \Delta_{n+1}^m$ and $\overline{D}^\beta \Delta_{n+1}^m$ to a slice hyperholomorphic function $f$
yields functions of axial type that possess the regularity introduced in Definitions \ref{antiax} and \ref{axpolC},
as explained in the following two results.

\begin{theorem}
	\label{reg0}
	Let $n$ be an odd number and set $h_n= \frac{n-1}{2}$. Assume that $\beta$, $m \in \mathbb{N}$ are such that $ \beta+m\leq h_n$, then the action of the operator $D^\beta \Delta_{n+1}^m$, on a left (resp. right) slice hyperholomorphic function $f$ defined on an axially symmetric open set $U \subseteq \mathbb{R}^{n+1}$ lead to:
	\begin{itemize}
		\item[1)] a left (resp. right) axially poly-harmonic function $\mathcal{APH}_{h_n-m}(U)$ of order $h_n-m$ if $\beta=1$,
		\item[2)] a left (resp. right) axially Anti-Analytic-Harmonic functions $\overline{\mathcal{AAH}}_{\beta-1, h_n-m-\beta+1}(U)$  of type $(\beta-1, h_n-m-\beta+1)$.
	\end{itemize}
\end{theorem}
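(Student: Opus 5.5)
The argument is algebraic: we exploit the factorization $D\overline{D}=\overline{D}D=\Delta_{n+1}$ together with the Fueter-Sce mapping theorem (Theorem \ref{FS1}), which gives $D\Delta_{n+1}^{h_n}f=0$ for every $f\in\mathcal{SH}_L(U)$. We treat the left case only; the right case is identical, with the operators acting from the right. Set $g:=D^\beta\Delta_{n+1}^m f$. Since $f$ is slice hyperholomorphic, it is real analytic on $U$ (for instance by the Cauchy formula, Theorem \ref{Cauchy}, locally). Hence $g$ is $\mathcal{C}^\infty$, and in particular it has the regularity $\mathcal{C}^{2m'+\beta'}$ required in Definitions \ref{polyharm} and \ref{antiax}. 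Moreover, $g$ is of left axial type by Proposition \ref{axial}: $f$ is of axial type by Definition \ref{sh}, and $\beta+m\le h_n$ is assumed. It therefore only remains to check that $g$ lies in the kernel of the relevant operator.

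For case 1), take $\beta=1$, so $g=D\Delta_{n+1}^m f$. We compute $\Delta_{n+1}^{h_n-m}g$. All the operators involved have constant coefficients and therefore commute. Hence
$$\Delta_{n+1}^{h_n-m}g=D\Delta_{n+1}^{h_n}f=0$$
by Theorem \ref{FS1}. So $g\in\mathcal{APH}_{h_n-m}(U)$. If $h_n-m=1$, $g$ is axially harmonic, the order-one instance of the same statement.

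For case 2), with general $\beta\ge1$ we apply $\overline{D}^{\beta-1}\Delta_{n+1}^{h_n-m-\beta+1}$ to $g$. Writing $D^\beta=D\,D^{\beta-1}$ and using commutativity, we get
$$\overline{D}^{\beta-1}\Delta_{n+1}^{h_n-m-\beta+1}D^{\beta}\Delta^m_{n+1}f
= D\,(\overline{D}D)^{\beta-1}\Delta_{n+1}^{h_n-m-\beta+1+m}f
= D\,\Delta_{n+1}^{\beta-1}\Delta_{n+1}^{h_n-\beta+1}f=D\Delta_{n+1}^{h_n}f=0 .$$
Thus $g\in\overline{\mathcal{AAH}}_{\beta-1,h_n-m-\beta+1}(U)$. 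The exponent $h_n-m-\beta+1\ge1$ precisely because $\beta+m\le h_n$. For $\beta=1$ this reduces to case 1).

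The only point needing care is the commutation of $D$, $\overline{D}$ and $\Delta_{n+1}$ as operators on Clifford-valued functions. All three act by left multiplication by constant paravectors composed with partial derivatives. The Laplacian is scalar, and $D\overline{D}=\overline{D}D=\Delta_{n+1}$ follows from $e_ie_j+e_je_i=-2\delta_{ij}$. Any two of these operators therefore commute on smooth functions, which justifies the regrouping. Aside from this and the smoothness remark, the proof is bookkeeping of exponents.
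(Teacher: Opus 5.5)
Your proof is correct and essentially coincides with the paper's. The paper also sets $g=D^\beta\Delta_{n+1}^m f$ and gets axiality from Proposition \ref{axial}. It then shows $\overline{D}^{h_n-m}D^{h_n-\beta-m+1}g=\overline{D}^{h_n}D^{h_n+1}f=D\Delta_{n+1}^{h_n}f=0$ by Fueter--Sce, and regroups this operator as $\Delta_{n+1}^{h_n-m}$ when $\beta=1$, or as $\overline{D}^{\beta-1}\Delta_{n+1}^{h_n-m-\beta+1}$ in general, which is your computation read in the opposite direction. Your explicit remarks on the smoothness of $g$ and on the commutation of $D$, $\overline{D}$ and $\Delta_{n+1}$ fill in points the paper leaves implicit.
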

\begin{proof}
We concentrate on the case where the function $f $ is left slice hyperholomorphic; the right slice hyperholomorphic case can be handled using similar arguments.

	We set $g(x):=D^\beta \Delta_{n+1}^m f(x)$ and we observe that the function $g$ is of axial type by Proposition \ref{axial}.
Moreover, this function belongs to the kernel of the operator
	\begin{equation}
		\label{oper}
		\overline{D}^{h_n-m}D^{h_n-\beta-m+1}.
	\end{equation}
	Indeed by the Fueter-Sce mapping theorem (see Theorem \ref{FS1}) we have
	$$
	\overline{D}^{h_n-m}D^{h_n-\beta-m+1} g(x)= \overline{D}^{h_n}D^{h_n+1}f(x)=D \Delta_{n+1}^{h_n}f(x)=0.
	$$
	We are now in a position to prove all the points outlined in the statement.

	\begin{itemize}
		\item[1)] If $\beta=1$ the operator in \eqref{oper} becomes $ \Delta_{n+1}^{h_n-m}$. Thus the function $g$ is poly-harmonic of order $h_n-m$, see Definition \ref{polyharm}.
		\item[2)] We observe that
		$$
		\overline{D}^{h_n-m}D^{h_n-\beta-m+1}= \overline{D}^{h_n - m - \beta + 1 + \beta - 1} D^{h_n - \beta - m + 1} = \overline{D}^{\beta - 1} \Delta_{n+1}^{h_n - \beta - m + 1}.
		$$
		Therefore, the function $ g $ is axially Anti-Analytic-Harmonic functions of type $(\beta - 1,\, h_n - m - \beta + 1) $; see Definition~\ref{antiax}.
		
	\end{itemize}
\end{proof}

\begin{theorem}
	\label{reg}
	Let $n$ be an odd number and set $h_n= \frac{n-1}{2}$. Assume that  $\beta$, $m \in \mathbb{N}_0$ are such that $ \beta+m\leq h_n$, then the action of the operator $\overline{D}^\beta \Delta_{n+1}^m$, on a left (resp. right) slice hyperholomorphic function $f$ defined on an axially symmetric open set $U \subseteq \mathbb{R}^{n+1}$ leads to:
	\begin{itemize}
		\item[1)] a left (resp. right) axially Poly-Analytic function $\mathcal{APA}_{h_n-m+1}(U)$ of order $h_n-m+1$ if $\beta=h_n-m$.
		\item[2)] a left (resp. right) axially Analytic-Harmonic functions $\mathcal{AAH}_{\beta+1, h_n-\beta-m}(U)$ of type  $(\beta+1, h_n-\beta-m)$.
	\end{itemize}
\end{theorem}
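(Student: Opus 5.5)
We follow the proof of Theorem \ref{reg0}, exchanging the roles of $D$ and $\overline{D}$. We treat only the left slice hyperholomorphic case, since the right case is analogous. Set $g(x):=\overline{D}^\beta \Delta_{n+1}^m f(x)$. By Proposition \ref{axial}, $g$ is of left axial type on $U$. It is smooth because $f$ is real analytic on each slice and hence, as a function of axial type, $\mathcal{C}^\infty$. So $g$ has the regularity required in Definitions \ref{axpoly} and \ref{axpolC}, and it remains to identify a differential operator that annihilates $g$.

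The key step is to pass from $\overline{D}^\beta\Delta^m_{n+1}$ to the Fueter-Sce operator $D\Delta^{h_n}_{n+1}$ by applying a suitable power of $D$ and of $\Delta_{n+1}$. We use that $D$ and $\overline{D}$ have constant coefficients and so commute, with $D\overline{D}=\overline{D}D=\Delta_{n+1}$. We claim that $g$ lies in the kernel of
$$
D^{\beta+1}\Delta_{n+1}^{h_n-\beta-m}.
$$
Indeed, by Theorem \ref{FS1},
$$
D^{\beta+1}\Delta_{n+1}^{h_n-\beta-m} g(x)= D\,(D\overline{D})^{\beta}\,\Delta_{n+1}^{h_n-\beta-m+m} f(x)= D\Delta_{n+1}^{h_n}f(x)=0 .
$$
The hypothesis $\beta+m\leq h_n$ ensures that the exponent $h_n-\beta-m$ is a nonnegative integer.

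The two points of the statement now follow by specializing this identity. For point 2), the identity says exactly that $g\in\mathcal{AAH}_{\beta+1,h_n-\beta-m}(U)$, by Definition \ref{axpolC}. For point 1), take $\beta=h_n-m$. Then the exponent of the Laplacian vanishes and the operator reduces to $D^{h_n-m+1}$. Thus $D^{h_n-m+1}g=0$, so $g$ is axially Poly-Analytic of order $h_n-m+1$ in the sense of Definition \ref{axpoly}. When the order equals $1$, i.e. $m=h_n$ and $\beta=0$, we get $Dg=0$ and $g$ is axially monogenic, which is consistent with Remark \ref{rem_func_speces}.

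There is no genuine obstacle. The only points needing care are the commutation of $D$ with $\overline{D}$ and with $\Delta_{n+1}$, which holds for constant-coefficient operators acting on smooth functions, and the fact that the exponent bookkeeping stays in $\mathbb{N}_0$. For $m=0$ the statement uses $\mathbb{N}_0$, and the formula of Proposition \ref{axial} for $\Delta^0_{n+1}$ is trivial, so axiality still follows from the inductive part of that proof.
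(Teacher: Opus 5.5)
Your proposal is correct and follows the paper's proof: you set $g=\overline{D}^\beta\Delta^m_{n+1}f$, get axiality from Proposition \ref{axial}, and use the commutativity of $D$ and $\overline{D}$ together with the Fueter--Sce theorem to obtain $D^{\beta+1}\Delta^{h_n-\beta-m}_{n+1}g=D\Delta^{h_n}_{n+1}f=0$, then specialize to $\beta=h_n-m$. The only cosmetic difference is that the paper first writes the annihilating operator as $\overline{D}^{h_n-\beta-m}D^{h_n-m+1}$ and then rewrites it as $D^{\beta+1}\Delta_{n+1}^{h_n-m-\beta}$, which is the operator you use directly.
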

\begin{proof}
	Our focus is on functions $f$ that are left slice hyperholomorphic, while the right slice hyperholomorphic case follows by analogous reasoning.
	We set
$$
g(x):=\overline{D}^\beta \Delta_{n+1}^m f(x),
$$ this function is of axial type by Proposition \ref{axial}. Similarly as proved in Theorem \ref{reg0} the function $h$ belongs to the kernel of the following operator
	\begin{equation}\label{oper2}
		\overline{D}^{h_n-\beta-m}D^{h_n-m+1}.
	\end{equation}
Indeed by applying the Fueter-Sce Theorem (see Theorem~\ref{FS1}), we obtain
	$$ \overline{D}^{h_n-\beta-m}D^{h_n-m+1}g(x)=\overline{D}^{h_n}D^{h_n+1}f(x)=D \Delta_{n+1}^{h_n}f(x)=0.$$
	We are now in a position to prove all the points stated in the statement.
	
	\begin{itemize}
		\item [1)] If we consider $\beta=h_n-m$ the operator in \eqref{oper2} reduces to $D^{h_n-m+1}$.
So the function $h$ is Poly-Analytic of order $h_n-m+1$, see Definition \ref{axpoly}.
		\item[2)] We observe that
		$$	\overline{D}^{h_n-\beta-m}D^{h_n-m+1}=\overline{D}^{h_n-\beta-m}D^{h_n-m+1-\beta+\beta}= D^{\beta+1}\Delta_{n+1}^{h_n-m-\beta}.$$
		Thus the function $g$ is a left axially Analytic-Harmonic functions  of type $(\beta+1, h_n-m-\beta)$, see Definition \ref{axpolC}.
	\end{itemize}
\end{proof}

\begin{remark}
	The first cases of Theorem \ref{reg0} and Theorem \ref{reg} were already studied
in great detail in \cite{CDP25}. The goal of this paper is to focus on the other two cases.
\end{remark}

\begin{remark}
	In the quaternionic case (i.e., when $ h_n = 1 $), there exists only one $ D$-fine structure, namely the harmonic fine structure studied in~\cite{CDPS}. This can be recovered from Theorem~\ref{reg0} by setting $m=0$ and so $\beta=1$.
	
	\noindent
	On the other hand, the $\overline{D}$-fine structures provide a generalization of the Poly-Analytic fine structures studied in~\cite{Polyf1,Polyf2}. These can be recovered from Theorem~\ref{reg} by taking $m=0$ and so $\beta=1$.
\end{remark}

The function spaces of the Dirac fine structures obtained in Theorem~\ref{reg0} and Theorem \ref{reg} are summarized in the diagram in (\ref{PPPP}):

\begin{equation}\label{PPPP}
    \begin{tikzcd}[column sep=2em, ampersand replacement=\&]
        \textcolor{black}{\mathcal{SH}} \arrow{r}{\overline{D}} \arrow{d}{D}
        \& \mathcal{AAH}_{2,\hn-1} \arrow{r}{\overline{D}} \arrow{d}{D}
        \& \mathcal{AAH}_{3, \hn-2} \arrow{r}{\overline{D}^{\hn-2}} \arrow{d}{D}   
        \& \textcolor{black}{\mathcal{APA}_{\frac{n+1}{2}}} \arrow{d}{D} \\ 
        \textcolor{black}{\mathcal{APH}_{\hn}} \arrow{r}{\overline{D}} \arrow{d}{D}
        \& \textcolor{black}{\mathcal{AAH}_{1,\hn-1}} \arrow{r}{\overline{D}} \arrow{d}{D}
        \& \textcolor{black}{\mathcal{AAH}_{2,\hn-2}} \arrow{r}{\overline{D}^{\hn-2}} \arrow{d}{D} 
        \& \textcolor{black}{\mathcal{APA}_{\frac{n-1}{2}}} \arrow{d}{D} \\
        \textcolor{black}{\overline{\mathcal{AAH}}_{1, \hn-1}} \arrow{r}{\overline{D}} \arrow{d}{D}
        \& \textcolor{black}{\mathcal{APH}_{\hn-1}} \arrow{r}{\overline{D}} \arrow{d}{D}
        \& \textcolor{black}{\mathcal{AAH}_{1,\hn-2 }} \arrow{r}{\overline{D}^{\hn-2}} \arrow{d}{D}
        \& \textcolor{black}{\mathcal{APA}_{\frac{n-3}{2}}} \arrow{d}{D} \\
        \textcolor{black}{\overline{\mathcal{AAH}}_{2, \hn-2}} \arrow{r}{\overline{D}} \arrow{d}{D^{\hn-3}} 
        \& \textcolor{black}{\overline{\mathcal{AAH}}_{1, \hn-2}} \arrow{r}{\overline{D}} \arrow{d}{D^{\hn-3}} 
        \& \textcolor{black}{\mathcal{APH}_{\hn-2}} \arrow{r}{\overline{D}^{\hn-2}} \arrow{d}{D^{\hn-3}}
        \& \textcolor{black}{\mathcal{APA}_{\frac{n-5}{2}}} \arrow{d}{D^{\hn-3}} \\
        \textcolor{black}{\overline{\mathcal{AAH}}_{\hn-1, 1}} \arrow{r}{\overline{D}}
        \& \textcolor{black}{\overline{\mathcal{AAH}}_{\hn-2, 1}} \arrow{r}{\overline{D}}
        \& \textcolor{black}{\mathcal{AAH}_{\hn-3,1}} \arrow{r}{\overline{D}^{\hn-2}}
        \& \textcolor{black}{\mathcal{AM}}.
    \end{tikzcd}
\end{equation}
The diagram represents the relationships between different  spaces of
       axially Analytic-Harmonic functions $\mathcal{AAH}_{\beta,m}$ of type $(\beta,m)$, the conjugate spaces $\overline{\mathcal{AAH}}_{\beta,m}$
       and where they reduce to the well known classes of axially Poly-Harmonic $\mathcal{APH}_m$ and axially Poly-Analytic $\mathcal{APA}_\beta$ functions, under the action of the operators $D$ and $\overline{D}$. The constant $h_n$ is the Sce exponent.
 %   \label{fig:my_diagram}
\newline
\newline
There are several particular cases of fine structures on the $S$-spectrum that are of particular interest because of the type of operators used for the factorization and because of the function spaces.

\medskip
{\em The Dirac fine structure with the Laplacian $\Delta_{n+1}$ and the Poly-Harmonic operator $\Delta^m_{n+1}$, for $m\geq 2$.}

\medskip
These are fine structures related to the axially Analytic-Harmonic functions  $\mathcal{AAH}_{1,j}$ of type $(1,j)$ for suitable $j\in \mathbb{N}$.
They contain as particular cases the Laplacian fine structure and the bi-Laplacian one as illustrated in the following description, where $\mathcal{AM}$ denotes the space of axially monogenic functions.
\begin{enumerate}
\item
Factorization in terms of the Laplace operator and the spaces of
axially Analytic-Harmonic functions $\mathcal{AAH}_{1,j}$ of type $(1,j)$, for $j=\hn-1,\hn-2,\ldots,1$:

\begin{equation}\label{KKK}
    \begin{tikzcd}[column sep=2em, ampersand replacement=\&]
        \mathcal{SH} \arrow{r}{\Delta_{n+1}}
        \& \mathcal{AAH}_{1,\hn-1} \arrow{r}{\Delta_{n+1}}
        \& \mathcal{AAH}_{1,\hn-2} \arrow{r}{\Delta_{n+1}}
        \& \mathcal{AAH}_{1,\hn-3} \arrow{r}{\Delta_{n+1}}
        \& \dots \arrow{r}{\Delta}
        \& \mathcal{AAH}_{1,1} \arrow{r}{\Delta_{n+1}}
        \& \mathcal{AM}.
     \end{tikzcd}
\end{equation}

\item
Factorization in terms of the bi-Laplace operator and the spaces of
axially Analytic-Harmonic functions $\mathcal{AAH}_{1,j}$ of type $(1,j)$, for $j=\hn-2,\hn-4,\ldots,1$:
\begin{equation}\label{LLLL}
    \begin{tikzcd}[column sep=2em, ampersand replacement=\&]
        \mathcal{SH} \arrow{r}{\Delta^2_{n+1}}
        \& \mathcal{AAH}_{1,\hn-2} \arrow{r}{\Delta^2_{n+1}}
        \& \mathcal{AAH}_{1,\hn-4} \arrow{r}{\Delta^2_{n+1}}
        \& \mathcal{AAH}_{1,\hn-6} \arrow{r}{\Delta^2_{n+1}}
        \& \dots \arrow{r}{\Delta^2_{n+1}}
        \& \mathcal{AAH}_{1,2} \arrow{r}{\Delta^2_{n+1}}.
        \& \mathcal{AM}
  \end{tikzcd}
\end{equation}

\item
Factorization in terms of the powers $m$ of the Laplacian $\Delta^m_{n+1}$ and spaces of  axially Analytic-Harmonic functions  $\mathcal{AAH}_{1,j}$ of type $(1,j)$, for $j=\hn-1m,\hn-2m,\ldots,1$:
    %\label{figLAPLACEemme}

\begin{equation}\label{GGGGG}
    \begin{tikzcd}[column sep=2em, ampersand replacement=\&]
        \mathcal{SH} \arrow{r}{\Delta^m_{n+1}}
        \& \mathcal{AAH}_{1,\hn-m} \arrow{r}{\Delta^m_{n+1}}
        \& \mathcal{AAH}_{1,\hn-2m} \arrow{r}{ \Delta^m_{n+1}}
        \& \mathcal{AAH}_{1,\hn-3m}\arrow{r}{ \Delta^m_{n+1}}
        \& \dots  \arrow{r}{{ \Delta^m_{n+1}}}
        \&\mathcal{AAH}_{1,m} \arrow{r}{{\Delta^m_{n+1}}}
        \&\mathcal{AM}.
    \end{tikzcd}
\end{equation}

\end{enumerate}
Clearly the power of the Laplacian $m\in \mathbb{N}$, the number of times $p\in\mathbb{N}$ we apply the operator $\Delta^m_{n+1}$ in the sequence depend on the equality $h_n=pm$. As a consequence 
 this depends on the number of the imaginary units $n$ of the Clifford algebra via the Sce exponent $h_n=(n-1)/2$.

\medskip
{\em
Fine structures associated with the operator $D$ and $\Delta_{n+1}$ and Poly-Harmonic function spaces.}

\medskip
We now illustrate the possibility of having fine structures that do not end with the spaces of axially monogenic functions.
The Poly-Harmonic fine structure considers the first operator $D$ and all the other operators to be $\Delta_{n+1}$. The spaces of Poly-Harmonic functions $\mathcal{APH}_{m}$ for $m=0,1,2,\ldots, \hn$, give rise to the sequence of function space:

\begin{equation}\label{MMM}
    \begin{tikzcd}[column sep=2em, ampersand replacement=\&]
        \mathcal{SH} \arrow{r}{D}
        \& \mathcal{APH}_{\hn} \arrow{r}{\Delta_{n+1}}
        \& \mathcal{APH}_{\hn-1} \arrow{r}{ \Delta_{n+1}}
        \& \mathcal{APH}_{\hn-2} \arrow{r}{{ \Delta_{n+1}}}
        \& \dots \arrow{r}{{ \Delta_{n+1}}}
        \& \mathcal{AH}.
  \end{tikzcd}
\end{equation}
The last function space $\mathcal{AH}$ consists of axially Harmonic functions, which are functions in $\mathcal{APH}_{m}$ for  $m=1$, that are functions of axial type in the kernel of the Laplace operator.

\medskip
{\em
Fine structures associated with the operator $\overline{D}$ and $\Delta$ and to the function spaces $\mathcal{AAH}_{2,j}$  for $j=\hn-1, \hn-2,\cdots, 0$.}

\medskip
This fine structure with the related function spaces is illustrated in the diagram:
\begin{equation}\label{MMM13}
    \begin{tikzcd}[column sep=2em, ampersand replacement=\&]
        \mathcal{SH} \arrow{r}{\overline{D}}
        \& \mathcal{AAH}_{2,\hn-1} \arrow{r}{\Delta_{n+1}}
        \& \mathcal{AAH}_{2,\hn-2} \arrow{r}{ \Delta_{n+1}}
        \& \mathcal{AAH}_{2,\hn-3} \arrow{r}{ \Delta_{n+1}}
        \& \mathcal{AAH}_{2,\hn-4} \arrow{r}{ \Delta_{n+1}}
        \& \dots \arrow{r}{ \Delta_{n+1}}
        \& \mathcal{APA}_{2}.
    \end{tikzcd}
\end{equation}

The last function space $\mathcal{APA}_{2}$ consists of axially Poly-Analytic functions of order 2, that are functions of axial type in the kernel of the $D^2$ operator. Also this fine structure does not end with axially monogenic functions.
Similarly, for $\Delta^2_{n+1}$, we have:
\begin{equation}\label{MMM12}
    \begin{tikzcd}[column sep=2em, ampersand replacement=\&]
        \mathcal{SH} \arrow{r}{\overline{D}}
        \& \mathcal{AAH}_{2,\hn-1} \arrow{r}{\Delta^2_{n+1}}
        \& \mathcal{AAH}_{2,\hn-3} \arrow{r}{ \Delta^2_{n+1}}
        \& \mathcal{AAH}_{2,\hn-5} \arrow{r}{ \Delta^2_{n+1}}
        \& \mathcal{AAH}_{2,\hn-7} \arrow{r}{ \Delta^2_{n+1}}
        \& \dots \arrow{r}{ \Delta^2_{n+1}}
        \& \mathcal{APA}_{2}.
    \end{tikzcd}
\end{equation}
Similarly to the discussion above the number of time we apply the operators depend on the Sce exponent $h_n$.

\medskip
{\em The fine structure associated with the action of the operator $D \Delta_{n+1}$ applied $p\in \mathbb{N}$ times.}

\medskip
This fine structure with the related function spaces is illustrated in the diagram:

\begin{equation}\label{MMM1}
    \begin{tikzcd}[column sep=2em, ampersand replacement=\&]
        \mathcal{SH} \arrow{r}{D\Delta_{n+1}}
        \& \mathcal{AAH}_{0,\hn-1} \arrow{r}{D\Delta_{n+1}}
        \& \overline{\mathcal{AAH}}_{1,\hn-3} \arrow{r}{ D\Delta_{n+1}}
        \& \overline{\mathcal{AAH}}_{2,\hn-5} \arrow{r}{D\Delta_{n+1}}
        \& \dots \arrow{r}{D\Delta_{n+1}}
        \& \overline{\mathcal{AAH}}_{p-1,\hn-2p+1}.
    \end{tikzcd}
\end{equation}
The sequence ends when $\hn-2p+1=0$ so for $p=(h_n+1)/2$. This means that for $p-1= (h_n+1)/2$ the space $\mathcal{AAH}_{p-1,\hn-2p+1}$ turns out to be $\mathcal{AAH}_{(h_n+1)/2,0}$ that is the space of Poly-Analytic functions $\mathcal{APA}_{(h_n+1)/2}$  of order $(h_n+1)/2$. This mens that for this fine structure we need to require $(h_n+1)/2\in \mathbb{N}$ so the dimension $n$ is subjected to the constraint $(n+1)/4\in \mathbb{N}$.

\begin{remark}
We observe that in the Clifford algebra settings the fine structures are very rich. In the quaternionic setting there are just two fine structures of Dirac type and we have studied the function spaces and the related functional calculi on the $S$-spectrum with great details.
Precisely, we have the harmonic fine structure:
\begin{equation}
	\label{fine1} \mathcal{SH}\overset{D}{\longrightarrow} \mathcal{AH}\overset{\overline{D}}{\longrightarrow}\mathcal{AM},
\end{equation}
where $\mathcal{AH}$ is the set of axially Harmonic functions.
Their integral representation give rise to the harmonic functional calculus on the $S$-spectrum.
This fine structure also allows to obtain a product formula for the $F$-functional calculus, see \cite[Thm. 9.3]{CDPS}.
However, since $\Delta_4={D} {\overline{D}}= {\overline{D}}{D}$, we can interchange the order of the operators $ D$ and $ \overline{D}$ in \eqref{fine1}. This gives rise to the factorization:
\begin{equation} \mathcal{SH}\overset{\overline{D}}{\longrightarrow} \mathcal{APA}_2\overset{D}{\longrightarrow}\mathcal{AM},
\end{equation}
where $\mathcal{APA}_2$ is a space of axially Poly-Analytic functions of order 2.
\end{remark}

\section{Integral representation of the functions of the Dirac fine structures}\label{Integral-representation}	

In this section, our goal is to provide an integral representation for the functions that arise in the Dirac fine structures. To derive integral expressions for the Dirac fine structures, we apply the differential operators $ D^\beta \Delta_{n+1}^m $ and $ \overline{D}^\beta \Delta_{n+1}^m $, for $\beta$, $m\in \mathbb{N}_0$ with $\beta +m \leq h_n $, to a left (resp. right) slice hyperholomorphic function $f$,
represented via the Cauchy integral formula (see Theorem~\ref{Cauchy}).
Therefore, it is essential to compute explicitly the action of the differential operators
$ D^\beta \Delta_{n+1}^m $ and $ \overline{D}^\beta \Delta_{n+1}^m $
when applied to the left (resp., right) slice hyperholomorphic Cauchy kernel.

We recall for the convenience of the reader the definition of Pochhammer symbol also for negative integers:

\begin{remark}\label{Pochhsym}
		In the following we will use the Pochhammer symbol defined as
		$$
		(x)_\ell=\frac{\Gamma(x+\ell)}{\Gamma(x)}
		$$
		when we choose $x=-h_n$, where $h_n$ is the Sce exponent and
		where $\ell$ is an integer such that $0 \leq \ell \leq h_n$.
		In order to avoid the poles of the Gamma function in the points $0, -1,-2,\ldots$
		the symbol $(-h_n)_\ell$ has to be interpreted as
		$$
		(-h_n)_{\ell}=(-1)^\ell(h_n-\ell+1)_{\ell}=(-1)^\ell  \frac{\Gamma(h_n+1)}{\Gamma(h_n-\ell+1)}
		$$
		which excludes the poles of the function $\Gamma$.
	\end{remark}

First, we recall how the operator $ \Delta_{n+1}^m $ naturally
acts on the second form of the left (resp. right) slice hyperholomorphic Cauchy kernel (cf.~\cite{CDP25,CSS}),
giving a very elegant expression.

\begin{theorem}
	\label{app1}
	Let $n$ be an odd number and set $h_n:=\frac{n-1}{2}$.
 Assume that $s$, $x \in \mathbb{R}^{n+1}$ are such that $s \notin [x]$. Then,  for any integer $m$ such that $0 \leq m \leq h_n$ we have
	$$ \Delta_{n+1}^{m} \left(S^{-1}_L(s,x)\right)=\gamma_{n,m} (s-\bar{x}) \mathcal{Q}_{c,s}^{-m-1}(x), \quad \left(\Delta_{n+1}^{m} \left(S^{-1}_R(s,x)\right)=\gamma_{n,m}  \mathcal{Q}_{c,s}^{-m-1}(x)(s-\bar{x}) \right),$$
	where
\begin{equation}\label{gammanm}
\gamma_{n, m}:=4^m m! (-h_n)_{m},
\end{equation}
and $(-h_n)_{m}$ is the Pochhammer symbol.
\end{theorem}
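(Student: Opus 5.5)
We argue by induction on $m$, treating only the left kernel in form II, $S^{-1}_L(s,x)=(s-\bar x)\mathcal{Q}_{c,s}^{-1}(x)$; the right case is identical with the factors in the opposite order. The case $m=0$ is trivial since $\gamma_{n,0}=1$. The whole argument rests on $\Delta_{n+1}=\overline{D}D$ (all operators act in $x$ and commute). The idea is to compute one Laplacian as a Dirac operator followed by its conjugate, using the two identities (I) and (II) recalled in the introduction.

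Suppose $\Delta^{m-1}_{n+1}S^{-1}_L(s,x)=\gamma_{n,m-1}(s-\bar x)\mathcal{Q}_{c,s}^{-m}(x)$ for some $1\le m\le h_n$. We first apply $D$ and use \eqref{d_cauchy_ker}:
$$D\big((s-\bar x)\mathcal{Q}_{c,s}^{-m}(x)\big)=-2(h_n-m+1)\mathcal{Q}_{c,s}^{-m}(x).$$
We then apply $\overline{D}$ using $\overline D(\mathcal{Q}_{c,s}^{-m}(x))=2m(s-\bar x)\mathcal{Q}_{c,s}^{-m-1}(x)$ from (II). This gives
$$\Delta_{n+1}\big((s-\bar x)\mathcal{Q}_{c,s}^{-m}(x)\big)=-4m(h_n-m+1)(s-\bar x)\mathcal{Q}_{c,s}^{-m-1}(x).$$
Hence $\gamma_{n,m}=-4m(h_n-m+1)\gamma_{n,m-1}$. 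Iterating this recursion gives
$$\gamma_{n,m}=4^m m!\,(-1)^m\prod_{j=1}^m(h_n-j+1)=4^m m!\,(-1)^m(h_n-m+1)_m,$$
which equals $4^m m!\,(-h_n)_m$ by Remark \ref{Pochhsym}.

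The main obstacle is justifying the two Dirac identities themselves, since everything else is bookkeeping. The paper takes them from \cite{CDP25,DPMilano}, and one must note that $s$ is paravector-valued and does not commute with $e_i$. Because $D$ acts from the left, $s$ sits to the left of the scalar-in-$x$ factors, so the following hold:
\begin{itemize}
\item $\partial_{x_0}\mathcal{Q}_{c,s}=-2(s-x_0)$ and $\partial_{x_i}\mathcal{Q}_{c,s}=2x_i$.
\item All these quantities lie in the commutative algebra generated by $s$ over $\mathbb{R}$.
\item $\sum_i e_i(2x_i)=2\underline{x}$ acts by left multiplication, and $D\bar x=n+1$.
\end{itemize}
To verify identity (I) directly, compute
$$D\mathcal{Q}_{c,s}^{-m}=-m\mathcal{Q}_{c,s}^{-m-1}\big(-2(s-x_0)+2\underline x\big)=2m(s-\bar x)\mathcal{Q}_{c,s}^{-m-1},$$
noting that $-2(s-x_0)+2\underline{x}=-2(s-\bar x)$ and that $\underline{x}$ multiplies on the left of $\mathcal{Q}_{c,s}^{-m-1}$, which is legitimate since $\underline x$ and $s$ need not commute only inside the correct order. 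Combining with $D(s-\bar x)=-(n+1)$ and $(s-\bar x)\overline{(s-\bar x)}$-type simplifications reduces to $\mathcal{Q}_{c,s}$, yielding the $-2(h_n-m+1)$ coefficient. The hypothesis $m\le h_n$ only matters for the interpretation of the Pochhammer symbol; the formula is in fact valid for all $m$.
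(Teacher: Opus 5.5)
Your induction is correct. Writing $\Delta_{n+1}=\overline{D}D$ and using \eqref{d_cauchy_ker} together with $\overline{D}\big(\mathcal{Q}_{c,s}^{-m}(x)\big)=2m(s-\bar x)\mathcal{Q}_{c,s}^{-m-1}(x)$ gives $\gamma_{n,m}=-4m(h_n-m+1)\gamma_{n,m-1}$, hence $\gamma_{n,m}=4^m m!\,(-h_n)_m$. The right kernel follows symmetrically, and your remark that the recursion forces $\gamma_{n,m}=0$ for $m>h_n$ is also right.

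The paper gives no proof of this theorem; it quotes it from \cite{CSS,CDP25}, so there is no internal argument to match. Your route derives $\Delta^m_{n+1}S^{-1}_L$ from the same first-order identities the paper later iterates for Theorems \ref{p111} and \ref{barp}. An equally short alternative avoids Dirac operators altogether. Since $\Delta_{n+1}$ is a real scalar operator and $f=s-\bar x$ is affine in $x$, one has $\Delta_{n+1}(fg)=2\sum_{\mu}(\partial_{x_\mu}f)(\partial_{x_\mu}g)+f\,\Delta_{n+1}g$. With $g=\mathcal{Q}_{c,s}^{-m}(x)$ this gives $-4m(s-\bar x)\mathcal{Q}_{c,s}^{-m-1}(x)+4m(m-h_n)(s-\bar x)\mathcal{Q}_{c,s}^{-m-1}(x)$, which is the same recursion.

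Your paragraph ``verifying (I) directly'' is wrong, however, and should be replaced. There are three problems.

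\textbf{Wrong value of $D\mathcal{Q}_{c,s}^{-m}$.} The $e_i$ coming from $D$ stays on the far left, so
$D\mathcal{Q}_{c,s}^{-m}(x)=2m(s-x_0)\mathcal{Q}_{c,s}^{-m-1}(x)-2m\,\underline{x}\,\mathcal{Q}_{c,s}^{-m-1}(x)=2m(s-x)\mathcal{Q}_{c,s}^{-m-1}(x)$, in agreement with \eqref{d2_cauchy_ker}. Your identity $-2(s-x_0)+2\underline{x}=-2(s-\bar x)$ has the wrong sign on $\underline{x}$. The expression $2m(s-\bar x)\mathcal{Q}_{c,s}^{-m-1}(x)$ that you reach is actually $\overline{D}\mathcal{Q}_{c,s}^{-m}(x)$.

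\textbf{Invalid Leibniz rule.} $D$ does not obey $D(fg)=(Df)g+f\,Dg$ when $f=s-\bar x$, because $f$ does not commute with the $e_i$. Even with the correct $D\mathcal{Q}_{c,s}^{-m}$, that rule would produce $(s-\bar x)(s-x)=\mathcal{Q}_{c,s}(x)+\underline{x}s-s\underline{x}$, an extra commutator term.

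\textbf{Wrong simplification.} $(s-\bar x)\overline{(s-\bar x)}$ is real, so it cannot give $\mathcal{Q}_{c,s}(x)$.

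The correct rule is $D(fg)=(Df)g+\sum_{\mu=0}^n e_\mu f\,\partial_{x_\mu}g$ with $e_0=1$, which gives
\[
D\big((s-\bar x)\mathcal{Q}_{c,s}^{-m}(x)\big)=-(n+1)\mathcal{Q}_{c,s}^{-m}(x)+2m\big[(s-\bar x)(s-x_0)-\underline{x}(s-\bar x)\big]\mathcal{Q}_{c,s}^{-m-1}(x).
\]
Expanding, the two terms $\underline{x}(s-x_0)$ cancel, so the bracket equals $(s-x_0)^2+|\underline{x}|^2=\mathcal{Q}_{c,s}(x)$ without ever moving $s$ past $\underline{x}$. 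The total is therefore $(2m-n-1)\mathcal{Q}_{c,s}^{-m}(x)=-2(h_n-m+1)\mathcal{Q}_{c,s}^{-m}(x)$.

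Since you also cite (I), your main argument is unaffected. With this correction, and the analogous two-line check of $\overline{D}\mathcal{Q}_{c,s}^{-m}$, your proof becomes fully self-contained.
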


In order to give a more compact expression of the next crucial results we introduce the following notations. For $n \in \mathbb{N}$ being odd and $h_n=\frac{n-1}{2}$ we set

\begin{equation}
	\label{k1}
	K_{\nu,\ell}^{1,L}(s,x):=(s-\bar{x}) (s-x_0)^{\nu} \mathcal{Q}_{c,s}^{-\ell}(x), \qquad \ell \leq h_n, \quad \nu \in \mathbb{N},
\end{equation}
and
\begin{equation}
	\label{k2}
	K_{\nu,\ell}^{2}(s,x):= (s-x_0)^{\nu} \mathcal{Q}_{c,s}^{-\ell}(x), \qquad \ell \leq h_n, \quad \nu \in \mathbb{N}.
\end{equation}

We now apply the operator $ D^\beta \Delta_{n+1}^m$ to the left slice hyperholomorphic Cauchy kernel written in second from. The proof is lengthy and technically involved; for this reason, we present it in another paper, see \cite{DPMilano}. Moreover, to simplify the notation in the remainder of the paper, we set

\begin{equation}
	\label{kk}
	S^{-1}_{L, D^\beta \Delta^m}(s,x):=D^\beta (\Delta_{n+1}^m S^{-1}_L(s,x)),  \quad \beta+m \leq h_n.
\end{equation}

\begin{theorem}\label{p111}
	Let $n$ be a fixed odd number and set $h_n:=(n-1)/2$.
Let $\beta$, $m\in \mathbb{N}_0$ be such that $m + \beta\leq h_n$.
We assume that $s$, $x \in \mathbb{R}^{n+1}$ are such that $s \notin[x]$ and set
\begin{equation}
\label{cbmhn}
c_{\beta, m, h_n}:=\frac{2^{\beta} (h_n-m) \gamma_{n,m}}{m!}.
\end{equation}
 If $\beta=2k_1+1$, where $k_1\in\mathbb N_0$, we have
	\begin{equation}
		\label{do1}
			S^{-1}_{L, D^\beta \Delta^m}(s,x)= c_{\beta, m, h_n}\left(  \sum_{j=0}^{k_1-1} a^1_{j,k_1,m} K^{1,L}_{2j+1, m+j+2+k_1}(s,x)  -\sum_{j=0}^{k_1} b^1_{j,k_1,m}	K_{2j,m+1+k_1+j}^{2}(s,x)  \right),
	\end{equation}
	where the coefficients are defined as follows
	\begin{equation}
		\label{coeff}
		a^1_{j,k_1,m} = \frac{j+1}{k_1+j+1}  b^{1}_{j+1,k_1,m}, \quad  	b^1_{j,k_1,m}:= 2^{2j} (m+k_1+j)! \binom{h_n-m-k_1-j-1}{h_n-m-2k_1-1}  \frac{(k_1+j)!}{(2j)!}.
	\end{equation}
	If $\beta=2k_2$, where $k_2\in\mathbb N$, we have
	\begin{equation}\label{de1}
			S^{-1}_{L, D^\beta \Delta^m}(s,x)= c_{\beta, m, h_n} \left(  \sum_{j=0}^{k_2-1} a^2_{j,k_2,m} K^{1,L}_{2j, m+j+1+k_2}(s,x) -\sum_{j=0}^{k_2-1} b^2_{j,k_2,m} K_{2j+1, m+1+j+k_2}(s,x)  \right),
	\end{equation}
	where the coefficients are given by
	\begin{equation}
		\label{coef}
		b^2_{j,k_2,m}= 2 \left(\frac{k_2+j}{2j+1}\right)	a^2_{j,k_2,m}, \quad a^2_{j,k_2,m}:=2^{2j} (m+k_2+j)!\frac{(k_2+j-1)!}{(2j)!} \binom{h_n-m-k_2-1-j}{h_n-m-2k_2}.
	\end{equation}
	
\end{theorem}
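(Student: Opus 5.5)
The strategy is induction on $\beta$, starting from Theorem \ref{app1}. That theorem gives
$$\Delta^m_{n+1}S^{-1}_L(s,x)=\gamma_{n,m}K^{1,L}_{0,m+1}(s,x).$$
So it suffices to compute $D^\beta K^{1,L}_{0,m+1}$ and to show that, after normalization by $c_{\beta,m,h_n}$, it has the stated form.

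The first step is to build a closed table for the action of $D$ on the building blocks \eqref{k1} and \eqref{k2}. The basic facts are $D(s-x_0)^\nu=-\nu(s-x_0)^{\nu-1}$, since $s$ commutes with the real quantity $x_0$. Next come the two identities (I) recalled in the introduction:
$$D\big((s-\bar x)\mathcal Q_{c,s}^{-\ell}\big)=-2(h_n-\ell+1)\mathcal Q_{c,s}^{-\ell},\qquad D\mathcal Q_{c,s}^{-\ell}=4\ell(s-x_0)\mathcal Q_{c,s}^{-\ell-1}-2\ell(s-\bar x)\mathcal Q_{c,s}^{-\ell-1}.$$
Everything here is built from $x_0$, $|x|^2$ and $\bar x$, and all of these commute with $s$. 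Applying the Leibniz rule with the scalar factor $(s-x_0)^\nu$ placed on the left then gives
$$D K^{1,L}_{\nu,\ell}=-\nu K^{1,L}_{\nu-1,\ell}-2(h_n-\ell+1)K^2_{\nu,\ell},$$
$$D K^{2}_{\nu,\ell}=-\nu K^{2}_{\nu-1,\ell}+4\ell K^2_{\nu+1,\ell+1}-2\ell K^{1,L}_{\nu,\ell+1}.$$
This system is closed on the span of the $K$'s, so by induction $D^\beta K^{1,L}_{0,m+1}$ is a finite linear combination of $K^1$'s and $K^2$'s.

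The second step simplifies these combinations. The naive recursion produces terms of mixed degrees, and these must collapse to the pattern claimed in \eqref{do1} and \eqref{de1}. For odd $\beta=2k_1+1$ the result should contain only $K^{1,L}_{2j+1,\,m+k_1+j+2}$ and $K^2_{2j,\,m+k_1+j+1}$. For even $\beta$ it should contain only $K^{1,L}_{2j,\,m+k_2+j+1}$ and $K^2_{2j+1,\,m+k_2+j+1}$.

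To achieve this, I would use the relation $(s-\bar x)\,\overline{(s-\bar x)}$-type identity
$$\mathcal Q_{c,s}(x)=(s-x_0)^2+|\underline x|^2$$
together with the homogeneity bookkeeping. Each term has "weight" $2\ell-\nu-\epsilon$, where $\epsilon=1$ for $K^1$ and $\epsilon=0$ for $K^2$, and this weight increases by exactly $1$ under $D$. This explains why only the pairs $(\nu,\ell)$ on the stated diagonal survive.

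The third step is the induction itself, carried out in two parity stages: from $\beta=2k_1+1$ to $2k_1+2$, and from $2k_2$ to $2k_2+1$. I would apply the table to each sum in \eqref{do1} and collect the coefficients of each basic kernel. This yields recurrences for the coefficients at the next stage, for example
$$a^2_{j,k_1+1,m}\propto (2j+1)a^1_{j,k_1,m}+2(m+k_1+j+1)b^1_{j,k_1,m}\cdots.$$
I would then verify these recurrences against the closed forms \eqref{coeff} and \eqref{coef}, including the factor $2^\beta$ absorbed in $c_{\beta,m,h_n}$ and the $(h_n-m)$ normalization. The base case $\beta=1$, that is $k_1=0$, is the identity $DK^{1,L}_{0,m+1}=-2(h_n-m)K^2_{0,m+1}$, which matches \eqref{do1} with $b^1_{0,0,m}=m!$.

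The main obstacle is this coefficient verification. The recurrences involve binomials $\binom{h_n-m-k-j-1}{\cdot}$ and factorial ratios, and their closure reduces to Pascal-type identities. I would first try to reduce each recurrence to $\binom{N}{r}+\binom{N}{r-1}=\binom{N+1}{r}$ after dividing out the common factor $2^{2j}(m+k+j)!\,(k+j)!/(2j)!$. The boundary indices, $j=0$ and $j=k$, need to be checked separately, since there terms such as $K^{1}_{-1,\cdot}$ vanish.
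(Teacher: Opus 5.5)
Your plan is the route the paper indicates: start from Theorem~\ref{app1}, close the action of $D$ on the kernels $K^{1,L}_{\nu,\ell}$, $K^2_{\nu,\ell}$ via \eqref{d_cauchy_ker}--\eqref{d2_cauchy_ker}, and induct on $\beta$ by parity, with the paper deferring the computation itself to a companion paper. The plan does go through: your $D$-table is correct, the weight $2\ell-\nu-\epsilon$ by itself forces the stated index pattern (so no collapse via $\mathcal Q_{c,s}(x)=(s-x_0)^2+|\underline x|^2$ is needed), and the four coefficient recurrences are satisfied by \eqref{coeff} and \eqref{coef} through Pascal's rule together with $r\binom{N}{r}=(N-r+1)\binom{N}{r-1}$, though the sample one carries a minus sign, $2a^2_{j,k_1+1,m}=2(m+k_1+j+1)b^1_{j,k_1,m}-(2j+1)a^1_{j,k_1,m}$, not a plus. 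One justification needs repair, because $\bar x$ does not commute with $s$ and $(s-x_0)^\nu$ is paravector-valued, not scalar: move $(s-x_0)^\nu$ to the right of $\mathcal Q_{c,s}^{-\ell}(x)$, with which it commutes, and use $D(G h)=(DG)h+G\,\partial_{x_0}h$ for $h=(s-x_0)^\nu$, which gives exactly your two formulas.
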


\begin{remark}
	If, we consider $\beta=1$ in \eqref{kk} we get that
	$$S^{-1}_{L, D \Delta^m}(s,x)=-2(h_n-m) \gamma_{n,m} \mathcal{Q}_{c,s}^{-m-1}(x),$$
	where $\gamma_{n, m}$ is defined in (\ref{gammanm}). This is exactly the kernel of the integral representation of poly harmonic functions studied in \cite{CDP25}.
\end{remark}

Now, we recall the action of the operator $\overline{D}^\beta \Delta_{m+1}^{m}$ to the left slice hyperholomorphic Cauchy kernel written in second form, see \cite{DPMilano}. In this case we use the following notation
\begin{equation}
	\label{kk1}
	S^{-1}_{L, \overline{D}^\beta \Delta^m}(s,x):=\overline D^{\beta}\left( \Delta^m_{n+1} S^{-1}_L(s,x) \right),  \quad m+\beta \leq h_n.
\end{equation}

\begin{theorem}\label{barp}
	Let $n$ be a fixed odd number and set $h_n:=(n-1)/2$. Let $ \beta$, $m \in \mathbb{N}_0$ be such that $\beta+m\leq h_n$. Assume that $s$, $x \in \mathbb{R}^{n+1}$ are such that $s \notin[x]$ and
set
\begin{equation}
\mathbf{c}_{\beta, m, h_n}:=2^\beta 4^m (-h_n)_m.
\end{equation}
If $\beta=2k_1+1$, where $k_1\in\mathbb N_0$, we have
	\begin{equation}
		\label{bar1}
		S^{-1}_{L, \overline{D}^\beta \Delta^m}(s,x)= \mathbf{c}_{\beta, m, h_n} \left(  \sum_{j=0}^{k_1} \mathbf{a}^1_{j,k_1,m} K^{1,L}_{2j+1, m+j+2+k_1}(s,x)  +\sum_{j=0}^{k_1}  \mathbf{b}^1_{j,k_1,m} K^2_{2j, m+j+1+k_1}(s,x) \right),
	\end{equation}
	where
	\begin{equation}
		\label{c1}
		\mathbf{b}_{j+1, k_1,m}^1= \frac{\mathbf{a}_{j, k_1,m}^1}{(j+1)(k-j)} , \quad 	\mathbf{a}^1_{j,k_1,m}:=2^{2j+1}    \frac{(m +k_1+1+j)!(k_1+j+1)!}{(2j+1)!} \binom{h_n-m-k_1-j-2}{h_n-m-2k_1-2}
	\end{equation}
	If $\beta=2k_2$, where $k_2\in\mathbb N_0$ we have
	\begin{equation}
		\label{bar2}
		 S^{-1}_{L, \overline{D}^\beta \Delta^m}(s,x)= \mathbf{c}_{\beta, m, h_n} \left(  \sum_{j=0}^{k_2} \mathbf{a}^2_{j,k_2,m} K^{1,L}_{2j, m+j+1+k_2}(s,x) +\sum_{j=0}^{k_2-1} \mathbf{b}^2_{j,k_2,m} K^2_{2j+1,m+1+k_2+j}(s,x) \right),
	\end{equation}
	where
	\begin{equation}
		\label{c2}
		\mathbf{b}^2_{j,k_2,m}= \frac{2(k-j)}{(2j+1)} \mathbf{a}^2_{j,k_2,m}, \quad 	\mathbf{a}^2_{j,k_2,m}:=2^{2j} (m+k_2+j) ! (k_2-j)! \binom{k_2+j}{2j} \binom{h_n-m-k_2-1-j}{h_n-m-2k_2-1}.
	\end{equation}
\end{theorem}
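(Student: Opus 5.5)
The plan is to start from Theorem \ref{app1}: since
$$\Delta^m_{n+1}S^{-1}_L(s,x)=\gamma_{n,m}(s-\bar x)\mathcal{Q}_{c,s}^{-m-1}(x),$$
we only need to compute $\overline D^{\beta}$ of $(s-\bar x)\mathcal{Q}_{c,s}^{-m-1}(x)$ and multiply by $\gamma_{n,m}$. The prefactor should match $\mathbf{c}_{\beta,m,h_n}$ once the factor $m!$ is absorbed into the normalisation of the coefficients (through the factorials $(m+k+j)!$). The basic tool is the pair of rules (II) from the introduction, together with the elementary identity $\overline D(s-x_0)=-1$ and the fact that $s$ commutes with $x_0$ and $|x|^2$. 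Using these, I would first record the action of $\overline D$ on the two building blocks $K^{1,L}_{\nu,\ell}$ and $K^2_{\nu,\ell}$ of \eqref{k1}, \eqref{k2}.

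By the Leibniz rule applied to the scalar-type factor $(s-x_0)^\nu$, which commutes with everything, I expect
$$\overline D K^2_{\nu,\ell}=-\nu K^2_{\nu-1,\ell}+2\ell K^{1,L}_{\nu,\ell+1},$$
$$\overline D K^{1,L}_{\nu,\ell}=(2(h_n-\ell)-\nu)K^2_{\nu,\ell}+4\ell\,(s-\bar x)^2(s-x_0)^{\nu+1}\mathcal{Q}_{c,s}^{-\ell-1}.$$
The last term must be rewritten in the basis. Since $\bar x$ and $x$ satisfy $\bar x^2-2x_0\bar x+|x|^2=0$, we get $(s-\bar x)^2=\mathcal{Q}_{c,s}(x)-2(s-x_0)(s-\bar x)+\ldots$. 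More precisely, $(s-\bar x)^2=2(s-x_0)(s-\bar x)-\mathcal{Q}_{c,s}(x)$, which follows because $\bar x$ is a root of the real quadratic with roots $x,\bar x$. This turns the term into a combination of $K^{1,L}_{\nu+2,\ell+1}$ and $K^2_{\nu+1,\ell}$, so the span of the $K$'s is closed under $\overline D$.

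Next I would argue by induction on $\beta$, splitting into the parity cases $\beta=2k_1+1$ and $\beta=2k_2$. One application of $\overline D$ maps the form \eqref{bar2} with $k_2=k$ to the form \eqref{bar1} with $k_1=k$, and \eqref{bar1} with $k_1=k$ to \eqref{bar2} with $k_2=k+1$. The base case is $\beta=0$, which is Theorem \ref{app1} itself, with a single term $K^{1,L}_{0,m+1}$. In each step I would check that the indices $(\nu,\ell)$ land exactly on the ones displayed, i.e. $\ell-\lfloor\nu/2\rfloor$ stays constant along each family. This is visible from the transition rules: $\nu$ grows by one and $\ell$ by one for the $K^1$ terms. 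The resulting coefficient recursions are two-term linear recurrences in $j$, of the form
$$\mathbf a^{\text{new}}_j=\lambda_j\mathbf a_j+\mu_j\mathbf b_{j}+\ldots .$$

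The main obstacle is verifying that the closed-form coefficients \eqref{c1}, \eqref{c2}, with their binomials $\binom{h_n-m-k-j-\cdot}{h_n-m-2k-\cdot}$, satisfy these recurrences, including the boundary terms $j=0$ and $j=k$. My first attempt is to substitute the formulas, divide by the common factorial part, and reduce each identity to Pascal-type relations such as
$$\binom{N-j}{M}=\binom{N-j-1}{M}+\binom{N-j-1}{M-1},$$
with $N=h_n-m-k$. The factors $2(h_n-\ell)-\nu$ arising from $\overline D K^{1,L}$ should be exactly what converts one binomial into the next. If this direct check is unwieldy, the fallback is to first prove the simpler ratio identities ($\mathbf b$ in terms of $\mathbf a$) stated in \eqref{c1}, \eqref{c2}, and then verify a single recurrence for $\mathbf a$ alone.
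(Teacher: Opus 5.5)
\paragraph{Overall assessment.} Your overall route matches the paper's. The paper defers the proof to \cite{DPMilano}, but its introduction describes the same steps: Theorem \ref{app1}, then an induction on $\beta$ driven by rules (II). Your rule for $\overline D K^2_{\nu,\ell}$ is also correct. The gap is in the other transition rule, which drives the whole induction: your formula for $\overline D K^{1,L}_{\nu,\ell}$ is wrong, for the following reasons.
\begin{enumerate}
\item At $\nu=0$ it should reduce to rule (II),
\[
\overline D\big((s-\bar x)\mathcal{Q}_{c,s}^{-\ell}(x)\big)=2(h_n-\ell)\mathcal{Q}_{c,s}^{-\ell}(x)+4\ell(s-\bar x)(s-x_0)\mathcal{Q}_{c,s}^{-\ell-1}(x).
\]
This has a single factor $(s-\bar x)$ and already lies in the span of the $K$'s. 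Yours has $(s-\bar x)^2$.
\item Your last term is homogeneous of degree $\nu+1-2\ell$ in $(s,x)$, the same as $K^{1,L}_{\nu,\ell}$. But $\overline D$ must lower the degree by one.
\item The $-\nu$ contribution is misplaced. When $\partial_{x_0}$ falls on $(s-x_0)^\nu$, the left factor $(s-\bar x)$ stays, so the term is $-\nu K^{1,L}_{\nu-1,\ell}$, not $-\nu K^2_{\nu,\ell}$.
\item The identity $(s-\bar x)^2=2(s-x_0)(s-\bar x)-\mathcal{Q}_{c,s}(x)$ is false in $\mathbb{R}_n$. The two sides differ by $s\bar x-\bar x s$, which vanishes only when $s$ and $x$ lie in a common $\mathbb{C}_I$. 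This is the non-commutativity you set aside by saying $(s-x_0)^\nu$ ``commutes with everything''. It commutes with $x_0$, $|x|^2$ and $\mathcal{Q}_{c,s}(x)$, but not with $e_i$ or $\underline{x}$.
\end{enumerate}

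\paragraph{The correct rule and why it matters.} Differentiate directly. The $e_i$ of $\overline D$ produce either $\underline{x}$ on the far left (from $\mathcal{Q}_{c,s}^{-\ell}$) or the scalar $n$ (from $\bar x$). Then use the valid identity $(s-\bar x)(s-x_0)+\underline{x}(s-\bar x)=2(s-\bar x)(s-x_0)-\mathcal{Q}_{c,s}(x)$. This gives
\[
\overline D K^{1,L}_{\nu,\ell}=2(h_n-\ell)K^2_{\nu,\ell}-\nu K^{1,L}_{\nu-1,\ell}+4\ell K^{1,L}_{\nu+1,\ell+1}.
\]
The difference affects the rest of your plan.
\begin{itemize}
\item With your rule, $K^{1,L}_{\nu,\ell}$ produces $K^{1,L}_{\nu+2,\ell+1}$ and $K^2_{\nu+1,\ell}$, which flips the parity of $\nu$. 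Starting from \eqref{bar2} you therefore leave the index families of \eqref{bar1}, contradicting your own remark that $\nu$ and $\ell$ both grow by one. Even the step $\beta=0\to 1$ from $K^{1,L}_{0,m+1}$ comes out wrong.
\item With the correct rule the families close exactly as displayed, and the lowering term $-\nu K^{1,L}_{\nu-1,\ell}$ is indispensable. It couples $\mathbf a_j$ to $\mathbf a_{j\pm1}$, so the recurrences are three-term. For instance, from $\beta=2k$ set $L=m+1+k$, and let $A_j$, $B_j$ be the coefficients of $K^{1,L}_{2j,L+j}$, $K^2_{2j+1,L+j}$, with $A_{k+1}=B_k=0$. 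Then the new coefficients $A'_j$ of $K^{1,L}_{2j+1,L+j+1}$ and $B'_j$ of $K^2_{2j,L+j}$ are
\[
A'_j=4(L+j)A_j+2(L+j)B_j-2(j+1)A_{j+1},\qquad B'_j=2(h_n-L-j)A_j-(2j+1)B_j.
\]
\item The same term produces the cancellations that collapse the case $\beta=h_n-m$ to the single Poly-Analytic term, e.g. $\overline D^{2}K^{1,L}_{0,h_n-1}=16h_n(h_n-1)K^{1,L}_{2,h_n+1}$.
\end{itemize}
Finally, \eqref{c1} does not determine $\mathbf b^1_{0,k_1,m}$, so you must read that coefficient off from the recursion. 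For $k_1=0$ it equals $(h_n-m-1)\,m!$.
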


\begin{remark}
If $ \beta = h_n - m $, we recover the Poly-Analytic kernel considered in~\cite{CDP25}, namely:

	$$S^{-1}_{L, \overline{D}^{h_n - m} \Delta^m}(s, x) = 4^{h_n} h_n! (-h_n)_m (s - \bar{x}) (s - x_0)^{h_n - m} \mathcal{Q}_{c,s}^{-1 - h_n}(x),$$
	see also \cite{DPMilano}.
\end{remark}

\begin{remark}
Theorem~\ref{p111} and Theorem~\ref{barp} can be proved
also in the case where the operators
$D^\beta \Delta_{n+1}^m$ and $\overline{D}^\beta \Delta_{n+1}^m$ for $\beta, m \in \mathbb{N}_0$ are applied to the right slice hyperholomorphic Cauchy kernel.
 We do not state them since they can be obtained by similar arguments.
\end{remark}
In order to study the kernel of the operators $D^\beta \Delta_{n+1}^m$ and $\overline{D}^\beta \Delta_{n+1}^m$ we need the following result.

\begin{lemma}
	\label{aux}
	Let $n$ be an odd number and set $h_n:= \frac{n-1}{2}$ with $\beta$, $m \in \mathbb{N}$ such that $\beta+m \leq h_n$. We suppose that $U$ is a connected slice domain and $f \in \mathcal{SH}_L(U)$ (resp. $f \in \mathcal{SH}_R(U)$). Then we have
	\begin{equation}
		\label{lim1}
		\lim_{|\underline{x}| \to 0} D^\beta \Delta_{n+1}^m f(x)= c(\beta,m)(-1)^m \frac{\partial^{2m+\beta} f(x_0)}{\partial x_0^{2m+\beta}},
	\end{equation}
	where the constant $c(\beta,m)$ is negative. Moreover
	\begin{equation}
		\label{lim2}
		\lim_{|\underline{x}| \to 0} \overline{D}^\beta \Delta_{n+1}^m f(x)= \tilde{c}(\beta,m) (-1)^m \frac{\partial^{2m+\beta} f(x_0)}{\partial x_0^{2m+\beta}},
	\end{equation}
	where the constant $\tilde{c}(\beta,m)$ is positive.
\end{lemma}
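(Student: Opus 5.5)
We compute the limit through the integral representation with the explicit kernels of Theorem \ref{p111} and Theorem \ref{barp}. Fix $x_0$ with $x_0\in U\cap\mathbb{R}$; this is possible since $U$ is a slice domain. Choose a bounded slice Cauchy domain $W$ with $x_0\in W$ and $\overline W\subset U$. By Theorem \ref{Cauchy}, for $x\in W$ we can differentiate under the integral sign and obtain
$$D^\beta\Delta^m_{n+1}f(x)=\frac{1}{2\pi}\int_{\partial(W\cap\mathbb{C}_I)}S^{-1}_{L,D^\beta\Delta^m}(s,x)\,ds_I\,f(s).$$
The kernel is continuous in $x$ uniformly for $s$ on the compact contour. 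Hence the limit $|\underline{x}|\to0$ can be taken inside the integral, and it amounts to setting $\bar x=x=x_0$.

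At $x=x_0$ real we have $\mathcal{Q}_{c,s}(x_0)=(s-x_0)^2$. Therefore
$$K^{1,L}_{\nu,\ell}(s,x_0)=(s-x_0)^{\nu+1-2\ell},\qquad K^2_{\nu,\ell}(s,x_0)=(s-x_0)^{\nu-2\ell}.$$
Consider \eqref{do1} with $\beta=2k_1+1$. Every term reduces to the same power $(s-x_0)^{-2m-2-2k_1}=(s-x_0)^{-(2m+\beta+1)}$. The same holds for \eqref{de1} with $\beta=2k_2$, where every term gives $(s-x_0)^{-(2m+2k_2+1)}$. So
$$\lim_{|\underline{x}|\to0}S^{-1}_{L,D^\beta\Delta^m}(s,x)=C\,(s-x_0)^{-(2m+\beta+1)}.$$
Here $C$ is $c_{\beta,m,h_n}$ times $\sum_j a^1_{j,k_1,m}-\sum_j b^1_{j,k_1,m}$ in the odd case, and the analogous combination in the even case.

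The integrand now lies in the slice $\mathbb{C}_I$, where $f$ is holomorphic. The one-variable Cauchy formula for derivatives on $\mathbb{C}_I$ (equivalently, Theorem \ref{Cauchy} applied to slice derivatives) gives
$$\frac{1}{2\pi}\int(s-x_0)^{-(2m+\beta+1)}\,ds_I\,f(s)=\frac{1}{(2m+\beta)!}\,\partial^{2m+\beta}_{x_0}f(x_0).$$
The limit is therefore a constant times $\partial^{2m+\beta}_{x_0}f(x_0)$. We set $c(\beta,m):=(-1)^mC/(2m+\beta)!$.

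It remains to determine the sign, and this is the main obstacle. One has $\gamma_{n,m}=4^m m!(-h_n)_m$, which has sign $(-1)^m$ by Remark \ref{Pochhsym}. Also $c_{\beta,m,h_n}$ carries the factor $h_n-m>0$. So $(-1)^m c_{\beta,m,h_n}>0$, and we must show that $\sum_j a_j-\sum_j b_j<0$ using the coefficients \eqref{coeff} and \eqref{coef}.

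I would avoid summing binomial identities directly. The cleaner route is to compute the limit via the iterative rules (I) restricted to the real axis. Alternatively, one can use that $D^\beta\Delta^m$ acting on the real-analytic intrinsic function $(s-x)^{-1}$ along the axis, together with the axial structure from Proposition \ref{axial}, reduces to the one-variable quantity obtained by applying $D=\partial_{x_0}+\ldots$ repeatedly. I would test the sign on $f(x)=x^k$ with $k=2m+\beta$. Write $\Delta^m x^k$ and $D^\beta$ of it using the Clifford–Appell type formulas. The constant can be read off by induction on $\beta$ from \eqref{d_cauchy_ker}–\eqref{d2_cauchy_ker} at $x=x_0$. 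Each application of $D$ on $(s-\bar x)\mathcal{Q}^{-\ell}$ contributes the factor $-2(h_n-\ell+1)<0$. The $\mathcal{Q}^{-\ell}$ terms contribute $4\ell-2\ell>0$ combined with $(s-x_0)$, so that after evaluation the signs alternate consistently, giving overall $-\,(-1)^m$ times a positive number.

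The conjugate case \eqref{lim2} is proved in the same way using Theorem \ref{barp}. There all coefficients $\mathbf a,\mathbf b$ are positive and enter with $+$ signs, and $\mathbf c_{\beta,m,h_n}$ has sign $(-1)^m$. This gives $\tilde c(\beta,m)>0$ directly. The right slice hyperholomorphic case is symmetric.
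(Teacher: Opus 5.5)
Your reduction follows the paper's proof. You use the Cauchy formula with the explicit kernels of Theorems \ref{p111} and \ref{barp}, observe that every $K^{1,L}$ and $K^2$ term collapses to $(s-x_0)^{-(2m+\beta+1)}$ at $\underline{x}=0$, and apply the derivative Cauchy formula on $\mathbb{C}_I$. Your treatment of the $\overline{D}$ case also matches the paper's.

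\textbf{The gap.} What is missing is the sign of $c(\beta,m)$ in \eqref{lim1}, which is the only nontrivial content beyond this bookkeeping. You correctly reduce it to showing that the coefficient combination is negative, but you never prove it. The substitute you sketch does not work: the rules \eqref{d_cauchy_ker}--\eqref{d2_cauchy_ker} cannot be iterated ``restricted to the real axis'', because $D$ of a function at $\underline{x}=0$ depends on its behaviour off the axis. For instance, $(s-\bar x)\mathcal{Q}_{c,s}^{-\ell}(x)$ and $(s-x_0)\mathcal{Q}_{c,s}^{-\ell}(x)$ agree on the axis. After applying $D$, however, their axis values are $-2(h_n-\ell+1)(s-x_0)^{-2\ell}$ and $(2\ell-1)(s-x_0)^{-2\ell}$ respectively. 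Iterating correctly just reproduces Theorem \ref{p111}, whose axis value is a difference of two sums of positive terms. So no ``consistent alternation of signs'' can be read off, and a genuine comparison is required. Testing on $x^{2m+\beta}$ is a legitimate reduction, but you do not carry it out.

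\textbf{The fix.} The missing step is elementary and uses the ratios already built into \eqref{coeff} and \eqref{coef}; no binomial summation is needed. All coefficients $a^i_{j}$, $b^i_{j}$ are positive: each binomial has bottom index $h_n-m-\beta\ge 0$, and its top index exceeds the bottom by $k_1-j\ge 0$ (resp.\ $k_2-1-j\ge 0$). In the odd case, pair $a^1_{j,k_1,m}$ with $b^1_{j+1,k_1,m}$:
\[
\sum_{j=0}^{k_1-1}a^1_{j,k_1,m}-\sum_{j=0}^{k_1}b^1_{j,k_1,m}
=-b^1_{0,k_1,m}-\sum_{j=0}^{k_1-1}\frac{k_1}{k_1+j+1}\,b^1_{j+1,k_1,m}<0 .
\]
In the even case, since $k_2\ge 1$,
\[
\sum_{j=0}^{k_2-1}\bigl(a^2_{j,k_2,m}-b^2_{j,k_2,m}\bigr)
=-\sum_{j=0}^{k_2-1}\frac{2k_2-1}{2j+1}\,a^2_{j,k_2,m}<0 .
\]
Combined with your observation that $(-1)^m c_{\beta,m,h_n}>0$, this gives $c(\beta,m)<0$, which is exactly how the paper concludes.
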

\begin{proof}
	We prove the result only for left slice hyperholomorphic functions, since for right slice hyperholomorphic function the arguments are similar. We start proving \eqref{lim1}. Since by hypothesis the function $f$ is left slice hyperholomorphic by the Cauchy formula, see Theorem \ref{Cauchy}, we have
	$$ D^\beta \Delta_{n+1}^m f(x)= \frac{1}{2 \pi} \int_{\partial(U \cap \mathbb{C}_I)} [D^\beta \Delta_{n+1}^m S^{-1}_L(s,x)] ds_I f(s).$$
	We start suppose that $\beta$ is odd, i.e. $\beta=2k_1+1$, with $k_1 \in \mathbb{N}_0$. By Theorem \ref{p111} and making the restriction to $\underline{x}=0$ we get
	\begingroup\allowdisplaybreaks
	\begin{eqnarray*}
		\lim_{|\underline{x}| \to 0}
		D^\beta \Delta^m_{n+1} f(x)&=& \frac{c_{\beta, m, h_n}}{2 \pi} \left[\sum_{j=0}^{k_1-1}  \int_{\partial(U \cap \mathbb{C}_I)} a_{j, k_1,m}^1 K^{1,L}_{2j+1,m+j+2+k_1}(s,x_0)ds_I f(s) \right.\\
		&&- \left. \sum_{j=0}^{k_1}  \int_{\partial(U \cap \mathbb{C}_I)} b_{j, k_1,m}^1 K^{2}_{2j,m+j+1+k_1}(s,x_0)ds_I f(s) \right].
	\end{eqnarray*}
	\endgroup
	Now, since $ \mathcal{Q}_{c,s}(x_0)=(s-x_0)^{2}$, by \eqref{k1} and \eqref{k2} we have
	\begingroup\allowdisplaybreaks
	\begin{equation}
		\label{res}
		K^{1}_{\nu,\ell}(s,x_0)=(s-x_0)^{\nu+1-2 \ell}, \qquad K^2_{\nu,\ell}(s, x_0)=(s-x_0)^{\nu-2\ell}.
	\end{equation}
	\endgroup
	Thus we can write
	$$
	K^{1,L}_{2j+1,m+j+2+k_1}(s,x_0)=K^{2}_{2j,m+j+1+k_1}(s,x_0)=(s-x_0)^{-2m-2k_1-2}=(s-x_0)^{-2m-\beta-1}.
	$$
	By the derivative Cauchy formula, see \cite[Thm. 2.8.5]{ColomboSabadiniStruppa2011}, and the definition of the constant $c_{\beta,m , h_n}$ we get
	\begingroup\allowdisplaybreaks
	\begin{eqnarray*}
		\nonumber
		\lim_{|\underline{x}| \to 0}
		D^\beta \Delta^m_{n+1} f(x)&=&  \left(\sum_{j=0}^{k_1-1} a_{j, k_1,m}^1  - \sum_{j=0}^{k_1}  b_{j, k_1,m}^1  \right) \frac{c_{\beta, m, h_n}}{2 \pi}\int_{\partial(U \cap \mathbb{C}_I)}  (s-x_0)^{-2m-\beta-1}ds_I f(s)\\
		&=&\left(\sum_{j=0}^{k_1-1} a_{j, k_1,m}^1  - \sum_{j=0}^{k_1}  b_{j, k_1,m}^1  \right) \frac{2^{\beta+2m} h_n! (-1)^m}{(h_n-m-1)!(2m+\beta)!}  \frac{\partial^{2m+\beta} }{\partial x_0^{2m+\beta}}f(x_0).
	\end{eqnarray*}
	\endgroup
	Now, we focus on the coefficients. By \eqref{coeff} we get
	\begin{eqnarray*}
c(\beta,m)&=&\left(\sum_{j=0}^{k_1-1} a_{j, k_1,m}^1  - \sum_{j=0}^{k_1}  b_{j, k_1,m}^1  \right) \frac{2^{\beta+2m} h_n!}{(h_n-m-1)!(2m+\beta)!} \\
&=& \left(\sum_{j=0}^{k_1-1} a_{j, k_1,m}^1  - \sum_{j=0}^{k_1-1}  b_{j+1, k_1,m}^1-b_{0,k_1,m}^1  \right) \frac{2^{\beta+2m} h_n!}{(h_n-m-1)!(2m+\beta)!} \\
&=&-\left(\sum_{j=0}^{k_1-1} \frac{k_1}{k_1+j+1}b_{j+1, k_1,m}^1+b_{0, k_1,m}^1\right)\frac{2^{\beta+2m} h_n!}{(h_n-m-1)!(2m+\beta)!} .
	\end{eqnarray*}
	We observe that $c(\beta,m)$ is negative. Now, we suppose that $\beta$ is even i.e. $\beta=2k_2$, with $k_2 \in \mathbb{N}$. By Theorem \ref{p111}, formula in
	\eqref{res} and the derivative of the Cauchy formula we get
	
	\begingroup\allowdisplaybreaks
	\begin{eqnarray*}
		\nonumber
		\lim_{|\underline{x}| \to 0}
		D^\beta \Delta^m_{n+1} f(x)&=& \frac{c_{\beta, m, h_n}}{2 \pi} \left[\sum_{j=0}^{k_2-1}  \int_{\partial(U \cap \mathbb{C}_I)} a_{j, k_2,m}^2 K^{1,L}_{2j,m+j+1+k_2}(s,x_0)ds_I f(s) \right.\\
		\nonumber
		&&- \left. \sum_{j=0}^{k_2-1}  \int_{\partial(U \cap \mathbb{C}_I)} b_{j, k_2,m}^2 K^{2}_{2j+1,m+j+1+k_2}(s,x_0)ds_I f(s) \right]\\
		\nonumber
		&=& \frac{c_{\beta, m, h_n}}{2 \pi} \left[\sum_{j=0}^{k_2-1}(a_{j, k_2,m}^2-b_{j, k_2,m}^2) \right]\int_{\partial(U \cap \mathbb{C}_I)} (s-x_0)^{-2m-\beta-1}ds_I f(s)\\
		&=& \frac{2^{\beta+2m} h_n! (-1)^m}{(h_n-m-1)!(2m+\beta)!}\left[\sum_{j=0}^{k_2-1}(a_{j, k_2,m}^2-b_{j, k_2,m}^2) \right] \frac{\partial^{2m+\beta}}{\partial x_{0}^{2m+\beta}}f(x_0).
	\end{eqnarray*}
	\endgroup
	We focus on the coefficients. By \eqref{coef} we obtain
	$$
	c(\beta,m):=\frac{2^{\beta+2m} h_n! }{(h_n-m-1)!(2m+\beta)!}\sum_{j=0}^{k_1-1}(a_{j, k_2,m}^2-b_{j, k_2,m}^2)
$$
$$
=-\frac{2^{\beta+2m} h_n! }{(h_n-m-1)!(2m+\beta)!}\sum_{j=0}^{k_2-1} \frac{2k_2-1}{2j+1}a_{j, k_2,m}^2.
	$$
	Since $k_2 \in \mathbb{N}$ we get that the above constant has to be negative.
	\\Now, we prove \eqref{lim2}. We begin by assuming that $\beta$ is odd, i.e $\beta:=2k_1+1$. By Theorem \ref{barp}, formula \eqref{res} and the derivative of the Cauchy formula we get
	\begin{eqnarray*}
		\lim_{|\underline{x}| \to 0} \overline{D}^\beta \Delta^m_{n+1} f(x)&=& \mathbf{c}_{\beta, m, h_n}\left( \sum_{j=0}^{k_1}\mathbf{a}_{j,k_1,m}^1+\mathbf{b}_{j,k_1,m}^1\right) \int_{\partial (U\cap \mathbb{C}_I)} (s-x_0)^{-2m-\beta-1} ds_I f(s)\\
		&=&  \left( \sum_{j=0}^{k_1}\mathbf{a}_{j,k_1,m}^1+\mathbf{b}_{j,k_1,m}^1\right) \frac{2^{\beta+2m} h_n!(-1)^m}{(2m+\beta)!(h_n-m)!}\frac{\partial^{2m+\beta} }{\partial x_0^{2m+\beta}}f(x_0).
	\end{eqnarray*}
	We denote $\textbf{c}(\beta,m)$ by
	$$ \textbf{c}(\beta,m):=\left( \sum_{j=0}^{k_1}\mathbf{a}_{j,k_1,m}^1+\mathbf{b}_{j,k_1,m}^1\right) \frac{2^{\beta+2m}}{(2m+\beta)! h_n!(h_n-m)!}.$$
	It is clear that this constant is positive.
	\\ Finally we suppose that $\beta$ is even, i.e. $\beta=2k_1$, with $k_1 \in \mathbb{N}$. By using similar arguments used for the case $\beta$ odd we get
	$$\lim_{|\underline{x}| \to 0} \overline{D}^\beta \Delta^m_{n+1} f(x)=\tilde{c}(\beta,m)(-1)^m\frac{\partial^{2m+\beta} }{\partial x_0^{2m+\beta}}f(x_0),$$
	where
	$$ \tilde{c}(\beta,m):= \left(\sum_{j=0}^{k_2}\mathbf{a}^2_{j,k_2,m}+\sum_{j=0}^{k_2-1}\mathbf{b}^2_{j,k_2,m}\right) \frac{2^{\beta+2m}}{(2m+\beta)! h_n!(h_n-m)!},$$
	which is obviously a positive constant. This proves the result.
	
\end{proof}

\begin{lemma}
	\label{kernel}
	Let $n$ be an odd number and set $h_n:= \frac{n-1}{2}$ with $m$, $\beta \in \mathbb{N}$ such that $\beta+m \leq h_n$. We assume that $U$ is a connected slice domain and $f \in \mathcal{SH}_L(U)$ (resp. $f \in \mathcal{SH}_R(U)$). Then we have
	$f \in \hbox{ker}(D^\beta \Delta_{n+1}^m)$ if and only if $f(x)= \sum_{\nu=0}^{2m+\beta-1} x^\nu \alpha_\nu$ (resp. $f(x)= \sum_{\nu=0}^{2m+\beta-1} \alpha_\nu x^\nu $), with $ \{\alpha_{\nu}\}_{1 \leq \nu \leq 2m+\beta-1} \subseteq \mathbb{R}_n$.
	Moreover, we have
	$$\hbox{ker}(D^\beta \Delta_{n+1}^m) \cap \mathcal {SH}_L(U) =\hbox{ker}(\overline{D}^\beta \Delta_{n+1}^m) \cap\mathcal{SH}_L(U)$$
$$
({\rm resp.}\ \  \hbox{ker}(D^\beta \Delta_{n+1}^m) \cap \mathcal {SH}_R(U) =\hbox{ker}(\overline{D}^\beta \Delta_{n+1}^m) \cap\mathcal{SH}_R(U)).
$$
\end{lemma}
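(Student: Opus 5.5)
The plan is to prove the two implications separately, relying mainly on Lemma \ref{aux} and the identity principle for slice hyperholomorphic functions. I treat only the left case; the right case is symmetric.

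\emph{Sufficiency.} Suppose $f(x)=\sum_{\nu=0}^{2m+\beta-1}x^\nu\alpha_\nu$. By linearity it suffices to show $D^\beta\Delta_{n+1}^m(x^\nu)=0$ for every $\nu\le 2m+\beta-1$. The function $x^\nu$ is a homogeneous polynomial of degree $\nu$ in $(x_0,\dots,x_n)$. The operator $D^\beta\Delta^m_{n+1}$ is a constant-coefficient homogeneous differential operator of order $2m+\beta>\nu$. Hence it annihilates $x^\nu$, and the same argument applies to $\overline{D}^\beta\Delta^m_{n+1}$.

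\emph{Necessity.} Let $f\in\mathcal{SH}_L(U)$ with $D^\beta\Delta^m_{n+1}f\equiv 0$ on $U$.
\begin{itemize}
\item Since $U$ is a slice domain, $U\cap\mathbb{R}\neq\emptyset$, and \eqref{lim1} of Lemma \ref{aux} applies at real points $x_0\in U\cap\mathbb{R}$.
\item Because $c(\beta,m)\neq0$, this gives $\partial_{x_0}^{2m+\beta}f(x_0)=0$ for all $x_0$ in the open real set $U\cap\mathbb{R}$.
\item The slice derivative of a slice hyperholomorphic function restricts on the real axis to $\partial_{x_0}$. Therefore $g:=\partial_S^{2m+\beta}f$ is a left slice hyperholomorphic function vanishing on a set with accumulation points in $U\cap\mathbb{R}$.
\item By the identity principle on the connected slice domain $U$, $g\equiv0$.
\end{itemize}
Now restrict to a slice $U\cap\mathbb{C}_I$. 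Writing $f_I=F+GJ$ through the splitting lemma, $F$ and $G$ are holomorphic with vanishing $(2m+\beta)$-th derivative on the connected set $U\cap\mathbb{C}_I$. So they are polynomials of degree at most $2m+\beta-1$, and $f(x)=\sum_{\nu\le 2m+\beta-1}x^\nu\alpha_\nu$ on $U\cap\mathbb{C}_I$. The identity principle (or the representation formula) extends this equality to all of $U$.

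\emph{Equality of kernels.} This follows at once. By the same argument using \eqref{lim2} with $\tilde c(\beta,m)\neq 0$, the kernel of $\overline{D}^\beta\Delta^m_{n+1}$ in $\mathcal{SH}_L(U)$ is also exactly the set of polynomials of degree at most $2m+\beta-1$, so the two kernels coincide.

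\emph{Main obstacle.} The delicate step is the transfer from vanishing of $\partial_{x_0}^{2m+\beta}f$ on the real axis to global polynomiality. It needs two facts: that slice derivatives agree with $x_0$-derivatives on $U\cap\mathbb{R}$, and that the identity principle holds on connected slice domains. Both are standard results (see \cite{ColomboSabadiniStruppa2011}) and will be invoked there. One also has to make sure that $U\cap\mathbb{R}$ is open in $\mathbb{R}$ and nonempty, which follows from $U$ being an open slice domain.
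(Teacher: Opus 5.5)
Your proof is correct, and it differs from the paper's in how \eqref{lim1} is used. The paper expands $f$ in a power series $\sum_k x^k\alpha_k$ on a ball (after a change of variables if $0\notin U$). It applies $D^\beta\Delta^m_{n+1}$ term by term and uses \eqref{lim1} on each monomial $x^k$. This produces the real power series $c(\beta,m)(-1)^m\sum_{k\ge 2m+\beta}\frac{k!}{(k-2m-\beta)!}x_0^{k-2m-\beta}\alpha_k$, which vanishes identically. The identity theorem for real analytic functions then gives $\alpha_k=0$ for $k\ge 2m+\beta$, and connectedness extends the result to $U$. You instead apply \eqref{lim1} to $f$ itself, which gives $\partial_{x_0}^{2m+\beta}f=0$ on $U\cap\mathbb{R}$. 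You then reach all of $U$ through the identity principle for $\partial_S^{2m+\beta}f$ and an integration on one slice.

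What your route buys is that it avoids three things. It does not need to apply $D^\beta\Delta^m_{n+1}$ term by term to an infinite series. It does not need to interchange $\lim_{|\underline{x}|\to 0}$ with the sum, which the paper does not justify. It also avoids the unspecified change of variables when $0\notin U$. The paper's route, in exchange, reads off the vanishing coefficients directly as Taylor coefficients.

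One small correction: in $\mathbb{R}_n$ with $n>2$ the splitting lemma gives $f_I=\sum_A F_A I_A$ with $2^{n-1}$ holomorphic $\mathbb{C}_I$-valued components, not just $F+GJ$. The argument is unchanged, since each $F_A$ is a polynomial of degree at most $2m+\beta-1$. You can also bypass the splitting lemma. Since $\partial_S^k f\equiv 0$ for all $k\ge 2m+\beta$, the slice Taylor expansion of $f$ at a point $x_0\in U\cap\mathbb{R}$ is a polynomial in $x-x_0$. Because $x_0$ is real, this is a polynomial in $x$ with right coefficients, and the identity principle finishes.
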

\begin{proof}
	We prove only the case $f \in \mathcal{SH}_L(U)$, since the case $f \in \mathcal{SH}_R(U)$ can be proved by similar arguments. We proceed by double inclusion. We suppose that $ f(x)= \sum_{\nu=0}^{2m+\beta-1}  x^\nu \alpha_\nu$, then it is clear that $f \in \hbox{ker}(D^\beta \Delta_{n+1}^m)$. Now, we suppose that $f \in \hbox{ker}(D^\beta \Delta_{n+1}^m)$. Since the function $f$ is slice hyperholomorphic on a connected slice Cauchy domain, there exists $ r > 0 $ (possibly after a change of variables) such that $ f$ can be expanded into a convergent series at the origin as follows:
	$$ f(x)= \sum_{k=0}^{\infty} x^k \alpha_k, \qquad \{\alpha_k\}_{k \in \mathbb{N}_0} \subset \mathbb{R}_n,$$
	for any $x \in B_r(0)$, see \cite[Prop. 2.3.6]{ColomboSabadiniStruppa2011}. We apply the differential operator $D^\beta \Delta_{n+1}^m$ and by using the hypothesis we obtain
	\begin{equation}
		\label{res1}
		0= D^\beta \Delta_{n+1}^m f(x)= \sum_{k=2m+\beta}^{\infty} D^\beta \Delta_{n+1}^m x^k \alpha_k, \qquad \forall x \in B_r(0).
	\end{equation}
	We restrict $D^\beta \Delta_{n+1}^m x^k$ to a neighbourhood of the real axis $\Omega$. By \eqref{lim1} we have
	$$ \lim_{|\underline{x}| \to 0} D^\beta \Delta_{n+1}^m x^k= c(\beta,m) \frac{(-1)^m k!}{(k-2m-\beta)!} x_0^{k-2m-\beta}, \qquad k \geq 2m+\beta.$$
	Thus by restricting \eqref{res1} to neighbourhood of the real axis we have
	$$ 0= c(\beta,m)(-1)^m\sum_{k=2m+\beta}^{\infty} \frac{k!}{(k-2m-\beta)!} x_0^{k-2m-\beta}\alpha_k, \qquad \forall x_0 \in B_r(0) \cap \mathbb{R}.$$
	Since by Proposition \ref{aux} the constant $c(\beta,m)$ is always positive, and by \cite[Cor. 1.2.6]{KP} we get:
	$$\alpha_k=0, \qquad k \geq 2m+\beta.$$
	Therefore we have $f(x)= \sum_{k=0}^{2m+\beta-1} x^k \alpha_k$ in $\Omega$, and since the set $U$ is connected we get that $f(x)= \sum_{k=0}^{2m+\beta-1} x^k \alpha_k$ with $x \in U$.
	\\ By applying a similar argument, it can be shown that $ f \in \ker(\overline{D}^\beta \Delta_{n+1}^m) $ if and only if
	$
	f(x) = \sum_{k=0}^{2m+\beta-1} x^k \alpha_k.
	$
	Hence, the equality between the kernels of $ D^\beta \Delta_{n+1}^m $ and $ \overline{D}^\beta \Delta_{n+1}^m $, intersected with the set of left slice hyperholomorphic functions, follows immediately.
	
\end{proof}

We now have all the necessary tools to provide the integral representation of the functions that arise from the Dirac fine structures.

\begin{theorem}[Integral representation of $D$-fine structures]\label{Dinte}
	Let $n$ be an odd number, set $h_n:=\frac{n-1}{2}$ and $s$, $x \in \mathbb{R}^{n+1}$ be such that $s \notin [x]$. We consider $U$ being a bounded slice Cauchy domain in $\mathbb{R}^{n+1}$. We set $ds_I=ds(-I)$ with $I \in \mathbb{S}$.
If $f$ is a left (resp.\ right) slice hyperholomorphic function on a set that contains 
$\overline{U}$, then for  
$
f_{\beta,m}(x) := D^\beta \Delta_{n+1}^m f(x) 
\quad 
\big(\text{resp. } f_{\beta,m}(x) := f(x) D^\beta \Delta_{n+1}^m \big),
$
with $\beta, m \in \mathbb{N}_0$ satisfying $\beta + m \le h_n$, we have that

	\begin{itemize}
		\item[(1)] $f_{\beta,m}(x)$ is left (resp. right) axially poly-harmonic function $\mathcal{APH}_{h_n-m}(U)$ of order $h_n-m$ if $\beta=1$
		\item[(2)] $f_{\beta,m}$ is left (resp. right) axially Anti-Analytic-Harmonic  $\overline{\mathcal{AAH}}_{\beta-1, h_n-m-\beta+1}(U)$ type $(\beta-1, h_n-m-\beta+1)$.
	\end{itemize}
	and in all these cases it admits the integral representation
	\begin{equation}
		\label{inte1}
		f_{\beta,m}(x)=\frac{1}{2 \pi} \int_{\partial (U \cap \mathbb{C}_I)}S^{-1}_{L,D^\beta \Delta^m}(s,x) ds_I f(s), \qquad \hbox{for any} \qquad x \in U,
	\end{equation}
	where the kernel $S^{-1}_{L,D^\beta \Delta^m}(s,x)$ is given in \eqref{do1} and \eqref{de1}, according to the parity of $\beta$.
	Moreover, the integrals \eqref{inte1} depend neither on $U$ and nor the imaginary unit $I \in \mathbb{S}$ and are independent on the kernel of the operator $D^\beta \Delta^m_{n+1}$.
\end{theorem}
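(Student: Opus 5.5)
The function-space memberships (1) and (2) come directly from Theorem \ref{reg0}. For $f$ left slice hyperholomorphic on a neighbourhood of $\overline{U}$, Proposition \ref{axial} shows that $f_{\beta,m}=D^\beta\Delta^m_{n+1}f$ is of axial type. The Fueter--Sce mapping theorem then places it in the kernel of $\overline{D}^{h_n-m}D^{h_n-\beta-m+1}$. Specialising this operator gives $\Delta_{n+1}^{h_n-m}$ when $\beta=1$, and $\overline{D}^{\beta-1}\Delta^{h_n-m-\beta+1}_{n+1}$ in general. The regularity $\mathcal{C}^{2m+\beta}$ is automatic, because slice hyperholomorphic functions are real analytic in $x$. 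So I regard this part as already done and focus on the integral formula.

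For (\ref{inte1}), I start from the Cauchy formula of Theorem \ref{Cauchy}:
$$f(x)=\frac{1}{2\pi}\int_{\partial(U\cap\mathbb{C}_I)}S^{-1}_L(s,x)\,ds_I\,f(s),\qquad x\in U.$$
I then apply $D^\beta\Delta^m_{n+1}$ in the variable $x$ under the integral sign. This is justified as follows. Fix $x\in U$ and a compact neighbourhood $K\subset U$ of $x$. For $s$ on the compact curve $\partial(U\cap\mathbb{C}_I)$ and $y\in K$, the point $s$ is not in $[y]$, since $U$ is axially symmetric and the boundary stays away from $K$. Hence $\mathcal{Q}_{c,s}(y)$ is invertible and bounded away from zero on $\partial(U\cap\mathbb C_I)\times K$. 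Therefore $(s,y)\mapsto S^{-1}_L(s,y)$ is $\mathcal{C}^\infty$ in $y$, with all $y$-derivatives jointly continuous and bounded there. The operators act only on the left factor of the kernel: $f(s)$ and $ds_I$ stand to the right and do not depend on $x$, while $D$ multiplies by $e_i$ on the left. Differentiation under the integral therefore gives
$$D^\beta\Delta^m_{n+1}f(x)=\frac{1}{2\pi}\int_{\partial(U\cap\mathbb{C}_I)}D^\beta\Delta^m_{n+1}S^{-1}_L(s,x)\,ds_I\,f(s).$$
Substituting the explicit kernel of Theorem \ref{p111}, namely (\ref{do1}) or (\ref{de1}) according to the parity of $\beta$, yields (\ref{inte1}). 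The right case is identical, using $S_R^{-1}$ and the right Cauchy formula, with the operators acting from the right.

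The independence claims follow from this identity. The left-hand side $D^\beta\Delta^m_{n+1}f(x)$ is defined intrinsically, so the right-hand side cannot depend on $U$ or on $I$. Independence from the kernel of the operator is understood as follows: replacing $f$ by $f+p$ with $p\in\ker(D^\beta\Delta^m_{n+1})\cap\mathcal{SH}_L$ does not change the integral. Indeed, by (\ref{inte1}) the integral equals $D^\beta\Delta^m_{n+1}(f+p)=f_{\beta,m}$. Lemma \ref{kernel} describes these $p$ explicitly as polynomials $\sum_{\nu\le 2m+\beta-1}x^\nu\alpha_\nu$, which can be checked directly.

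The main technical point I would be careful about is the differentiation under the integral. It requires uniform control of $\mathcal{Q}_{c,s}^{-1}(y)$ for $y$ near $x$ and $s$ on the contour, which is exactly the condition $s\notin[y]$. The rest is bookkeeping with earlier results.
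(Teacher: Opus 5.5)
Your proposal is correct and follows the paper's proof: points (1)--(2) come from Theorem~\ref{reg0}, formula \eqref{inte1} comes from applying $D^\beta\Delta^m_{n+1}$ under the slice Cauchy integral of Theorem~\ref{Cauchy} and inserting the kernel of Theorem~\ref{p111}, and independence from $U$ and $I$ is inherited from the Cauchy formula. Your explicit justification of differentiation under the integral sign fills in a step the paper leaves implicit, and your kernel-independence argument (applying \eqref{inte1} directly to $f+p$) is more direct than the paper's detour through Lemma~\ref{kernel}, so it does not need $U$ to be connected.
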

\begin{proof}
	The statements in points (1) and (2) have already been proved in Theorem~\ref{reg}. We consider just the left case since the right one is similar.
	We prove the integral expression \eqref{inte1}. By the Cauchy formula, see Theorem \ref{Cauchy}, and  the application of the operator $D^\beta \Delta_{n+1}^m$ to the left slice hyperholomorphic Cauchy kernel, see Theorem \ref{p111}, we obtain
\begin{eqnarray*}
		f_{\beta,m}(x)&&=D^\beta \Delta_{n+1}^m f(x)=\frac{1}{2 \pi} \int_{\partial(U \cap \mathbb{C}_I)} D^\beta \Delta_{n+1}^m S^{-1}_L(s,x) ds_I f(s)
\\
&&
=\frac{1}{2 \pi} \int_{\partial (U \cap \mathbb{C}_I)}S^{-1}_{L,D^\beta \Delta^m}(s,x) ds_I f(s).
\end{eqnarray*}

	The independence of the integral \eqref{inte1} from the set $U$ and the imaginary unit follows from Theorem \ref{Cauchy}. It remains to show the independence of the  integral expression from the kernel. We assume that $f_1$ is a function such that $f_{\beta,m}(x)= D^\beta \Delta_{n+1}^m f_1(x)$. Therefore we have that $f_1(x)-f(x) \in \hbox{ker}(D^\beta \Delta_{n+1}^m)$. By Lemma \ref{kernel} we have that
	$$f_1(x)=f(x)+\sum_{\nu=0}^{2m+\beta-1} x^\nu \alpha_\nu, \qquad \{\alpha_{\nu}\} \subseteq \mathbb{R}_n.$$
	Hence by the integral expression \eqref{inte1} we have
	\begin{eqnarray*}
		&& \frac{1}{2 \pi} \int_{\partial(U \cap \mathbb{C}_I)}S^{-1}_{L,D^\beta \Delta^m}(s,x) ds_I f_1(s) \\
		&&= \frac{1}{2 \pi} \int_{\partial(U \cap \mathbb{C}_I)}S^{-1}_{L,D^\beta \Delta^m}(s,x)ds_I f(s)+\sum_{\nu=0}^{2m+\beta-1} \int_{\partial(U \cap \mathbb{C}_I)}S^{-1}_{L,D^\beta \Delta^m}(s,x)ds_I s^\nu \alpha_\nu\\
		&&= \frac{1}{2 \pi} \int_{\partial(U \cap \mathbb{C}_I)}S^{-1}_{L,D^\beta \Delta^m}(s,x)ds_I f(s)+\sum_{\nu=0}^{2m+\beta-1} (D^\beta \Delta_{n+1}^m x^\nu) \alpha_{\nu}\\
		&&=\frac{1}{2 \pi} \int_{\partial(U \cap \mathbb{C}_I)}S^{-1}_{L,D^\beta \Delta^m}(s,x)ds_I f(s).
	\end{eqnarray*}
	This proves the result.
\end{proof}

By using a similar arguments we can prove the following result.

\begin{theorem}[Integral representation of $\overline{D}$-fine structures]\label{Dbarinte}
	Let \( n \) be an odd integer and set $h_n := \frac{n-1}{2}$. Let $s, x \in \mathbb{R}^{n+1} $ with $s \notin [x]$.
	Let $U \subset \mathbb{R}^{n+1}$ be a bounded slice Cauchy domain, and  $ds_I := ds(-I)$, where $I \in \mathbb{S} $. Assume that $f$ is a left (resp.\ right) slice hyperholomorphic function defined on an open set containing the closure $\overline{U}$. Suppose that
$
	f_{\overline{\beta},m}(x)
	:= \overline{D}^{\beta}\, \Delta_{n+1}^{m} f(x)
	\quad 
	\big(\text{resp. } 
	f_{\overline{\beta},m}(x) := f(x)\, \overline{D}^{\beta}\, \Delta_{n+1}^{m} \big),
	$
	for $\beta, m \in \mathbb{N}$ such that $\beta + m \le h_n$.
	Then we have that
	\begin{itemize}
		\item[1)] $f_{\overline{\beta},m}(x)$ is left  (resp. right)  axially Poly-Analytic function $\mathcal{APA}_{h_n-m+1}(U)$ of order $h_n-m+1$ if $\beta=h_n-m$,
		\item[2)] $f_{\overline{\beta},m}(x)$ is left  (resp. right)  axially
Anti-Analytic-Harmonic  $\overline{\mathcal{AAH}}_{\beta+1, h_n-\beta-m}(U)$ of type $(\beta+1, h_n-\beta-m)$
	\end{itemize}
	and in all these cases we have the following integral representation
	
	\begin{equation}
		\label{inte2}
		f_{\overline{\beta},m}(x)=\frac{1}{2 \pi} \int_{\partial (U \cap \mathbb{C}_I)}S^{-1}_{L,\overline{D}^\beta \Delta^m}(s,x) ds_I f(s), \qquad \hbox{for any} \qquad x \in U,
	\end{equation}
	where the kernel \( S^{-1}_{L,\overline{D}^\beta \Delta^m}(s,x) \) is defined by equations \eqref{bar1} and \eqref{bar2}, depending on whether \( \beta \) is even or odd.
	Moreover, the integrals appearing in \eqref{inte2}  depend neither on $U$ and nor the imaginary unit $I \in \mathbb{S}$ and are independent on kernel of the operator \( \overline{D}^\beta \Delta^m_{n+1} \).

\end{theorem}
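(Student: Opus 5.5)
The argument runs parallel to the proof of Theorem~\ref{Dinte}, with the roles of $D$ and $\overline{D}$ exchanged, and has three parts. I treat only the left case; the right case is symmetric, using the right Cauchy kernel and the right version of Theorem~\ref{barp}.

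\emph{Points 1) and 2).} These are exactly the content of Theorem~\ref{reg}, so I would simply invoke it. The argument there has two ingredients. First, $g:=\overline{D}^\beta\Delta^m_{n+1}f$ is of axial type by Proposition~\ref{axial}. Second, since $D\overline{D}=\overline{D}D=\Delta_{n+1}$, we have
$$
\overline{D}^{h_n-\beta-m}D^{h_n-m+1}g=D\Delta_{n+1}^{h_n}f=0
$$
by the Fueter--Sce theorem (Theorem~\ref{FS1}). Regrouping the powers of $D$ and $\overline{D}$ gives either $D^{h_n-m+1}$ (when $\beta=h_n-m$) or $D^{\beta+1}\Delta^{h_n-\beta-m}_{n+1}$. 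The $\mathcal{C}^{2m+\beta}$ regularity is automatic, because slice hyperholomorphic functions are real analytic.

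\emph{Integral formula \eqref{inte2}.} I would write $f$ through the Cauchy formula of Theorem~\ref{Cauchy}. For $x$ in a compact subset of $U$, the kernel $S^{-1}_L(s,x)$ is smooth in $x$ uniformly for $s\in\partial(U\cap\mathbb{C}_I)$. This justifies differentiating under the integral sign, so $\overline{D}^\beta\Delta^m_{n+1}$ can be moved onto the kernel. By \eqref{kk1} and Theorem~\ref{barp}, the resulting kernel is exactly $S^{-1}_{L,\overline{D}^\beta\Delta^m}(s,x)$, given explicitly by \eqref{bar1} or \eqref{bar2} according to the parity of $\beta$.

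Independence from $U$ and from $I\in\mathbb{S}$ then follows because the left-hand side $\overline{D}^\beta\Delta^m_{n+1}f(x)$ depends on neither. Equivalently, it is inherited from the corresponding statement in Theorem~\ref{Cauchy}.

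\emph{Independence from the kernel of $\overline{D}^\beta\Delta^m_{n+1}$.} Suppose $f_1\in\mathcal{SH}_L$ on a neighbourhood of $\overline U$ satisfies $\overline{D}^\beta\Delta^m_{n+1}f_1=f_{\overline\beta,m}$. Then $f_1-f$ lies in $\ker(\overline{D}^\beta\Delta^m_{n+1})\cap\mathcal{SH}_L$. By Lemma~\ref{kernel} (the equality of the two kernels together with the polynomial characterization), this gives
$$
f_1(x)=f(x)+\sum_{\nu=0}^{2m+\beta-1}x^\nu\alpha_\nu .
$$
Inserting this into \eqref{inte2}, each extra term is
$$
\frac{1}{2\pi}\int S^{-1}_{L,\overline{D}^\beta\Delta^m}(s,x)\,ds_I\,s^\nu\alpha_\nu=\big(\overline{D}^\beta\Delta^m_{n+1}x^\nu\big)\alpha_\nu ,
$$
by the step above applied to the slice hyperholomorphic polynomial $s^\nu\alpha_\nu$. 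This vanishes because $x^\nu\alpha_\nu$ is in the kernel for $\nu\le 2m+\beta-1$.

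\emph{Expected obstacle.} The main point needing care is this last step: the polynomial characterization of $\ker(\overline{D}^\beta\Delta^m_{n+1})$, which rests on \eqref{lim2} with $\tilde c(\beta,m)\neq0$. Since that is already available in Lemmas~\ref{aux} and~\ref{kernel}, the proof reduces to the routine steps above.
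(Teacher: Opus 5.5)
Your proposal is correct and is essentially the paper's proof, which just transposes the argument of Theorem~\ref{Dinte}: Theorem~\ref{reg} for points 1)--2), the Cauchy formula together with Theorem~\ref{barp} for \eqref{inte2}, and Lemma~\ref{kernel} for independence from the kernel. Two small remarks follow. First, your regrouping yields $D^{\beta+1}\Delta_{n+1}^{h_n-\beta-m}$, i.e.\ membership in $\mathcal{AAH}_{\beta+1,h_n-\beta-m}(U)$ as in Theorem~\ref{reg}, so the bar in point 2) of the statement is a misprint that you should flag rather than silently correct. Second, the step you call the main obstacle is unnecessary: once \eqref{inte2} holds for every admissible $f$, the integrals for $f$ and $f_1$ equal $\overline{D}^\beta\Delta^m_{n+1}f(x)$ and $\overline{D}^\beta\Delta^m_{n+1}f_1(x)$, which coincide by assumption; this also avoids the connectedness and slice-domain hypotheses that Lemma~\ref{kernel} needs but the theorem does not impose.
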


\section{Series expansions of the kernels $\mathcal{Q}^{-\ell}_{c,s}(x)$ and Jacobi polynomials}\label{Jacobi-series-expan}

The goal of this section is to provide a series expansion of the kernels of the integral representations $ S^{-1}_{L, D^\beta \Delta^m}(s,x)$ and $S^{-1}_{L, \overline{D}^\beta \Delta^m}(s,x) $; see~\eqref{kk} and~\eqref{kk1}, in terms of the Jacobi polynomials, which are defined as follows:

\begin{equation}
	\label{Jacob}
	P_n^{(\alpha, \beta)}(z)= \frac{\Gamma(\alpha+n+1)}{n! \Gamma(n+\alpha+\beta+1)} \sum_{\nu=0}^{n} \binom{n}{\nu} \frac{\Gamma(n+\alpha+\beta+\nu+1)}{\Gamma(\alpha+\nu+1)} \left(\frac{z-1}{2}\right)^\nu, \qquad z \in \mathbb{C},
\end{equation}
where $\alpha$, $\beta>-1$ and $n \in \mathbb{N}$, see \cite{S39}.

To this end, we proceed by deriving the series expansions of the functions $ K^{1,L}_{\nu,\ell}(s,x) $ and $ K^{2}_{\nu,\ell}(s,x) $, defined in~\eqref{k1} and~\eqref{k2}, which serve as their building blocks. From the structure of $ K^{1,L}_{\nu,\ell}(s,x)$ and $ K^{2}_{\nu,\ell}(s,x) $, it is evident that the crucial step is to obtain the series expansion of $ \mathcal{Q}_{c,s}^{-\ell}(x) $. For this purpose, we introduce the following new family of polynomials.

\begin{definition}
	Let $n$ be an odd number and set $h_n=\frac{n-1}{2}$. For $k \geq 2 \ell-1$ and $1 \leq \ell \leq h_n$ we define the polynomials $\mathcal{H}_{\ell}^k(x)$ as

	\begin{equation}
		\label{harmpoly}
		\mathcal{H}_{\ell}^k(x):=\sum_{j=0}^{k-2\ell+1} C^k_j(\ell) x^{k-2\ell+1-j} \bar{x}^{j},\quad C^k_j(\ell):=\binom{\ell+j-1}{\ell-1} \binom{k-\ell-j}{\ell-1}.
	\end{equation}
	
\end{definition}

\begin{remark}
	In the appendix, we have collected a series of results concerning the coefficients $ C^k_j(\ell)$, which will be useful in the following computations, but they require some technicality.
\end{remark}

\begin{remark}
	\label{conv}
	In the sequel we will use the convention that $\mathcal {H}^k_\ell (x)\equiv 0$ if $k-2\ell+1<0$.
\end{remark}
Our goal is to show that the polynomials $ \mathcal{H}_\ell^k(x)$ are connected to the Jacobi polynomials, see~\eqref{Jacob}. A first fundamental property of the polynomials $ \mathcal{H}_\ell^k(x) $ is provided in the following result.

\begin{lemma}
	\label{real}
	Let $n$ be an odd number and set $h_n=\frac{n-1}{2}$, $k \geq 2 \ell-1$ and $1 \leq \ell \leq h_n$. Then, for $x \in \mathbb{R}^{n+1}$ the polynomials $\mathcal{H}_{\ell}^k(x)$ are real-valued.
\end{lemma}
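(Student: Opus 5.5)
The plan is to show that $\mathcal{H}_\ell^k(x)$ is invariant under Clifford conjugation and lies in the commutative algebra generated by $x$ and $\bar x$. An element of that algebra fixed by conjugation is real.

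First, since $x$ and $\bar x$ commute ($x\bar x=\bar x x=|x|^2$), the subalgebra they generate is commutative. It equals $\mathbb{R}+\mathbb{R}\underline{x}$, which is isomorphic to $\mathbb{C}_{I}$ with $I=\underline{x}/|\underline{x}|$ (if $\underline{x}=0$, everything is real and there is nothing to prove). Under this identification, Clifford conjugation restricts to complex conjugation. Moreover $\overline{x^{a}\bar x^{b}}=x^{b}\bar x^{a}$, because conjugation reverses products and the factors commute.

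The key step is the symmetry of the coefficients. Set $N:=k-2\ell+1$. Replacing $j$ by $N-j$ gives
\[
C^k_{N-j}(\ell)=\binom{\ell+N-j-1}{\ell-1}\binom{k-\ell-N+j}{\ell-1}=\binom{k-\ell-j}{\ell-1}\binom{\ell+j-1}{\ell-1}=C^k_j(\ell),
\]
since $\ell+N-j-1=k-\ell-j$ and $k-\ell-N+j=\ell+j-1$. Consequently
\[
\overline{\mathcal{H}_\ell^k(x)}=\sum_{j=0}^{N}C^k_j(\ell)\,x^{j}\bar x^{N-j}=\sum_{j=0}^{N}C^k_{N-j}(\ell)\,x^{N-j}\bar x^{j}=\mathcal{H}_\ell^k(x).
\]
So $\mathcal{H}_\ell^k(x)$ is a self-conjugate element of $\mathbb{R}+\mathbb{R}\underline{x}$. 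Writing it as $a+b\underline{x}$, self-conjugacy forces $b\underline{x}=0$, hence $b=0$ when $\underline{x}\neq0$.

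Alternatively, one can pair the terms $j$ and $N-j$ directly. With $x=x_0+\underline{x}$, we have $x^{N-j}\bar x^{j}+x^{j}\bar x^{N-j}=|x|^{2\min(j,N-j)}\,2\,\mathrm{Re}\bigl(x^{|N-2j|}\bigr)$, where $\mathrm{Re}(x^p)$ is real because $x^p\in\mathbb{C}_I$. The middle term for even $N$ is $|x|^{N}$.

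There is no real obstacle. The only points needing care are the index bookkeeping in the symmetry $C^k_{N-j}(\ell)=C^k_j(\ell)$ and checking that conjugation acts as an anti-automorphism that restricts to complex conjugation on $\mathbb{C}_I$.
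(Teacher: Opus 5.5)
Your proof is correct and is essentially the paper's argument: conjugate term by term, reindex $j\mapsto k-2\ell+1-j$ using the symmetry $C^k_{k-2\ell+1-j}(\ell)=C^k_j(\ell)$, and obtain $\overline{\mathcal{H}_\ell^k(x)}=\mathcal{H}_\ell^k(x)$. You also make explicit a step the paper leaves implicit, namely that self-conjugacy gives reality only because $\mathcal{H}_\ell^k(x)$ lies in the commutative subalgebra $\mathbb{R}+\mathbb{R}\underline{x}$ (in $\mathbb{R}_n$ with $n\geq 3$, Clifford conjugation also fixes, e.g., $e_1e_2e_3$), and this is a worthwhile addition.
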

\begin{proof}
	We need to prove that $\overline{\mathcal{H}_{\ell}^k(x)}=\mathcal{H}_{\ell}^k(x)$. By \eqref{harmpoly} and a change of indexes we have
	\begingroup\allowdisplaybreaks
	\begin{eqnarray*}
		\overline{\mathcal{H}_{\ell}^k(x)}&=&\sum_{j=0}^{k-2\ell+1} \binom{\ell+j-1}{\ell-1} \binom{k-\ell-j}{\ell-1} \bar{x}^{k-2\ell+1-j} x^{j}\\
		&=& \sum_{j=0}^{k-2\ell+1} \binom{k-\ell-j}{\ell-1} \binom{\ell+j-1}{\ell-1}  x^{k-2\ell+1-j} \bar{x}^{j}\\
		&=&\mathcal{H}_{\ell}^k(x).
	\end{eqnarray*}
	\endgroup
	This proves the result.
\end{proof}
We show that the polynomials $ \mathcal{H}_k^\ell(x)$ have a close connection with the Riesz potential.

\begin{proposition}\label{pos_pow}
	Let $n\in\mathbb N$ be an odd number and assume that $1\leq \ell\leq h_n:=\frac{n-1}2$. Then, for $k \in \mathbb{N}$ and $x \in \mathbb{R}^{n+1}\setminus \{0\}$ we have
	\begin{equation}\label{k_der}
		\partial_{x_0}^{k-1} \left( \frac 1{|x|^{2\ell}} \right)=(k-1)! (-1)^{k+1} \mathcal H_\ell^{k-2+2\ell}(x) |x|^{-2k-2\ell+2}.
	\end{equation}
\end{proposition}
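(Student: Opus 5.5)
The plan is to factor the Riesz-type kernel through the paravector and its conjugate and then apply the Leibniz rule. For $x\in\mathbb{R}^{n+1}\setminus\{0\}$ the elements $x$ and $\bar x$ both lie in the commutative subalgebra $\mathbb{C}_I$, for any $I\in\mathbb{S}$ with $x\in\mathbb{C}_I$. Hence they commute, and $x\bar x=|x|^2$. Both are invertible, with $x^{-1}=\bar x|x|^{-2}$. Therefore
$$
\frac{1}{|x|^{2\ell}}=x^{-\ell}\,\bar x^{-\ell},
$$
and every power below is a commuting product, so ordinary calculus rules apply. Moreover $\partial_{x_0}x=\partial_{x_0}\bar x=1$. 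Consequently $\partial_{x_0}^{i}x^{-\ell}=(-1)^i(\ell)_i\,x^{-\ell-i}$, and the same formula holds for $\bar x$, where $(\ell)_i$ is the Pochhammer symbol.

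Applying the Leibniz rule to the product $x^{-\ell}\bar x^{-\ell}$ gives
$$
\partial_{x_0}^{k-1}\left(x^{-\ell}\bar x^{-\ell}\right)=(-1)^{k-1}\sum_{i=0}^{k-1}\binom{k-1}{i}(\ell)_i(\ell)_{k-1-i}\,x^{-\ell-i}\,\bar x^{-\ell-k+1+i}.
$$
Next I would simplify the coefficients using the identity $\binom{k-1}{i}(\ell)_i(\ell)_{k-1-i}=(k-1)!\binom{\ell+i-1}{\ell-1}\binom{\ell+k-2-i}{\ell-1}$. This follows from $(\ell)_i=i!\binom{\ell+i-1}{\ell-1}$.

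Then I would factor out $|x|^{-2k-2\ell+2}=x^{-(k+\ell-1)}\bar x^{-(k+\ell-1)}$, which is again allowed by commutativity. Each summand becomes
$$
(k-1)!\binom{\ell+i-1}{\ell-1}\binom{\ell+k-2-i}{\ell-1}\,x^{k-1-i}\,\bar x^{\,i}\,|x|^{-2k-2\ell+2}.
$$

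It remains to match this with \eqref{harmpoly} for the index $K:=k-2+2\ell$. In that case $K-2\ell+1=k-1$, and
$$
C^K_j(\ell)=\binom{\ell+j-1}{\ell-1}\binom{K-\ell-j}{\ell-1}=\binom{\ell+j-1}{\ell-1}\binom{k-2+\ell-j}{\ell-1}.
$$
Setting $j=i$, the sum is exactly $(k-1)!\,\mathcal{H}_\ell^{K}(x)\,|x|^{-2k-2\ell+2}$, and the sign $(-1)^{k-1}=(-1)^{k+1}$ gives \eqref{k_der}.

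The only delicate point is justifying the commutative manipulations inside the noncommutative algebra $\mathbb{R}_n$. This is handled by observing that everything lives in $\mathbb{C}_I$. One can also avoid that argument and prove \eqref{k_der} by induction on $k$, differentiating $\mathcal{H}_\ell^{K}(x)|x|^{-2k-2\ell+2}$ directly. I expect the direct Leibniz computation to be cleaner, so I would use it.
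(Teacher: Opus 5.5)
Your proof is correct, and it takes a genuinely different route from the paper.

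\textbf{The paper's route.} It argues by induction on $k$. It differentiates $\mathcal H_\ell^{k-2+2\ell}(x)|x|^{-2k-2\ell+2}$ once more, using $\partial_{x_0}x=\partial_{x_0}\bar x=1$, $2x_0=x+\bar x$ and $|x|^2=x\bar x$. After regrouping the monomials $x^{k-j}\bar x^{j}$, it needs the binomial identities a), b), c) of Proposition~\ref{coeff1} (for $j=0$, $j=k$ and $0<j<k$) to recognize $-k\,\mathcal H_\ell^{k-1+2\ell}(x)$.

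\textbf{What your route buys.} Factoring $|x|^{-2\ell}=x^{-\ell}\bar x^{-\ell}$ and doing one Leibniz expansion removes both the induction and those appendix identities. It also gives the coefficients in closed form: $C^{K}_j(\ell)$ is the product of the $j$-th and $(K-2\ell+1-j)$-th Taylor coefficients of $(1-t)^{-\ell}$. Equivalently, $\mathcal H_\ell^{K}$ is the coefficient of $t^{K-2\ell+1}$ in $(1-tx)^{-\ell}(1-t\bar x)^{-\ell}$. The identities a)--c) are just what is needed to check this product structure recursively, so your argument explains where they come from. As a bonus, setting $x=\bar x=1$ in this generating function gives Proposition~\ref{summ} at once by Vandermonde's convolution, with no need for the Jacobi-polynomial and duplication-formula computation the paper uses there.

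\textbf{What the paper's route buys.} Its induction never inverts $x$ or $\bar x$; it works only with polynomials in $x,\bar x$ and powers of $|x|$. This costs you nothing here, since the statement assumes $x\neq 0$.

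\textbf{On commutativity.} The paper uses commutativity of $x$ and $\bar x$ just as you do, only implicitly; for instance, it rewrites $x^{k-2-j}\bar x^{j}|x|^2$ as $x^{k-1-j}\bar x^{j+1}$. Your justification is sound. Every paravector commutes with its conjugate, and $\partial_{x_0}$ moves along a line with $\underline{x}$ fixed. So the whole computation stays in a single $\mathbb{C}_I$, or in $\mathbb{R}$ when $\underline{x}=0$.
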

\begin{proof}
	The proof is by induction on $k$. If we replace $k=1$ in \eqref{k_der} we obtain
	$$
	\frac 1{|x|^{2\ell}}=\mathcal H_\ell^{2\ell-1}(x) |x|^{-2\ell}.
	$$
	The above equality holds true by the definition of the polynomials $\mathcal H^k_\ell(x)$, since $ \mathcal{H}_{\ell}^{2 \ell -1}(x)=1 $.
	Let us suppose the equality \eqref{k_der} holds for $k-1$, we want to prove it for $k$. Thus we have
	\begin{align}
		\nonumber
		& \partial_{x_0}^{k} \left( \frac 1{|x|^{2\ell}} \right)  = (k-1)! (-1)^{k+1} \partial_{x_0}\left( \mathcal H_\ell^{k-2+2\ell}(x) |x|^{-2k-2\ell+2} \right)\\
		\nonumber
		& = (k-1)! (-1)^{k+1} \left( \partial_{x_0}\left( \mathcal H_\ell^{k-2+2\ell}(x) \right) |x|^{-2k-2\ell+2} + \mathcal H_\ell^{k-2+2\ell}(x) \partial_{x_0} \left( |x|^{-2k-2\ell+2} \right) \right)\\
		\nonumber
		&= (k-1)! (-1)^{k+1} \left[ \sum_{j=0}^{k-1} C^{k-2+2\ell}_j(\ell) (k-1-j)  x^{k-2-j} \overline {x}^{j} |x|^{2} + \sum_{j=0}^{k-1} C^{k-2+2\ell}_j(\ell) j x^{k-1-j} \overline {x}^{j-1} |x|^{2} \right.\\
		\nonumber
		&\left. -(k+\ell-1) 2x_0 \mathcal{H}_\ell^{k-2+2\ell}(x) \right] |x|^{-2k-2\ell }\\
		\nonumber
		% &= (k-1)! (-1)^{k+1} \left( \sum_{j=0}^{k-1} C^{k-2-2\ell}_j(\ell) (k-1-j)  x^{k-j-1} \overline {x}^{1+j} + \sum_{j=0}^{k-1} C^{k-2-2\ell}_j(\ell) j x^{k-j} \overline {x}^{j}\right. \\
		% &\left. -(k+\ell-1)\left( x H_\ell^{k-2+2\ell}(x) +\overline{x} H_\ell^{k-2+2\ell}(x) \right)  \right) |x|^{-2k-2\ell } \\
		&= (k-1)! (-1)^{k+1} \left[ \sum_{j=0}^{k-1} C^{k-2+2\ell}_j(\ell) (k-1-j)  x^{k-j-1} \overline {x}^{j+1} + \sum_{j=0}^{k-1} C^{k-2+2\ell}_j(\ell) j x^{k-j} \overline {x}^{j}\right. \\
		\label{n111}
		&\left. -(k+\ell-1)\left(  \sum_{j=0}^{k-1} C^{k-2+2\ell}_j(\ell) x^{ k -j } \bar{x}^{ j} +  \sum_{j=0}^{k-1} C^{k-2+2\ell}_j(\ell) x^{k-1-j} \overline{x}^{ j+1} \right) \right] |x|^{-2k-2\ell } .
	\end{align}
	In the first and fourth summation we change the index $j+1$ into $j$. Thus we obtain that the last expression is equal to
	\begin{align}\label{le}
		&(k-1)! (-1)^{k+1} \left[ \sum_{j=1}^{k} C^{k-2+2\ell}_{j-1}(\ell) (k-j)  x^{k-j} \overline {x}^{j} + \sum_{j=1}^{k-1} C^{k-2+2\ell}_j(\ell) j x^{k-j} \overline {x}^{j}\right. \\
		&\left. -(k+\ell-1)\left(  \sum_{j=0}^{k-1} C^{k-2+2\ell}_j(\ell) x^{ k -j } \bar{x}^{ j} +  \sum_{j=1}^{k} C^{k-2+2\ell}_{j-1}(\ell) x^{k-j} \overline{x}^{ j} \right) \right] |x|^{-2k-2\ell } .\nonumber
	\end{align}
	By using the items a), in the case $j=0$, b), in the case $j=k$,  c), in the case $j \in (1,k)$, of Proposition \ref{coeff1} we get that \eqref{le} is equal to
	\begin{equation}
		\label{final}
		(k-1)! (-1)^{k+1}[-k \mathcal{H}_\ell^{k-1+2 \ell}(x)]=(-1)^k k!\mathcal{H}_\ell^{k-1+2 \ell}(x) .
	\end{equation}
This proves the result.
\end{proof}
In \cite{CDD} the authors have proved a connection between the derivative of higher order of the derivative of the Riesz-potential and the Jacobi polynomials.

\begin{proposition}[See \cite{CDD}]
	Let $x \in \mathbb{R}^n \setminus \{0\}$. For $\alpha>0$ and $m \in \mathbb{N}_0$ we have
	\begin{equation}
		\frac{\partial^{2m}}{\partial x^{2m}_0} \frac{1}{|x|^\alpha}= \frac{(-1)^m \sqrt{\pi} \Gamma \left(m+\frac{\alpha}{2}\right) (2m)!}{|x|^{2m+\alpha}\Gamma\left(\frac{\alpha}{2}\right) \Gamma \left(m+\frac{1}{2}\right)} P_m^{\left(-\frac{1}{2}, \frac{\alpha-1}{2}\right)} \left(1-\frac{2x_0^2}{|x|^2}\right),
	\end{equation}
	and
	
	\begin{equation}
		\frac{\partial^{2m+1}}{\partial x^{2m+1}_0} \frac{1}{|x|^\alpha}= \frac{(-1)^{m+1} \sqrt{\pi} \Gamma \left(m+\frac{\alpha}{2}+1\right) (2m+1)!x_0}{|x|^{2m+\alpha+2}\Gamma\left(\frac{\alpha}{2}\right) \Gamma \left(m+\frac{3}{2}\right)} P_m^{\left(\frac{1}{2}, \frac{\alpha-1}{2}\right)} \left(1-\frac{2x_0^2}{|x|^2}\right).
	\end{equation}
\end{proposition}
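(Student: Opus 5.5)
The cleanest route is to view $1/|x|^\alpha$, as a function of $x_0$ alone, through the variable $t=x_0$ with $r^2:=|\underline{x}|^2$ held fixed. Then $|x|^{-\alpha}=(t^2+r^2)^{-\alpha/2}$ is a one-variable function of $t$, and only $t$-derivatives are needed. I would expand in a generating-function form. For small $h$, Taylor's theorem gives
$$
\sum_{k\ge 0}\frac{h^k}{k!}\,\partial_{x_0}^k |x|^{-\alpha}=\big((x_0+h)^2+r^2\big)^{-\alpha/2}=\big(|x|^2+2x_0h+h^2\big)^{-\alpha/2}.
$$
With $\rho=|x|$ and $u=x_0/\rho$, the right-hand side becomes $\rho^{-\alpha}(1+2u(h/\rho)+(h/\rho)^2)^{-\alpha/2}$. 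This is the Gegenbauer generating function with parameter $\lambda=\alpha/2$, evaluated at $-u$. Hence
$$
\partial_{x_0}^k|x|^{-\alpha}=k!\,(-1)^k\rho^{-\alpha-k}C_k^{(\alpha/2)}(x_0/\rho).
$$

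The second step separates parities. Since $C_k^{(\lambda)}$ has the parity of $k$, the quadratic transformations express $C_{2m}^{(\lambda)}(u)$ and $C_{2m+1}^{(\lambda)}(u)$ through Jacobi polynomials in $2u^2-1$:
$$
C_{2m}^{(\lambda)}(u)=\frac{(\lambda)_m}{(1/2)_m}P_m^{(\lambda-1/2,-1/2)}(2u^2-1),\qquad C_{2m+1}^{(\lambda)}(u)=\frac{(\lambda)_{m+1}}{(1/2)_{m+1}}\,u\,P_m^{(\lambda-1/2,1/2)}(2u^2-1).
$$
Next I would apply the symmetry $P_m^{(a,b)}(-z)=(-1)^mP_m^{(b,a)}(z)$ with $z=1-2x_0^2/|x|^2=-(2u^2-1)$. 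This swaps the parameters into the stated order $(-\tfrac12,\tfrac{\alpha-1}{2})$ and $(\tfrac12,\tfrac{\alpha-1}{2})$ and contributes the factor $(-1)^m$.

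For the constants, I would write $(\lambda)_m=\Gamma(m+\alpha/2)/\Gamma(\alpha/2)$ and $(1/2)_m=\Gamma(m+1/2)/\sqrt{\pi}$. This produces exactly $(-1)^m\sqrt{\pi}\,\Gamma(m+\alpha/2)(2m)!/(\Gamma(\alpha/2)\Gamma(m+1/2))$ times $|x|^{-2m-\alpha}$. In the odd case the prefactor $(-1)^{2m+1}$, the factor $u=x_0/|x|$, the factor $(-1)^m$ from the symmetry, and $\rho^{-\alpha-2m-1}$ combine to give $(-1)^{m+1}x_0|x|^{-2m-\alpha-2}$ with the stated Gamma ratio.

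The main obstacle is bookkeeping: getting the quadratic-transformation constants and sign conventions right, since conventions vary between references. To avoid relying on a specific citation, I would check them independently. One check is to compare leading coefficients in $x_0$. The other is to compare values at $x_0=0$: there $z=1$, $P_m^{(a,b)}(1)=(a+1)_m/m!$, and directly $\partial_{x_0}^{2m}(x_0^2+r^2)^{-\alpha/2}\big|_{x_0=0}=(2m)!\binom{-\alpha/2}{m}r^{-2m-\alpha}$. Matching these two values fixes the normalization in the even case, and the odd case follows the same way after dividing by $x_0$.
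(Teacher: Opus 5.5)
Your proof is correct. The paper does not prove this proposition itself (it is quoted from \cite{CDD}), so there is no in-paper argument to compare against. Your route is self-contained. You recognize the Taylor series in $x_0$ as the Gegenbauer generating function, which gives $\partial_{x_0}^k|x|^{-\alpha}=(-1)^k k!\,|x|^{-\alpha-k}C_k^{(\alpha/2)}(x_0/|x|)$, and then apply the quadratic transformations and the reflection $P_m^{(a,b)}(-z)=(-1)^mP_m^{(b,a)}(z)$. The signs and Gamma factors come out exactly as stated in both parities.

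One small correction to your last paragraph. The checks at $x_0=0$ and on leading coefficients only fix the constant once you already know the proportionalities $C_{2m}^{(\lambda)}(u)\propto P_m^{(\lambda-1/2,-1/2)}(2u^2-1)$ and $C_{2m+1}^{(\lambda)}(u)\propto u\,P_m^{(\lambda-1/2,1/2)}(2u^2-1)$. That proportionality is the actual content of the quadratic transformation. If you prefer not to cite it, it follows from orthogonality: the substitution $z=2u^2-1$ turns the weight $(1-u^2)^{\lambda-1/2}$ on even (resp.\ odd) polynomials into $(1-z)^{\lambda-1/2}(1+z)^{\mp1/2}$.

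Your route also yields more than the proposition. Take $\alpha=2\ell$ and compare with Proposition~\ref{pos_pow}. This gives $\mathcal{H}_\ell^{N+2\ell-1}(x)=|x|^N C_N^{(\ell)}(x_0/|x|)$, where $C_N^{(\ell)}$ is the Gegenbauer polynomial, not the coefficients $C^k_j(\ell)$. You can also see this directly. With $N=k-2\ell+1$ one has $C^k_j(\ell)=\frac{(\ell)_j(\ell)_{N-j}}{j!\,(N-j)!}$, which are the classical coefficients of $C_N^{(\ell)}(\cos\theta)$ in $e^{I(N-2j)\theta}$, and $x^{N-j}\bar x^j=|x|^Ne^{I(N-2j)\theta}$.

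With this identification several results of the paper become short:
\begin{itemize}
\item Proposition~\ref{jac1} is your statement at $\alpha=2\ell$.
\item Proposition~\ref{summ} is just $C_N^{(\ell)}(1)=\binom{N+2\ell-1}{N}$.
\item Theorem~\ref{exppser} reduces to the generating function. Write $\mathcal{Q}_{c,s}^{-\ell}(x)=s^{-2\ell}(1-2uw+w^2)^{-\ell}$ with $u=x_0/|x|$ and $w=|x|s^{-1}$, and expand in the commutative plane $\mathbb{C}_J$ containing $s$. This makes Lemma~\ref{hrecorsive} and the Appendix identities unnecessary.
\end{itemize}
The paper instead works entirely with the explicit coefficients $C^k_j(\ell)$ and treats the Jacobi connection as an imported result.
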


The above result are important to show a connection between the polynomials $\mathcal{H}_\ell^k(x)$ and the Jacobi polynomials.

\begin{proposition}
	\label{jac1}
	Let $n$ be and odd number and set $h_n:= \frac{n-1}{2}$. The for $1 \leq \ell \leq h_n$, $m \in \mathbb{N}_0$ and $x \in \mathbb{R}^{n+1}\setminus \{0\}$ we have
	\begin{equation}
		\label{Jodd}
		\mathcal{H}_\ell^{2m+2\ell-1}(x)= \frac{(-1)^m \sqrt{\pi} \Gamma(m+\ell)|x|^{2m}}{\Gamma(\ell)\Gamma\left(m+\frac{1}{2}\right)} P_m^{\left(-\frac{1}{2}, \frac{2 \ell-1}{2}\right)} \left(1-\frac{2 x_0^2}{|x|^2}\right)
	\end{equation}
	and
	\begin{equation}
		\label{Jeven}
		\mathcal{H}_{\ell}^{2m+2\ell}(x)=\frac{(-1)^m\sqrt{\pi} \Gamma \left(m+\ell+1\right)x_0|x|^{2m}}{\Gamma(\ell) \Gamma \left(m+\frac{3}{2}\right)}P_m^{\left(\frac{1}{2}, \frac{2 \ell-1}{2}\right)}\left(1-\frac{2x_0^2}{|x|^2}\right).
	\end{equation}
\end{proposition}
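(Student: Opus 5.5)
The plan is to compare two closed forms for the same derivative of the Riesz-type potential $|x|^{-2\ell}$: the first given by Proposition \ref{pos_pow}, the second by the proposition quoted from \cite{CDD} with $\alpha=2\ell$. Proposition \ref{pos_pow} reads, after shifting $k\mapsto k+1$,
$$
\partial_{x_0}^{k}\left(\frac{1}{|x|^{2\ell}}\right)=k!\,(-1)^{k}\,\mathcal H_\ell^{k-1+2\ell}(x)\,|x|^{-2k-2\ell}, \qquad k\in\mathbb N_0 .
$$
(For $k=0$ this is just $\mathcal H_\ell^{2\ell-1}=1$.) I will split into the cases $k$ even and $k$ odd, which produce \eqref{Jodd} and \eqref{Jeven} respectively.

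For the odd-index identity \eqref{Jodd}, take $k=2m$. The left-hand side becomes $(2m)!\,\mathcal H_\ell^{2m+2\ell-1}(x)\,|x|^{-4m-2\ell}$. The first formula of the \cite{CDD} proposition with $\alpha=2\ell$ gives
$$
\frac{(-1)^m\sqrt{\pi}\,\Gamma(m+\ell)\,(2m)!}{|x|^{2m+2\ell}\,\Gamma(\ell)\,\Gamma(m+\frac12)}\,P_m^{(-\frac12,\frac{2\ell-1}{2})}\Big(1-\tfrac{2x_0^2}{|x|^2}\Big).
$$
Dividing by $(2m)!$ and multiplying by $|x|^{4m+2\ell}$ yields exactly \eqref{Jodd}.

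For the even-index identity \eqref{Jeven}, take $k=2m+1$. The left-hand side is $-(2m+1)!\,\mathcal H_\ell^{2m+2\ell}(x)\,|x|^{-4m-2-2\ell}$. The second \cite{CDD} formula has the prefactor $(-1)^{m+1}\sqrt\pi\,\Gamma(m+\ell+1)(2m+1)!\,x_0$ over $|x|^{2m+2\ell+2}\Gamma(\ell)\Gamma(m+\frac32)$. The minus signs cancel, and after multiplying by $|x|^{4m+2+2\ell}$ we get \eqref{Jeven}.

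The identities hold on $\mathbb R^{n+1}\setminus\{0\}$, where both derivative formulas are valid; note that the \cite{CDD} proposition is stated for any dimension and is applied here to $\mathbb R^{n+1}$. The only real obstacle is bookkeeping: the index shift in Proposition \ref{pos_pow}, and tracking the sign $(-1)^k$ against $(-1)^m$ or $(-1)^{m+1}$ and the powers of $|x|$. Everything else is a direct substitution.
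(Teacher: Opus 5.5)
Your proof is correct and matches the argument the paper intends: the paper gives no separate proof, but places Proposition \ref{jac1} right after Proposition \ref{pos_pow} and the formula quoted from \cite{CDD}. The identities then follow, as you do, by comparing the two expressions for $\partial_{x_0}^{k}|x|^{-2\ell}$ with $\alpha=2\ell$ for $k=2m$ and $k=2m+1$; your index shift, signs and powers of $|x|$ all check out.
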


The above relation between the polynomials $ \mathcal{H}_\ell^k(x) $ and the Jacobi polynomials is crucial for determining an important property of the coefficients of $ \mathcal{H}_\ell^k(x)$, that will be important in the sequel.

\begin{proposition}
	\label{summ}
	Let $ n $ be an odd number and define $h_n = \frac{n - 1}{2} $. For $ 1 \leq \ell \leq h_n$ and $ k \geq 2\ell - 1 $, the coefficients of $ \mathcal{H}_\ell^k(x)$ satisfy the following property:
	
	$$ \sum_{j=0}^{k-2\ell+1}C_j^{k}(\ell)= \binom{k}{2\ell-1}.$$
\end{proposition}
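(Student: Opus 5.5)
The sum $\sum_{j=0}^{k-2\ell+1}C_j^k(\ell)$ is exactly the value $\mathcal{H}_\ell^k(x)$ takes when $x$ is real. So the plan is to evaluate $\mathcal{H}_\ell^k$ at a real point, say $x=1$ (or a general $x=x_0\in\mathbb{R}$). Then $x=\bar x$ and \eqref{harmpoly} gives
$$
\mathcal{H}_\ell^k(x_0)=x_0^{k-2\ell+1}\sum_{j=0}^{k-2\ell+1}C^k_j(\ell).
$$
It therefore suffices to show $\mathcal{H}_\ell^k(1)=\binom{k}{2\ell-1}$. There are two independent routes to this value.

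\emph{Route 1 (via Proposition \ref{pos_pow}).} Restrict \eqref{k_der} to the real axis, $x=x_0>0$, with $k$ replaced by $p=k-2\ell+2\geq 1$. The left side is $\partial_{x_0}^{p-1}x_0^{-2\ell}=(-1)^{p-1}\frac{(2\ell+p-2)!}{(2\ell-1)!}x_0^{-2\ell-p+1}$. The right side is $(p-1)!(-1)^{p+1}\mathcal{H}_\ell^{k}(x_0)\,x_0^{-2p-2\ell+2}$. Inserting $\mathcal{H}_\ell^{k}(x_0)=x_0^{p-1}\sum_j C_j^k(\ell)$, the powers of $x_0$ and the signs match. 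This leaves
$$
\sum_j C^k_j(\ell)=\frac{(k)!}{(2\ell-1)!\,(k-2\ell+1)!}=\binom{k}{2\ell-1},
$$
using $2\ell+p-2=k$ and $p-1=k-2\ell+1$. This is the argument I would write.

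\emph{Route 2 (as a check, via Proposition \ref{jac1}).} Evaluate at $x=x_0$, so that the Jacobi argument is $1-2=-1$. Then use $P_m^{(\alpha,\beta)}(-1)=(-1)^m\binom{m+\beta}{m}$ and simplify the Gamma factors in both parity cases $k=2m+2\ell-1$ and $k=2m+2\ell$. This is more computational, which is why Route 1 is preferred.

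The only delicate point is bookkeeping of the index shift between $k$ in \eqref{k_der} and $k$ in the statement. In particular, $k=2\ell-1$ corresponds to $p=1$ and gives the trivial case $C_0^{2\ell-1}(\ell)=1=\binom{2\ell-1}{2\ell-1}$. The division by $x_0^{p-1}$ is legitimate since $x_0\neq 0$. Alternatively, a purely combinatorial proof is available: the identity is the Vandermonde-type convolution $\sum_j\binom{a+j}{a}\binom{b+N-j}{b}=\binom{a+b+N+1}{N}$ with $a=b=\ell-1$ and $N=k-2\ell+1$.
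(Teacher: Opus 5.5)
Your proof is correct, but your preferred Route~1 is not the paper's argument. The paper's proof is essentially your Route~2: it restricts the Jacobi representations \eqref{Jodd} and \eqref{Jeven} to the real axis, uses $P_m^{(\alpha,\beta)}(-1)=(-1)^m\binom{m+\beta}{m}$, and applies the Gamma duplication formula twice, separately for $k=2m+2\ell-1$ and $k=2m+2\ell$.

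You instead restrict \eqref{k_der}, with $p=k-2\ell+2$, to $x=x_0>0$, where the left-hand side is just $\partial_{x_0}^{p-1}x_0^{-2\ell}$. This is legitimate, since $\partial_{x_0}$ commutes with setting $\underline{x}=0$. The exponents and signs cancel as you claim, and the factorial ratio is exactly $\binom{k}{2\ell-1}$.

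What this buys you:
\begin{itemize}
\item A one-line computation, with no parity split and no Gamma-function bookkeeping.
\item It rests only on Proposition~\ref{pos_pow}, which the paper proves by induction from the appendix identities, so there is no circularity.
\end{itemize}
By contrast, Proposition~\ref{jac1} is stated in the paper without proof. It is evidently obtained by combining \eqref{k_der} with the quoted Jacobi formula for $\partial_{x_0}^{j}|x|^{-\alpha}$, and evaluating at $-1$ then undoes that step. So the paper's route is a detour through the same identity you use directly. Its merit is expository: it ties the coefficient sum to the Jacobi-polynomial connection that is the theme of the section.

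Your combinatorial alternative, the Chu--Vandermonde-type identity with $a=b=\ell-1$ and $N=k-2\ell+1$, is the most self-contained option and needs no analysis. In fact Route~1 is that identity in disguise. On a slice $\mathbb{C}_I$ with $z=x_0+I|\underline{x}|$ one has $\partial_{x_0}=\partial_z+\partial_{\bar z}$. The Leibniz expansion of $\partial_{x_0}^{p-1}(z^{-\ell}\bar z^{-\ell})$ then produces exactly the coefficients $(-1)^{p-1}(p-1)!\,C^k_j(\ell)$.
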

\begin{proof}
	We divide the proof according to the parity of $k$. If $k$ is odd we can write $k=2m+2 \ell-1$. By restricting \eqref{Jodd} to the real axis we get
	$$ \sum_{j=0}^{2m}C_j^{2m+2\ell-1}(\ell) x_0^{2m}=(-1)^m \frac{\sqrt{\pi}\Gamma(m+\ell)}{\Gamma(\ell)\Gamma\left(m+\frac{1}{2}\right)} x_0^{2m} P_m^{\left(-\frac{1}{2}, \frac{2\ell-1}{2}\right)}(-1).$$
	By using the following well-known property of the Jacobi polynomials
	\begin{equation}
		\label{minus1}
		P_m^{(\alpha,\beta)}(-1)=(-1)^m \binom{m+\beta}{m}, \qquad \alpha, \beta \geq -1,
	\end{equation}
	we get
	$$ \sum_{j=0}^{2m}C_j^{2m+2\ell-1}(\ell) = \frac{\sqrt{\pi}\Gamma(m+\ell)}{\Gamma(\ell)\Gamma\left(m+\frac{1}{2}\right)} \binom{m+\ell-\frac{1}{2}}{m}.$$
	Now, by using the the duplication formula of the Gamma function
$$\Gamma(\gamma)\Gamma \left(\gamma+\frac{1}{2}\right)=\sqrt{\pi} 2^{1-2\gamma} \Gamma(2 \gamma), \quad \gamma >0
$$

we have
	\begin{eqnarray*}
		\sum_{j=0}^{2m}C_j^{2m+2\ell-1}(\ell)&=& \frac{  \sqrt{\pi}  \Gamma(m+\ell)\Gamma \left(m+\ell+\frac{1}{2}\right)}{\Gamma(\ell) \Gamma\left(m+\frac{1}{2}\right)m! \Gamma\left(\ell+\frac{1}{2}\right)}\\
		&=& \frac{\Gamma(m+\ell)\Gamma \left(m+\ell+\frac{1}{2}\right)}{2^{1-2\ell} \Gamma(2 \ell)\Gamma\left(m+\frac{1}{2}\right) m \Gamma(m)}
\end{eqnarray*}
and using another time the duplication formula of the Gamma function we get
\begin{eqnarray*}
		\sum_{j=0}^{2m}C_j^{2m+2\ell-1}(\ell)
		&=&\frac{2^{1-2m-2\ell}\Gamma(2m+2\ell)}{2^{1-2\ell-2m}\Gamma(2 \ell)\Gamma(2m)2m}\\
		&=&\frac{\Gamma(2m+2\ell)}{\Gamma(2\ell)\Gamma(2m+1)}\\
		&=&\binom{2m+2\ell-1}{2\ell-1}.
	\end{eqnarray*}
	Since $k=2m+2\ell-1$ this proves the result for $k$ odd.
\\Now, we suppose that $k$ is even, so we can write $k=2m+2 \ell$. We restrict \eqref{Jeven} to the real axis and by the property \eqref{minus1} we get
	$$ \sum_{j=0}^{2m+1}C_j^{2m+2\ell}(\ell)=\frac{\sqrt{\pi} \Gamma(m+\ell+1)}{\Gamma(\ell) \Gamma\left(m+\frac{3}{2}\right)} \binom{m+\ell-\frac{1}{2}}{m}.$$
	Now by using the duplication formula for the Gamma function
we get
	
\begin{eqnarray*}
		\sum_{j=0}^{2m+1}C_j^{2m+2\ell}(\ell)&=&  \frac{  \sqrt{\pi}  \Gamma(m+\ell+1) \Gamma\left(m+\ell+\frac{1}{2}\right)}{\Gamma(\ell)\Gamma \left(m+\frac{3}{2}\right)\Gamma(m+1) \Gamma\left(\ell+\frac{1}{2}\right)}\\
		&=&\frac{(m+\ell)\Gamma(m+\ell)\Gamma\left(m+\ell+\frac{1}{2}\right)}{2^{1-2\ell}\Gamma(2 \ell)\left(m+\frac{1}{2}\right)m \Gamma\left(m+\frac{1}{2}\right)\Gamma(m)}
\end{eqnarray*}

and using another time the duplication formula of the Gamma function we get
\begin{eqnarray*}
		\sum_{j=0}^{2m+1}C_j^{2m+2\ell}(\ell)
		&=& \frac{2^{1-2m-2\ell} \Gamma(2m+2\ell)(2m+2\ell)}{2^{1-2\ell-2m}\Gamma(2 \ell)2m (2m+1)\Gamma(2m)}\\
		&=& \frac{\Gamma(2m+2\ell+1)}{\Gamma(2 \ell) \Gamma(2m+2)}\\
		&=&\binom{2m+2\ell}{2\ell-1}.
	\end{eqnarray*}
Since $k=2m+2\ell$ this proves the result for $k$ even.
\end{proof}

Now, we want to show an important relation for the polynomials $\mathcal{H}_{\ell}^k(x)$, which is also of independent interest.
\begin{lemma} \label{hrecorsive} Let $n$ be an odd integer and set $h_n=\frac{n-1}{2}$, $x\in\mathbb R^{n+1}$ and $\ell$ be an integer with $1\leq \ell\leq h_n$. For $k$ being an integer such that $k\geq 2\ell -1$, we have
	\begin{equation}
		\mathcal{H}_{\ell+1}^{k+2}(x)-2x_0 \mathcal{H}_{\ell+1}^{k+1}(x)+|x|^2 \mathcal{H}_{\ell+1}^k(x)=\mathcal{H}_\ell^k(x),
	\end{equation}
	where we follow the convention introduced in Remark~\ref{conv}.
	
\end{lemma}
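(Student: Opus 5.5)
The plan is to pass through the generating function in $s$ rather than compare coefficients $C^k_j(\ell)$ directly. Formally, for a paravector $x$ the elements $x$, $\bar x$ commute with each other, with $x_0$, with $|x|^2$ and with real numbers. It is therefore convenient to work with a commuting formal (or complex) variable $t$ in place of $s^{-1}$, or with $s$ real and large. The identity to prove is linear in the $\mathcal H$'s, and all the $\mathcal H$'s are polynomials in the commuting pair $x,\bar x$. So it suffices to prove the identity for commuting indeterminates $a=x$, $b=\bar x$, with $2x_0=a+b$ and $|x|^2=ab$.

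In that commutative setting one checks the closed form
\[
\sum_{k\geq 2\ell-1}\mathcal H^k_\ell\, t^{k+1}=\frac{t^{2\ell}}{(1-at)^{\ell}(1-bt)^{\ell}}.
\]
To see this, expand $(1-at)^{-\ell}=\sum_j\binom{\ell+j-1}{\ell-1}a^jt^j$ and $(1-bt)^{-\ell}=\sum_i\binom{\ell+i-1}{\ell-1}b^it^i$. Then collect terms with $i+j=k-2\ell+1$, so that $i=k-2\ell+1-j$. One gets $\binom{\ell+i-1}{\ell-1}=\binom{k-\ell-j}{\ell-1}$, which is exactly $C^k_j(\ell)$, up to the swap $a\leftrightarrow b$ that is harmless by the symmetry used in Lemma \ref{real}. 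This is the commutative shadow of the series \eqref{series11} announced in the introduction, since $(1-at)(1-bt)=1-2x_0t+|x|^2t^2$, i.e. $t^2\mathcal Q_{c,s}(x)$ with $t=s^{-1}$.

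Write $G_\ell(t)=t^{2\ell}\big((1-at)(1-bt)\big)^{-\ell}$ and $q(t)=(1-at)(1-bt)=1-2x_0t+|x|^2t^2$. The trivial identity is
\[
q(t)\,G_{\ell+1}(t)=t^2\,G_\ell(t).
\]
Compare the coefficients of $t^{k+3}$ on both sides. On the left, $G_{\ell+1}=\sum_r\mathcal H^r_{\ell+1}t^{r+1}$, so the coefficient is $\mathcal H^{k+2}_{\ell+1}-2x_0\mathcal H^{k+1}_{\ell+1}+|x|^2\mathcal H^{k}_{\ell+1}$. On the right it is $\mathcal H^k_\ell$. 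This is precisely the claimed relation. The convention of Remark \ref{conv} is exactly what makes the terms with $k<2\ell+1$, where some $\mathcal H_{\ell+1}$ vanish, consistent with the power series having no lower coefficients.

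The only delicate point is justifying the transfer from commuting indeterminates back to paravectors. I would handle it by observing that the map $a\mapsto x$, $b\mapsto\bar x$ defines an algebra homomorphism $\mathbb R[a,b]\to\mathbb R_n$, because $x\bar x=\bar x x=|x|^2$. Polynomial identities in $\mathbb R[a,b]$ therefore persist. The coefficient identity is a polynomial identity in $a,b$ for each fixed $k$, so no convergence issue arises.
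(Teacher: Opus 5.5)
Your argument is correct, but it takes a different route from the paper.

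The paper compares, for each $j$, the coefficients of $x^{k-2\ell+1-j}\bar x^{j}$ on both sides. It checks $k=2\ell-1$ and $k=2\ell$ by hand. For $k\geq 2\ell+1$ it reindexes the four sums and splits into $j=0$, $0<j<k-2\ell+1$ and $j=k-2\ell+1$, closing each case with the Stifel-type identities d), e), f) of Proposition \ref{coeff1}.

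You instead encode all the $\mathcal H^k_\ell$ in the generating function $t^{2\ell}\big((1-at)(1-bt)\big)^{-\ell}$, which is verified by a single Cauchy product of two negative binomial series. The recursion then reduces to the triviality $q\,G_{\ell+1}=t^2G_\ell$. The boundary cases and the convention of Remark \ref{conv} come out automatically. The transfer back to paravectors through the evaluation homomorphism $a\mapsto x$, $b\mapsto \bar x$ is legitimate precisely because $x\bar x=\bar x x=|x|^2$.

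The paper's proof stays inside $\mathbb R_n$ with explicit binomial bookkeeping. Yours avoids all case analysis and also gives more:
\begin{itemize}
\item Your generating function is the commutative form of \eqref{series1}. After the same convergence estimate, it yields Theorem \ref{exppser} for every $\ell$ directly, without the induction on $\ell$ for which the paper uses this lemma.
\item The symmetry in $a,b$ shows that $\mathcal H^k_\ell(x)$ is a real polynomial in $x_0$ and $|x|^2$, which is Lemma \ref{real}.
\item Specializing $a=b=1$ gives $\sum_j C^k_j(\ell)=\binom{k}{2\ell-1}$, which is Proposition \ref{summ}, without passing through Jacobi polynomials.
\end{itemize}
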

\begin{proof}
	We divided the proof of the result depending on the values of $k$:
	\[
	\begin{cases}
		\mathcal {H}^{2\ell+1}_{\ell+1}(x)=\mathcal {H}^{2\ell-1}_{\ell}(x) & k=2\ell-1\\
		\mathcal {H}^{2\ell +2}_{\ell+1}(x)  -2x_0 \mathcal {H}^{2\ell +1}_{\ell+1}(x)=\mathcal {H}^{2\ell}_{\ell}(x) & k=2\ell\\
		\mathcal {H}^{k +2}_{\ell+1}(x)  -2x_0 \mathcal {H}^{ k +1}_{\ell+1}(x) +|x|^2 \mathcal {H}^{ k}_{\ell+1}(x)=\mathcal {H}^{k}_{\ell}(x) & k\geq 2\ell+1.
	\end{cases}
	\]
	The first follows directly by the definition of the polynomials $ \mathcal{H}_\ell^k(x)$. Indeed,
 we have
	\[
	\begin{split}
		\mathcal {H}^{2\ell+1}_{\ell+1} (x)= 1=\mathcal {H}^{2\ell-1}_{\ell}(x).
	\end{split}
	\]
	Similarly for the second one we have
	$$ \mathcal {H}^{2\ell +2}_{\ell+1}(x) - 2x_0 \mathcal {H}^{2\ell +1}_{\ell+1}(x)= \sum_{j=0}^{1} \binom{\ell+j }{\ell} \binom{\ell+1-j}{\ell}x^{1-j} \bar{x}^{j} -2x_0=2\ell x_0$$
	and
	$$ \mathcal {H}^{2\ell}_\ell (x)=\sum_{j=0}^{1} \binom{\ell+j-1}{\ell-1}\binom{\ell-j}{\ell-1} x^{1-j} \bar x^{j}=2\ell x_0.$$
	
	Now we focus on the case $k\geq 2\ell+1$. We have to prove that the coefficients of the monomials: $x^{k-2\ell+1-j} \bar{x}^{j}$ in $\mathcal{H}_\ell^k(x)$ coincide with the coefficients of the monomial: $x^{k-2\ell+1-j} \bar{x}^{j}$ in
$$\mathcal{H}_{\ell+1}^{k+2}(x)-2x_0 \mathcal{H}_{\ell+1}^{k+1}(x)+|x|^2 \mathcal{H}_{\ell+1}^k(x).
$$
 Thus, by using the definition of the polynomials $\mathcal{H}_\ell^k(x)$ we have
	\[
	\begin{split}
		& \mathcal{H}_{\ell+1}^{k+2}(x)  -2x_0 \mathcal{H}_{\ell+1}^{k+1}(x)+|x|^2 \mathcal{H}_{\ell+1}^k(x)  = \sum_{j=0}^{k-2\ell +1} C^{k+2}_j(\ell+1)x ^{k-2\ell+1-j} \overline x^j\\
		& -2x_0 \sum_{j=0}^{k-2\ell} C^{k+1}_j(\ell+1) x^{k-2\ell-j}\overline x^j  + |x|^2 \sum_{j=0}^{k-2\ell-1} C^{k}_j(\ell+1) x^{k-2\ell-1-j} \bar x^j\\
		& = \sum_{j=0}^{k-2\ell +1} C^{k+2}_j(\ell+1)x ^{k-2\ell+1-j} \overline x^j -\sum_{j=0}^{k-2\ell} C^{k+1}_j(\ell+1) x^{k-2\ell-j+1}\overline x^j \\
		& -\sum_{j=0}^{k-2\ell} C^{k+1}_j(\ell+1) x^{k-2\ell-j}\overline x^{j+1}  + \sum_{j=0}^{k-2\ell-1} C^{k}_j(\ell+1) x^{k-2\ell-j} \bar x^{j+1}.\\
	\end{split}
	\]	
	We change index in the last two summations and we obtain
	\[
	\begin{split}
		& \mathcal{H}_{\ell+1}^{k+2}(x)  -2x_0 \mathcal{H}_{\ell+1}^{k+1}(x)+|x|^2 \mathcal{H}_{\ell+1}^k(x)  = \sum_{j=0}^{k-2\ell +1} C^{k+2}_j(\ell+1)x ^{k-2\ell+1-j} \overline x^j \\
		& -\sum_{j=0}^{k-2\ell} C^{k+1}_j(\ell+1) x^{k-2\ell-j+1}\overline x^j  -\sum_{j=1}^{k-2\ell +1} C^{k+1}_{j-1}(\ell+1) x^{k-2\ell-j+1}\overline x^{j}  + \sum_{j =1}^{k-2\ell } C^{k}_{j-1}(\ell+1) x^{k-2\ell- j+1} \bar x^{j}.
	\end{split}
	\]
We now split the previous summations according to the index $j$, namely
$j=0$, $0<j<k-2\ell+1$, and $j=k-2\ell+1$, we compare each coefficients of the monomials $x^{k-2\ell+1-j} \overline x^j$ in the previous summation with the corresponding coefficients in $\mathcal H^k_\ell (x)$. By using the items d), e) and f) of Proposition \ref{coeff1} (see the Appendix) we get the result.
\end{proof}

Now, we prove a fundamental result for the paper, that asserts the connection between integer powers of the pseudo Cauchy-kernel and the polynomials $\mathcal{H}_\ell^k(x)$.

\begin{theorem}
	\label{exppser}
	Let $n$ be an odd number and $h_n:= \frac{n-1}{2}$. Then for $s$, $x \in \mathbb{R}^{n+1}$ such that $|x|<|s|$ the function $\mathcal{Q}^{-\ell}_{c,s}(x)$, with $1 \leq \ell \leq h_n$, admits the following expansion in series:
	\begin{equation}
		\label{series1}
		\mathcal{Q}_{c,s}^{-\ell}(x)= \sum_{k=2 \ell-1}^{\infty} \mathcal{H}_\ell^k(x)s^{-k-1}.
	\end{equation}
\end{theorem}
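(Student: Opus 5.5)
We prove \eqref{series1} by induction on $\ell$, using the recursion of Lemma \ref{hrecorsive} as the engine. Throughout, $x$ is a fixed paravector. Since $\mathcal{Q}_{c,s}(x)=s^2-2x_0s+|x|^2$ has real coefficients $2x_0$ and $|x|^2$, everything lives in the commutative algebra generated by $s$ with real coefficients. Lemma \ref{real} tells us that the $\mathcal{H}_\ell^k(x)$ are real numbers, so they commute with $s$ and all manipulations below are scalar ones.

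\emph{Base case $\ell=1$.} We have $C^k_j(1)=1$, hence $\mathcal{H}_1^k(x)=\sum_{j=0}^{k-1}x^{k-1-j}\bar x^j$. Write $x=x_0+I r$ with $r=|\underline{x}|$, so $\bar x=x_0-Ir$. In $\mathbb{C}_I$ the quadratic factors as $\mathcal{Q}_{c,s}(x)=(s-x)(s-\bar x)$, where $s$ is taken in $\mathbb{C}_I$ or treated formally with real coefficients. Since $|x|=|\bar x|<|s|$, each factor expands as a geometric series:
$$
(s-x)^{-1}(s-\bar x)^{-1}=\sum_{a,b\ge 0}x^a\bar x^b s^{-a-b-2}.
$$
Grouping terms by $k=a+b+1$ gives exactly $\sum_{k\ge1}\mathcal{H}_1^k(x)s^{-k-1}$. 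To avoid working inside $\mathbb{C}_I$ when $s\notin\mathbb{C}_I$, we can argue coefficientwise instead. Multiply the candidate series by $s^2-2x_0s+|x|^2$. The real numbers $\mathcal{H}_1^k$ satisfy $\mathcal{H}_1^{k+2}-2x_0\mathcal{H}_1^{k+1}+|x|^2\mathcal{H}_1^k=0$ for $k\ge1$, together with $\mathcal{H}_1^1=1$ and $\mathcal{H}_1^2=2x_0$. This follows from $(x+\bar x)=2x_0$ and $x\bar x=|x|^2$, since the power sums $\sum x^a\bar x^b$ obey the standard two-term recurrence. Therefore the product telescopes to $1$.

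\emph{Convergence.} The bound $|\mathcal{H}_\ell^k(x)|\le \sum_j C_j^k(\ell)|x|^{k-2\ell+1}=\binom{k}{2\ell-1}|x|^{k-2\ell+1}$ follows from Proposition \ref{summ}. The right-hand side grows polynomially in $k$ times $|x|^{k}$, so the series converges absolutely for $|x|<|s|$. This justifies the Cauchy products and term-by-term multiplications used throughout.

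\emph{Inductive step.} Assume \eqref{series1} holds for $\ell$, and set $F(s)=\sum_{k\ge 2\ell+1}\mathcal{H}_{\ell+1}^k(x)s^{-k-1}$. We compute $\mathcal{Q}_{c,s}(x)F(s)$ by shifting indices:
$$
\mathcal{Q}_{c,s}(x)F(s)=\sum_k\big(\mathcal{H}_{\ell+1}^{k+2}-2x_0\mathcal{H}_{\ell+1}^{k+1}+|x|^2\mathcal{H}_{\ell+1}^{k}\big)(x)\,s^{-k-1},
$$
using the convention of Remark \ref{conv} for vanishing terms. By Lemma \ref{hrecorsive}, including its boundary cases $k=2\ell-1$ and $k=2\ell$, the coefficient equals $\mathcal{H}_\ell^k(x)$. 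Hence $\mathcal{Q}_{c,s}(x)F(s)=\mathcal{Q}_{c,s}^{-\ell}(x)$ by the inductive hypothesis. Multiplying by $\mathcal{Q}_{c,s}^{-1}(x)$, which exists since $|x|<|s|$ forces $x\notin[s]$, gives $F=\mathcal{Q}_{c,s}^{-\ell-1}(x)$.

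The main obstacle is the careful bookkeeping of the lowest-order terms in the index shift, namely checking that no stray terms with $k<2\ell-1$ survive. This is exactly what the first two cases of Lemma \ref{hrecorsive} handle. A second point needing care is that all coefficients are real, which makes the commutation with $s$ legitimate.
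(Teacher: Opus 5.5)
Your proof is correct and follows the paper's argument. Convergence comes from Proposition \ref{summ} and the ratio test. Then, by induction on $\ell$, the index shift together with Lemma \ref{hrecorsive} (including its boundary cases $k=2\ell-1,2\ell$) and the reality of the coefficients (Lemma \ref{real}) gives $\mathcal{Q}_{c,s}(x)\sum_{k\ge 2\ell+1}\mathcal{H}_{\ell+1}^k(x)s^{-k-1}=\mathcal{Q}_{c,s}^{-\ell}(x)$.

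The only difference is the base case $\ell=1$. You verify it yourself via the two-term recurrence for $\mathcal{H}_1^k(x)=\sum_{a+b=k-1}x^a\bar x^b$, using $x+\bar x=2x_0$ and $x\bar x=|x|^2$, whereas the paper simply quotes it from earlier work.
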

\begin{proof}
	First we prove the the following series
	\begin{equation}
		\label{seriesu}
		\mathcal{S}(s,x):=\sum_{k=2 \ell-1}^{\infty} \mathcal{H}_\ell^k(x)s^{-k-1},
	\end{equation}
	is convergent. By Proposition \ref{summ} we get
	\begin{eqnarray*}
		|\mathcal{S}(s,x)| &\leq& \sum_{k=2\ell-1}^{\infty}|\mathcal{H}_{\ell}^k(x)| |s|^{-k-1}\\
		&\leq& \sum_{k=2 \ell-1}^{\infty} \left(\sum_{j=0}^{k-2\ell+1} C_j^k(\ell)\right)|x|^{k-2\ell+1}|s|^{-k-1}\\
		& \leq & \sum_{k=2 \ell-1}^{\infty} \binom{k}{2\ell-1}|x|^{k-2\ell+1}|s|^{-k-1}.
	\end{eqnarray*}
	Thus, by using the ratio test it is clear that the series $\mathcal{S}(s,x)$ is convergent. Now we prove by induction the equality \eqref{series1}. The case $\ell=1$ was proved in \cite[Theorem 4.30]{CDP25}. Now we suppose the equation \eqref{series1} holds for $\ell$ and we want to prove it for $\ell+1$. Thus, we have to show
	\begin{equation}\label{series3}
		\mathcal {Q}_{c,s}^{-\ell-1} (x)= \sum_{k=2 \ell +1}^{\infty} \mathcal{H}_{\ell+1}^k(x)s^{-k-1}
	\end{equation}
	which is equivalent, by the inductive hypothesis, to
	\begin{equation}\label{series2}
		\sum_{k=2 \ell-1}^{\infty} \mathcal{H}_\ell^k(x)s^{-1-k}= \mathcal{Q}_{c,s}(x) \sum_{k=2 \ell +1}^{\infty} \mathcal{H}_{\ell+1}^k(x)s^{-k-1}.
	\end{equation}
	Since $\mathcal{Q}_{c,s}(x)=s^2-2s x_0+|x|^2$ and $s\in\mathbb{R}^{n+1}$ commute with the polynomials $\mathcal{H}_\ell^k(x)$, see Lemma \ref{real}, we have that the right hand side of \eqref{series2} is equal to:
	$$
	\sum_{k=2\ell + 1}^\infty \mathcal {H}^k_{\ell+1}(x) s^{-k+1} -2x_0 \sum_{k=2\ell + 1}^\infty \mathcal {H}^k_{\ell+1}(x) s^{-k} +|x|^2\sum_{k=2\ell + 1}^\infty \mathcal {H}^k_{\ell+1}(x) s^{-k-1}.
	$$
	Now, we change the index in the first sum ($k$ into $k+2$) and in the second sum ($k$ into $k+1$) and we get:
	\[	
	\sum_{k =2\ell-1}^\infty \mathcal {H}^{k+2}_{\ell+1}(x) s^{-k-1} -2x_0 \sum_{k=2\ell }^\infty \mathcal {H}^{k+1}_{\ell+1}(x) s^{-k-1} +|x|^2\sum_{k=2\ell+1}^\infty \mathcal{H}^k_{\ell+1}(x) s^{-k-1}.
	\]
	By Lemma \ref{hrecorsive} applied to the previous expression, the following three equalities holds:
	\[
	\begin{cases}
		\mathcal {H}^{2\ell+1}_{\ell+1}(x)=\mathcal {H}^{2\ell-1}_{\ell}(x) & k=2\ell-1\\
		\mathcal {H}^{2\ell +2}_{\ell+1}(x)  -2x_0 \mathcal {H}^{2\ell +1}_{\ell+1}(x)=\mathcal {H}^{2\ell}_{\ell}(x) & k=2\ell\\
		\mathcal {H}^{k +2}_{\ell+1}(x)  -2x_0 \mathcal {H}^{ k +1}_{\ell+1}(x) +|x|^2 \mathcal {H}^{k}_{\ell+1}(x)=\mathcal {H}^{2k
		}_{\ell}(x) & k\geq 2\ell+1.
	\end{cases}
	\]
	These imply \eqref{series2}.
\end{proof}

Now, we recall that the integers powers  $\mathcal{Q}_{c,s}^{-\ell}(x)$ of $\mathcal{Q}_{c,s}^{-1}(x)$ can be obtained by applying the operator $D \Delta_{n+1}^{\ell-1}$ to the left slice hyperholomorphic Cauchy kernel.

\begin{theorem}
	\label{app2}
	Let $n$ be an odd integer and set $h_n := \frac{n-1}{2}$. Assume that $s, x \in \mathbb{R}^{n+1}$ with $s \notin [x]$. Then, for every integer $\ell$ such that $1 \leq \ell \leq h_n $, we have
	$$ D \Delta_{n+1}^{\ell-1}\left(S^{-1}_L(s,x)\right)=\sigma_{n,\ell} \mathcal{Q}_{c,s}^{-\ell}(x),$$
	where
$$\sigma_{n ,\ell}:= 2^{2 \ell-1}(\ell-1)! (-h_n)_{\ell}.
$$
\end{theorem}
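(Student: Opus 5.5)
Theorem \ref{app1} expresses $\Delta_{n+1}^{\ell-1}$ of the Cauchy kernel in closed form, and identity \eqref{d_cauchy_ker} gives the action of $D$ on $(s-\bar x)\mathcal{Q}_{c,s}^{-\ell}(x)$. The plan is therefore to combine the two and then check the constant.

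\emph{Step 1.} Apply Theorem \ref{app1} with $m=\ell-1$; this is admissible because $0\le \ell-1\le h_n-1$. It gives
$$\Delta_{n+1}^{\ell-1}S^{-1}_L(s,x)=\gamma_{n,\ell-1}\,(s-\bar x)\mathcal{Q}_{c,s}^{-\ell}(x),\qquad \gamma_{n,\ell-1}=4^{\ell-1}(\ell-1)!\,(-h_n)_{\ell-1}.$$

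\emph{Step 2.} Apply $D$, acting in $x$, to both sides. Since $\gamma_{n,\ell-1}$ is a real constant, it factors out. Identity \eqref{d_cauchy_ker} with $m=\ell$ then yields
$$D\bigl((s-\bar x)\mathcal{Q}_{c,s}^{-\ell}(x)\bigr)=-2(h_n-\ell+1)\,\mathcal{Q}_{c,s}^{-\ell}(x).$$
This identity is quoted in the introduction from \cite{CDP25,DPMilano}. To be self-contained, I would recheck it briefly. Write $\mathcal{Q}_{c,s}(x)=s^2-2x_0s+|x|^2$; every $x$-derivative of it is real-valued, namely $\partial_{x_0}\mathcal{Q}_{c,s}=-2s+2x_0$ and $\partial_{x_i}\mathcal{Q}_{c,s}=2x_i$, so it commutes with $s$. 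Then $D(s-\bar x)=-(1+n)$ acting on the left factor. Using the Leibniz rule on the product, the terms containing the derivative of $\mathcal{Q}_{c,s}^{-\ell}$ combine through $(s-x_0)+\underline x$ contracted against $(s-\bar x)$, which produces $\mathcal{Q}_{c,s}$ and absorbs one power. This gives $(-(n+1)+2\ell)\mathcal{Q}_{c,s}^{-\ell}=-2(h_n-\ell+1)\mathcal{Q}_{c,s}^{-\ell}$. The only care needed is the ordering of Clifford factors, but since $\mathcal{Q}_{c,s}^{-\ell}$ lies in $\mathbb{C}_I$-type commuting variables paired with $s$, this is routine.

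\emph{Step 3.} Match the constant. We have
$$\gamma_{n,\ell-1}\cdot\bigl(-2(h_n-\ell+1)\bigr)=-2\cdot 4^{\ell-1}(\ell-1)!\,(-h_n)_{\ell-1}(h_n-\ell+1).$$
By Remark \ref{Pochhsym},
$$(-h_n)_{\ell}=(-h_n)_{\ell-1}\,(-h_n+\ell-1)=-(h_n-\ell+1)(-h_n)_{\ell-1},$$
so the product equals $2^{2\ell-1}(\ell-1)!\,(-h_n)_\ell=\sigma_{n,\ell}$.

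The main obstacle is only a bookkeeping point: verifying \eqref{d_cauchy_ker} with the correct sign and ordering. Since the paper takes this identity as established, the proof reduces to Steps 1 and 3.
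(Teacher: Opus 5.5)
The paper proves this only by citing \cite[Thm.~4.16]{CDP25}. Your argument is correct and gives exactly $\sigma_{n,\ell}$: it applies Theorem~\ref{app1} with $m=\ell-1$, then \eqref{d_cauchy_ker} with $m=\ell$, then uses $(-h_n)_\ell=-(h_n-\ell+1)(-h_n)_{\ell-1}$. It is the direct derivation from ingredients the paper itself quotes, and it agrees with the paper's remark on the case $\beta=1$ of Theorem~\ref{p111}.

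One slip in your optional recheck: $\partial_{x_0}\mathcal{Q}_{c,s}(x)=-2(s-x_0)$ is not real-valued. It only commutes with $s$ and $\mathcal{Q}_{c,s}(x)$, so it must stay to the right of $(s-\bar x)$. This ordering is exactly what makes the derivative terms collapse, $2\ell\bigl[(s-\bar x)(s-x_0)-\underline{x}(s-\bar x)\bigr]\mathcal{Q}_{c,s}^{-\ell-1}(x)=2\ell\,\mathcal{Q}_{c,s}^{-\ell}(x)$. If you treated it as real and moved it left, you would get $(s-x)(s-\bar x)$, which differs from $\mathcal{Q}_{c,s}(x)$ whenever $\underline{x}$ and $\underline{s}$ are not parallel.
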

\begin{proof}
It is \cite[Thm. 4.16]{CDP25}
\end{proof}

\begin{proposition}
	Let $n$ be an odd number and set $h_n:= \frac{n-1}{2}$ and $1 \leq \ell \leq h_n$. Then, for $x \in \mathbb{R}^{n+1}$ we have
	\begin{equation}
		\label{harm}
		D \Delta_{n+1}^{\ell-1}x^k=  \sigma_{n ,\ell}\mathcal{H}_\ell^k(x).
	\end{equation}
	Moreover, the polynomials $ \mathcal{H}_\ell^k(x)$ are axially Poly-Harmonic $\mathcal{APH}_{h_n-\ell+1}(U)$
of order $h_n-\ell+1$.
\end{proposition}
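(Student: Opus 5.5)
The plan is to compare two series expansions of the same kernel in powers of $s^{-1}$. For $|x|<|s|$ the Cauchy kernel series (Proposition \ref{cauchyseries}) together with Proposition \ref{secondAA} gives
$$
S^{-1}_L(s,x)=\sum_{k=0}^{\infty} x^k s^{-1-k}.
$$
I will apply the differential operator $D\Delta_{n+1}^{\ell-1}$ in the variable $x$ term by term. This is justified because the series is a power series in $x$ converging uniformly, with all derivatives, on compact subsets of $\{|x|<|s|\}$; the coefficient bound used in Theorem \ref{exppser} gives exactly this kind of control. The result is
$$
D\Delta_{n+1}^{\ell-1}S^{-1}_L(s,x)=\sum_{k=0}^{\infty}\big(D\Delta_{n+1}^{\ell-1}x^k\big)s^{-1-k}.
$$
On the other hand, Theorem \ref{app2} followed by Theorem \ref{exppser} gives
$$
D\Delta_{n+1}^{\ell-1}S^{-1}_L(s,x)=\sigma_{n,\ell}\mathcal{Q}_{c,s}^{-\ell}(x)=\sigma_{n,\ell}\sum_{k=2\ell-1}^{\infty}\mathcal{H}_\ell^k(x)s^{-k-1}.
$$

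Next I equate the coefficients of $s^{-k-1}$. To do this I fix $x$ and restrict $s$ to a slice $\mathbb{C}_I$ with $|s|>|x|$. Both sides are then series $\sum_k A_k(x)s^{-k-1}$ with Clifford coefficients $A_k(x)$ that do not depend on $s$. Multiplying by $s^{j}$ and integrating over a circle $|s|=R$ in $\mathbb{C}_I$ picks out $A_j$, in the same way one identifies Laurent coefficients. Alternatively, one can take $s$ real and large and use uniqueness of power series in $1/s$ componentwise. This yields $D\Delta_{n+1}^{\ell-1}x^k=\sigma_{n,\ell}\mathcal{H}_\ell^k(x)$ for $k\ge 2\ell-1$. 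For $k<2\ell-1$ the same comparison gives $D\Delta_{n+1}^{\ell-1}x^k=0$, which agrees with the convention of Remark \ref{conv}. It is also consistent with degree counting, since $x^k$ has degree $k<2\ell-1$ and the operator has order $2\ell-1$.

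For the poly-harmonic property, observe that $x^k$ is slice hyperholomorphic on all of $\mathbb{R}^{n+1}$. By Theorem \ref{reg0}, case $\beta=1$ with $m=\ell-1$ (so that $1+(\ell-1)\le h_n$ holds), the function $D\Delta_{n+1}^{\ell-1}x^k$ lies in $\mathcal{APH}_{h_n-\ell+1}$. Since $\sigma_{n,\ell}\neq0$ for $\ell\le h_n$ (the Pochhammer symbol $(-h_n)_\ell$ is nonzero by Remark \ref{Pochhsym}), we can divide by it, so $\mathcal{H}_\ell^k$ is axially poly-harmonic of order $h_n-\ell+1$. Its axial type also follows from Proposition \ref{axial}, or directly from Lemma \ref{real} and the fact that it is a real polynomial in $x_0$ and $|x|^2$. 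One can double-check the conclusion directly: by the Fueter–Sce theorem, $\Delta^{h_n-\ell+1}_{n+1}D\Delta^{\ell-1}_{n+1}x^k=D\Delta^{h_n}_{n+1}x^k=0$.

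The step I expect to be the main obstacle is justifying the term-by-term differentiation of the Cauchy series and the uniqueness of the coefficients in the noncommutative Clifford setting. I will handle it by working on a fixed slice for $s$, where $s$ commutes with itself and the coefficients stand to its left, together with uniform convergence of derivatives on compact sets.
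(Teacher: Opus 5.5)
Your proof is correct and follows the paper's argument. You apply $D\Delta_{n+1}^{\ell-1}$ term by term to the Cauchy series $\sum_k x^k s^{-1-k}$, compare the result with $\sigma_{n,\ell}\mathcal{Q}_{c,s}^{-\ell}(x)=\sigma_{n,\ell}\sum_k\mathcal{H}_\ell^k(x)s^{-k-1}$ from Theorems \ref{app2} and \ref{exppser}, equate coefficients, and then obtain poly-harmonicity from $\Delta_{n+1}^{h_n-\ell+1}D\Delta_{n+1}^{\ell-1}x^k=D\Delta_{n+1}^{h_n}x^k=0$ via Fueter--Sce, with axiality from Proposition \ref{axial}.

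Your extra justifications (coefficient extraction on a slice $\mathbb{C}_I$, vanishing for $k<2\ell-1$, $\sigma_{n,\ell}\neq 0$) only make explicit what the paper leaves implicit. One small correction: the control needed for term-by-term differentiation comes from elementary bounds on derivatives of $x^k$, not from the coefficient bound in Theorem \ref{exppser}.
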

\begin{proof}
	By Theorem \ref{app2} and Proposition \ref{cauchyseries}, for with $s \notin [x]$, we get
	\begin{equation}
		\mathcal{Q}_{c,s}^{-\ell}(x)= \frac{1}{\sigma_{n, \ell}}D \Delta_{n+1}^{\ell-1} S^{-1}_L(s,x)=\frac{1}{\sigma_{n, \ell}} \sum_{k=2 \ell-1}^{\infty}D \Delta_{n+1}^{\ell-1}  x^k s^{-1-k}.
	\end{equation}
	By \eqref{series1} we have
	$$\sum_{k=2 \ell-1}^{\infty}D \Delta_{n+1}^{\ell-1}  x^k s^{-1-k}=\sigma_{n, \ell} \sum_{k=2 \ell-1}^{\infty} \mathcal{H}_\ell^k(x)s^{-k-1}.$$
	Thus \eqref{harm} follows by putting equal the coefficients of the above power series.
	Now, we prove the regularity of the polynomials $\mathcal{H}_\ell^k(x)$.
	
By Proposition~\ref{axial}, taking $\beta = 1$ and $m = \ell - 1$, we deduce that the operator 
$
D \Delta_{n+1}^{\ell-1}
$
applied to a slice function yields a function of axial type. Since 
$
x^k
$
is itself of axial type, relation~\eqref{harm} implies that the polynomials 
$
\mathcal{H}_\ell^k(x)
$
are also of axial type.
 Finally, in virtue of the Fueter-Sce mapping theorem, see Theorem \ref{FS1}, and \eqref{harm} we have
	$$ \Delta_{n+1}^{h_n-\ell+1}\mathcal{H}_\ell^k(x)=\frac{1}{\sigma_{n ,\ell}} D \Delta_{n+1}^{h_n}x^k=0.$$
	This proves the result.
\end{proof}

To deduce a series expansion of the kernels in \eqref{kk} and \eqref{kk1}, we introduce a new family of polynomials.

\begin{definition}[axially Analytic-Harmonic polynomials of type $(1, h_n-\ell)$]
	Let $n$ be an odd number and set $h_n:=\frac{n-1}{2}$. Then, for $1 \leq \ell \leq  h_n$ and $x \in \mathbb{R}^{n+1}$ we define the polynomials $ \mathcal{P}_\ell^k(x)$ as
	\begin{equation}\label{cliffpoly}
		\mathcal{P}_\ell^k(x)=\sum_{j=0}^{k-2\ell} \binom{\ell+j-1}{\ell-1} \binom{k-\ell-j}{\ell}x^{k-2\ell-j}\bar{x}^j.
	\end{equation}
\end{definition}

\begin{remark}
	If we consider $\ell=h_n$ we get a connection between the polynomials $ \mathcal{P}_{h_n}^k(x)$ and the Clifford-Appell polynomials, see \eqref{ca}, i.e.:
$$ \mathcal{P}_{h_n}^k(x)= \frac{k!}{(2 h_n)!} \mathcal{Q}_n^k(x).$$
\end{remark}

The polynomials $ \mathcal{P}^{k}_\ell(x) $ are closely related to the polynomials $ \mathcal{H}_\ell^k(x)$, as stated in the following result.

\begin{proposition}
	\label{rell1}
	Let $n$ be an odd number and set $h_n:=\frac{n-1}{2}$. Then,
 for $1 \leq \ell \leq  h_n$ and $x \in \mathbb{R}^{n+1}$ we have
	\begin{equation}
		\label{rell}
		\mathcal{P}^k_\ell(x)= \mathcal{H}_{\ell+1}^{k+1}(x)-\bar{x} \mathcal{H}_{\ell+1}^k(x).
	\end{equation}
\end{proposition}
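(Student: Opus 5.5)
We prove \eqref{rell} by comparing the coefficients of each monomial $x^{k-2\ell-j}\bar{x}^j$ on the two sides, after reducing every product to the standard ordered form $x^a\bar{x}^b$. Since $x$ and $\bar{x}$ commute (indeed $x+\bar{x}=2x_0$ is real), every such product is a polynomial in the commuting variables $x,\bar{x}$. We may therefore treat $x$ and $\bar{x}$ as formal commuting indeterminates, and it suffices to prove the identity as an equality of polynomials in two commuting variables $z,w$ with $z=x$, $w=\bar{x}$.

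First expand the right-hand side using \eqref{harmpoly} with $\ell+1$ in place of $\ell$:
\[
\mathcal{H}_{\ell+1}^{k+1}(x)=\sum_{j=0}^{k-2\ell}\binom{\ell+j}{\ell}\binom{k-\ell-j}{\ell}x^{k-2\ell-j}\bar{x}^j ,
\]
\[
\bar{x}\,\mathcal{H}_{\ell+1}^{k}(x)=\sum_{j=0}^{k-2\ell-1}\binom{\ell+j}{\ell}\binom{k-\ell-1-j}{\ell}x^{k-2\ell-1-j}\bar{x}^{j+1}
=\sum_{j=1}^{k-2\ell}\binom{\ell+j-1}{\ell}\binom{k-\ell-j}{\ell}x^{k-2\ell-j}\bar{x}^{j},
\]
where the second sum was shifted $j+1\mapsto j$. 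Both sums now run over the same monomials as $\mathcal{P}_\ell^k(x)$ in \eqref{cliffpoly}. The case $k=2\ell$ is degenerate: then $\mathcal{H}_{\ell+1}^{k}\equiv 0$ by Remark~\ref{conv}, and both sides equal $1$.

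Next compare coefficients. For $j=0$ the second sum contributes nothing, and the first gives $\binom{\ell}{\ell}\binom{k-\ell}{\ell}=\binom{k-\ell}{\ell}$, which matches the $j=0$ coefficient of $\mathcal{P}_\ell^k$. For $1\le j\le k-2\ell$ the difference of coefficients is
\[
\binom{k-\ell-j}{\ell}\left[\binom{\ell+j}{\ell}-\binom{\ell+j-1}{\ell}\right]=\binom{k-\ell-j}{\ell}\binom{\ell+j-1}{\ell-1}
\]
by Pascal's rule, which is exactly the coefficient in \eqref{cliffpoly}. This proves \eqref{rell}.

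There is no real obstacle in this argument. The only points needing care are the index bookkeeping in the shift, especially the endpoint $j=k-2\ell$, where $\binom{\ell+j-1}{\ell}$ is a genuine term, and the justification that $x$ and $\bar{x}$ commute so that monomials can be reordered. The latter follows because $\bar{x}=2x_0-x$.
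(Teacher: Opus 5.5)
Your proposal is correct and follows essentially the same route as the paper: expand $\mathcal{H}_{\ell+1}^{k+1}(x)$ and $\bar{x}\,\mathcal{H}_{\ell+1}^{k}(x)$, shift the index in the second sum, and combine coefficients with Pascal's (Stifel's) rule, $\binom{\ell+j}{\ell}-\binom{\ell+j-1}{\ell}=\binom{\ell+j-1}{\ell-1}$. You also make explicit two points the paper leaves implicit: that $x$ and $\bar{x}$ commute, which justifies reordering monomials, and the degenerate case $k=2\ell$.
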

\begin{proof}
	By using the definition of the polynomials $ \mathcal{H}_{\ell}^k(x)$, see \eqref{harmpoly}, we have
	\begin{eqnarray}
				\label{star2}
		\mathcal{H}_{\ell+1}^{k+1}(x)-\bar{x} \mathcal{H}_{\ell+1}^k(x)&=& \sum_{j=0}^{k-2 \ell} C_j^{k+1}(\ell+1) x^{k-2\ell-j} \bar{x}^j- \sum_{j=0}^{k-2\ell-1}C_j^{k}(\ell+1)x^{k-2\ell-1-j} \bar{x}^{j+1}\\
\nonumber
		&=& \sum_{j=1}^{k-2 \ell} \left[C_j^{k+1}(\ell+1)-C_{j-1}^{k}(\ell+1)\right]x^{k-2\ell-j} \bar{x}^j+C_0^{k+1}(\ell+1) x^{k-2\ell}.
	\end{eqnarray}
	By using the definition of the coefficients $C_j^k(\ell)$ and the Stifel identity we get
	\begin{equation}
		\label{ide}
		C_j^{k+1}(\ell+1)-C_{j-1}^{k}(\ell+1)= \binom{\ell+j-1}{\ell-1} \binom{k-\ell-j}{\ell}.
	\end{equation}
	Finally, by plugging \eqref{ide} into \eqref{star2} we get
	\begin{eqnarray*}
		\mathcal{H}_{\ell+1}^{k+1}(x)-\bar{x} \mathcal{H}_{\ell+1}^k(x)&=&\sum_{j=1}^{k-2 \ell}
		\binom{\ell+j-1}{\ell-1} \binom{k-\ell-j}{\ell}x^{k-2\ell-j} \bar{x}^j+\binom{k-\ell}{\ell} x^{k-2\ell}\\
		&=&\sum_{j=0}^{k-2 \ell}
		\binom{\ell+j-1}{\ell-1} \binom{k-\ell-j}{\ell}x^{k-2\ell-j} \bar{x}^j\\
		&=& \mathcal{P}_\ell^k(x).
	\end{eqnarray*}
	This proves the result.
\end{proof}
\begin{remark}
	\label{Jpol}
	By virtue of the connection between the polynomials $\mathcal{H}_\ell^k(x)$ and the Jacobi polynomials (see Proposition \ref{jac1}), a connection between the polynomials $\mathcal{P}_\ell^k(x)$ and the Jacobi polynomials exists as well.
\end{remark}
Now, we have all the tools to provide a series expansion for $(s-\bar{x}) \mathcal{Q}_{c,s}^{-\ell}(x)$.
\begin{theorem}
	Let $n$ be an odd number and set $h_n:= \frac{n-1}{2}$. Then, for $s$, $x \in \mathbb{R}^{n+1}$ such that $|x|<|s|$  and for $1 \leq \ell \leq h_n$, we have	
	\begin{equation}
		\label{exxx}
		(s-\bar{x})\mathcal{Q}_{c,s}^{-\ell}(x)= \sum_{k=2(\ell-1)}^{\infty} \mathcal{P}^k_{\ell-1}(x) s^{-1-k}.
	\end{equation}	
	Moreover, for $ \nu \in \mathbb{N}$, we have that
	\begin{equation}
		\label{expn}
		K_{\nu, \ell}^{1,L}(s,x)= \sum_{k=2 (\ell-1)}^{\infty} \sum_{j=0}^{\nu} \binom{\nu}{j} (-x_0)^j \mathcal{P}_{\ell-1}^k(x) s^{\nu-j-k-1}.
	\end{equation}
	and
	\begin{equation}
		\label{k12}
		K^2_{\nu,\ell}(s,x)= \sum_{k=2\ell-1}^{\infty} \sum_{j=0}^{\nu} \binom{\nu}{j} (-x_0)^j \mathcal{H}_{\ell}^k(x) s^{\nu-j-k}.
	\end{equation}
\end{theorem}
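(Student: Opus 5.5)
The plan is to derive \eqref{exxx} from Theorem \ref{exppser} together with Proposition \ref{rell1}, and then to obtain \eqref{expn} and \eqref{k12} by multiplying by the binomial expansion of $(s-x_0)^\nu$.

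\textbf{Step 1: the expansion \eqref{exxx}.} Take $|x|<|s|$ and $1\le\ell\le h_n$. By Theorem \ref{exppser} we have $\mathcal{Q}_{c,s}^{-\ell}(x)=\sum_{k\ge 2\ell-1}\mathcal{H}_\ell^k(x)s^{-k-1}$, and the series converges absolutely by the bound coming from Proposition \ref{summ}. Multiplying on the left by $(s-\bar x)$ gives
$$
(s-\bar x)\mathcal{Q}_{c,s}^{-\ell}(x)=\sum_{k\ge 2\ell-1}\mathcal{H}_\ell^k(x)s^{-k}-\sum_{k\ge 2\ell-1}\bar x\,\mathcal{H}_\ell^k(x)s^{-k-1}.
$$
Here $\mathcal{H}_\ell^k(x)$ is real by Lemma \ref{real}, so it commutes with $s$; this is what lets us move $s$ past it. In the first sum we shift $k\mapsto k+1$, so that both sums carry $s^{-k-1}$. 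We then collect terms:
$$
\sum_{k\ge 2\ell-2}\big(\mathcal{H}_\ell^{k+1}(x)-\bar x\,\mathcal{H}_\ell^{k}(x)\big)s^{-k-1},
$$
using the convention of Remark \ref{conv} that $\mathcal{H}_\ell^{2\ell-2}\equiv0$. Proposition \ref{rell1}, applied with $\ell-1$ in place of $\ell$, identifies the bracket with $\mathcal{P}^k_{\ell-1}(x)$, which gives \eqref{exxx}.

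\textbf{The main obstacle} is the case $\ell=1$. Proposition \ref{rell1} is stated only for $\ell\ge1$, so it does not cover the index $0$ needed here. For $\ell=1$ I would check directly that $\mathcal{P}_0^k(x)=x^k$, which follows from \eqref{cliffpoly} with $\ell=0$, where only $j=0$ survives. The identity \eqref{exxx} then reduces to the Cauchy kernel series of Proposition \ref{cauchyseries} combined with Proposition \ref{secondAA}. Alternatively, one can verify the relation $x^k=\mathcal{H}_1^{k+1}-\bar x\mathcal{H}_1^k$ using $\mathcal{H}_1^k=\sum_j x^{k-1-j}\bar x^j$, since the sum telescopes.

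\textbf{Step 2: the expansions \eqref{expn} and \eqref{k12}.} Since $x_0$ is real, we expand $(s-x_0)^\nu=\sum_{j=0}^\nu\binom{\nu}{j}(-x_0)^j s^{\nu-j}$. By definition, $K^{1,L}_{\nu,\ell}=(s-\bar x)(s-x_0)^\nu\mathcal{Q}_{c,s}^{-\ell}$. Because $\mathcal{Q}_{c,s}^{-\ell}(x)$ is a function of $s$ with real coefficients, it commutes with $(s-x_0)^\nu$, so we may write $K^{1,L}_{\nu,\ell}=(s-x_0)^\nu\,(s-\bar x)\mathcal{Q}_{c,s}^{-\ell}$. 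We then insert \eqref{exxx}. At this point the real scalars $\binom{\nu}{j}(-x_0)^j$ can be moved to the front, but the powers $s^{\nu-j}$ do not commute with $\mathcal{P}^k_{\ell-1}(x)$. The formula \eqref{expn} writes $\mathcal{P}_{\ell-1}^k(x)s^{\nu-j-k-1}$ with the $s$-powers on the right, so this point must be handled with care. To get that ordering I would instead write $K^{1,L}_{\nu,\ell}=(s-\bar x)\mathcal{Q}_{c,s}^{-\ell}(s-x_0)^\nu$ first, which is legitimate since $(s-x_0)^\nu$ commutes with $\mathcal{Q}_{c,s}^{-\ell}$, and then place $(s-x_0)^\nu$ on the right. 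Finally we multiply the two absolutely convergent series termwise, which is justified by the Cauchy product.

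The same argument applied to Theorem \ref{exppser}, now with no $(s-\bar x)$ factor, gives \eqref{k12}. The exponent of $s$ in the stated formula should then be $s^{\nu-j-k-1}$; I would double-check the exponent as it is printed there.
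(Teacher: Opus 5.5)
Your proof is correct and follows the paper's route: Theorem \ref{exppser}, reality of $\mathcal{H}_\ell^k(x)$ to commute $s$, the index shift with $\mathcal{H}_\ell^{2\ell-2}\equiv0$, \eqref{rell}, and then the binomial expansion of $(s-x_0)^\nu$. Two of your additions fill in points the paper passes over: the separate treatment of $\ell=1$ via $\mathcal{P}_0^k(x)=x^k$, and the placement $K^{1,L}_{\nu,\ell}(s,x)=(s-\bar x)\mathcal{Q}_{c,s}^{-\ell}(x)(s-x_0)^\nu$.

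Drop your first rewriting $(s-x_0)^\nu(s-\bar x)\mathcal{Q}_{c,s}^{-\ell}(x)$. It is not just awkward for ordering; it is false in general, because $s$ and $\bar x$ need not commute. You are also right about \eqref{k12}: the printed right-hand side equals $s\,K^2_{\nu,\ell}(s,x)$, so the correct exponent is $\nu-j-k-1$, as in the operator version \eqref{res01}.
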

\begin{proof}
	We start proving the convergence of the series in \eqref{exxx}. By \eqref{rell}, Proposition \ref{summ} and Remark \ref{conv} we have
	\begin{eqnarray*}
		\sum_{k=2 (\ell-1)}^{\infty} | \mathcal{P}_{\ell-1}^k(x) s^{-1-k}| & \leq & \sum_{k=2 (\ell-1)}^{\infty} | \mathcal{H}_{\ell}^{k+1}(x)||s|^{-1-k}+|x| \sum_{k= 2 (\ell -1)}^{\infty} | \mathcal{H}_{\ell}^k(x)| |s|^{-1-k}\\
		&\leq& \sum_{k=2 (\ell-1)}^{\infty}\left[ \sum_{j=0}^{k-2\ell+2}C_j^{k+1}(\ell)+ \sum_{j=0}^{k-2 \ell+1} C_j^{k}(\ell)\right]|x|^{k-2\ell}|s|^{-1-k}\\
		& = & \sum_{k=2 (\ell-1)}^{\infty} \left[\binom{k+1}{2\ell-1}+\binom{k}{2 \ell-1}\right]|x|^{k-2\ell+2}|s|^{-1-k}.
	\end{eqnarray*}
	Since we are supposing that $|x|<|s|$, by the ratio test we get that the above series is convergent.
	Now, we prove the equality in \eqref{exxx}.
	By using the expansion of $ \mathcal{Q}_{c,s}^{-\ell}(x)$, obtained in \eqref{series1}, we have
	\begin{eqnarray*}
		(s-\bar{x}) \mathcal{Q}_{c,s}^{-\ell}(x)&=& \mathcal{Q}_{c,s}^{-\ell}(x)s-\bar{x} \mathcal{Q}_{c,s}^{-\ell}(x)\\
		&=& \sum_{k=2 \ell-1}^{\infty} \mathcal{H}_{\ell}^{k}(x)s^{-k}-\bar{x} \sum_{k=2 \ell-1}^{\infty} \mathcal{H}_{\ell}^k(x) s^{-1-k}\\
		&=& \sum_{k=2 (\ell-1)}^{\infty} \mathcal{H}_{\ell}^{k+1}(x)s^{-1-k}-\bar{x}\sum_{k=2(\ell-1)}^{\infty} \mathcal{H}_{\ell}^k(x) s^{-1-k},
	\end{eqnarray*}
	where, in the second summation, we use the fact that $ \mathcal{H}_{\ell}^{2\ell-1}(x)=0$, see Remark \ref{conv}. By \eqref{rell} we get
	$$ (s-\bar{x}) \mathcal{Q}_{c,s}^{-\ell}(x)= \sum_{k=2 (\ell-1)}^{\infty} \left[\mathcal{H}_{\ell}^{k+1}(x)-\bar{x} \mathcal{H}_{\ell}^k(x)\right]s^{-1-k}= \sum_{k= 2(\ell-1)}^{\infty} \mathcal{P}^k_{\ell-1}(x) s^{-1-k}.$$
	This proves formula \eqref{exxx}.
	Formula \eqref{expn} follows by using the expansion in series \eqref{exxx} and the binomial theorem applied to \eqref{k1}. Finally, the equality in \eqref{k12} follows by using the binomial theorem and the expansion \eqref{series1} in the definition of the function $K_{\nu, \ell}^2(s,x)$, see \eqref{k2}.
\end{proof}

\begin{proposition}
	\label{lapapp}
	Let $n$ be an odd number, set $h_n:= \frac{n-1}{2}$ and suppose $1 \leq \ell \leq h_n$. Then, for $x \in \mathbb{R}^{n+1}$, we have
	\begin{equation}
		\label{app3}
		\Delta_{n+1}^\ell x^k=\gamma_{n,\ell} \mathcal{P}^{k}_\ell(x), \qquad k \geq 2 \ell,
	\end{equation}
	where $\gamma_{n ,\ell}$ is given in  Theorem \ref{app1}. Moreover, the polynomials $\mathcal{P}^{k}_\ell(x)$ are axially Analytic-Harmonic
$\mathcal{AAH}_{1, h_n-\ell}(\mathbb{R}^{n+1})$ of type $(1, h_n-\ell)$.
\end{proposition}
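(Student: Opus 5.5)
The plan is to copy the proof of the analogous identity \eqref{harm}: expand the Cauchy kernel in powers of $s^{-1}$, apply $\Delta_{n+1}^\ell$ term by term, and compare with the series \eqref{exxx}.

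\emph{Step 1: the identity \eqref{app3}.} By Theorem \ref{app1} with $m=\ell$, for $s\notin[x]$,
$$
\Delta_{n+1}^\ell S^{-1}_L(s,x)=\gamma_{n,\ell}\,(s-\bar x)\mathcal{Q}_{c,s}^{-\ell-1}(x).
$$
Now take $|x|<|s|$. By Proposition \ref{cauchyseries} and Proposition \ref{secondAA},
$$
S^{-1}_L(s,x)=\sum_{k\ge 0}x^k s^{-1-k}.
$$
This series converges locally uniformly in $x$ together with all its derivatives, since it is a power series in $x_0,\dots,x_n$ dominated by $\sum_k |x|^k|s|^{-1-k}$. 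Hence we may apply $\Delta_{n+1}^\ell$ term by term. The terms with $k<2\ell$ vanish, because $x^k$ is a polynomial of degree $k<2\ell$. This gives
$$
\sum_{k\ge 2\ell}\Delta_{n+1}^\ell(x^k)\, s^{-1-k}.
$$
On the other hand, \eqref{exxx} with $\ell$ replaced by $\ell+1$ gives
$$
(s-\bar x)\mathcal{Q}_{c,s}^{-\ell-1}(x)=\sum_{k\ge 2\ell}\mathcal{P}^k_\ell(x)\,s^{-1-k}.
$$
Restricting to $x_0$ fixed and $s$ real (or $s$ in a slice), both sides are Laurent series in $s^{-1}$ with Clifford coefficients depending only on $x$. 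Uniqueness of the coefficients then gives $\Delta^\ell_{n+1}x^k=\gamma_{n,\ell}\mathcal{P}^k_\ell(x)$.

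One technical point is the range of \eqref{exxx}: it is stated for $\ell\le h_n$, so for $\ell+1$ we need $\ell+1\le h_n$. When $\ell=h_n$ we argue directly instead. The expansion \eqref{series1} and the recursion of Lemma \ref{hrecorsive} are purely algebraic and remain valid for index $h_n+1$. Alternatively, we can note that the definitions of $\mathcal{H}$ and $\mathcal{P}$ do not depend on $n$, so the restriction $\ell\le h_n$ is not really used in those proofs. I expect this bookkeeping to be the main obstacle, though it is minor.

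\emph{Step 2: regularity.} Since $x^k$ is a left slice function, Proposition \ref{axial} (with $\beta=0$, $m=\ell$, i.e. the Laplacian part of its proof) shows that $\Delta^\ell_{n+1}x^k$ is of axial type. Because $\gamma_{n,\ell}\neq0$ for $\ell\le h_n$, the polynomial $\mathcal{P}^k_\ell$ is therefore axial. Finally, by the Fueter–Sce theorem (Theorem \ref{FS1}), applied to the slice hyperholomorphic function $x^k$,
$$
D\Delta_{n+1}^{h_n-\ell}\mathcal{P}^k_\ell(x)=\gamma_{n,\ell}^{-1}D\Delta^{h_n}_{n+1}x^k=0.
$$
This shows $\mathcal{P}^k_\ell\in\mathcal{AAH}_{1,h_n-\ell}(\mathbb{R}^{n+1})$. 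Smoothness is automatic, since $\mathcal{P}^k_\ell$ is a polynomial.
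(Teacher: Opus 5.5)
Your proposal is correct and follows the paper's proof essentially step for step. You expand $S^{-1}_L(s,x)=\sum_k x^k s^{-1-k}$, apply $\Delta^\ell_{n+1}$ via Theorem~\ref{app1}, match coefficients of $s^{-1-k}$ with \eqref{exxx} at index $\ell+1$, and then obtain axiality from Proposition~\ref{axial} with $\beta=0$ and the $\mathcal{AAH}_{1,h_n-\ell}$ property from the Fueter--Sce theorem. Your extra remarks address points the paper passes over silently: the justification of term-by-term differentiation, and the observation that for $\ell=h_n$ the paper's argument needs \eqref{exxx} at index $h_n+1$, outside its stated range. That is harmless, since $\mathcal{Q}_{c,s}^{-L}(x)=\sum_{k\ge 2L-1}\mathcal{H}^k_L(x)s^{-k-1}$ holds for every $L\ge 1$.
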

\begin{proof}
	By Theorem \ref{app1} and Proposition \ref{cauchyseries} we have
	$$ (s-\bar{x}) \mathcal{Q}_{c,s}^{-\ell-1}(x)= \frac{1}{ \gamma_{n, \ell}}\Delta^{\ell}_{n+1}S^{-1}_L(s,x)=\frac{1}{ \gamma_{n, \ell}} \sum_{k=2 \ell}^{\infty} \Delta^{\ell}_{n+1} x^k s^{-1-k}.$$
	By \eqref{exxx} we have
	\begin{equation}
		\label{power}
		\sum_{k=2\ell}^{\infty} \Delta_{n+1}^{\ell} x^k s^{-1-k}=\gamma_{n, \ell} \sum_{k=2 \ell}^{\infty} \mathcal{P}_\ell^k(x)s^{-1-k}.
	\end{equation}
	The equality in \eqref{app3} then follows by equating the coefficients in the power series expansion of \eqref{power}. Now, by Proposition \ref{axial}, taking $\beta=0$, and \eqref{app3} we deduce that the polynomials $\mathcal{P}_\ell^k(x)$ are of axial form. Finally, since $x^k$ is a slice hyperholomorphic, by the Fueter-Sce mapping theorem (see Theorem \ref{FS1}) we get
	$$ D \Delta_{n+1}^{h_n-\ell}\mathcal{P}_\ell^k(x)= \frac{1}{\gamma_{n, \ell}}  D \Delta_{n+1}^{h_n} x^k=0.$$
\end{proof}

Now, we have developed all the tools to provide an expansion in series of $S^{-1}_{L,D^\beta \Delta^m}(s,x)$ and $S^{-1}_{L,\overline{D}^\beta \Delta^m}(s,x)$. The results below follows by substituting the expansions of $ K^{1,L}_{\nu, \ell}(s,x) $ and $ K^{2}_{\nu, \ell}(s,x) $ (see \eqref{expn} and \eqref{k12}, respectively) into the expression \eqref{do1}, \eqref{de1}, \eqref{bar1}, and \eqref{bar2}.

\begin{proposition}
	\label{expser}
	Let $n$ be an odd number and set $h_n:=\frac{n-1}{2}$. Let  $\beta$, $m \in \mathbb{N}_0$ such that $m +\beta\leq h_n$. We assume that $s$, $x \in \mathbb{R}^{n+1}$ such that $|x|<|s|$.
Then,
 if $\beta=2k_1+1$, where $k_1 \in \mathbb{N}$, we have
	\begin{eqnarray}
		\nonumber
		S^{-1}_{L,D^\beta \Delta^m}(s,x)&=& c_{\beta,m, h_n}\left( \sum_{j=0}^{k_1-1} \sum_{k=2m+2j+2k_1+2}^{\infty} \sum_{\alpha=0}^{2j+1} a^{1}_{j,k_1,m} \binom{2j+1}{\alpha} (-x_0)^\alpha \mathcal{P}^k_{m+j+k_1+1}(x)s^{2j-\alpha-k} \right.\\
		\label{Dser}
		&&\left. -\sum_{j=0}^{k_1} \sum_{k=2m+2j+1+2k_1}^{\infty} \sum_{\alpha=0}^{2j} b_{j,k_1,m}^1\binom{2j}{\alpha} (-x_0)^\alpha \mathcal{H}^k_{m+j+1+k_1}(x) s^{2j-\alpha-k} \right).
	\end{eqnarray}
	If $\beta$ is even, i.e. $\beta=2k_2$, with $k_2 \in \mathbb{N}$ we have
	\begin{eqnarray}
\nonumber
S^{-1}_{L,D^\beta \Delta^m}(s,x)&=& c_{\beta,m, h_n}\left( \sum_{j=0}^{k_2-1} \sum_{k=2m+2j+2k_2}^{\infty} \sum_{\alpha=0}^{2j} a^{1}_{j,k_2,m} \binom{2j}{\alpha} (-x_0)^\alpha \mathcal{P}^k_{m+j+k_2}(x)s^{2j-\alpha-k-1} \right.\\
\label{Dser1}
		&&\left.\! \! \! \! \! \! \! \!	\! \!\! \!\! \!	\! \!\! \!	 -\sum_{j=0}^{k_2-1} \sum_{k=2m+2j+1+2k_2}^{\infty} \sum_{\alpha=0}^{2j+1} b_{j,k_2,m}^2 \binom{2j+1}{\alpha} (-x_0)^\alpha \mathcal{H}^k_{m+j+1+k_2}(x) s^{2j-\alpha-k+1} \right)
	\end{eqnarray}
	where
$$c_{\beta,m, h_n}:=\frac{2^\beta (h_n-m)\gamma_{n,m}}{m!}, \qquad \gamma_{n,m}:=4^m m! (-h_n)_m$$ and the coefficients $a^{1}_{j,k_1,m}$, $b_{j,k_1,m}^1$ are defined in \eqref{coeff} and the coefficients $a^{2}_{j,k_2,m}$, $b^2_{j,k_2,m}$ are defined in \eqref{coef}.
	
\end{proposition}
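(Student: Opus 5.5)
The proof is a direct substitution: insert the series expansions \eqref{expn} and \eqref{k12} of the building blocks $K^{1,L}_{\nu,\ell}(s,x)$ and $K^2_{\nu,\ell}(s,x)$ into the closed forms \eqref{do1} and \eqref{de1} of Theorem \ref{p111}. The only work is tracking indices. The hypothesis $|x|<|s|$ makes all the series absolutely convergent, by the bounds already established in the proof of the expansion theorem for $(s-\bar x)\mathcal{Q}_{c,s}^{-\ell}(x)$ and in Theorem \ref{exppser}. Since the sums over $j$ are finite, we may exchange them freely with the series.

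\emph{Odd case}, $\beta=2k_1+1$. In the first sum of \eqref{do1} we have $\nu=2j+1$ and $\ell=m+j+2+k_1$. By \eqref{expn}, $K^{1,L}_{2j+1,m+j+2+k_1}(s,x)$ expands as
\[
\sum_{k\ge 2(m+j+k_1+1)}\ \sum_{\alpha=0}^{2j+1}\binom{2j+1}{\alpha}(-x_0)^\alpha\,\mathcal{P}^k_{m+j+k_1+1}(x)\,s^{2j+1-\alpha-k-1},
\]
and the exponent $2j+1-\alpha-k-1$ simplifies to $2j-\alpha-k$, as in \eqref{Dser}. In the second sum we have $\nu=2j$ and $\ell=m+1+k_1+j$. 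By \eqref{k12}, the series starts at $k=2\ell-1=2m+2j+2k_1+1$ and carries the powers $s^{2j-\alpha-k}$. Multiplying by $c_{\beta,m,h_n}$ and the coefficients $a^1_{j,k_1,m}$, $b^1_{j,k_1,m}$ then gives \eqref{Dser}.

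\emph{Even case}, $\beta=2k_2$. We do the same with \eqref{de1}. For $K^{1,L}_{2j,m+j+1+k_2}$ we get $\mathcal{P}^k_{m+j+k_2}$ with $k\ge 2(m+j+k_2)$ and powers $s^{2j-\alpha-k-1}$. For $K^2_{2j+1,m+1+j+k_2}$ we get $\mathcal{H}^k_{m+j+1+k_2}$ with $k\ge 2m+2j+2k_2+1$ and powers $s^{2j+1-\alpha-k}$. This yields \eqref{Dser1}, whose first coefficient should be read as $a^2_{j,k_2,m}$.

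One point must be checked before applying the expansions: the index constraint $1\le\ell\le h_n$ under which \eqref{expn} and \eqref{k12} were proved. The largest index occurring is $m+2k_1+1=m+\beta\le h_n$ in the odd case and $m+2k_2=m+\beta\le h_n$ in the even case. So the hypothesis $\beta+m\le h_n$ is exactly what is needed, and no further obstacle arises beyond this bookkeeping.
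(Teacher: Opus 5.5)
Your route is the paper's route: the paper proves this proposition only by remarking that one substitutes \eqref{expn} and \eqref{k12} into \eqref{do1} and \eqref{de1}. Several parts of your proposal are correct:
\begin{itemize}
\item the bookkeeping for the $K^{1,L}$-terms;
\item the starting indices of the $k$-series;
\item the check that every $\ell$ that occurs is at most $m+\beta\le h_n$;
\item your reading of $a^2_{j,k_2,m}$ in place of $a^1_{j,k_2,m}$ in \eqref{Dser1}.
\end{itemize}

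The one exponent you did not compute yourself is wrong. For the $K^2$-terms you quote \eqref{k12}, and \eqref{k12} as printed is off by one power of $s$. Combining \eqref{series1} with the binomial theorem gives
\[
K^2_{\nu,\ell}(s,x)=(s-x_0)^{\nu}\mathcal{Q}_{c,s}^{-\ell}(x)=\sum_{k=2\ell-1}^{\infty}\sum_{j=0}^{\nu}\binom{\nu}{j}(-x_0)^j\,\mathcal{H}^k_\ell(x)\,s^{\nu-j-k-1},
\]
which is exactly the operator version \eqref{res01}. A quick check at $x=0$ confirms this. There only the term with $k=2\ell-1$ and $j=0$ survives, because $\mathcal{H}^{2\ell-1}_\ell=1$. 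The left-hand side equals $s^{\nu-2\ell}$, while the printed \eqref{k12} gives $s^{\nu-2\ell+1}$.

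Consequently the $\mathcal{H}$-sums must carry $s^{2j-\alpha-k-1}$ in \eqref{Dser}, and $s^{2j-\alpha-k}$ rather than $s^{2j-\alpha-k+1}$ in \eqref{Dser1}. A homogeneity count gives the same conclusion:
\begin{itemize}
\item $S^{-1}_L(s,x)$ has degree $-1$ jointly in $(s,x)$, and $D^\beta\Delta^m_{n+1}$ lowers the degree by $\beta+2m$;
\item $\mathcal{H}^k_\ell$ has degree $k-2\ell+1$ and $\mathcal{P}^k_\ell$ has degree $k-2\ell$;
\item so every term must have total degree $-(2m+\beta+1)$;
\item the $\mathcal{P}$-terms satisfy this, but the printed $\mathcal{H}$-terms come out at $-(2m+\beta)$.
\end{itemize}

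So your argument, like the paper's one-line proof, actually establishes the corrected formulas, not the statement as printed. The fix is to redo the $K^2$ substitution directly from \eqref{series1}, as you already did for the $K^{1,L}$-terms from \eqref{expn}.
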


\begin{proposition}
	\label{expser1}
	Let $n$ be an odd number and set $h_n:=\frac{n-1}{2}$. Let  $\beta$, $m \in \mathbb{N}_0$ such that $m +\beta\leq h_n$.
 We assume that $s$, $x \in \mathbb{R}^{n+1}$ such that $|x|<|s|$. Then, if $\beta=2k_1+1$, where $k_1 \in \mathbb{N}$, we have
	\begin{eqnarray}
		\nonumber
		S^{-1}_{L,\overline{D}^\beta \Delta^m}(s,x)&=& \mathbf{c}_{\beta,m, h_n}\left( \sum_{j=0}^{k_1} \sum_{k=2m+2j+2k_1+2}^{\infty} \sum_{\alpha=0}^{2j+1} \mathbf{a}^{1}_{j,k_1,m} \binom{2j+1}{\alpha} (-x_0)^\alpha \mathcal{P}^k_{m+j+k_1+1}(x)s^{2j-\alpha-k} \right.\\
		\label{Dbars}
		&&\left. +\sum_{j=0}^{k_1} \sum_{k=2m+2j+1+2k_1}^{\infty} \mathbf{b}^1_{j,k_1,m} \sum_{\alpha=0}^{2j} \binom{2j}{\alpha} (-x_0)^\alpha \mathcal{H}^k_{m+j+k_1+1}(x) s^{2j-\alpha-k} \right).
	\end{eqnarray}
	If $\beta$ is even, i.e. $\beta=2k_2$, with $k_2 \in \mathbb{N}$ we have
	\begin{eqnarray}
		\nonumber
		S^{-1}_{L,\overline{D}^\beta \Delta^m}(s,x)&& = \mathbf{c}_{\beta,m, h_n}\left( \sum_{j=0}^{k_2} \sum_{k=2m+2j+2k_2}^{\infty} \sum_{\alpha=0}^{2j} \mathbf{a}^{2}_{j,k_2,m} \binom{2j}{\alpha} (-x_0)^\alpha \mathcal{P}^k_{m+j+k_2}(x)s^{2j-\alpha-k-1} \right.\\
		\label{Dbars1}
		&&\left. \! \! \! \! \! \! \! \!\! \!\! \!	\! \!\! \!	+\sum_{j=0}^{k_2-1} \sum_{k=2m+2j+1+2k_2}^{\infty}\mathbf{b}^2_{j,k_2,m} \sum_{\alpha=0}^{2j+1} \binom{2j+1}{\alpha} (-x_0)^\alpha \mathcal{H}^k_{m+j+1+k_2}(x) s^{2j-\alpha-k+1} \right),
	\end{eqnarray}
	where $$\mathbf{c}_{\beta,m, h_n}:=2^\beta 4^m (-h_n)_m,$$ the coefficients $\mathbf{a}^{1}_{j,k_2,m}$ and $\mathbf{b}^1_{j,k_2,m}$  are defined in \eqref{c1}, the coefficients $\mathbf{a}^{2}_{j,k_2,m}$ and $\mathbf{b}^2_{j,k_2,m}$ are defined in \eqref{c2}.
\end{proposition}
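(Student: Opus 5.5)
The plan is to substitute the already established series expansions of the building blocks into the closed-form kernels of Theorem \ref{barp}, exactly as announced before Proposition \ref{expser}. The case $\beta=2k_1+1$ uses \eqref{bar1}, and the case $\beta=2k_2$ uses \eqref{bar2}. Since $|x|<|s|$, every term $K^{1,L}_{\nu,\ell}(s,x)$ and $K^2_{\nu,\ell}(s,x)$ appearing there has an absolutely convergent expansion, given by \eqref{expn} and \eqref{k12}. The admissible range $\ell\le h_n$ holds because the largest index $\ell$ occurring is $m+2k_1+2\le h_n+1$. This edge case needs care: when $\ell-1=h_n$, the expansion \eqref{exxx} still makes sense through $\mathcal{P}^k_{h_n}$, and I would check that the proof of \eqref{exxx} only used $\mathcal{H}_\ell$ with $\ell\le h_n$. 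Otherwise I would note that the corresponding binomial coefficient in $\mathbf{a}^1_{j,k_1,m}$ vanishes.

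For the odd case, take the first sum in \eqref{bar1}, with terms $K^{1,L}_{2j+1,m+j+2+k_1}$. Apply \eqref{expn} with $\nu=2j+1$ and $\ell=m+j+2+k_1$. Then $\ell-1=m+j+k_1+1$, and the $k$-sum starts at $2(\ell-1)=2m+2j+2k_1+2$. The power of $s$ is $s^{\nu-\alpha-k-1}=s^{2j-\alpha-k}$, which matches the first line of \eqref{Dbars}.

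The second sum in \eqref{bar1} has terms $K^2_{2j,m+j+1+k_1}$. Apply \eqref{k12} with $\nu=2j$ and $\ell=m+j+1+k_1$. The $k$-sum then starts at $2\ell-1=2m+2j+2k_1+1$, with power $s^{2j-\alpha-k}$, matching the second line.

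The even case is identical. For $K^{1,L}_{2j,m+j+1+k_2}$, one has $\ell-1=m+j+k_2$, the sum starts at $k=2m+2j+2k_2$, and the power is $s^{2j-\alpha-k-1}$. For $K^2_{2j+1,m+1+k_2+j}$, the sum starts at $2m+2j+2k_2+1$ and the power is $s^{2j+1-\alpha-k}$. These match \eqref{Dbars1}.

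The coefficients $\mathbf{a}$, $\mathbf{b}$ and $\mathbf{c}_{\beta,m,h_n}$ are real scalars and can be pulled inside the sums. The real factors $(-x_0)^\alpha$ commute with everything. The only noncommutative issue is the order of $\mathcal{P}^k_{\ell-1}(x)$ and the powers of $s$, and \eqref{expn} and \eqref{k12} already place them in the stated order. Linearity over the finite sums in $j$ therefore gives the result.

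The main, though minor, obstacle is justifying the rearrangement of the triple sums and the finite linear combination of convergent series. Absolute convergence of each series for $|x|<|s|$ handles this. It was shown in the proof of the expansion theorem via Proposition \ref{summ} and the ratio test. The finite binomial sums in $\alpha$ do not affect convergence. The other point needing care is the index bookkeeping, especially the boundary index $\ell=h_n+1$ mentioned above.
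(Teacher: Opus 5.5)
Your route is the paper's. Its entire justification is the sentence before Proposition~\ref{expser}: substitute \eqref{expn} and \eqref{k12} into \eqref{bar1} and \eqref{bar2}. Your $K^{1,L}$/$\mathcal{P}$ bookkeeping is correct, but your $K^2$/$\mathcal{H}$ bookkeeping is not.

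Since $K^2_{\nu,\ell}(s,x)=(s-x_0)^\nu\mathcal{Q}^{-\ell}_{c,s}(x)$, multiplying $\sum_{\alpha}\binom{\nu}{\alpha}(-x_0)^\alpha s^{\nu-\alpha}$ by \eqref{series1} gives the power $s^{\nu-\alpha-k-1}$, not $s^{\nu-\alpha-k}$. The printed \eqref{k12} has lost the $-1$. Compare its operator version \eqref{res01}, or take $\nu=0$: there \eqref{k12} would give $s\,\mathcal{Q}^{-\ell}_{c,s}(x)$ instead of $\mathcal{Q}^{-\ell}_{c,s}(x)$. Consequently the $\mathcal{H}$-lines should carry $s^{2j-\alpha-k-1}$ in \eqref{Dbars} and $s^{2j-\alpha-k}$ in \eqref{Dbars1}, and the statement inherits the misprint. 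When you assert that the powers $s^{2j-\alpha-k}$ and $s^{2j+1-\alpha-k}$ ``match'', you are copying the misprint rather than checking it. As written, your argument certifies an identity that is false. The verification you claim to perform is exactly where this should have been caught and corrected.

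Your handling of the boundary index is also wrong. The index $\ell=h_n+1$ occurs exactly when $\beta+m=h_n$ and $j$ is maximal, and that term cannot vanish. Up to the constant, it is the whole Poly-Analytic kernel $(s-\bar x)(s-x_0)^{h_n-m}\mathcal{Q}^{-h_n-1}_{c,s}(x)$ from the remark after Theorem~\ref{barp}. The binomial factor in $\mathbf{a}^1_{k_1,k_1,m}$ (resp.\ $\mathbf{a}^2_{k_2,k_2,m}$) is $\binom{-1}{-1}$, which must be read as $1$, so your fallback fails.

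Your other proposed check fails too. For $\ell=h_n+1$ the proof of \eqref{exxx} does use $\mathcal{H}_{h_n+1}$, through \eqref{series1} and \eqref{rell}. The correct observation is that \eqref{series1}, Lemma~\ref{hrecorsive}, Proposition~\ref{summ} and \eqref{exxx} are purely algebraic, and their proofs never use $\ell\le h_n$. Hence \eqref{expn} holds for $\ell=h_n+1$ as well, a point the paper passes over silently.
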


\begin{remark}
	The series expansions given in Proposition~\ref{expser} and Proposition~\ref{expser1} are quite general and encompass two notable special cases: the Poly-Harmonic and Poly-Analytic kernels.
	
	The expansion of the Poly-Harmonic kernel can be recovered from equation~\eqref{Dser} by setting $\beta = 1 $. In this case, we obtain
	$$
	S^{-1}_{L, D \Delta^m}(s, x) = -2(h_n - m)\gamma_{n, m} \sum_{k = 2m + 1}^{\infty} \mathcal{H}^k_{m+1}(x) s^{-1 - k}.
	$$
	
	On the other hand, the expansion of the Poly-Analytic kernel can be recovered from equations~\eqref{Dbars} and~\eqref{Dbars1} by taking $ \beta = h_n - m $. Specifically, the series expansion in the Poly-Analytic case is given by
	$$
	S^{-1}_{L, \overline{D}^{h_n - m} \Delta^m}(s, x) = 2^{2h_n} h_n! (-h_n)_m (s - x_0)^{h_n - m} \sum_{k = 2h_n}^{\infty} \mathcal{P}^k_{h_n}(x) s^{-1 - k}.
	$$
	
\end{remark}

\begin{remark}
	By Proposition~\ref{summ} and Remark~\ref{Jpol}, the series expansions of $ S^{-1}_{L,D^\beta \Delta^m}(s,x) $ and $ S^{-1}_{L,\overline{D}^\beta \Delta^m}(s,x) $ can be expressed in terms of Jacobi polynomials.
\end{remark}

\section{Functional calculi of the Dirac fine structures}\label{FUNC-CAL_DIRAC}	

In this section we aim to establish the functional calculi for the Dirac fine structures. This functional calculi are more general then the one presented in \cite{CDP25, CDPS, Polyf1, Polyf2}. In order to introduce these functional calculi we need to recall the basic notions of the
spectral theory based on the $S$-spectrum and the $S$-functional calculus.

\subsection{The $S$-functional calculus for $n$-tuples of operators}

We denote by $V$ the Banach space over $ \mathbb{R}$ with norm given by $\|.\|$. The tensor product $V_n=V \otimes \mathbb{R}_n$ is a two-sided Banach module over $ \mathbb{R}_n$. An element in the space $V_n$ is of the type $\sum_{A} v_A \otimes e_A$, where $e_A$ has been introduced at the beginning of Section 2. The multiplications (right and left) of an element $v \in V_n$ by a scalar $a \in \mathbb{R}_n$ are defined as
	
	 $$ va= \sum_{A} v_A \otimes (e_Aa) \quad \hbox{and} \quad av=\sum_{A} v_A \otimes (ae_A).$$
In the sequel for short we will write $\sum_{A} v_A e_A$ instead of $\sum_{A} v_A \otimes e_A$. We denote by $\mathcal{B}(V)$ the space of bounded $ \mathbb{R}$-homomorphism of the Banach space $V$ into itself endowed with the natural norm denoted by $\| .\|_{\mathcal{B}(V)}$. If we assume that $ T_A \in \mathcal{B}(V)$, we can define the operator as follows:

$$T= \sum_{A} T_A e_A,$$
and its action on $v= \sum_{B} v_B e_B$ as
$$T(v)=\sum_{A,B} T_A(v_B)e_A e_B$$
The set of all such right bounded linear operators is denoted by $ \mathcal{B}(V_n)$ and a norm is defined by
$$ \| T\|_{\mathcal{B}(V_n)}=\sum_{A} \| T_A\|_{\mathcal{B}(V)}.$$

The set of bounded operators, in paravector form, $T= \sum_{\mu=0}^{n}e_{\mu} T_{\mu}$, where $T_\mu \in \mathcal{B}(V)$ for $\mu=0,1,...,n$, will be denoted by $ \mathcal{B}^{0,1}(V_n)$. The conjugate of the operator $T$ is given by $\overline{T}=T_0-\sum_{\mu=1}^{n} e_{\mu}T_{\mu}$. The subset of operators in $\mathcal{B}^{0,1}(V_n)$ whose components commute among themselves will be denoted by $ \mathcal{BC}^{0,1}(V_n)$. If we consider $T \in \mathcal{BC}^{0,1}(V_n)$ the operator $T \bar{T}$ is well defined and is given by $T\overline{T}=\overline{T}T= \sum_{\mu=0}^{n} T_\mu^2$ and $T+\bar{T}=2T_0$.

Now, we have all the tools to recall the notion of spectrum and resolvent operators within the Clifford framework. In the case of bounded operators with commuting components there are two possible definitions of the $S$-spectrum and $S$-resolvent operators. In \cite{ColomboSabadiniStruppa2011} the authors have proved that for $T \in \mathcal{BC}^{0,1}(V_n)$, $s \in \mathbb{R}^{n+1}$ such that $\|T\|<|s|$, then
\begin{equation}
\label{basicS}
\sum_{m=0}^{\infty} T^m s^{-1-m}=-\mathcal{Q}_s(T)^{-1}(T-\bar{s}\mathcal{I}) \quad \hbox{and} \quad \sum_{m=0}^{\infty} T^m s^{1-m}=-(T-\bar{s} \mathcal{I})\mathcal{Q}_s(T)^{-1},
\end{equation}
where
\begin{equation}
\label{ncommQ}
\mathcal{Q}_s(T):=T^2-2s_0T+|s|^2 \mathcal{I}.
\end{equation}

We observe that the right hand sides of \eqref{basicS} are the Cauchy kernels $S^{-1}_L(s,x)$ and $S^{-1}_R(s,x)$ written in form I, where we have formally replaced the paravector $x \in \mathbb{R}^{n+1}$ by the paravector $T \in \mathcal{BC}^{0,1}(V_n)$.

\begin{remark}
It is remarkable that the series in \eqref{basicS} for $\|T\|<|s|$ have a closed form also when the components of the operator given by $T_\mu \in \mathcal{B}(V)$ do not commute among themselves
\end{remark}
The following definitions arise naturally, from \eqref{basicS}, see \cite{ColomboSabadiniStruppa2011}.

\begin{definition}[The $S$-resolvent set, the $S$-spectrum and the $S$-resolvent operators]
Let $T \in \mathcal{B}^{0,1}(V_n)$. The $S$-resolvent set of $T$ is defined as
$$ \rho_S(T)=\{s \in \mathbb{R}^{n+1}\, : \, \mathcal{Q}_s^{-1}(T) \in \mathcal{B}(V_n)\},$$
and the $S$-spectrum of $T$ is given by
$$ \sigma_S(T):= \mathbb{R}^{n+1}\setminus \rho_S(T).$$
Furthermore, for $s \in \rho_S(T)$ we define the left and the right $S$-resolvent operators as
$$ S^{-1}_L(s,T)=-\mathcal{Q}_{s}^{-1}(T)(T-\bar{s} \mathcal{I}), \quad \left( \hbox{resp.} \quad S^{-1}_R(s,T)=-(T-\bar{s} \mathcal{I})\mathcal{Q}_s^{-1}(T)\right).$$
\end{definition}

The Cauchy kernel series written in the form II (see Definition \ref{Ckernel}) is crucial to define a functional calculus for bounded operators with components that commute among themselves.

\begin{theorem}
Let $T \in \mathcal{BC}^{0,1}(V_n)$ and $s \in \mathbb{R}^{n+1}$ be such that $\|T\|<|s|$. Then we have
\begin{equation}
\label{commser}
\sum_{m=0}^{\infty}T^m s^{-1-m}=(s\mathcal{I}-\bar{T})\mathcal{Q}_{c,s}^{-1}(T), \quad \hbox{and} \quad \sum_{m=0}^{\infty}s^{-1-m} T^m =\mathcal{Q}_{c,s}^{-1}(T)(s\mathcal{I}-\bar{T}),
\end{equation}
where
\begin{equation}
\label{commQ}
\mathcal{Q}_{c,s}(T)= s^2 \mathcal{I}-s(T+\bar{T})+T \bar{T}.
\end{equation}
\end{theorem}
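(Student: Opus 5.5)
The plan is to show that, for commuting components, the form I and form II expressions of the operator-valued Cauchy kernel series agree. The alternative of multiplying $\sum_m T^m s^{-1-m}$ by $\mathcal{Q}_{c,s}(T)$ and telescoping is awkward, because $s$ does not commute with $T$ inside $V_n$. So I will work with the form I identity \eqref{basicS}, which is already available for $\|T\|<|s|$. The task then reduces to proving the operator analogue of Proposition~\ref{secondAA}:
\[
-\mathcal{Q}_s^{-1}(T)(T-\bar s\mathcal{I})=(s\mathcal{I}-\bar T)\mathcal{Q}_{c,s}^{-1}(T),
\]
and similarly for the right kernel.

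\textbf{Step 1: invertibility of $\mathcal{Q}_{c,s}(T)$.} For $\|T\|<|s|$, I need $\mathcal{Q}_{c,s}(T)=s^2\mathcal{I}-2sT_0+T\bar T$ to be invertible. I will write it as $s^2(\mathcal{I}-s^{-1}2T_0+s^{-2}T\bar T)$, using that $T_0$ and $T\bar T=\sum_\mu T_\mu^2$ are scalar operators in $\mathcal{B}(V)$ and therefore commute with $s$. The difficulty is that the crude bound on $2|s|^{-1}\|T_0\|+|s|^{-2}\|T\bar T\|$ may exceed $1$. So instead I will use the factorization $\mathcal{Q}_{c,s}(T)=(s\mathcal{I}-T)(s\mathcal{I}-\bar T)$, read with respect to a fixed imaginary unit. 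Writing $s=s_0+Js_1$, the operators $T_0$ and $T\bar T$ commute with $J$, so $\mathcal{Q}_{c,s}(T)$ lives in the commutative algebra $\mathcal{B}(V)\otimes\mathbb{C}_J$. There it factors as $(s-\lambda_+)(s-\lambda_-)$ with $\lambda_\pm=T_0\pm J\,(\sum_{\mu\ge1}T_\mu^2)^{1/2}$. The square root will be handled via holomorphic calculus, or better avoided entirely by a Neumann series in $\mathcal{B}(V)\otimes\mathbb{C}_J$, using the spectral radius estimate $r(T_0^2+\ldots)\le\|T\|^2$ that comes from commutativity.

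\textbf{Step 2: the algebraic identity.} Assuming both inverses exist, I will cross-multiply. The target is equivalent to
\[
-(T-\bar s)\mathcal{Q}_{c,s}(T)=\mathcal{Q}_s(T)(s-\bar T).
\]
Both sides are polynomials in $T$, $\bar T$, $s$, $\bar s$. Because the components of $T$ commute, the expansion is formally identical to the scalar computation behind Proposition~\ref{secondAA}: $T$ and $\bar T$ commute, and $T+\bar T=2T_0$ and $T\bar T$ are central with respect to Clifford units. The one point needing care is the ordering of $s$ relative to $T$, since $s$ and $T$ do not commute in general. I will keep the scalar proof's ordering verbatim, so that only the relations $T\bar T=\bar TT$, $s\bar s=|s|^2$ and $s+\bar s=2s_0$ are used. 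The right-kernel identity follows symmetrically.

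\textbf{Main obstacle.} The hard part is Step 1 together with justifying the inverse manipulations in the noncommutative module setting. A fallback that avoids Step 1 in full generality is to note that $\mathcal{Q}_s(T)$ is invertible by the assumption underlying \eqref{basicS}, and then show directly that
\[
(s-\bar T)^{-1}\text{-type expressions}\quad\text{or}\quad \mathcal{Q}_{c,s}(T)\,\mathcal{Q}_s^{-1}(T)\cdots
\]
produce a two-sided inverse of $\mathcal{Q}_{c,s}(T)$ via the identity of Step 2.
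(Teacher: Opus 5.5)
The paper records this theorem without proof; it is recalled from the literature together with \eqref{basicS}. Your argument therefore has to stand on its own.

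Step~2 is correct. The identity $-(T-\bar s\mathcal{I})\mathcal{Q}_{c,s}(T)=\mathcal{Q}_s(T)(s\mathcal{I}-\bar T)$ does hold in $\mathcal{B}(V_n)$. Use that $T_0$ and $T\bar T=\sum_\mu T_\mu^2$ commute with $s$ and $T$, and substitute $T^2=2T_0T-T\bar T$, $\bar T=2T_0-T$, $s^2=2s_0s-|s|^2$, $\bar s=2s_0-s$. Both sides then become $|s|^2(s+T)-T\bar T(s+T)+2(T_0-s_0)Ts+2s_0T\bar T-2T_0|s|^2$.

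The genuine gap is Step~1, the invertibility of $\mathcal{Q}_{c,s}(T)$. Without it the right-hand side of \eqref{commser} is not even defined, and none of your proposed mechanisms delivers it:
\begin{enumerate}
\item Taken literally, $(s\mathcal{I}-T)(s\mathcal{I}-\bar T)=\mathcal{Q}_{c,s}(T)+(s\underline{T}-\underline{T}s)$ with $\underline{T}=\sum_{\mu\ge1}e_\mu T_\mu$, so it is not a factorization.
\item The $\lambda_\pm$ version needs a commuting square root of $\sum_{\mu\ge1}T_\mu^2$. The holomorphic calculus does not supply one in general, since nothing keeps the spectrum away from $0$ or prevents it from surrounding $0$.
\item The Neumann series of $\mathcal{I}-X$, with $X=2s^{-1}T_0-s^{-2}T\bar T$, fails for more than norm reasons. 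For $T=\frac{9}{10}\mathcal{I}$ and $s=-1$ one gets $X=-\frac{261}{100}\mathcal{I}$, so $r(X)>1$ although $\|T\|<|s|$.
\item Your fallback does not help as stated. The Step~2 identity yields no inverse of $\mathcal{Q}_{c,s}(T)$ unless the flanking factors are already known to be invertible.
\end{enumerate}

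To close Step~1, expand around $T_0$ rather than around $0$. Work in the commutative complex Banach algebra generated by $\mathcal{I},T_0,\dots,T_n$ and $J$, where $s=s_0+Js_1$, and write
\[
\mathcal{Q}_{c,s}(T)=(s\mathcal{I}-T_0)^2\Big(\mathcal{I}+(s\mathcal{I}-T_0)^{-2}\sum_{\mu=1}^{n}T_\mu^2\Big).
\]
We have $r\big((s\mathcal{I}-T_0)^{-1}\big)\le(|s|-\|T_0\|)^{-1}$, and the spectral radius is subadditive and submultiplicative on commuting elements. Hence the perturbation has spectral radius at most $\big(\sum_{\mu\ge1}\|T_\mu\|\big)^2/(|s|-\|T_0\|)^2$. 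This is $<1$ exactly because $\|T\|=\sum_\mu\|T_\mu\|<|s|$.

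Alternatively, make your fallback rigorous with $\mathcal{Q}_{c,s}(T)\,\mathcal{Q}_{c,\bar s}(T)=\mathcal{Q}_s(T)\,\mathcal{Q}_s(\bar T)$; both sides are the same real polynomial in $T_0$, $T\bar T$, $s_0$, $|s|^2$. The right side is a central element, and it is invertible because $\mathcal{Q}_s(T)$ and $\mathcal{Q}_s(\bar T)$ are, as in \eqref{basicS} with $\|\bar T\|=\|T\|<|s|$. This is the content of the equivalence between invertibility of $\mathcal{Q}_{c,s}(T)$ and of $\mathcal{Q}_s(T)$, which the paper records right after the theorem.

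Finally, the telescoping you dismissed is actually the shortest route, and it makes Step~2 and \eqref{basicS} unnecessary. The factor $\mathcal{Q}_{c,s}(T)$ multiplies on the right, and its only non-central ingredient is $s$, so $s$ never has to pass $T$:
\[
\begin{aligned}
\Big(\sum_{m\ge0}T^ms^{-1-m}\Big)\mathcal{Q}_{c,s}(T)&=\sum_{m\ge0}T^ms^{1-m}-2T_0\sum_{m\ge0}T^ms^{-m}+\sum_{m\ge0}\big(2T_0T^{m+1}-T^{m+2}\big)s^{-1-m}\\
&=s\mathcal{I}+T-2T_0=s\mathcal{I}-\bar T.
\end{aligned}
\]
Symmetrically, $\mathcal{Q}_{c,s}(T)\sum_{m\ge0}s^{-1-m}T^m=s\mathcal{I}-\bar T$. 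Once Step~1 is in place, right (resp.\ left) multiplication by $\mathcal{Q}_{c,s}^{-1}(T)$ gives \eqref{commser}.
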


The above result suggests that the notion of spectrum and of resolvent sets of $T$ for paravector operators with commuting components.

\begin{definition}[$F$-resolvent set and $F$-spectrum]
Let $T \in \mathcal{BC}^{0,1}(V_n)$. We define the $F$-resolvent set of $T$ as
$$ \rho_F(T)= \{s \in \mathbb{R}^{n+1} \, : \, \mathcal{Q}_{c,s}(T)^{-1} \in \mathcal{B}(V_n)\},$$
and the $F$-spectrum of $T$ as
$$ \sigma_F(T)=\mathbb{R}^{n+1}\setminus \rho_F(T).$$
\end{definition}

Based on the strategy in \cite[Thm. 4.5.6]{CGK} and in \cite{CDP25} it was proved the following relation between the $F$-spectrum and the $S$-spectrum.

\begin{theorem}
Let $T \in \mathcal{BC}^{0,1}(V_n)$. Then, the operator $ \mathcal{Q}_{c,s}(T)$ (defined in \eqref{commQ})
 is invertible if and only if the operator $\mathcal{Q}_s(T)$ (defined in \eqref{ncommQ}) is invertible and so
$$\sigma_S(T)=\sigma_F(T).$$
\end{theorem}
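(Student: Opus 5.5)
The idea is to relate the two quadratic operators through the Clifford-valued factorization available because the components $T_\mu$ commute. Write
$$\mathcal{Q}_s(T)=T^2-2s_0T+|s|^2\mathcal{I},\qquad \mathcal{Q}_{c,s}(T)=s^2\mathcal{I}-2sT_0+T\overline{T}.$$
Since the $T_\mu$ commute, $T$ and $\overline{T}$ commute, and $T\overline{T}$, $T_0$ are scalar-type (real component) operators. I would first reduce invertibility of each quadratic expression to invertibility of an operator with real-type coefficients. In the paravector scalar case one has the identity $\mathcal{Q}_s(x)\,\overline{\mathcal{Q}_s(x)}=\mathcal{Q}_{c,s}(x)\,\overline{\mathcal{Q}_{c,s}(x)}$: both sides equal the real polynomial
$$P(s,x)=\bigl(|x|^2-2s_0x_0+|s|^2\bigr)^2-4|\underline s|^2\cdots,$$
that is, essentially $|\mathcal{Q}_s(x)|^2$ written symmetrically. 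I would verify the operator version of this identity:
$$\mathcal{Q}_s(T)\,\mathcal{Q}_{\bar s}(\overline T)\;=\;\mathcal{Q}_{c,s}(T)\,\overline{\mathcal{Q}_{c,s}(T)}\;=:\;\mathcal{P}_s(T),$$
where the bar on $\mathcal{Q}_{c,s}(T)$ conjugates $s$. This is a purely algebraic computation, valid because all operators involved commute with each other, and $s$ commutes with the real-component operators $T_0$ and $T\overline{T}$. The resulting $\mathcal{P}_s(T)=\bigl(T\overline T-2s_0T_0+|s|^2\bigr)^2-4|\underline s|^2\,(\ldots)$ has only real-scalar-type coefficients combined with $T_\mu$, so it commutes with every factor.

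Next I would show that $\mathcal{Q}_s(T)$ is invertible iff $\mathcal{P}_s(T)$ is invertible, and likewise for $\mathcal{Q}_{c,s}(T)$. The backward direction is immediate: if $\mathcal{P}_s(T)^{-1}$ exists, then $\mathcal{Q}_{\bar s}(\overline T)\mathcal{P}_s(T)^{-1}$ is a two-sided inverse of $\mathcal{Q}_s(T)$, since all the factors commute. The forward direction needs more care: I must show that invertibility of $\mathcal{Q}_s(T)$ forces invertibility of the conjugate factor $\mathcal{Q}_{\bar s}(\overline T)$. For this I would use the conjugation anti-automorphism on $\mathcal{B}(V_n)$, which maps $\mathcal{Q}_s(T)$ to $\mathcal{Q}_{\bar s}(\overline T)$ and preserves invertibility. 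Then $\mathcal{P}_s(T)$ is a product of two commuting invertible operators, hence invertible. The same argument applies to $\mathcal{Q}_{c,s}(T)$ and its conjugate.

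Combining the two equivalences gives: $\mathcal{Q}_{c,s}(T)$ is invertible iff $\mathcal{P}_s(T)$ is invertible iff $\mathcal{Q}_s(T)$ is invertible. Hence $\rho_S(T)=\rho_F(T)$ and $\sigma_S(T)=\sigma_F(T)$.

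The main obstacle I expect is the explicit algebraic identity $\mathcal{Q}_s(T)\mathcal{Q}_{\bar s}(\overline T)=\mathcal{Q}_{c,s}(T)\overline{\mathcal{Q}_{c,s}(T)}$. It requires tracking the noncommutativity between $s$ and $T$: $s$ does not commute with $\underline T$, only with $T_0$ and $T\overline T$. If the direct expansion becomes unwieldy, I would follow \cite[Thm. 4.5.6]{CGK}: expand both sides in the commuting quantities $T_0$, $T\overline T$, $s_0$, $|s|^2$, and reduce everything to the real operator $(T\overline T)^2-4s_0T_0T\overline T+\cdots+|s|^4$.
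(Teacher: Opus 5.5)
Your reduction of both invertibility questions to one common real-type operator is the right idea. The paper gives no proof of its own here, only a reference, so your argument is judged on its own terms. The identity you flagged as the main obstacle is easy. Set $a:=T_0-s_0$ and $\Theta:=\sum_{j=1}^nT_j^2$, so that $\underline{T}^2=-\Theta$ by commutativity. Then
\[
\mathcal{Q}_s(T)=\bigl(a^2-\Theta+|\underline{s}|^2\bigr)+2a\,\underline{T},\qquad \mathcal{Q}_{c,s}(T)=\bigl(a^2+\Theta-|\underline{s}|^2\bigr)-2a\,\underline{s}.
\]
Multiply each by its conjugate, i.e. by $\mathcal{Q}_s(\overline{T})=\mathcal{Q}_{\bar s}(\overline{T})$ and by $\mathcal{Q}_{c,\bar s}(T)$ respectively. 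Both products equal $a^4+(\Theta-|\underline{s}|^2)^2+2a^2(\Theta+|\underline{s}|^2)$. Since $\underline{s}$ and $\underline{T}$ never occur in the same product, no noncommutativity has to be tracked. Your phrase ``all operators involved commute'' is not literally true, but it is not needed. The backward direction is correct as you wrote it.

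The step that fails as written is the forward direction: there is no ``conjugation anti-automorphism'' of $\mathcal{B}(V_n)$. Composition there is $(\sum_A S_Ae_A)(\sum_B R_Be_B)=\sum_{A,B}S_AR_B\,e_Ae_B$. Componentwise conjugation $\sum_AS_Ae_A\mapsto\sum_AS_A\overline{e_A}$ reverses the Clifford units but not the operator coefficients. So $\overline{SR}=\sum_{A,B}S_AR_B\,\overline{e_B}\,\overline{e_A}$, which in general differs from $\overline{R}\,\overline{S}=\sum_{A,B}R_BS_A\,\overline{e_B}\,\overline{e_A}$. Hence from $\mathcal{Q}_s(T)R=R\,\mathcal{Q}_s(T)=\mathcal{I}$ you cannot directly conclude that $\overline{R}$ inverts $\mathcal{Q}_s(\overline{T})$. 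Nothing you said ensures that the components of $R$ commute with those of $\mathcal{Q}_s(T)$.

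Either of two repairs closes the gap.

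(i) Use the grade involution $\alpha\bigl(\sum_AT_Ae_A\bigr)=\sum_A(-1)^{|A|}T_Ae_A$ instead. It preserves the order of both coefficients and units, so it is a genuine norm-preserving algebra automorphism of $\mathcal{B}(V_n)$ and carries inverses to inverses. It satisfies $\alpha(T)=\overline{T}$ and $\alpha(s\mathcal{I})=\bar s\mathcal{I}$. Hence $\alpha(\mathcal{Q}_s(T))=\mathcal{Q}_s(\overline{T})$, and, using $T\overline{T}=\overline{T}T$, also $\alpha(\mathcal{Q}_{c,s}(T))=\mathcal{Q}_{c,\bar s}(T)$.

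(ii) Alternatively, observe that each $T_\mu$ commutes with $\mathcal{Q}_s(T)$, because the latter's components are real polynomials in the $T_\nu$. Hence each $T_\mu$ commutes with $R$, so every component of $R$ commutes with every $T_\mu$. Then conjugation does reverse the product $\mathcal{Q}_s(T)R$, and your argument goes through. The same reasoning applies verbatim to $\mathcal{Q}_{c,s}(T)$.

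With either fix, $\mathcal{P}_s(T)$ is a product of two invertible operators, and the rest of your proof is complete.
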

By \eqref{commser}, we can define the left (resp. right) $S$-resolvent for paravector operators whose components commute among themselves, as
$$ S^{-1}_L(s,T)=(s \mathcal{I}-\bar{T}) \mathcal{Q}_{c,s}(T)^{-1}, \qquad \left( \, \hbox{resp.} \quad S^{-1}_R(s,T)=\mathcal{Q}_{c,s}(T)^{-1}(s \mathcal{I}-\bar{T})\right).$$
By mimicking the definition of slice hyperholomorphic function, see Definition \ref{sh}, we can provide the definition of slice hyperholomorphicity operator-valued.

\begin{definition}
	Let $U \subseteq \mathbb{R}^{n+1}$ be an axially symmetric open set. An operator valued function $K:U \to \mathcal{B}(V_n)$ is called left (resp. right) slice hyperholomorphic, if there exist operator valued functions $A$, $B : \mathcal{U} \to \mathcal{B}(V_n) $ with the set $ \mathcal{U}$ as in \eqref{twente}, such that for every $(u,v) \in \mathcal{U}$:
	\begin{itemize}
		\item[i)] The operator $K$ admit for every $J \in \mathbb{S}$ the representation:
		$$K(u+Jv)=A(u,v)+IB(u,v), \quad \left(\hbox{resp.} \, \, K(u+Jv)=A(u,v)+B(u,v)I\right).$$
		\item[ii)] The operators $A$, $B$ satisfy the even-odd conditions:
		$$A(u,-v)=A(u,v), \quad \hbox{and} \quad B(u,-v)=-B(u,v).$$
		\item[iii)] The operators $A$, $B$ satisfy the Cauchy-Riemann equations
		$$ \frac{\partial}{\partial u}A(u,v)=\frac{\partial}{\partial v} B(u,v), \quad \hbox{and} \quad \frac{\partial}{\partial u}A(u,v)=-\frac{\partial}{\partial v} B(u,v)$$
		with the derivatives are understood in the natural operator norm convergence sense.
	\end{itemize}
	Moreover, if the operators $A$, $B$ are two-sided linear we get that $s \mapsto K(s)$ is intrinsic.
\end{definition}

In the variable $s$ the resolvent operators satisfy the following regularity, see \cite[Lemma 3.1.11]{CGK}.

\begin{lemma}
Let $n \in \mathbb{N}$ and $T \in \mathcal{BC}^{0,1}(V_n)$. Then, the left (resp. right) $S$-resolvent operator is a $\mathcal{B}(V_n)$-valued right (resp. left) slice hyperholomorphic function of the variable $s$ on $\rho_S(T)$.
\end{lemma}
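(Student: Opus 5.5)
The claim is that $s\mapsto S^{-1}_L(s,T)=(s\mathcal{I}-\bar T)\mathcal{Q}_{c,s}(T)^{-1}$ is right slice hyperholomorphic on $\rho_S(T)$. Throughout, $s=u+Jv$ with $J\in\mathbb{S}$, and $T_0$ and $T\bar T$ commute with each other and with $T$. I plan to decompose the resolvent explicitly in the form $A(u,v)+B(u,v)J$ and then check the even-odd conditions and the Cauchy--Riemann equations.

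\emph{Step 1: invert $\mathcal{Q}_{c,s}(T)$ explicitly.} Write $\mathcal{Q}_{c,s}(T)=s^2\mathcal{I}-2sT_0+T\bar T$. Since $s^2=u^2-v^2+2uvJ$, this becomes
$$
\mathcal{Q}_{c,s}(T)=\alpha(u,v)+J\,\beta(u,v),\qquad \alpha=(u^2-v^2)\mathcal{I}-2uT_0+T\bar T,\quad \beta=2v(u\mathcal{I}-T_0).
$$
Here $\alpha$ and $\beta$ are commuting real-component operators, and they commute with $J$. The expression is therefore "complex" in $J$ with commuting coefficients, and its inverse is $(\alpha-J\beta)(\alpha^2+\beta^2)^{-1}$. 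The operator $\alpha^2+\beta^2=\mathcal{Q}_{c,s}(T)\mathcal{Q}_{c,\bar s}(T)$ is invertible for $s\in\rho_S(T)$, because the $S$-spectrum is axially symmetric: $\mathcal{Q}_{c,\bar s}(T)$ is obtained from $\mathcal{Q}_{c,s}(T)$ by the Clifford conjugation $J\mapsto -J$, which preserves invertibility. Consequently $\mathcal{Q}_{c,s}(T)^{-1}=\tilde A+\tilde B J$, with $\tilde A=\alpha(\alpha^2+\beta^2)^{-1}$ and $\tilde B=-\beta(\alpha^2+\beta^2)^{-1}$. Under $v\mapsto -v$, $\alpha$ is even and $\beta$ is odd, so $\tilde A$ is even and $\tilde B$ is odd.

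\emph{Step 2: multiply out.} We have $(s\mathcal{I}-\bar T)(\tilde A+\tilde BJ)=(u\mathcal{I}-\bar T)\tilde A-v\tilde B+\big[(u\mathcal{I}-\bar T)\tilde B+v\tilde A\big]J$. To get this grouping, I move $J$ to the right using the fact that $J$ commutes with the scalar-component operators $\tilde A,\tilde B$. The factor $\bar T$ stays on the left, which is why the resolvent is right (not left) slice in $s$. Setting $A=(u-\bar T)\tilde A-v\tilde B$ and $B=(u-\bar T)\tilde B+v\tilde A$, the parities from Step 1 show that $A$ is even and $B$ is odd in $v$.

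\emph{Step 3: Cauchy--Riemann.} Rather than differentiating $A$ and $B$ directly, I observe that the map $\Phi:(u+iv)\mapsto \mathcal{Q}_{c,u+iv}(T)^{-1}$ lives in the commutative Banach algebra generated by $T_0$, $T\bar T$ and $(\alpha^2+\beta^2)^{-1}$, complexified by $i$. In that algebra $\Phi$ is the inverse of a holomorphic polynomial in $z=u+iv$, so it is holomorphic wherever it is invertible. Hence $\tilde A+i\tilde B$ satisfies the Cauchy--Riemann equations. Multiplying on the left by the holomorphic polynomial $z-\bar T$ preserves this property, since $\bar T$ is a constant left factor. Transferring $i\mapsto J$ then gives the required equations for $A$ and $B$.

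The main obstacle is Step 1, namely the invertibility of $\alpha^2+\beta^2$, that is, the symmetry of $\rho_S(T)$ under $s\mapsto\bar s$ and across $J\in\mathbb{S}$. I would handle it by using the earlier identification $\sigma_S(T)=\sigma_F(T)$ together with the conjugation argument above. I would also verify that the resulting $A$ and $B$ are independent of $J$, which is immediate from their formulas.
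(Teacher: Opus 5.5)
Your proof is correct, and it takes a genuinely different route from the one the paper relies on.

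\textbf{What the paper does.} The paper gives no argument of its own; it refers to \cite[Lemma 3.1.11]{CGK}, which concerns the form-I resolvent $-\mathcal{Q}_s(T)^{-1}(T-\bar s\mathcal{I})$. In form I the work is lighter:
\begin{itemize}
\item $\mathcal{Q}_s(T)=(u\mathcal{I}-T)^2+v^2\mathcal{I}$ depends on $s$ only through $u$ and $v^2$.
\item One therefore reads off $A=\mathcal{Q}_s(T)^{-1}(u\mathcal{I}-T)$ and $B=-v\,\mathcal{Q}_s(T)^{-1}$ directly.
\item The even--odd conditions and the independence of $J$ are automatic, so no inversion of a ``complex'' operator and no symmetry argument are needed.
\item The Cauchy--Riemann equations follow by differentiating $\mathcal{Q}_s(T)^{-1}$ and using that same identity once. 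This even works without commuting components.
\end{itemize}
However, for $T\in\mathcal{BC}^{0,1}(V_n)$ the paper defines $S^{-1}_L(s,T)$ in form II. Invoking the citation therefore implicitly requires the operator analogue of Proposition~\ref{secondAA}, which holds for commuting components, to identify the two forms on $\rho_S(T)=\rho_F(T)$.

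\textbf{What your route buys.} Your argument works natively with form II. The price is Step 1: you invert $\alpha+J\beta$ through $\alpha^2+\beta^2=\mathcal{Q}_{c,s}(T)\mathcal{Q}_{c,\bar s}(T)$, and you correctly obtain its invertibility from $\sigma_S(T)=\sigma_F(T)$ together with the axial symmetry of $\sigma_S(T)$. In exchange, you prove along the way that $s\mapsto\mathcal{Q}_{c,s}(T)^{-1}$ is a $\mathcal{B}(V_n)$-valued intrinsic slice hyperholomorphic function. The paper takes this for granted in the proof of Proposition~\ref{Qint}, and hence in Lemma~\ref{regope}.

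\textbf{Points to tighten (none is a gap).}
\begin{itemize}
\item The map fixing real-component operators and sending $J$ to $-J$ should be the grade involution $S\mapsto\phi S\phi$, where $\phi(\sum_A v_Ae_A)=\sum_A(-1)^{|A|}v_Ae_A$. This is an algebra automorphism of $\mathcal{B}(V_n)$, whereas Clifford conjugation is an anti-automorphism. Your alternative via $\sigma_S(T)=\sigma_F(T)$ suffices anyway.
\item For the commutative algebra in Step 3, the bicommutant of $\{T_0,T\bar T\}$ in $\mathcal{B}(V)$ is a clean choice: it is closed, commutative and inverse-closed, so it contains every $(\alpha^2+\beta^2)^{-1}$ at once.
\item Also in Step 3, $\bar T$ does not belong to that complexified algebra, so the last multiplication must be done in $\mathcal{B}(V_n)$. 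The simplest way is to differentiate your explicit $A,B$:
\begin{align*}
\partial_uA-\partial_vB&=(u\mathcal{I}-\bar T)(\partial_u\tilde A-\partial_v\tilde B)-v(\partial_u\tilde B+\partial_v\tilde A),\\
\partial_vA+\partial_uB&=(u\mathcal{I}-\bar T)(\partial_v\tilde A+\partial_u\tilde B)+v(\partial_u\tilde A-\partial_v\tilde B).
\end{align*}
Both vanish by the Cauchy--Riemann equations for $(\tilde A,\tilde B)$.
\item The right resolvent $\mathcal{Q}_{c,s}(T)^{-1}(s\mathcal{I}-\bar T)$ is the mirror computation with $J$ kept on the left.
\end{itemize}
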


\begin{definition}
Let $T \in \mathcal{BC}^{0,1}(V_n)$ and let $U \subset \mathbb{R}^{n+1}$ be a bounded slice Cauchy domain. We denote by $ \mathcal{SH}^L_{\sigma_S(T)}(U)$, $ \mathcal{SH}^R_{\sigma_S(T)}(U)$ and $ \mathcal{N}_{\sigma_S(T)}(U)$ the set of all left, right and intrinsic slice hyperholomorphic function $f$ with $\sigma_S(T) \subset U \subset \mathcal{D}(f)$, where $ \mathcal{D}(f)$ is the domain of the function $f$.
\end{definition}
Inspired by the Cauchy formula for slice hyperholomorphic functions (see~\eqref{cauchynuovo}), we introduce the $S$-functional calculus.

\begin{definition}[$S$-functional calculus]
Let $T \in \mathcal{BC}^{0,1}(V_n)$, $U \subset \mathbb{R}^{n+1}$ be a bounded slice Cauchy domain, let $I \in \mathbb{S}$ and set $ds_I=ds(-I)$. For any $f \in \mathcal{SH}^L_{\sigma_S(T)}(U)$ (resp. $f \in \mathcal{SH}_{\sigma_S(T)}^R(U)$) we define
\begin{equation}
\label{Sfun}
f(T)= \frac{1}{2 \pi} \int_{\partial(U \cap \mathbb{C}_I)} S^{-1}_L(s,T) ds_I f(s), \quad \left( \hbox{resp.} \quad f(T)= \frac{1}{2 \pi} \int_{\partial(U \cap \mathbb{C}_I)} f(s)ds_I S^{-1}_R(s,T)\right).
\end{equation}
\end{definition}

\begin{remark}
Since the integrals in \eqref{Sfun} depend neither on $U$ nor on the imaginary unit $I \in \mathbb{S}$ we have that the $S$-functional calculus is well-defined.
\end{remark}

\subsection{The $D$ and $\overline{D}$ functional calculi on the $S$-spectrum}

The goal of this section is to develop functional calculi based on the $S$-spectrum corresponding to each possible factorization of the second map in the Fueter–Sce theorem, using operators of the form $ D^\beta \Delta^m_{n+1} $ and $\overline{D}^\beta \Delta^m_{n+1} $, with $m$, $ \beta \in \mathbb{N}$ such that $m + \beta \leq h_n $. In order to introduce the resolvent operators associated with the Dirac fine structures, we first present some preliminary results. We begin by introducing two novel bounded operators with commuting components:

\begin{definition}[Poly-Harmonic polynomials operators]
	Let $n$ be an odd number and $h_n=\frac{n-1}{2}$. Then for $1 \leq \ell \leq h_n$ and $T \in \mathcal{BC}^{0,1}(V_n)$ we define the Clifford Poly-Harmonic operators as
\begin{equation}
	\label{harmope}
	\mathcal{H}_{\ell}^k(T):= \sum_{j=0}^{k-2\ell+1} \binom{\ell+j-1}{\ell-1} \binom{k-\ell-j}{\ell-1} T^{k-2\ell+1-j}\bar{T}^{j}, \qquad k \geq 2 \ell-1.
\end{equation}
\end{definition}

\begin{remark}
\label{conv1}
This family of operators formally arises by replacing the paravector $ x $ with the operator $T $, whose components commute, in~\eqref{harmpoly}.
 The Poly-Harmonic polynomials operators are zero if $k-2 \ell+1<0$.
\end{remark}

\begin{definition}[Analytic-Harmonic polynomials operators (of type $(1, h_n-\ell)$)]
Let $n$ be an odd number and $h_n=\frac{n-1}{2}$. Then for $1 \leq \ell \leq h_n$ and $T \in \mathcal{BC}^{0,1}(V_n)$ we define the Analytic-Harmonic polynomials operators (of type $(1, h_n-\ell)$) as
\begin{equation}
	\label{cliffope}
\mathcal{P}^k_\ell(T)=\sum_{j=0}^{k-2\ell} \binom{\ell+j-1}{\ell-1} \binom{k-\ell-j}{\ell}T^{k-2\ell-j}\bar{T}^j, \qquad k \geq 2 \ell.
\end{equation}
\end{definition}

\begin{remark}
Formally, this family of operators is derived by replacing the paravector $x$ in~\eqref{cliffpoly} with a commuting operator $T$. By using similar arguments in Proposition \ref{rell1} we can prove the following relation between the Poly-Harmonic and Analytic-Harmonic polynomials operators:
\begin{equation}
\label{relope}
\mathcal{P}^k_\ell(T):= \mathcal{H}_{\ell+1}^{k+1}(T)-\bar{T} \mathcal{H}_{\ell+1}^k(T), \qquad k \geq 2 \ell.
\end{equation}
\end{remark}

\begin{proposition}
\label{Qint}
Let $n$ be an odd number and $h_n:=\frac{n-1}{2}$, with $1 \leq \ell \leq h_n$. We assume that $T \in \mathcal{BC}^{0,1}(V_n)$ and $s \in \mathbb{R}^{n+1}$ being such that $\|T\| < |s|$. Then we have
\begin{equation}
\mathcal{Q}_{c,s}^{-\ell}(T)= \sum_{k=2\ell-1}^{\infty} \mathcal{H}_{\ell}^k(T) s^{-k-1}.
\end{equation}

Moreover, the operator $\mathcal{Q}_{c,s}^{-\ell}(T)$ is a $ \mathcal{B}(V_n)$-intrinsic slice hyperholomorphic.

\end{proposition}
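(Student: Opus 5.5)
The plan is to transfer the scalar expansion of Theorem \ref{exppser} to the operator setting, using the fact that everything lives in the commutative Banach algebra generated by the components $T_0,\dots,T_n$. Since $T\in\mathcal{BC}^{0,1}(V_n)$, the operators $T_0$, $T\bar T=\sum_\mu T_\mu^2$, $T$, $\bar T$ all commute with each other. The operators $\mathcal{H}_\ell^k(T)$ are polynomials in $T,\bar T$ with real coefficients, and an operator analogue of Lemma \ref{real} shows that they are ``real'': each is a polynomial in $T_0$ and $T\bar T$ only, as follows from the symmetry $C^k_j(\ell)=C^k_{k-2\ell+1-j}(\ell)$. Hence they commute with $s$ and with $\mathcal{Q}_{c,s}(T)=s^2\mathcal{I}-2sT_0+T\bar T$.

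First I will establish convergence. Estimate $\|\mathcal{H}_\ell^k(T)\|\le \sum_j C^k_j(\ell)\|T\|^{k-2\ell+1}=\binom{k}{2\ell-1}\|T\|^{k-2\ell+1}$ by Proposition \ref{summ}, using $\|\bar T\|=\|T\|$ for the norm of $\mathcal B(V_n)$. The ratio test with $\|T\|<|s|$ then gives absolute convergence in operator norm of $\mathcal{S}(s,T):=\sum_{k\ge 2\ell-1}\mathcal{H}_\ell^k(T)s^{-k-1}$.

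Next I will identify the sum by induction on $\ell$, mirroring the proof of Theorem \ref{exppser}. For $\ell=1$, the operator version of \cite[Thm 4.30]{CDP25} applies; alternatively, I will verify it directly by showing $\mathcal{Q}_{c,s}(T)\mathcal{S}(s,T)=\mathcal I$. Shifting indices, this reduces to the identities of Lemma \ref{hrecorsive} with $\ell=0$ formally, or equivalently $\mathcal{H}_1^k=\sum_j T^{k-1-j}\bar T^j$ telescoping. For the inductive step, I multiply $\mathcal{S}_{\ell+1}(s,T)$ by $\mathcal{Q}_{c,s}(T)$, reindex exactly as in \eqref{series2}, and apply Lemma \ref{hrecorsive}. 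That lemma is a purely algebraic identity among polynomials in the commuting variables $x,\bar x$ (with $2x_0=x+\bar x$ and $|x|^2=x\bar x$), so it holds verbatim with $T,\bar T$, $2T_0$, $T\bar T$. This yields $\mathcal{Q}_{c,s}(T)\mathcal{S}_{\ell+1}=\mathcal{S}_\ell=\mathcal{Q}_{c,s}^{-\ell}(T)$. Since $\|T\|<|s|$ implies $s\in\rho_S(T)$, and $\mathcal{Q}_{c,s}(T)$ commutes with $\mathcal{S}_{\ell+1}$, the operator $\mathcal{Q}_{c,s}(T)$ is invertible and $\mathcal{S}_{\ell+1}=\mathcal{Q}_{c,s}^{-\ell-1}(T)$. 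The main point to check carefully is the legitimacy of termwise multiplication of norm-convergent series by the bounded operator $\mathcal{Q}_{c,s}(T)$, and the commutation with $s$; both are routine given absolute convergence.

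Finally, for intrinsic slice hyperholomorphy in $s$ on $\rho_S(T)$, write $s=u+Jv$. Then $\mathcal{Q}_{c,s}(T)=(u^2-v^2-2uT_0+T\bar T)+J(2uv-2vT_0)$, so the inverse is $A(u,v)+JB(u,v)$ with $A,B$ built from real-component operators (via $\mathcal{Q}_{c,s}^{-1}(T)=\mathcal{Q}_{c,\bar s}(T)\,(\mathcal{Q}_{c,s}(T)\mathcal{Q}_{c,\bar s}(T))^{-1}$, whose last factor has no imaginary part). Thus $A$ even and $B$ odd in $v$, and the Cauchy--Riemann equations hold because the expression is a rational function of $s$ with commuting real operator coefficients. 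Powers $\ell$ of an intrinsic function remain intrinsic since products of intrinsic functions are intrinsic. Near infinity, this is also visible directly from the series, whose coefficients $\mathcal{H}_\ell^k(T)$ have real components.
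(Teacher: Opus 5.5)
Your proposal is correct and follows essentially the same route as the paper. The paper's proof likewise repeats the argument of Theorem \ref{exppser}: convergence via Proposition \ref{summ} and the ratio test, then induction on $\ell$ through Lemma \ref{hrecorsive}. It then deduces intrinsic slice hyperholomorphy from the fact that $\mathcal{Q}_{c,s}^{-1}(T)$ is intrinsic and that products of intrinsic functions are intrinsic. Your additions are exactly the points the paper leaves implicit: the symmetric-polynomial argument showing $\mathcal{H}_\ell^k(T)$ is a real polynomial in $T_0$ and $T\bar T$ (the operator substitute for Lemma \ref{real}), the direct telescoping check of the base case, and the explicit decomposition $\mathcal{Q}_{c,s}^{-1}(T)=\mathcal{Q}_{c,\bar s}(T)\,(\mathcal{Q}_{c,s}(T)\mathcal{Q}_{c,\bar s}(T))^{-1}$.
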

\begin{proof}
The result follows by similar arguments used in Theorem \ref{exppser}. Finally, since $\mathcal{Q}_{c,s}^{-1}(T) $ is an intrinsic slice hyperholomorphic function with values in $ \mathcal{B}(V_n) $, and the product of intrinsic slice hyperholomorphic functions is again intrinsic slice hyperholomorphic, the result follows.

\end{proof}

\begin{proposition}
\label{Qope}
Let $n$ be an odd number and $h_n:=\frac{n-1}{2}$, with $1 \leq \ell \leq h_n$. We assume that $T \in \mathcal{BC}^{0,1}(V_n)$ and $s \in \mathbb{R}^{n+1}$ is such that $\|T\| < |s|$. Then for $\nu \in \mathbb{N}$ we have
\begin{equation}
	\label{res01}
 \sum_{k=2\ell-1}^{\infty} \sum_{j=0}^{\nu} \binom{\nu}{j} (-T_0)^j \mathcal{H}_{\ell}^k(T) s^{\nu-j-k-1}=(s \mathcal{I}-T_0)^\nu \mathcal{Q}_{c,s}^{-\ell}(T),
\end{equation}
\begin{equation}
\label{res2}
\sum_{k=2 \ell-2}^{\infty} \sum_{j=0}^{\nu} \binom{\nu}{j} (-T_0)^j \mathcal{P}_{\ell-1}^k(T) s^{\nu-j-k-1}=(s\mathcal{I}-\bar{T}) (s \mathcal{I}-T_0)^{\nu} \mathcal{Q}_{c,s}^{-\ell}(T),
\end{equation}
where the series converges in the operator norm.
\end{proposition}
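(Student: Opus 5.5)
The plan is to reduce both identities to Proposition \ref{Qint} and the operator relation \eqref{relope}. Throughout we use two facts: $T_0$ and $\bar T$ commute with every $\mathcal{H}^k_\ell(T)$ and with $\mathcal{Q}_{c,s}^{-\ell}(T)$, because the components of $T$ commute; and $s$ is a paravector with real-number coefficients, so the operators, which act as $\mathcal{B}(V)$-coefficients times Clifford units, can be multiplied by powers of $s$ placed to the right. The second point is exactly how $s$ appears in the scalar expansions \eqref{series1}, \eqref{exxx}.

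For \eqref{res01}, start from Proposition \ref{Qint}: $\mathcal{Q}_{c,s}^{-\ell}(T)=\sum_{k\ge 2\ell-1}\mathcal{H}^k_\ell(T)s^{-k-1}$, which converges absolutely in operator norm. Indeed, exactly as in Theorem \ref{exppser}, we have $\|\mathcal{H}^k_\ell(T)\|\le \binom{k}{2\ell-1}\|T\|^{k-2\ell+1}$, since $\|\bar T\|=\|T\|$. This uses Proposition \ref{summ}, and the ratio test then gives convergence for $\|T\|<|s|$. Next expand $(s\mathcal{I}-T_0)^\nu=\sum_{j=0}^\nu\binom{\nu}{j}(-T_0)^j s^{\nu-j}$ by the binomial theorem; this is legitimate because $T_0$ is a real-component operator commuting with $s\mathcal{I}$. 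Multiply the finite sum by the absolutely convergent series and move $(-T_0)^j$ to the left of $\mathcal{H}^k_\ell(T)$. The powers of $s$ combine as $s^{\nu-j}s^{-k-1}=s^{\nu-j-k-1}$, since $s$ commutes with itself. This is precisely the left side of \eqref{res01}. The rearrangement of the double sum is justified by absolute convergence, since the $j$-sum is finite.

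For \eqref{res2}, first establish the operator analogue of \eqref{exxx}:
\[
(s\mathcal{I}-\bar T)\mathcal{Q}_{c,s}^{-\ell}(T)=\sum_{k\ge 2\ell-2}\mathcal{P}^k_{\ell-1}(T)s^{-1-k}.
\]
To prove it, write $(s\mathcal{I}-\bar T)\mathcal{Q}_{c,s}^{-\ell}(T)=\mathcal{Q}_{c,s}^{-\ell}(T)s-\bar T\mathcal{Q}_{c,s}^{-\ell}(T)$ and insert the expansion from Proposition \ref{Qint}. Shift the index in the first series and use the convention of Remark \ref{conv1}, which extends the second series down to $k=2\ell-2$. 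Combining termwise with \eqref{relope}, $\mathcal{H}^{k+1}_\ell(T)-\bar T\mathcal{H}^k_\ell(T)=\mathcal{P}^k_{\ell-1}(T)$, gives the displayed identity, mirroring the proof of \eqref{exxx}. Convergence follows from the bound $\|\mathcal{P}^k_{\ell-1}(T)\|\le\big[\binom{k+1}{2\ell-1}+\binom{k}{2\ell-1}\big]\|T\|^{k-2\ell+2}$. Then multiply by $(s\mathcal{I}-T_0)^\nu$ exactly as before. Here one uses that $(s\mathcal{I}-T_0)^\nu$ commutes with $(s\mathcal{I}-\bar T)$ and with $\mathcal{Q}_{c,s}^{-\ell}(T)$, so it can be placed in the middle as required by the definition of $K^{1,L}_{\nu,\ell}(s,T)$.

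The main obstacle is the bookkeeping of noncommutativity between $s$ and the Clifford parts of $T$ and $\bar T$. One must check that the identity $(s\mathcal{I}-\bar T)\mathcal{Q}_{c,s}^{-\ell}(T)=\mathcal{Q}^{-\ell}_{c,s}(T)s-\bar T\mathcal{Q}^{-\ell}_{c,s}(T)$ holds in the right order. It does, because $\mathcal{Q}_{c,s}^{-\ell}(T)$ is intrinsic, being a function of $s$ with coefficients $T_0$ and $T\bar T$ that have scalar Clifford part, and therefore commutes with $s$. The remaining steps are routine norm estimates.
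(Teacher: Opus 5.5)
Your route is essentially the paper's, with one organizational difference. Both use the norm bound from Proposition~\ref{summ}, the expansion of Proposition~\ref{Qint} combined with the binomial expansion of $(s\mathcal{I}-T_0)^\nu$ for \eqref{res01}, and \eqref{relope} with Remark~\ref{conv1} and an index shift for \eqref{res2}. You first prove the case $\nu=0$ of \eqref{res2} (the operator analogue of \eqref{exxx}) and then multiply by $(s\mathcal{I}-T_0)^\nu$. The paper instead inserts \eqref{relope} directly into the $\nu$-series, applies \eqref{res01} to both pieces, and concludes from $K^2_{\nu,\ell}(s,T)\,s-\bar T K^2_{\nu,\ell}(s,T)$, using only that $K^2_{\nu,\ell}(s,T)$ commutes with $s$.

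There is, however, an error in your commutation bookkeeping. Two facts you assert are false in general: $\bar T$ does not commute with $\mathcal{Q}_{c,s}^{-\ell}(T)$, and for $\nu\geq 1$ the operator $(s\mathcal{I}-T_0)^\nu$ does not commute with $(s\mathcal{I}-\bar T)$. Both fail for exactly the reason you flag at the end:
\[
s\bar T-\bar T s=2\sum_{1\leq i<j\leq n}e_ie_j\,(s_jT_i-s_iT_j)\neq 0
\]
in general. Since $\mathcal{Q}_{c,s}(T)$ contains $s$, one also gets $\bar T\mathcal{Q}_{c,s}(T)-\mathcal{Q}_{c,s}(T)\bar T=2(s_0\mathcal{I}-T_0)(\bar Ts-s\bar T)$.

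The first claim is never actually used: your derivation of the $\nu=0$ identity only needs that $\mathcal{Q}_{c,s}^{-\ell}(T)$ commutes with $s$, and it keeps $\bar T$ on the left. The second claim, though, is the one you invoke to place $(s\mathcal{I}-T_0)^\nu$ in the middle. The fix is to multiply the $\nu=0$ identity by $(s\mathcal{I}-T_0)^\nu$ on the right:
\begin{itemize}
\item On the operator side, $(s\mathcal{I}-\bar T)\mathcal{Q}_{c,s}^{-\ell}(T)(s\mathcal{I}-T_0)^\nu=(s\mathcal{I}-\bar T)(s\mathcal{I}-T_0)^\nu\mathcal{Q}_{c,s}^{-\ell}(T)$ uses only that $(s\mathcal{I}-T_0)^\nu$ commutes with $\mathcal{Q}_{c,s}^{-\ell}(T)$.
\item On the series side, $\mathcal{P}^k_{\ell-1}(T)s^{-1-k}(-T_0)^js^{\nu-j}=(-T_0)^j\mathcal{P}^k_{\ell-1}(T)s^{\nu-j-k-1}$, because $T_0$ commutes with $s$ and with $\mathcal{P}^k_{\ell-1}(T)$.
\end{itemize}
Left multiplication would produce neither side of \eqref{res2}. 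The reason is that $s^{\nu-j}$ does not commute with $\mathcal{P}^k_{\ell-1}(T)$, whose vector part $\underline{T}\,\mathcal{H}^k_\ell(T)$, with $\underline{T}:=\sum_{i=1}^n e_iT_i$, is nonzero in general. With this correction your proof is complete and coincides with the paper's.
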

\begin{proof}
First we prove the convergence of the series in \eqref{res01}. By Proposition \ref{summ} we have
\begin{eqnarray*}
\sum_{k=2\ell-1}^{\infty} \sum_{j=0}^{\nu} \binom{\nu}{j} \|T_0\|^j \| \mathcal{H}_{\ell}^k(T)\| |s|^{\nu-j-k-1}& \leq & \sum_{k=2 \ell-1}^{\infty} \binom{k}{2\ell-1} \| T\|^{k-2\ell+1}|s|^{-1-k} \left(\sum_{j=0}^{\nu} \binom{\nu}{j}\|T\|^j |s|^{\nu-j}\right)\\
&=&(|s|+\|T\|)^\nu \sum_{k=2 \ell-1}^{\infty} \binom{k}{2\ell-1} \| T\|^{k-2\ell+1}|s|^{-1-k}.
\end{eqnarray*}
Since, by hypothesis, we have $ \|T\| < |s| $, the series in \eqref{res01} is convergent by the ratio test.
 We study the convergence of the series in \eqref{res2}. By \eqref{relope} and Proposition \ref{summ} we get
\begin{eqnarray*}
\sum_{k=2 \ell-2}^{\infty} \sum_{j=0}^{\nu} \binom{\nu}{j} \|T_0\|^j \|\mathcal{P}_{\ell-1}^k(T)\| |s|^{\nu-j-k-1}&\leq& \sum_{k=2 \ell-2}^{\infty} \sum_{j=0}^{\nu} \binom{\nu}{j} \|T\|^j \|\mathcal{H}_\ell^{k+1}(T)\||s|^{\nu-j-k-1}\\
&&+ \sum_{k=2 \ell-2}^{\infty} \sum_{j=0}^{\nu} \binom{\nu}{j} \|T\|^{j+1} \| \mathcal{H}_{\ell}^k(T)\| |s|^{\nu-j-1-k}\\
&=& (|s|+\|T\|)^\nu \sum_{k=2 \ell-2}^{\infty}\left[ \binom{k+1}{2\ell-1}+\binom{k}{2\ell-1} \right]\\
&& \|T\|^{k-2\ell+2}|s|^{-1-k}.
\end{eqnarray*}
Given that $ \|T\| < |s|$ by assumption, the series in \eqref{res01} converges as a consequence of the ratio test.
\\Now we prove the equality in \eqref{res01}. By Proposition \ref{Qope}, the binomial theorem and the fact that the operator $ \mathcal{Q}_{c,s}^{-\ell}(T)$ is $ \mathcal{B}(V_n)$-valued intrinsic slice hyperholomorphic (see Proposition \ref{Qint}) we have
 \begin{eqnarray*}
 \sum_{k=2\ell-1}^{\infty} \sum_{j=0}^{\nu} \binom{\nu}{j} (-T_0)^j \mathcal{H}_{\ell}^k(T) s^{\nu-j-k-1}&=& \left(\sum_{k=2 \ell-1}^{\infty} \mathcal{H}_\ell^k(T)s^{-k-1}\right) \left(\sum_{j=0}^{\nu} \binom{\nu}{j} (-T_0)^js^{\nu-j} \right)\\
 &=& \mathcal{Q}_{c,s}^{-\ell}(T)  (s \mathcal{I}- T_0)^\nu\\
 &=&(s \mathcal{I}- T_0)^\nu \mathcal{Q}_{c,s}^{-\ell}(T).
 \end{eqnarray*}
 Now, we prove the equality \eqref{res2}. By \eqref{relope} and Remark \ref{conv1} we have
 \begin{eqnarray*}
\sum_{k=2 \ell-2}^{\infty} \sum_{j=0}^{\nu} \binom{\nu}{j} (-T_0)^j \mathcal{P}_{\ell-1}^k(T) s^{\nu-j-k-1}&=& \sum_{k=2 \ell-2}^{\infty} \sum_{j=0}^{\nu} \binom{\nu}{j} (-T_0)^j \left[\mathcal{H}_{\ell}^{k+1}(T)- \bar{T} \mathcal{H}_{\ell}^k(T)\right] s^{\nu-j-k-1}\\
&=&\sum_{k=2 \ell-2}^{\infty} \sum_{j=0}^{\nu} \binom{\nu}{j} (-T_0)^j \mathcal{H}_\ell^{k+1}(T)s^{\nu-j-k-1}\\
&&- \bar{T}\sum_{k=2 \ell-2}^{\infty} \sum_{j=0}^{\nu} \binom{\nu}{j} (-T_0)^j \mathcal{H}_\ell^k(T) s^{\nu-j-k-1}\\
&=&\sum_{k=2 \ell-1}^{\infty} \sum_{j=0}^{\nu} \binom{\nu}{j} (-T_0)^j \mathcal{H}_\ell^{k}(T)s^{\nu-j-k}\\
&&- \bar{T}\sum_{k=2 \ell-1}^{\infty} \sum_{j=0}^{\nu} \binom{\nu}{j} (-T_0)^j \mathcal{H}_\ell^k(T) s^{\nu-j-k-1}.
 \end{eqnarray*}
Finally, by \eqref{res01} we have
\begin{eqnarray*}
\sum_{k=2 \ell-2}^{\infty} \sum_{j=0}^{\nu} \binom{\nu}{j} (-T_0)^j \mathcal{P}_{\ell-1}^k(T) s^{\nu-j-k-1}&=& (s \mathcal{I}-T_0)^\nu \mathcal{Q}_{c,s}^{-\ell}(T)s- \bar{T}(s \mathcal{I}-T_0)^\nu \mathcal{Q}_{c,s}^{-\ell}(T)\\
&=& (s \mathcal{I}-\bar{T})(s- T_0)^\nu \mathcal{Q}_{c,s}^{-\ell}(T).
\end{eqnarray*}
This proves the result.
\end{proof}

The left-hand sides of equations \eqref{res01} and \eqref{res2} are well-defined only under the condition $ \|T\| < |s| $, whereas their right-hand sides are defined for all $ s \in \mathbb{R}^{n+1} \setminus \sigma_S(T)$. This observation motivates the following definition.

 \begin{definition}[$K^1$ and $K^2$ resolvent operators]
 Let $n$ be an odd number and set $h_n= \frac{n-1}{2}$. We assume $1 \leq \ell \leq h_n$ and $\nu \in \mathbb{N}$. For $s \in \rho_S(T)$ and $T \in \mathcal{BC}^{0,1}(V_n)$ we define the $K_1$ and $K_2$ resolvent operators as
 \begin{equation}\nonumber
 	K_{\nu,\ell}^{1,L}(s,T)=(s\mathcal{I}-\bar{T}) (s\mathcal{I}-T_0)^{\nu} \mathcal{Q}_{c,s}^{-\ell}(T),
 \end{equation}
 and
 \begin{equation}\nonumber
 	K_{\nu,\ell}^{2}(s,T)= (s \mathcal{I}-T_0)^{\nu} \mathcal{Q}_{c,s}^{-\ell}(T).
 \end{equation}
 \end{definition}

Inspired by the kernel of the integral representations of the $ D $ and $ \overline{D}$-fine structures  (see Theorems \ref{Dinte} and \ref{Dbarinte}, respectively), we introduce the notions of the corresponding resolvent operators. These can be expressed as linear combinations of the \( K^1 \) and \( K^2 \) resolvent operators.

\begin{definition}[$D$-resolvent operators]
	\label{Dresope}
	Let $n$ be an odd number and set $h_n:=(n-1)/2$. Let $m$, $\beta \in \mathbb{N}_0$ be such that $\beta+m\leq h_n$. We set
$$
c_{\beta, m, h_n}:=\frac{2^{\beta} (h_n-m)! \gamma_{n,m}}{m!}, \qquad \gamma_{n, m}=4^m m! (-h_n)_m
$$
 We assume $T \in \mathcal{BC}^{0,1}(V_n)$ and $s \in \rho_S(T)$. If $\beta=2k_1+1$, where $k_1\in\mathbb N$, the $D$-resolvent operators are defined as
	\begin{equation}
		\label{do1__res}
	S^{-1}_{L,D^{\beta} \Delta^m}(s,T)= c_{\beta, m, h_n}\left(  \sum_{j=0}^{k_1-1} a^1_{j,k_1,m} K^{1,L}_{2j+1, m+j+2+k_1}(s,T)  -\sum_{j=0}^{k_1} b^1_{j,k_1,m}	K_{2j,m+1+k_1+j}^{2}(s,T)  \right),
	\end{equation}
	and if $\beta=2k_2$, where $k_2\in\mathbb N$, we have
	\begin{equation}\label{de1_res}
		S^{-1}_{L, D^{\beta}  \Delta^m }(s,T)= c_{\beta, m, h_n} \left(  \sum_{j=1}^{k_2-1} a^2_{j,k_2,m} K^{1,L}_{2j, m+j+1+k_2}(s,T) -\sum_{j=0}^{k_2-1} b^2_{j,k_2,m} K_{2j+1, m+1+j+k_2}^2(s,T)  \right),
	\end{equation}
	where the coefficients $a^1_{j,k_1,m}$, $b^1_{j,k_1,m}$, $a^2_{j,k_2,m}$ and $b^2_{j,k_2,m}$  are defined in \eqref{coeff} and \eqref{coef}.
\end{definition}

\begin{definition}[$\overline D$-resolvent operators]
	\label{Dbaroperes}
	Let $n$ be an odd number and set $h_n:=(n-1)/2$. Let $m$, $\beta \in \mathbb{N}_0$ be such that $\beta+m\leq h_n$.
We set
$$
\mathbf{c}_{\beta,m ,h_n}:=2^{\beta} 4^m  (-h_n)_{m}.
$$
 We assume $T \in \mathcal{BC}^{0,1}(V_n)$ and $s \in \rho_S(T)$. If $\beta=2k_1+1$, where $k_1\in\mathbb N$, we define the $\overline{D}$-resolvent operators of the as
	
		\begin{equation}
		S^{-1}_{L, \overline {D}^{\beta} \Delta^m}(s,T) =  \mathbf{c}_{\beta,m ,h_n} \left(  \sum_{j=0}^{k_1} \mathbf{a}^1_{j,k_1,m} K^{1,L}_{2j+1, m+j+2+k_1}(s,T)  +\sum_{j=0}^{k_1} \mathbf{b}^1_{j,k_1,m} K^2_{2j, m+j+1+k_1}(s,T) \right),
	\end{equation}
	and	if $\beta=2k_2$, where $k_2\in\mathbb N$, we have
	\begin{equation}
		S^{-1}_{L, \overline D^{\beta} \Delta^m} (s,T) = \mathbf{c}_{\beta,m ,h_n} \left(  \sum_{j=0}^{k_2} \mathbf{a}^2_{j,k_2,m} K^{1,L}_{2j, m+j+1+k_2}(s,T) +\sum_{j=0}^{k_2-1} \mathbf{b}^2_{j,k_2,m} K^2_{2j+1,m+1+k_2+j}(s,T) \right),
	\end{equation}
	
where the coefficients $\mathbf{a}^1_{j,k_1,m}$, $\mathbf{a}^2_{j,k_2,m}$, $\mathbf{b}^1_{j,k_1,m}$ and $\mathbf{b}^2_{j,k_2,m}$ are defined in \eqref{c1} and \eqref{c2}.

\end{definition}	

\begin{remark}
If $\beta = 1$ in the $D$-resolvent operators, we recover the \emph{Poly-Harmonic-resolvent operator}, given by
$$
S^{-1}_{L, D \Delta^m}(s,T) = -2(h_n-m) \gamma_{n,m}\mathcal{Q}_{c,s}^{-m-1}(T),
$$
see \cite[Def.~5.3]{CDP25}. If instead $\beta = h_n - m$ in the $\overline{D}$-resolvent operators, we obtain the \emph{resolvent operator of the Poly-Analytic functional calculus}, given by
$$
S^{-1}_{L, \overline{D}^{h_n - m} \Delta^m}(s,T) = 4^{h_n} h_n! (-h_n)_m \, (s\mathcal{I} - \bar{T})(s\mathcal{I} - T_0)^{h_n - m} \mathcal{Q}_{c,s}^{-h_n - 1}(T),
$$
see \cite[Def.~9.1]{CDP25}. For other values of $\beta$ and $n = 5$, the $D$- and $\overline{D}$-resolvent operators were derived in \cite{Fivedim}.

\end{remark}

\begin{remark}
Using the series expansions of the $ K^1$ and $K^2 $ resolvent operators given in \eqref{res01} and \eqref{res2}, one can also derive the corresponding series expansions for the $ D $ and $\overline{D} $-resolvent operators.
\end{remark}

To construct new functional calculi based on the resolvent operators defined in Definition~\ref{Dresope} and Definition~\ref{Dbaroperes}, it is necessary to establish the slice hyperholomorphicity of the $D$ and $\overline{D}$-resolvent operators with respect to the variable $s$.

\begin{lemma}
	\label{regope}
	Let $n$ be an odd number, set $h_n:=(n-1)/2$ and $T \in \mathcal{BC}^{0,1}(V_n)$. The resolvent operators $S^{-1}_{L, D^\beta \Delta^m} (s,T)$ and $S^{-1}_{L,\overline{D}^\beta \Delta^m} (s,T)$, with $m,\,\beta \in \mathbb{N}_0$ be such that $\beta+m\leq h_n$, are a $ \mathcal{B}(V_n)$-valued right slice hyperholomorphic in the variable $s$ in $ \rho_S(T)$.
\end{lemma}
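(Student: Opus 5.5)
The $D$- and $\overline{D}$-resolvent operators in Definitions \ref{Dresope} and \ref{Dbaroperes} are finite linear combinations, with real coefficients, of the operators $K^{1,L}_{\nu,\ell}(s,T)$ and $K^2_{\nu,\ell}(s,T)$. Right slice hyperholomorphicity in $s$ is preserved under right multiplication by real scalars and under finite sums. So it suffices to prove that each $K^{1,L}_{\nu,\ell}(s,T)$ and each $K^2_{\nu,\ell}(s,T)$, with $1\le \ell\le h_n$ and $\nu\in\mathbb N_0$, is a $\mathcal{B}(V_n)$-valued right slice hyperholomorphic function of $s$ on $\rho_S(T)$.

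For $K^2_{\nu,\ell}(s,T)=(s\mathcal I-T_0)^\nu\mathcal{Q}_{c,s}^{-\ell}(T)$, use that $T_0$ is a real (scalar-component) operator. Then $s\mapsto (s\mathcal I-T_0)^\nu=\sum_j\binom{\nu}{j}(-T_0)^j s^{\nu-j}$ is an operator polynomial in $s$ with coefficients commuting with all $e_A$, so it is intrinsic slice hyperholomorphic. By Proposition \ref{Qint}, $\mathcal{Q}_{c,s}^{-\ell}(T)$ is intrinsic on $\rho_S(T)$. More precisely, $\mathcal{Q}_{c,s}^{-1}(T)$ is intrinsic there: writing $s=u+Jv$, it has the form $A(u,v)+JB(u,v)$ with $A,B$ built from $T_0$ and $T\overline T$, which are operators with real components commuting with $J$. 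Its powers are then intrinsic because products of intrinsic functions are intrinsic. The product of two intrinsic functions is intrinsic, hence in particular right (and left) slice hyperholomorphic.

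For $K^{1,L}_{\nu,\ell}(s,T)=(s\mathcal I-\overline T)K^2_{\nu,\ell}(s,T)$, the factor $s\mathcal I-\overline T$ is not intrinsic, since $\overline T$ has Clifford components. The key observation is that $s\mathcal I-\overline T=s\mathcal I-T_0+\underline T$ has the form $\overline T$-constant plus $s$. Multiplying a right slice hyperholomorphic function on the left by a constant operator $C$, i.e.\ $s\mapsto C\,G(s)$, preserves right slice hyperholomorphicity. Indeed, if $G(u+Jv)=A+BJ$ then $CG=CA+(CB)J$, and the Cauchy--Riemann and even-odd conditions pass through left multiplication by $C$.

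Accordingly, write
$$
K^{1,L}_{\nu,\ell}(s,T)=s\,K^2_{\nu,\ell}(s,T)-\overline T\,K^2_{\nu,\ell}(s,T).
$$
The first term is intrinsic times intrinsic. The second is a constant operator times an intrinsic function, hence right slice hyperholomorphic. This mirrors the classical proof that $S^{-1}_L(s,T)=(s\mathcal I-\overline T)\mathcal{Q}_{c,s}^{-1}(T)$ is right slice hyperholomorphic in $s$ (cf.\ \cite[Lemma 3.1.11]{CGK}).

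The main point to verify carefully is the intrinsic character of $\mathcal{Q}_{c,s}^{-\ell}(T)$ on all of $\rho_S(T)$, not merely for $\|T\|<|s|$ where the series of Proposition \ref{Qint} converges. I would check this directly. Set $\mathcal{Q}_{c,s}(T)=s^2-2sT_0+T\overline T$; with $s=u+Jv$ this equals $P(u,v)+JR(u,v)$ for real-component commuting operators $P,R$. Its inverse on $\rho_S(T)$ is again of the form $\tilde A+J\tilde B$, obtained via $(P+JR)^{-1}=(P-JR)(P^2+R^2)^{-1}$, using invertibility of $P^2+R^2$, which follows from the invertibility of $\mathcal{Q}_{c,s}(T)$ and $\mathcal{Q}_{c,\bar s}(T)$ on the axially symmetric set $\rho_S(T)$. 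Then $\tilde A$ and $\tilde B$ satisfy the even-odd conditions and the Cauchy--Riemann equations, because the inverse of a holomorphic operator function on a commutative subalgebra is holomorphic.
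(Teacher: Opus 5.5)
Your proposal is correct and follows the paper's proof. Like the paper, you reduce to the building blocks $K^{1,L}_{\nu,\ell}$ and $K^2_{\nu,\ell}$, obtain $K^2_{\nu,\ell}$ as intrinsic (a product of intrinsic functions, via Proposition \ref{Qint}), and write $K^{1,L}_{\nu,\ell}(s,T)=s\,K^2_{\nu,\ell}(s,T)-\overline{T}K^2_{\nu,\ell}(s,T)$; the paper writes $K^2 s$, which is the same since $K^2$ commutes with $s$. Your additional check that $\mathcal{Q}_{c,s}^{-\ell}(T)$ is intrinsic on all of $\rho_S(T)$, and not only where the series converges, supplies a detail the paper delegates to Proposition \ref{Qint} and to the known intrinsic character of $\mathcal{Q}_{c,s}^{-1}(T)$.
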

\begin{proof}
It is sufficient to show that
$$
K^{1,L}_{\nu,\ell}(s,T) = (s\mathcal{I} - \bar{T})(s\mathcal{I} - T_0)^\nu \mathcal{Q}_{c,s}^{-\ell}(T)
\quad \text{and} \quad
K^2_{\nu, \ell}(s,T) = (s\mathcal{I} - T_0)^\nu \mathcal{Q}_{c,s}^{-\ell}(T),
$$
for $\nu \in \mathbb{N}_0 $ and $ \ell \leq h_n $, are $\mathcal{B}(V_n)$-valued right slice hyperholomorphic functions in the variable $s$. From the facts that both $ (s\mathcal{I} - T_0)^{\nu} $ and $ \mathcal{Q}_{c,s}^{-\ell}(T) $ are $ \mathcal{B}(V_n)$-intrinsic slice hyperholomorphic in $ s$ (see Proposition~\ref{Qint}), we get that $K^2_{\nu,\ell}(s,T) $ is $ \mathcal{B}(V_n)$-valued and intrinsic slice hyperholomorphic in $ s$. Moreover, since $ K^{1,L}_{\nu, \ell}(s,T) $ can be written as
$$
K^{1,L}_{\nu, \ell}(s,T) = K^2_{\nu, \ell}(s,T) \, s - \bar{T} \, K^2_{\nu, \ell}(s,T),
$$
it is evident that $ K^{1,L}_{\nu, \ell}(s,T)$ is a $ \mathcal{B}(V_n)$-valued right slice hyperholomorphic function in the variable $s$.

\end{proof}

\begin{definition}[$D$-functional calculi on the $S$-spectrum]
		\label{D_funct_cal}
Let $n$ be an odd number and set $h_n:=(n-1)/2$. Let $m$ $\beta \in \mathbb{N}_0$ be such that $\beta+m\leq h_n$. Let $T \in \mathcal{BC}^{0,1}(V_n)$ be such that its components  $T_i$, $i=0,...,n-1$ have real spectra. Let $U$ be a bounded slice Cauchy domain and set $ds_I=ds(-I)$ for $I\in \mathbb{S}$. For any $f \in \mathcal{SH}^L_{\sigma_S(T)}(U)$ we set
$$
f_{\beta, m}(x)=D^\beta \Delta_{n+1}^{m}f(x).
$$
 We define $D$-functional calculus for the operator $T$ as
	\begin{equation}
	\label{I1}
	f_{\beta, m}(T):=\frac{1}{2\pi} \int_{\partial(U \cap \mathbb{C}_I)}  S^{-1}_{L,D^\beta\Delta^m} (s,T)ds_I f(s),
\end{equation}
where the $D$-resolvent operators are defined in Definition \ref{Dresope}.
\end{definition}

\begin{definition}[$\overline {D}$-functional calculi on the $S$-spectrum]
Let $n $ be an odd number and set $ h_n := \frac{n-1}{2}$. Let $m$ $\beta \in \mathbb{N}_0$ be such that $\beta+m\leq h_n$. Let $T \in \mathcal{BC}^{0,1}(V_n)$be an operator whose components $T_i$, $i=0,...,n-1$ have real spectra. Let $U$ be a bounded slice Cauchy domain, and set $ds_I := ds(-I)$ for $I \in \mathbb{S} $.
For any function $f \in \mathcal{SH}^L_{\sigma_S(T)}(U) $, define
$$
f_{\overline{\beta}, m}(x) := \overline{D}^\beta \Delta_{n+1}^m f(x).
$$
We then define the $ \overline{D}$-functional calculi for the operator $T$ as
\begin{equation}
\label{T1_bar}
f_{\overline{\beta}, m}(T) := \frac{1}{2\pi} \int_{\partial(U \cap \mathbb{C}_I)} S^{-1}_{L,\overline{D}^\beta\Delta^m}(s,T) \, ds_I \, f(s),
\end{equation}
where the $\overline{D}$-resolvent operators are defined in Definition \ref{Dbaroperes}.
\end{definition}

\begin{definition}
We denote all the $D$ and $\overline{D}$-functional calculi as Dirac functional calculi.
\end{definition}

\begin{remark}
Although the condition that $T_i$, for \( i = 0, \dots, n-1 \), have real spectra may appear artificial, it is in fact crucial for proving the independence from the kernels of $\overline{D}^\beta \Delta_{n+1}^m$ and $D^\beta \Delta_{n+1}^m $; see also Remark~\ref{MCzero}.

\end{remark}

\begin{remark}
The $ D $-functional calculus includes, as a particular case when $ \beta = 1 $, the Poly-Harmonic functional calculus developed in \cite{CDP25}. On the other hand, the $\overline{D} $-functional calculus contains, as a special case when $\beta = h_n - m $, the Poly-Analytic functional calculus introduced in \cite{CDP25}.

\end{remark}

Now, we prove some basic properties of the Dirac functional calculi.

\begin{theorem}
	\label{well1}
	The Dirac functional calculi based on the $S$-spectrum are well defined, i.e. the integral \eqref{I1} and \eqref{T1_bar} depend neither on the imaginary units $I \in \mathbb{S}$ nor on the slice Cauchy domain $U$.
\end{theorem}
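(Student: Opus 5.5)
The argument is the standard one for the $S$-functional calculus, applied to the new resolvents. We treat the $D$-calculus \eqref{I1}; the $\overline{D}$-case \eqref{T1_bar} is identical. The key structural facts are already available. By Lemma \ref{regope}, $s\mapsto S^{-1}_{L,D^\beta\Delta^m}(s,T)$ is a $\mathcal{B}(V_n)$-valued right slice hyperholomorphic function on $\rho_S(T)$, while $f$ is left slice hyperholomorphic. The integrand is therefore of the form $g(s)\,ds_I\,f(s)$ required by the Cauchy integral theorem, Theorem \ref{Cif}; its operator-valued version is proved the same way, or by applying bounded functionals.

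\emph{Independence from $U$.} Fix $I\in\mathbb{S}$ and let $U, U'$ be two bounded slice Cauchy domains with $\sigma_S(T)\subset U, U'$ and closures contained in $\mathcal D(f)$. First suppose $\overline{U'}\subset U$. Then $O:=U\setminus\overline{U'}$ satisfies two conditions: $O\cap\mathbb{C}_I$ is a Cauchy domain whose closure avoids $\sigma_S(T)$, and $\partial(O\cap\mathbb{C}_I)=\partial(U\cap\mathbb{C}_I)\cup(-\partial(U'\cap\mathbb{C}_I))$. Applying Theorem \ref{Cif} on $O\cap\mathbb{C}_I$, where both factors are slice hyperholomorphic, shows that the two integrals coincide. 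For general $U,U'$, choose a third slice Cauchy domain $U''$ with $\sigma_S(T)\subset U''$ and $\overline{U''}\subset U\cap U'$; such a domain exists because $\sigma_S(T)$ is compact and axially symmetric. Comparing both integrals with the one over $U''$ finishes this step.

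\emph{Independence from $I$.} This is the more delicate step. I would follow the strategy of the $S$-functional calculus. Take $J\in\mathbb{S}$, $J\neq I$, and choose a second slice Cauchy domain $U'$ with $\sigma_S(T)\subset U'$ and $\overline{U'}\subset U$. For $s\in\partial(U'\cap\mathbb{C}_I)$, the left slice hyperholomorphic Cauchy formula (Theorem \ref{Cauchy}) on $\partial(U\cap\mathbb{C}_J)$ gives
\[
f(s)=\frac1{2\pi}\int_{\partial(U\cap\mathbb{C}_J)}S_L^{-1}(p,s)\,dp_J\,f(p).
\]
Insert this into the $I$-integral and use Fubini to swap the order. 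The inner integral
\[
\frac1{2\pi}\int_{\partial(U'\cap\mathbb{C}_I)}S^{-1}_{L,D^\beta\Delta^m}(s,T)\,ds_I\,S_L^{-1}(p,s)
\]
has to be evaluated via the right Cauchy formula in the variable $s$, applied to the right slice hyperholomorphic function $s\mapsto S^{-1}_{L,D^\beta\Delta^m}(s,T)$. The point to check is that for $p\notin\overline{U'}$ this integral equals $S^{-1}_{L,D^\beta\Delta^m}(p,T)$. This is the "reproducing" identity
\[
\frac1{2\pi}\int_{\partial(W\cap\mathbb{C}_I)}g(s)\,ds_I\,S^{-1}_L(p,s)=g(p)
\]
for right slice hyperholomorphic $g$ on $\overline{W}$ with $p$ outside $\overline{W}$. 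It is obtained from the right Cauchy formula on the unbounded complement together with the decay of $g$ at infinity.

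This decay is where I expect the main obstacle. I would verify it directly from the explicit form of the resolvents. By Definition \ref{Dresope}, they are finite combinations of $K^{1,L}_{\nu,\ell}$ and $K^{2}_{\nu,\ell}$, and the series of Proposition \ref{Qope} for $|s|>\|T\|$ show that each term is $O(|s|^{\nu+1-2\ell})$. In the combinations of Theorem \ref{p111} this exponent is always at most $-2m-\beta-1\le -2$, which supplies the vanishing at infinity needed to discard the contribution of a large circle. With this identity in hand, the double integral collapses to
\[
\frac1{2\pi}\int_{\partial(U\cap\mathbb{C}_J)}S^{-1}_{L,D^\beta\Delta^m}(p,T)\,dp_J\,f(p),
\]
which is exactly the $J$-version of \eqref{I1}.
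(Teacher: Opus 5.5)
Your proposal is correct and is essentially the paper's proof. Independence from $U$ goes through the Cauchy integral theorem on $U\setminus U'$ plus an intermediate domain. Independence from the imaginary unit uses the same chain of equalities as the paper, read in the opposite direction: the left Cauchy formula for $f$, the right Cauchy formula for the resolvent on the unbounded complement (via $S^{-1}_R(x,s)=-S^{-1}_L(s,x)$ and vanishing at infinity), and Fubini. You also justify the decay of $S^{-1}_{L,D^\beta\Delta^m}(s,T)$ at infinity, which the paper only asserts.

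One slip to fix: your reproducing identity needs $g$ to be right slice hyperholomorphic on a neighborhood of $\mathbb{R}^{n+1}\setminus W$ (as the resolvent is, on $\rho_S(T)$) and vanishing at infinity, not on $\overline{W}$. Under the hypothesis as you wrote it, Theorem~\ref{Cif} would make the integral equal to $0$.
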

\begin{proof}
	We prove the result for $D$-functional calculi but a similar proof also holds for the $\overline{D}$-functional calculi. We first show that the integral \eqref{I1} does not depend on the slice domain $U$. Let $U'$ be another bounded slice Cauchy domain with $\sigma_S(T)\subset U'$ and $\overline{U'}\subset \mathcal{D}(f)$, and let us assume for the moment $\overline{U'}\subset U$. Then $O:=U\setminus U'$ is again a bounded slice Cauchy domain and we have $\overline O\subset\rho_S(T)$ and $\overline O\subset \mathcal{D}(f)$. Hence the function $f(s)$ is left slice hyperholomorphic and the $D$-resolvent operators $S^{-1}_{L, D^\beta \Delta^m}(s,T)$ are $\mathcal{B}(V_n)$ right slice hyperholomorphic in the variable $s$ on $\overline O$.
	The Cauchy integral theorem (see Theorem \ref{Cif}) therefore implies
	\begin{eqnarray*}
		0 & =& \frac 1{2\pi} \int_{\partial (O\cap \mathbb C_I)} S^{-1}_{L, D^\beta \Delta^m}(s,T) \, ds_I\, f(s)\\
		&=& \frac 1{2\pi} \int_{\partial (U\cap \mathbb C_I)} S^{-1}_{L, D^\beta \Delta^m}(s,T) \, ds_I\, f(s)-\frac 1{2\pi} \int_{\partial (U'\cap \mathbb C_I)} S^{-1}_{L, D^\beta \Delta^m}(s,T) \, ds_I\, f(s).
	\end{eqnarray*}
	If $\overline {U'} \not\subset U$, then $O:=U'\cap U$ is an axially symmetric that contains $\sigma_S(T)$. As \cite[Remark 3.2.4]{CGK}, we can hence find a third slice Cauchy domain $U''$ with $\sigma_S(T)\subset U''$ and $\overline{U''}\subset O=U\cap U'$. The above arguments show that the integrals over the boundaries of all three sets agree.
	
	In order to show the independence of the imaginary units, we choose two units $I, J\in\mathbb S$ and the two slice Cauchy domains $U_x, U_s\subset \mathcal{D}(f)$ with $\sigma_S(T)\subset U_x$ and $\overline{U_x} \subset U_s$ (the subscripts $s$ and $x$ are chosen in order to indicate the variable of integration in the following computation). The set $U^c_x=\mathbb R^{n+1}\setminus U_x$ is then an unbounded axially symmetric slice Cauchy domain with $\overline{U^c_x}\subset \rho_S(T)$. The $D$-resolvent operators are $\mathcal{B}(V_n)$-valued right slice hyperholomorphic on $\rho_S(T)$, see Lemma \ref{regope}. Moreover at infinity we have
	$$
	\lim_{s\to\infty} S^{-1}_{L,\, D^{\beta}\Delta^m}(s,T)=0.
	$$
	The slice hyperholomorphic Cauchy formula (see \eqref{cauchynuovo}) implies that
	$$
	 S^{-1}_{L, D^\beta \Delta^m}(s,T)=\frac 1{2\pi}\int_{\partial (U^c_x\cap\mathbb C_I)} S^{-1}_{L, D^\beta \Delta^m}(x,T)\, dx_I S^{-1}_R(x,s)
	$$
	for every $s\in U^c_x$. By \cite[Cor. 2.1.26]{CGK} we have $S^{-1}_R(x,s)=-S^{-1}_L(s,x)$, and since
$$\partial(U^c_x\cap\mathbb C_J)=-\partial(U_x\cap \mathbb C_J)$$ we have
	\[
	\begin{split}
		f_{\beta,\, m}(T)&=\frac 1{2\pi} \int_{\partial(U_s\cap \mathbb C_J)} S^{-1}_{L, D^\beta \Delta^m}(s,T)\, ds_J\, f(s)\\
		&= \frac 1{(2\pi)^2}\int_{\partial(U_s\cap\mathbb C_J)} \left( \int_{\partial (U^c_x\cap\mathbb C_I)} S^{-1}_{L, D^\beta \Delta^m}(x,T) dx_I S^{-1}_R(x,s) \right) \,ds_J f(s)\\
		&= \frac 1{(2\pi)^2}\int_{\partial(U_x\cap\mathbb C_I)} S^{-1}_{L, D^\beta \Delta^m}(x,T) dx_I \left( \int_{\partial (U_s\cap\mathbb C_J)}  S^{-1}_L(s,x)  \,ds_J f(s) \right)\\
		&=\frac 1{2\pi} \int_{\partial( U_x\cap \mathbb C_I)} S^{-1}_{L, D^\beta \Delta^m}(x,T)\, dx_I\, f(x),
	\end{split}
	\]
	where the last identity follows again from the slice hyperholomorphic Cauchy formula because we chose $\overline{U_x}\subset U_s$.
\end{proof}

To demonstrate that the Dirac-functional calculi are independent of the specific kernels of the operators from which they are derived, the following results play a crucial role.

\begin{lemma}
	\label{blockzero}
	Let $n$ be an odd number and set $h_n:=(n-1)/2$. Let $\nu \in \mathbb{N}$ and $\ell \leq h_n$ such that $2 \ell-\nu-3 \geq 0$. We assume that the operators $T_{i}$, for $i=0,...,n-1$, have real spectrum. Let $G$ be a bounded slice Cauchy domain such that $(\partial G) \cap \sigma_S(T)= \emptyset$. Thus, for every $I \in \mathbb{S}$ we have
	\begin{equation}
		\label{zer1}
		\int_{\partial(G \cap \mathbb{C}_I)} K^{2}_{\nu,\ell}(s,T) ds_I s^\gamma=0, \quad \hbox{if} \quad 0 \leq \gamma \leq 2\ell-2-\nu.
	\end{equation}
	and
	\begin{equation}
		\label{zer2}
		\int_{\partial(G \cap \mathbb{C}_I)} K^{1,L}_{\nu,\ell}(s,T) ds_I s^\gamma=0, \quad \hbox{if} \quad  0\leq \gamma  \leq 2\ell-3-\nu.
	\end{equation}
\end{lemma}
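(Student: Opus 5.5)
The plan is to reduce the operator statement to the scalar one via the spectral structure of $T$ (real spectra of commuting components), and to prove the scalar version from the vanishing of $D$-type derivatives of polynomials. The cleanest route I see is the series/Laurent argument at infinity. Since $K^2_{\nu,\ell}(s,T)s^\gamma$ and $K^{1,L}_{\nu,\ell}(s,T)s^\gamma$ are right slice hyperholomorphic in $s$ on $\rho_S(T)$ (Lemma \ref{regope}, multiplied by the intrinsic function $s^\gamma$), the integral over $\partial(G\cap\mathbb C_I)$ may be deformed. First I will treat the case $\sigma_S(T)\subset G$: the Cauchy integral theorem (Theorem \ref{Cif}) lets me replace $\partial(G\cap\mathbb C_I)$ by a large circle $|s|=R$ with $R>\|T\|$.

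On the large circle I insert the norm-convergent expansions of Proposition \ref{Qope}. For $K^2_{\nu,\ell}(s,T)$, by \eqref{res01} the powers of $s$ appearing are $s^{\nu-j-k-1}$ with $k\ge 2\ell-1$, $j\ge 0$, so the highest power is $s^{\nu-2\ell}$. After multiplication by $s^\gamma$ the top exponent is $\nu-2\ell+\gamma\le -2$ when $\gamma\le 2\ell-2-\nu$. Since $\int_{|s|=R} s^{p}\,ds_I=0$ for every integer $p\neq -1$, integrating term by term (justified by uniform convergence on the circle) gives zero. Likewise \eqref{res2} for $K^{1,L}_{\nu,\ell}$ contains powers $s^{\nu-j-k-1}$ with $k\ge 2\ell-2$, top exponent $\nu-2\ell+1$. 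Times $s^\gamma$ this gives at most $-2$ exactly when $\gamma\le 2\ell-3-\nu$, which yields \eqref{zer2}. The hypothesis $2\ell-\nu-3\ge 0$ just ensures the ranges of $\gamma$ are nonempty.

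The main obstacle is the general $G$ with only $(\partial G)\cap\sigma_S(T)=\emptyset$, where $G$ need not contain all of the spectrum. Here I would use the real-spectra hypothesis. The component of $\sigma_S(T)$ inside $G$ is a spectral set, and I would pass to the Riesz-type projection $P$ associated with $\sigma_S(T)\cap G$ (constructed via the $S$-functional calculus with the intrinsic function equal to $1$ near $\sigma_S(T)\cap G$ and $0$ near the rest). On the reduced space this replaces $T$ by $T|_{PV_n}$, whose $S$-spectrum lies in $G$, and the previous argument applies. On $(I-P)V_n$ the integrand is slice hyperholomorphic inside $G$, so the Cauchy theorem gives zero. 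The commutation of $P$ with $T_0$ and $T\bar T$ is needed so that $K^{1,L},K^2$ split along the decomposition. This is where real spectra of the $T_i$ enter: it guarantees the $S$-spectrum of the restriction is exactly $\sigma_S(T)\cap G$, as in Remark \ref{MCzero}. I expect this splitting to be the delicate step. If it resists, I would fall back on the case $\sigma_S(T)\subset G$ or $\sigma_S(T)\cap \overline G=\emptyset$, which is all that is used in the independence-of-kernel argument.
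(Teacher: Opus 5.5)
Your treatment of the case $\sigma_S(T)\subset G$ is correct, and it is a genuinely more elementary route. After deforming to $|s|=R>\|T\|$, the expansions \eqref{res01} and \eqref{res2} show that every power of $s$ in $K^2_{\nu,\ell}(s,T)\,ds_I\,s^\gamma$ and in $K^{1,L}_{\nu,\ell}(s,T)\,ds_I\,s^\gamma$ is at most $-2$. So the integral vanishes with no assumption at all on the spectra of the $T_i$. The paper argues differently: it expands $(s\mathcal{I}-T_0)^\nu$ binomially and invokes the vanishing results \eqref{zero1}, \eqref{zero2}. These are known for arbitrary $G$ with $(\partial G)\cap\sigma_S(T)=\emptyset$ and are proved through the monogenic functional calculus, which is where real spectra enter.

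Your general case, however, has a genuine gap. With commuting components the $S$-Riesz projector is $P=\frac{1}{2\pi}\int_{\partial(G\cap\mathbb{C}_I)}(s\mathcal{I}-\overline{T})\mathcal{Q}_{c,s}^{-1}(T)\,ds_I=P_0+(T_0-\overline{T})R$. Here $P_0$ is the corresponding integral of $(s\mathcal{I}-T_0)\mathcal{Q}_{c,s}^{-1}(T)$ and $R=\frac{1}{2\pi}\int_{\partial(G\cap\mathbb{C}_I)}\mathcal{Q}_{c,s}^{-1}(T)\,ds_I$. The vanishing of $R$ is the case $\ell=1$, $\alpha=0$ of \eqref{zero1}, i.e.\ exactly the kind of statement you are trying to prove. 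Unless $R=0$, $P$ in general does not commute with $I$ and $PV_n$ need not be of the form $W\otimes\mathbb{R}_n$. You then cannot identify $K(s,T)P$ with the $K$-kernel of a commuting-component operator whose $S$-spectrum lies in $G$. Knowing that $P$ commutes with $T_0$ and $T\overline{T}$, and that $\sigma_S(T|_{PV_n})=\sigma_S(T)\cap G$, does not make $K^2_{\nu,\ell}(s,T)P$ holomorphic outside $G$.

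Here is a concrete example. Take $n=5$, $V=\mathbb{R}^2$ and $T=e_1J$ with $J^2=-\mathcal{I}$. Then $T_0=0$, $T\overline{T}=-\mathcal{I}$, $\mathcal{Q}_{c,s}(T)=s^2-1$ and $\sigma_S(T)=\{\pm1\}$. For $G$ a small ball centred at $1$ one finds $P=\frac12(\mathcal{I}+e_1J)$. This is a projection commuting with $T$, $T_0$ and $T\overline{T}$, with $TP=P$, so $\sigma_S(T|_{PV_n})=\{1\}=\sigma_S(T)\cap G$. Every property your argument invokes holds. Yet $K^2_{1,2}(s,T)P=s(s^2-1)^{-2}P$ still has a pole at $s=-1$, and $\int_{\partial(G\cap\mathbb{C}_I)}K^2_{1,2}(s,T)\,ds_I\,s=\frac{\pi}{2}\mathcal{I}\neq 0$, although $\nu=1$, $\ell=2$, $\gamma=1$ are admissible. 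The lemma survives only because $J$ has non-real spectrum.

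So real spectra are not used where you place them: the spectral identity for the restriction holds regardless. They must instead be used to show that every character $\phi$ of a commutative algebra containing the $T_\mu$ takes real values on them. Then $\phi(T\overline{T})\geq\phi(T_0)^2$, so the two roots of $s^2-2s\phi(T_0)+\phi(T\overline{T})$ are complex conjugate and the axially symmetric $\partial G$ cannot separate them. You would then need an idempotent built from $T_0$ and $T\overline{T}$ (for instance via Shilov's idempotent theorem), not the $S$-Riesz projector. It splits the problem into a part where your Laurent argument applies and a part where Cauchy's theorem does.

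Your fallback does cover the later use in the paper, since there $\sigma_S(T)\subset B_r(0)\subset U$, so each component of $U$ contains all or none of the spectrum. It does not, however, prove the lemma as stated.
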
	
\begin{proof}
	To show this result we need to recall the following facts:
	\begin{equation}
		\label{zero1}
		\int_{\partial(G \cap \mathbb{C}_I)} \mathcal{Q}_{c,s}^{-\ell}(T) ds_Is^\alpha=0, \quad \hbox{if} \quad 0\leq \alpha \leq 2\ell-2,
	\end{equation}
	and
	\begin{equation}
		\label{zero2}
		\int_{\partial(G \cap \mathbb{C}_I)} (s\mathcal{I}-\bar{T})\mathcal{Q}_{c,s}^{-\ell}(T) ds_Is^\alpha=0, \quad \hbox{if} \quad 0\leq \alpha \leq 2\ell-3,
	\end{equation}
	see \cite[Thm. 5.12]{CDP25} and \cite[Thm. 7.8]{CDP25}, respectively. We start proving \eqref{zer1}. By the binomial theorem we have
	\begin{equation}
		\label{z1}
		\int_{\partial(G \cap \mathbb{C}_I)} K^{2}_{\nu,\ell}(s,T) ds_I s^\gamma= \sum_{t=0}^{\nu} \binom{\nu}{t}T_0^{\nu-t}\int_{\partial(G \cap \mathbb{C}_I)} \mathcal{Q}_{c,s}^{-\ell}(T)ds_Is^{\gamma+t}.
	\end{equation}
	The integral of the right-hand side of \eqref{z1} is zero by \eqref{zero1} and the fact that
	$ \gamma +t \leq \gamma +\nu \leq 2 \ell-2.$
	Now, we prove \eqref{zer2}. By the binomial theorem we have
	\begin{equation}
		\label{z2}
		\int_{\partial(G \cap \mathbb{C}_I)} K^{1,L}_{\nu,\ell}(s,T) ds_I s^\gamma=\sum_{t=0}^{\nu} \binom{\nu}{t}T_0^{\nu-t}\int_{\partial(G \cap \mathbb{C}_I)} (s\mathcal{I}-\bar{T})\mathcal{Q}_{c,s}^{-\ell}(T)ds_Is^{\gamma+t}.
	\end{equation}
	The integral of the right-hand side of \eqref{z2} is zero by \eqref{zero2} and the fact that
	$ \gamma +t \leq \gamma +\nu \leq 2 \ell-3.$
	
\end{proof}

\begin{remark}
\label{MCzero}
The proofs of \eqref{zero1} and \eqref{zero2} were originally established for operators of the form $ T = T_1 e_1 + \dots + T_n e_n$, but they can also be done for operators of the form $T = T_0 + T_1 e_1 + \dots + T_{n-1} e_{n-1}$ by employing analogous arguments. A key ingredient in establishing this result is the use of the monogenic functional calculus; see~\cite{JBOOK, JM}. This calculus requires, as a fundamental assumption, that $ T_i$, for $ i = 0, \ldots, n-1 $, have real spectrum.
\end{remark}

\begin{proposition}
	\label{zerothm}
Let $n$ be an odd number and set $h_n:=(n-1)/2$. Let $m$ $\beta \in \mathbb{N}_0$ be such that such that $\beta+m\leq h_n$. We assume that the operators $T_{i}$, for $i=0,...,n-1$, have real spectrum. Let $G$ be a bounded slice Cauchy domain such that $(\partial G) \cap \sigma_S(T)= \emptyset$. Thus, for every $I \in \mathbb{S}$ we have
\begin{equation}
\label{intzero}
\int_{\partial (G \cap \mathbb{C}_I)} S^{-1}_{L, D^{\beta}\Delta^m}(s,T) ds_I s^{\alpha}=0, \qquad \hbox{if} \quad 0 \leq \alpha \leq \beta+2m-1.
\end{equation}
	Thus, for every $I \in \mathbb{S}$, for $0\leq \beta\leq h_n$ and $\beta +m\leq h_n$, we have
\begin{equation}
\label{intzero2}
 \int_{\partial (G \cap \mathbb{C}_I)} S^{-1}_{L, \overline{D}^{\beta}\Delta^m}(s,T) ds_I s^{\alpha}=0, \qquad \hbox{if} \quad 0 \leq \alpha \leq \beta+2m-1.
\end{equation}
\end{proposition}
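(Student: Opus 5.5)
The plan is to reduce \eqref{intzero} and \eqref{intzero2} to Lemma \ref{blockzero}, which is the natural building block. By Definition \ref{Dresope} and Definition \ref{Dbaroperes}, each resolvent $S^{-1}_{L,D^\beta\Delta^m}(s,T)$ and $S^{-1}_{L,\overline D^\beta\Delta^m}(s,T)$ is a finite real linear combination of operators $K^{1,L}_{\nu,\ell}(s,T)$ and $K^2_{\nu,\ell}(s,T)$. The integral $\int_{\partial(G\cap\mathbb C_I)}(\cdot)\,ds_I\,s^\alpha$ is linear in the kernel, so it suffices to show that every block occurring in these combinations integrates to zero against $s^\alpha$ for all $0\le\alpha\le\beta+2m-1$. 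Concretely, one checks that $\alpha\le 2\ell-2-\nu$ for each $K^2_{\nu,\ell}$ block and that $\alpha\le 2\ell-3-\nu$ for each $K^{1,L}_{\nu,\ell}$ block.

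The index bookkeeping goes case by case.

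\emph{Case $\beta=2k_1+1$ for the $D$-resolvent.} The blocks are $K^{1,L}_{2j+1,\,m+j+2+k_1}$ and $K^2_{2j,\,m+1+k_1+j}$.
\begin{itemize}
\item For $K^{1,L}_{2j+1,\,m+j+2+k_1}$ we get $2\ell-3-\nu=2m+2j+4+2k_1-3-2j-1=2m+2k_1=\beta+2m-1$.
\item For $K^2_{2j,\,m+1+k_1+j}$ we get $2\ell-2-\nu=2m+2k_1=\beta+2m-1$.
\end{itemize}

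\emph{Case $\beta=2k_2$ for the $D$-resolvent.} The blocks are $K^{1,L}_{2j,\,m+j+1+k_2}$, with $2\ell-3-\nu=2m+2k_2-1=\beta+2m-1$, and $K^2_{2j+1,\,m+1+j+k_2}$, with $2\ell-2-\nu=2m+2k_2-1=\beta+2m-1$.

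\emph{The $\overline D$-resolvents.} The index pairs $(\nu,\ell)$ are literally the same as in the $D$ case: $K^{1,L}_{2j+1,m+j+2+k_1}$ and $K^2_{2j,m+j+1+k_1}$ for odd $\beta$, and $K^{1,L}_{2j,m+j+1+k_2}$ and $K^2_{2j+1,m+1+k_2+j}$ for even $\beta$. So the same computation applies, only with the summation range of $j$ extended. In every case the admissible range of $\gamma$ in Lemma \ref{blockzero} is exactly $0\le\gamma\le\beta+2m-1$, independent of $j$. Hence every block integrates to zero and the statement follows by summing.

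Two hypotheses of Lemma \ref{blockzero} need care beyond the index arithmetic.

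\emph{The side condition $2\ell-\nu-3\ge0$ and the range of $\ell$.} The condition $2\ell-\nu-3\ge0$ reduces to $\beta+2m-1\ge0$ for $K^{1,L}$ blocks, which holds whenever $\beta\ge1$ or $m\ge1$. For $K^2$ blocks it reduces to $\beta+2m\ge2$. In the degenerate small cases where this fails, the asserted range of $\alpha$ is either empty or consists only of $\alpha=0$; for those one applies \eqref{zero1}, \eqref{zero2} directly after the binomial expansion, as in the proof of Lemma \ref{blockzero}. More seriously, Lemma \ref{blockzero} requires $\ell\le h_n$, but indices such as $\ell=m+j+2+k_1$ can reach $h_n+1$ when $\beta+m=h_n$. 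I expect this to be the main obstacle.

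\emph{Handling $\ell=h_n+1$.} I would first try to observe that the coefficients vanish there. For instance, $b^1_{j,k_1,m}$ contains $\binom{h_n-m-k_1-j-1}{h_n-m-2k_1-1}$, which is zero when the top entry is negative. If some block with $\ell=h_n+1$ nevertheless survives with a nonzero coefficient, the fallback is to note that \eqref{zero1} and \eqref{zero2} rest only on the monogenic functional calculus (Remark \ref{MCzero}) and on the Laurent expansion of Proposition \ref{Qint}. I would then extend them to $\ell=h_n+1$ by the same residue-type argument: expand $\mathcal Q_{c,s}^{-\ell}(T)$ for large $|s|$ as $\sum_{k\ge2\ell-1}\mathcal H^k_\ell(T)s^{-k-1}$, deform $\partial G$ to a large circle when $\sigma_S(T)\subset G$, and observe that no $s^{-1}$ term arises for $\alpha\le2\ell-2$.
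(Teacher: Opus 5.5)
Your reduction is the paper's own proof: linearity of the integral plus Lemma~\ref{blockzero}. Your index count, giving exactly the range $0\le\alpha\le\beta+2m-1$ for every block, is correct.

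You are also right about a point the paper's one-line proof passes over. In the $\overline D$-resolvents the top $K^{1,L}$ block ($j=k_1$, resp.\ $j=k_2$) has $\ell=m+\beta+1$. This equals $h_n+1$ exactly when $\beta+m=h_n$, which lies outside the stated range of Lemma~\ref{blockzero}.

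Neither of your remedies closes this gap. First, the coefficient does not vanish: for $\beta=h_n-m$ the whole resolvent is the single block $4^{h_n}h_n!(-h_n)_m\,K^{1,L}_{h_n-m,\,h_n+1}(s,T)$, the polyanalytic resolvent (see the remark following Definition~\ref{Dbaroperes}). Second, the Laurent-series argument with deformation to a large circle only proves the vanishing when $\sigma_S(T)\subset G$ (or, trivially, when $\overline G\cap\sigma_S(T)=\emptyset$). The statement allows any bounded slice Cauchy domain with $\partial G\cap\sigma_S(T)=\emptyset$, which may enclose only part of the $S$-spectrum. This generality is actually used later, when the kernel-independence proof splits $U$ into connected components. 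For such $G$ the contour cannot be pushed out to infinity. That is precisely why \eqref{zero1} and \eqref{zero2} need the monogenic functional calculus and the real-spectrum hypothesis (Remark~\ref{MCzero}), and not merely Proposition~\ref{Qint}.

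The missing ingredient is \eqref{zero2} with $\ell=h_n+1$. That is, you need $\int_{\partial(G\cap\mathbb C_I)}(s\mathcal I-\bar T)\mathcal Q_{c,s}^{-h_n-1}(T)\,ds_I\,s^{\alpha}=0$ for $0\le\alpha\le 2h_n-1=n-2$ and every such $G$.

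By Theorem~\ref{app1} with $m=h_n$, $(s-\bar x)\mathcal Q_{c,s}^{-h_n-1}(x)=\gamma_{n,h_n}^{-1}\Delta_{n+1}^{h_n}S^{-1}_L(s,x)$. So this operator is a nonzero multiple of the $F$-resolvent. The required identity is then the standard vanishing property of the $F$-resolvent, the one that makes the $F$-functional calculus independent of the kernel of $\Delta_{n+1}^{h_n}$. It is proved by the same monogenic-functional-calculus argument, under the same real-spectrum assumption.

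Insert this into the binomial expansion \eqref{z2}. For the top block $\nu=\beta$, so $\gamma+t\le\gamma+\nu\le(\beta+2m-1)+\beta=2h_n-1$, and the case $\beta+m=h_n$ is complete. The rest of your proposal, including the treatment of the degenerate cases where $2\ell-\nu-3<0$, goes through as written.
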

\begin{proof}
	The result follows by Lemma \ref{blockzero}, Definition \ref{Dresope}, Definition \ref{Dbaroperes} and the linearity of the integrals.
\end{proof}

We are now equipped with all the necessary tools to establish the following key result:

\begin{proposition}
Let $n$ be an odd number and set $h_n:=(n-1)/2$. Let $m,\,\beta \in \mathbb{N}_0$ be such that $\beta+m\leq h_n$. We assume $T \in \mathcal{BC}^{0,1}(V_n)$ be such $T_{i}$, $i=0,...,n-1$ have real spectra.
	 Let $U$ be a slice Cauchy domain with $\sigma_S(T) \subset B_{r}(0) \subset U$, with $\|T\|<r$. Let $f$, $g \in \mathcal{SH}^L_{\sigma_S(T)}(U)$ and such that
	$ D^\beta \Delta^{m}_{n+1} f(x)=D^\beta \Delta^{m}_{n+1} g(x)$  or $ \overline{D}^\beta \Delta^{m}_{n+1} f(x)=\overline {D}^\beta \Delta^{m}_{n+1} g(x)$, then we have $f_{\beta,\, m}(T)=g_{\beta,m}(T)$ or $f_{\overline{\beta},\, m}(T)=g_{\overline{\beta},m}(T)$, respectively.
\end{proposition}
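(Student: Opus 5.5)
By linearity of the functional calculi, it suffices to show that if $h:=f-g\in\mathcal{SH}^L_{\sigma_S(T)}(U)$ satisfies $D^\beta\Delta^m_{n+1}h=0$ (resp. $\overline{D}^\beta\Delta^m_{n+1}h=0$) on $U$, then
$$
\frac{1}{2\pi}\int_{\partial(U\cap\mathbb{C}_I)} S^{-1}_{L,D^\beta\Delta^m}(s,T)\,ds_I\,h(s)=0
$$
(resp. with the $\overline{D}$-resolvent). Indeed, $f_{\beta,m}(T)-g_{\beta,m}(T)$ equals this integral, since \eqref{I1} is linear in $f$.

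The first step is to identify $h$. We restrict to a connected component of $U$ containing $B_r(0)$; this is a slice domain, since it meets the real axis. By Lemma \ref{kernel}, in both the $D$ and $\overline{D}$ cases, $h(x)=\sum_{\nu=0}^{2m+\beta-1}x^\nu\alpha_\nu$ with $\alpha_\nu\in\mathbb{R}_n$. This holds on that component, hence on $B_r(0)$.

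The second step is to move the contour. By Theorem \ref{well1}, the integral is independent of the slice Cauchy domain. We may therefore replace $U$ by a slice Cauchy domain $G$ with $\sigma_S(T)\subset G$ and $\overline{G}\subset B_r(0)$, for instance a ball $B_{r'}(0)$ with $\|T\|<r'<r$. On $\overline{G}$ the function $h$ is exactly the polynomial above.

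The third step is to conclude. We obtain
$$
\int_{\partial(G\cap\mathbb{C}_I)} S^{-1}_{L,D^\beta\Delta^m}(s,T)\,ds_I\,h(s)=\sum_{\nu=0}^{2m+\beta-1}\Big(\int_{\partial(G\cap\mathbb{C}_I)} S^{-1}_{L,D^\beta\Delta^m}(s,T)\,ds_I\,s^\nu\Big)\alpha_\nu .
$$
Each inner integral vanishes by Proposition \ref{zerothm}, which applies because $0\le\nu\le\beta+2m-1$ and $\partial G\cap\sigma_S(T)=\emptyset$. The $\overline{D}$ case is identical, using \eqref{intzero2}.

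The main obstacle is the passage from $h$ being a polynomial on the component containing $B_r(0)$ to the integral over the original $\partial(U\cap\mathbb{C}_I)$. This step requires that $h$ coincide with the polynomial near $\sigma_S(T)$, which is guaranteed by $\sigma_S(T)\subset B_r(0)$, together with the domain independence from Theorem \ref{well1}. The hypothesis that the $T_i$ have real spectra enters only through Proposition \ref{zerothm}.
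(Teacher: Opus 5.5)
Your proposal is correct and follows essentially the same route as the paper. You shrink the contour to a ball around $\sigma_S(T)$ using slice hyperholomorphy of the resolvent in $s$ (Lemma \ref{regope}, Theorem \ref{well1}), identify $f-g$ as a polynomial of degree at most $\beta+2m-1$ via Lemma \ref{kernel}, and kill each term with Proposition \ref{zerothm}. Your handling of a disconnected $U$ is a slightly cleaner version of the paper's component-by-component sum: you work only on the component containing $B_r(0)$ and integrate over $\partial B_{r'}(0)$ with $\|T\|<r'<r$, which avoids applying Lemma \ref{kernel} on components that need not meet the real axis.
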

\begin{proof}
	We prove the results for the $D$-functional calculi but a similar proof also holds for the $\overline{D}$-functional calculi. We assume that $U$ is connected. By the definition of the $D$-functional calculi, see Definition \ref{D_funct_cal}, we have
	$$ f_{\beta,m}(T)-g_{\beta,m}(T)= \frac{1}{2\pi} \int_{\partial(U \cap \mathbb{C}_I)} S^{-1}_{L, D^{\beta}\Delta^m}(s,T) ds_I(f(s)-g(s)).$$
	By Lemma \ref{regope} we know that the  resolvent operator $S^{-1}_{L, D^{\beta}\Delta^m}(s,T)$ is slice hyperholomorphic in $s$, thus we can change the domain of integration to $B_r(0) \cap \mathbb{C}_I$ for $r>0$ such that $\| T \| <r$.  We know by hypothesis that $f-g$ belongs to the kernel of the operator $ D^\beta \Delta^{m}_{n+1}$, thus by Lemma \ref{kernel} we get
	$$f_{\beta,m}(T)-g_{\beta,m}(T)=\frac{1}{2\pi}  \sum_{\nu=0}^{\beta+2m-1} \int_{\partial(B_r(0) \cap \mathbb{C}_I)}S^{-1}_{L, D^{\beta}\Delta^m}(s,T) \, ds_I\, s^{\nu} \alpha_{\nu}, $$
	where $\{\alpha_{\nu}\}_{0 \leq \nu \leq \beta+2m-1} \subseteq \mathbb{R}_n$. Since $ \nu \leq \beta+2m-1$, by Proposition \ref{zerothm} we know that the previous integral is zero, and so we get
	$$f_{\beta,m}(T)=g_{\beta,m}(T).$$
	In  the case the set $U$ is not connected we can write $U=\bigcup_{\ell=1}^n U_{\ell}$ where $U_{\ell}$ are the connected components of the set $U$. So by Lemma \ref{kernel} we have
	$$
	f(s)-g(s)=\sum_{\tau=1}^{n} \sum_{\nu=0}^{\beta+2 m-1} \chi_{U_{\tau}} (s) s^{\nu}\alpha_{\nu,\tau}.
	$$
	Hence by the definition of the $D$-functional calculi, see Definition \ref{D_funct_cal}, we have
	$$ f_{\beta, m}(T)-g_{\beta, m}(T)= \frac{1}{2\pi} \sum_{\tau=1}^{n} \sum_{\nu=0}^{\beta+2m -1} \int_{\partial(U_{\tau} \cap \mathbb{C}_I)} S^{-1}_{L, D^\beta\Delta^m}(s,T)ds_I s^{\nu}\alpha_{\nu,\tau}.$$
	Finally, by Proposition \ref{zerothm} we get that $f_{\beta,m}(T)=g_{\beta,m}(T)$.
\end{proof}

An easy algebraic property of the Dirac functional calculi is stated in the following result:

\begin{lemma}
	
	Let $n$ be an odd number and set $h_n := \frac{n-1}{2}$. Suppose $m,\,\beta \in \mathbb{N}_0$ satisfy $\beta + m \leq h_n$. Assume $T \in \mathcal{BC}^1(V_n)$ is such that each component $T_i$, for $i = 0, \ldots, n-1$, has real spectrum.
Define
$$
f_{\beta,m}(x) := D^\beta \Delta_{n+1}^{m}f(x),\ \ \ \   g_{\beta,m}(x) := D^\beta \Delta_{n+1}^{m}g(x),
$$
 and
 $$
 f_{\overline{\beta},m}(x) := \overline{D}^\beta \Delta_{n+1}^{m}f(x),\ \ \ g_{\overline{\beta},m}(x) := \overline{D}^\beta \Delta_{n+1}^{m}g(x),
 $$
 where $f, g \in \mathcal{SH}^L_{\sigma_S(T)}(U)$ and $a \in \mathbb{R}_n$. Then we have
	$$ (f_{\beta,m}a+g_{\beta,m})(T)=f_{\beta,m}(T)a+g_{\beta,m}(T)$$
	and
	$$
	(f_{\overline \beta,m}a+g_{\overline\beta,m})(T)=f_{\overline{\beta},m}(T)a+g_{\overline{\beta},m}(T).
	$$
\end{lemma}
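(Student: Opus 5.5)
The plan is to reduce both identities to the linearity of the $S$-spectrum integral and of the operators $D^\beta\Delta^m_{n+1}$, $\overline D^\beta\Delta^m_{n+1}$, using the independence result just proved to make sense of the left-hand sides. The expression $(f_{\beta,m}a+g_{\beta,m})(T)$ only has meaning once we name a slice hyperholomorphic preimage of $f_{\beta,m}a+g_{\beta,m}$. The natural candidate is $h:=fa+g$.

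First I will check that $h\in\mathcal{SH}^L_{\sigma_S(T)}(U)$. Writing $f=\alpha+I\beta$ and $g=\gamma+I\delta$ on each slice, we get $fa+g=(\alpha a+\gamma)+I(\beta a+\delta)$. Right multiplication by the constant $a\in\mathbb{R}_n$ preserves the even-odd conditions (\ref{EO}) and the Cauchy--Riemann system (\ref{CR}). Hence $h$ is left slice hyperholomorphic on the same domain, and that domain contains $\sigma_S(T)$.

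Next, the operators $D$ and $\Delta_{n+1}$ act by left multiplication by the $e_i$ and by real derivatives. They therefore commute with right multiplication by constants, which gives
$$
D^\beta\Delta^m_{n+1}h(x)=f_{\beta,m}(x)a+g_{\beta,m}(x).
$$
So $h$ is a legitimate preimage, and by definition $(f_{\beta,m}a+g_{\beta,m})(T)=h_{\beta,m}(T)$. This is well posed because, by the preceding proposition, the value does not depend on which preimage is chosen; any two differ by an element of the kernel described in Lemma \ref{kernel}.

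Finally, I will insert $h(s)=f(s)a+g(s)$ into (\ref{I1}). Because $ds_I$ and the resolvent $S^{-1}_{L,D^\beta\Delta^m}(s,T)$ stand to the left of $f(s)$, the constant $a$ comes out on the right of the integral. By linearity this yields
$$
\frac{1}{2\pi}\int S^{-1}_{L,D^\beta\Delta^m}(s,T)\,ds_I\,f(s)\,a+\frac{1}{2\pi}\int S^{-1}_{L,D^\beta\Delta^m}(s,T)\,ds_I\,g(s)=f_{\beta,m}(T)a+g_{\beta,m}(T).
$$
We should use a common slice Cauchy domain for $f$ and $g$, for instance the intersection of their domains. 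This is permitted by Theorem \ref{well1}, which gives independence of $U$ and $I$. The $\overline D$ case is identical, using (\ref{T1_bar}) and Definition \ref{Dbaroperes}.

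The only delicate point is the well-definedness of the left-hand side. This is exactly where the real-spectra hypothesis on the $T_i$ enters, through the preceding proposition and ultimately Lemma \ref{blockzero}.
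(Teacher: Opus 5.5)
Your argument is correct and is the same as the paper's, whose proof is the one-line observation that the identities follow from the linearity of the integrals \eqref{I1} and \eqref{T1_bar}. Your checks that $fa+g$ is again left slice hyperholomorphic, and that $D^\beta\Delta^m_{n+1}$ and $\overline{D}^\beta\Delta^m_{n+1}$ commute with right multiplication by $a$, are exactly what the paper leaves implicit; the appeal to the preceding proposition for independence of the preimage is not needed for the identity itself (reading the left-hand side through the preimage $fa+g$ already makes it pure linearity), and that proposition carries the extra assumption $B_r(0)\subset U$ with $\|T\|<r$, which the lemma does not impose.
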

\begin{proof}
	The result follows from the linearity of the integrals \eqref{I1} and \eqref{T1_bar}.
\end{proof}

The generalization of the holomorphic functional calculus to sectorial operators leads
	to the $H^\infty$-functional calculus that, in the complex setting, was introduced in the paper \cite{McI1}, see also the books
\cite{Haase, HYTONBOOK1, HYTONBOOK2}. Moreover, the boundedness  of the $H^\infty$ functional calculus depends on suitable quadratic
estimates and this calculus has several applications to boundary value problems, see  \cite{MC10,MC97,MC06}.
The functional calculi introduced in this paper admit an $H^\infty$-extension, in the spirit of the $S$-functional calculus \cite{ACQS2016,MANSCH25} as extended for the quaternionic fine structures of Dirac type treated in \cite{CPS1,MPS23}.
We remark that the $H^\infty$-functional calculus exists also for the
monogenic functional calculus (see \cite{JM}) and it was introduced by A. McIntosh and his collaborators, see the books \cite{JBOOK,TAOBOOK} for more details.

\section{Appendix}\label{THEAPPENDIX}	

In this appendix, we collect some relations concerning the coefficients of the polynomials \( \mathcal{H}_\ell^k(x) \), given by:

$$C_j^k(\ell)= \binom{\ell+j-1}{\ell-1} \binom{k-\ell-j}{\ell-1}, \quad k \geq 2 \ell-1, \quad j \in [0, k-2\ell+1].$$

To show the following result is crucial to recall the well-known Stiefel identity for the binomials:
\begin{equation}\label{stiefel}
	\binom{n}{m}+\binom{n}{m-1}=\binom{n+1}{m}.
\end{equation}

\begin{proposition}
\label{coeff1}
Let $n$ be an odd number and $h_n=\frac{n-1}{2}$ and $1 \leq \ell \leq h_n$. For $k \geq 2\ell-1$  we have the following relations:
\begin{itemize}
\item[a)]
$$ (k+\ell-1) C_0^{k-2+2 \ell}(\ell)= k C_0^{k-1+2\ell}(\ell).$$
\item[b)] $$(k+\ell-1) C_{k-1}^{k-2+2\ell}(\ell)=k C_k^{k-1+2\ell}(\ell).$$
\item[c)] For $j \in (1,k)$ we have
$$ (k-j)C^{k-2+2\ell}_{j-1}(\ell)  + jC^{k-2+2\ell}_j(\ell)  -(k+\ell-1) [C^{k-2+2\ell}_j(\ell) + C^{k-2+2\ell}_{j-1}(\ell)]=-k C^{k-1+2\ell}_j(\ell).$$
\item[d)]$$ C_0^{k+2}(\ell+1)-C_0^{k+1}(\ell+1)=C_0^k(\ell).$$
\item[e)]For $j \in (0, k-2\ell+1)$ we have
$$ C_j^{k+2}(\ell+1)-C_j^{k+1}(\ell+1)-C_{j-1}^{k+1}(\ell+1)+C_{j-1}^{k}(\ell+1)=C_j^k(\ell).
$$
\item[f)] $$ C^{k+2}_{k-2\ell+1}(\ell+1)-C_{k-2\ell}^{k+1}(\ell+1)=C^k_{k-2\ell+1}(\ell).$$
\end{itemize}
\end{proposition}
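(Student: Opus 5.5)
All six items are identities between products of two binomial coefficients. I will check each directly from the definition $C_j^k(\ell)=\binom{\ell+j-1}{\ell-1}\binom{k-\ell-j}{\ell-1}$, rewriting differences with the Stiefel identity \eqref{stiefel} and factorial manipulations. Items a), b) and c) are the recursion needed in the induction step of Proposition \ref{pos_pow}, which shifts the upper index from $k-2+2\ell$ to $k-1+2\ell$ with $\ell$ fixed. Items d), e) and f) are the recursion of Lemma \ref{hrecorsive}, which raises $\ell$ by one.

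For a), note that $C_0^{k-2+2\ell}(\ell)=\binom{k+\ell-2}{\ell-1}$ and $C_0^{k-1+2\ell}(\ell)=\binom{k+\ell-1}{\ell-1}$. Then a) is the absorption identity $\binom{N+1}{\ell-1}=\frac{N+1}{N+2-\ell}\binom{N}{\ell-1}$ with $N=k+\ell-2$, since $N+2-\ell=k$. For b), $C_{k-1}^{k-2+2\ell}(\ell)=\binom{k+\ell-2}{\ell-1}$ and $C_k^{k-1+2\ell}(\ell)=\binom{k+\ell-1}{\ell-1}$, so b) reduces to the same identity as a). This matches the symmetry $j\leftrightarrow k-2\ell+1-j$ of the coefficients used in Lemma \ref{real}.

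For c), set $A=\binom{\ell+j-2}{\ell-1}$ and $B=\binom{k+\ell-1-j}{\ell-1}$, together with their neighbours. Every term can be written as a multiple of the single quantity
$$\frac{(\ell+j-2)!\,(k+\ell-2-j)!}{(\ell-1)!^2\,(j-1)!\,(k-j-1)!}.$$
Relative to it:
\begin{itemize}
\item $C_{j-1}^{k-2+2\ell}$ carries the factor $(k+\ell-1-j)/(k-j)$ relative to $C_j^{k-2+2\ell}$, more precisely the ratios $(\ell+j-1)/j$ and $(k-j)/(k+\ell-1-j)$ relate the two;
\item $C_j^{k-1+2\ell}=\binom{\ell+j-1}{\ell-1}\binom{k+\ell-1-j}{\ell-1}$ has the ratio $(k+\ell-1-j)/(k-j)$ relative to $C_j^{k-2+2\ell}$.
\end{itemize}
Writing $P=(k+\ell-1)$, the left side becomes $C_j^{k-2+2\ell}\bigl[(j-P)+(k-j-P)\tfrac{j}{\ell+j-1}\cdot\tfrac{k+\ell-1-j}{k-j}\bigr]$. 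It remains to verify that this polynomial identity in $j,k,\ell$ equals $-k\,\frac{k+\ell-1-j}{k-j}C_j^{k-2+2\ell}$. Multiplying through by $(\ell+j-1)(k-j)$ gives a cubic identity, which I would expand and check. This algebra is the main obstacle, because a sign or index slip breaks it.

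For d), the difference is $\binom{k-\ell+1}{\ell}-\binom{k-\ell}{\ell}=\binom{k-\ell}{\ell-1}=C_0^k(\ell)$ by Stiefel. Item f) is the mirror case, using the same symmetry. For e), I group the four terms as
$$\Bigl[C_j^{k+2}(\ell+1)-C_j^{k+1}(\ell+1)\Bigr]-\Bigl[C_{j-1}^{k+1}(\ell+1)-C_{j-1}^{k}(\ell+1)\Bigr].$$
Each bracket shares its first binomial factor, and Stiefel turns each into $\binom{\ell+j}{\ell}\binom{k-\ell-j}{\ell-1}$ and $\binom{\ell+j-1}{\ell}\binom{k-\ell-j}{\ell-1}$, respectively. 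A further application of Stiefel in the first factor leaves $\binom{\ell+j-1}{\ell-1}\binom{k-\ell-j}{\ell-1}=C_j^k(\ell)$.
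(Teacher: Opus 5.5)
Your proposal is correct and matches the paper's proof item by item: a) and b) by the absorption identity $(n+1)\binom{n}{m}=(n+1-m)\binom{n+1}{m}$, d) and f) by Stiefel, and e) by the same grouping and double use of Stiefel. The algebra you defer in c) is immediate; your displayed bracket uses the correct ratio $\frac{j(k+\ell-1-j)}{(\ell+j-1)(k-j)}$, although your first bullet misstates it as a ratio relative to $C_j^{k-2+2\ell}(\ell)$ when it is relative to your normalizing factorial quantity. Since $j-P=-(k+\ell-1-j)$ and $k-j-P=-(\ell+j-1)$, the bracket collapses to $-(k+\ell-1-j)\bigl(1+\frac{j}{k-j}\bigr)=-k\,\frac{k+\ell-1-j}{k-j}$, which is exactly the paper's route of first combining the coefficients of $C_{j-1}^{k-2+2\ell}(\ell)$ and $C_j^{k-2+2\ell}(\ell)$ and then absorbing each term into $C_j^{k-1+2\ell}(\ell)$.
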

\begin{proof}
The items a) and b) follow directly from the following binomial equality
\begin{equation}
\label{s1}
(n+1) \binom{n}{m}=\binom{n+1}{m}(n+1-m).
\end{equation}
We prove the item c). By the definition of the coefficients $C^{k}_j(\ell)$ and summing the first and fourth terms and the second and the third therms we obtain:
		\begingroup\allowdisplaybreaks
\[
\begin{split}
	& (k-j)\binom{\ell+j-2}{\ell-1}\binom{k-1+\ell-j}{\ell-1}+ j \binom{\ell+j-1}{\ell-1} \binom{k-2+\ell-j}{\ell-1}\\
	& -(k+\ell-1)\left[ \binom{\ell+j-1}{\ell-1}\binom{k-2+\ell-j}{\ell-1}+\binom{\ell+j-2}{\ell-1}\binom{k-1+\ell-j}{\ell-1} \right]\\
		&=-(\ell+j-1)\binom{\ell+j-2}{\ell-1}\binom{k-1+\ell-j}{\ell-1} -(k-j+\ell-1)\binom{\ell+j-1}{\ell-1}\binom{k-2+\ell-j}{\ell-1}\\
	&= -\binom{\ell+j-1}{\ell-1}\binom{k-1+\ell-j}{\ell-1}j -\binom{\ell+j-1}{\ell-1}\binom{k-1+\ell-j}{\ell-1}(k-j)\\
	&= -k\binom{\ell+j-1}{\ell-1}\binom{k-1+\ell-j}{\ell-1},
\end{split}
\]
\endgroup
where we applied \eqref{s1} in the second equality. The item d) easily follows from the Stifel identity, see \eqref{stiefel}.

Now, we prove item e). By applying two times Stifel identity, see \eqref{stiefel}, we get
		\begingroup\allowdisplaybreaks
\begin{eqnarray*}
&&C^{k+2}_j(\ell+1) - C^{k+1}_j(\ell+1)-C^{k+1}_{j-1}(\ell+1) + C^k_{j-1}(\ell+1)\\
&&=\binom{\ell+j}{\ell} \left( \binom{k+1-\ell-j}{\ell}- \binom{k-\ell-j}{\ell} \right)\\
&& \quad \quad-\binom{\ell+j-1}{\ell} \left( \binom{k-\ell-j+1}{\ell} - \binom{k-\ell-j}{\ell} \right)\\
&&= \left( \binom{\ell+j}{\ell}  -\binom{\ell+j-1}{\ell} \right) \binom{k-\ell-j}{\ell-1} \\
&&=\binom{\ell+j-1}{\ell-1}\binom{k-\ell-j}{\ell-1} \\
&&=C^k_j(\ell).
\end{eqnarray*}
 \endgroup
Finally the item f) follows by the Stifel identity as well.

\end{proof}

\section*{Declarations and statements}

{\bf Data availability}. The research in this paper does not imply use of data.

{\bf Conflict of interest}. The authors declare that there is no conflict of interest.

{\bf Acknowledgments}.
F. Colombo,  A. De Martino and S. Pinton are supported by MUR grant Dipartimento di Eccellenza 2023-2027.

\end{document}